
\documentclass{amsart}
\usepackage{amssymb}
\usepackage{amsfonts}

\setcounter{MaxMatrixCols}{10}

\textwidth 16 cm
\setlength{\oddsidemargin}{0mm}
\setlength{\evensidemargin}{0mm}
\newtheorem{theorem}{Theorem}[section]

\newtheorem{corollary}[theorem]{Corollary}
\newtheorem{definition}[theorem]{Definition}
\newtheorem{example}[theorem]{Example}
\newtheorem{lemma}[theorem]{Lemma}
\newtheorem{proposition}[theorem]{Proposition}
\theoremstyle{remark}
\newtheorem{remark}[theorem]{Remark}
\numberwithin{equation}{section}

\begin{document}
\title{NULLITY CONDITIONS IN PARACONTACT GEOMETRY}
\author{B. Cappelletti Montano}
\address{Dipartimento di Matematica, Universit\'{a} degli Studi di Cagliari,
Via Ospedale 72, 09124 Cagliari, ITALY}
\email{b.cappellettimontano@gmail.com}
\author{I. K\"{u}peli Erken}
\address{Art and Science Faculty, Department of Mathematics, Uludag
University, 16059 Bursa, TURKEY}
\email{iremkupeli@uludag.edu.tr}
\author{C. Murathan }
\address{Art and Science Faculty, Department of Mathematics, Uludag
University, 16059 Bursa, TURKEY} \email{cengiz@uludag.edu.tr}
 \subjclass[2010]{Primary 53B30, 53C15, 53C25; Secondary 53D10} \keywords{Paracontact metric
manifold, para-Sasakian, contact metric manifold, $(\kappa,\mu)$-manifold, Legendre foliation}

\begin{abstract}
The paper is a complete study of paracontact metric manifolds for which the
Reeb vector field of the underlying contact structure satisfies a nullity
condition (the condition \eqref{paranullity} below, for some real numbers $%
\tilde\kappa$ and $\tilde\mu$). This class of pseudo-Riemannian manifolds,
which includes para-Sasakian manifolds, was recently defined in \cite{MOTE}.
In this paper we show in fact that there is a kind of duality between those
manifolds and contact metric $(\kappa,\mu)$-spaces. In particular, we prove
that, under some natural assumption, any such paracontact metric manifold
admits a compatible contact metric $(\kappa,\mu)$-structure (eventually
Sasakian). Moreover, we prove that the nullity condition is invariant under $%
\mathcal{D}$-homothetic deformations and determines the whole curvature
tensor field completely. Finally non-trivial examples in any dimension are
presented and the many differences with the contact metric case, due to the
non-positive definiteness of the metric, are discussed.
\end{abstract}

\maketitle

\section{\textbf{Introduction}}

A contact metric $(\kappa,\mu)$-space is a contact Riemannian manifold $%
(M,\varphi,\xi,\eta,g)$ such that the Reeb vector field $\xi$ belongs to the
so-called $\left(\kappa,\mu\right)$-nullity distribution, i.e. the curvature
tensor field satisfies the condition
\begin{equation}  \label{definizione}
R_{XY}\xi=\kappa\left(\eta\left(Y\right)X-\eta\left(X\right)Y\right)+\mu%
\left(\eta\left(Y\right)hX-\eta\left(X\right)hY\right),
\end{equation}
for some real numbers $\kappa$ and $\mu$, where $2h$ denotes the Lie
derivative of $\varphi$ in the direction of $\xi$. This new class of
Riemannian manifolds was introduced in \cite{BKP} as a natural
generalization both of the Sasakian condition $R_{XY}\xi=\eta\left(Y%
\right)X-\eta\left(X\right)Y$ and of those contact metric manifolds
satisfying $R_{XY}\xi=0$ which were studied by D. E. Blair in \cite{blair-1}%
. Nowadays contact $(\kappa,\mu)$-manifolds are considered a very important
topic in contact Riemannian geometry. In fact in despite of the technical
appearance of the definition, there are good reasons for studying $%
(\kappa,\mu)$-spaces. The first is that, in the non-Sasakian case (that is
for $\kappa\neq 1$), the condition \eqref{definizione} determines the
curvature tensor field completely; next, $(\kappa,\mu)$-spaces provide
non-trivial examples of some remarkable classes of contact Riemannian
manifolds, like CR-integrable contact metric manifolds (\cite{Tan2}), $H$%
-contact manifolds (\cite{perrone}), harmonic contact metric manifolds (\cite%
{vergara1}), or contact Riemannian manifolds with $\eta$-parallel torsion
tensor (\cite{gosh}); moreover, a local classification is known (\cite{BO}).

Recently, in \cite{MOTE}, an unexpected relationship between contact $%
(\kappa,\mu)$-spaces and paracontact geometry was found. It was proved (cf.
Theorem \ref{motivation} below) that any (non-Sasakian) contact $%
(\kappa,\mu) $-space carries a canonical paracontact metric structure $(%
\tilde{\varphi},\xi,\eta,\tilde{g})$ whose Levi-Civita connection satisfies
a condition formally similar to \eqref{definizione}
\begin{equation}  \label{paranullity}
\tilde{R}_{X
Y}\xi=\tilde\kappa\left(\eta\left(Y\right)X-\eta\left(X\right)Y\right)+%
\tilde\mu(\eta\left(Y\right)\tilde{h}X-\eta\left(X\right)\tilde{h}Y)
\end{equation}
where $2\tilde{h}:={\mathcal{L}}_{\xi}\tilde\varphi$ and, in this case, $%
\tilde\kappa=(1-\mu/2)^2+\kappa-2$, $\tilde\mu=2$.

We recall that paracontact manifolds are smooth manifolds of dimension $2n+1$
endowed with a $1$-form $\eta $, a vector field $\xi $ and a field of
endomorphisms of tangent spaces $\tilde{\varphi}$ such that $\eta (\xi )=1$,
$\tilde{\varphi}^{2}=I-\eta \otimes \xi $ and $\tilde{\varphi}$ induces an
almost paracomplex structure on the codimension $1$ distribution defined by
the kernel of $\eta $ (see $\S $ \ref{preliminaries} for more details). If,
in addition, the manifold is endowed with a pseudo-Riemannian metric $\tilde{%
g}$ of signature $(n,n+1)$ satisfying
\begin{equation*}
\tilde{g}(\tilde{\varphi}X,\tilde{\varphi}Y)=-\tilde{g}(X,Y)+\eta (X)\eta
(Y),\ \ \ d\eta (X,Y)=\tilde{g}(X,\tilde{\varphi}Y),
\end{equation*}%
$(M,\eta )$ becomes a contact manifold and $(\tilde{\varphi},\xi ,\eta ,%
\tilde{g})$ is said to be a paracontact metric structure on $M$. \ The study
of paracontact geometry was initiated by Kaneyuki and Williams in \cite%
{kaneyuki1} and then it was continued by many other authors. Very recently a
systematic study of paracontact metric manifolds, and some remarkable
subclasses like para-Sasakian manifolds, was carried out by Zamkovoy (\cite%
{Za}). The importance of paracontact geometry, and in particular of
para-Sasakian geometry, has been pointed out especially in the last years by
several papers highlighting the interplays with the theory of para-K\"{a}%
hler manifolds and its role in pseudo-Riemannian geometry and mathematical
physics (cf. e.g. \cite{alekseevski1}, \cite{alekseevski2}, \cite{erdem},
\cite{cortes1}, \cite{cortes2}).

These considerations motivate us to study the class of paracontact metric
manifolds satisfying the nullity condition \eqref{paranullity}, for some
constant real numbers $\tilde\kappa$ and $\tilde\mu$. We call these
pseudo-Riemannian manifolds \emph{paracontact $(\tilde\kappa,\tilde\mu)$%
-manifolds}. As we will see the class of paracontact $(\tilde\kappa,\tilde%
\mu)$-manifolds is very large. It contains para-Sasakian manifolds, as well
as those paracontact metric manifolds satisfying $\tilde{R}_{XY}\xi=0$ for
all $X,Y\in\Gamma(TM)$ (recently studied in \cite{ZATZA}). But, unlike in
the contact Riemannian case, a paracontact $(\tilde\kappa,\tilde\mu)$%
-manifold such that $\tilde\kappa=-1$ in general is not para-Sasakian. There
are in fact paracontact $(\tilde\kappa,\tilde\mu)$-manifolds such that $%
\tilde{h}^{2}=0$ (which is equivalent to take $\tilde\kappa=-1$) but with $%
\tilde{h}\neq 0$. Another important difference with the contact Riemannian
case, due to the non-positive definiteness of the metric, is that while for
contact metric $(\kappa,\mu)$-spaces the constant $\kappa$ can not be
greater than $1$, here we have no restrictions for the constants $%
\tilde\kappa$ and $\tilde\mu$.

In $\S $ \ref{third} we study the common properties for the cases $%
\tilde\kappa<-1$, $\tilde\kappa=-1$, $\tilde\kappa>-1$. We prove for
instance that while the values of $\tilde\kappa$ and $\tilde\mu$ change, the
form of \eqref{paranullity} remains unchanged under $\mathcal{D}$-homothetic
deformations. Moreover we prove that any paracontact $(\tilde\kappa,\tilde%
\mu)$-manifold is \emph{integrable} (in the sense of \cite{Za}), i.e. its
canonical paracontact connection preserves $\tilde\varphi$, and we find some
general properties of the curvature.

Since the geometric behavior of the manifold is very different according to
the circumstance that $\tilde\kappa<-1$ or $\tilde\kappa>-1$, we study
separately the two cases. In particular, in both cases we prove that the $%
(\tilde\kappa,\tilde\mu)$-nullity condition \eqref{paranullity} determines
the whole curvature tensor field completely. In fact we are able to find an explicit formula for the curvature, depending on the tensors $\tilde\varphi$, $\tilde h$, $\tilde\varphi h$. It is interesting that the same formula holds both for the case $\tilde\kappa<-1$ and $\tilde\kappa>-1$.  Then we find the values of $%
\tilde\kappa$ and $\tilde\mu$ for which the pseudo-Riemannian metric in
question is $\eta$-Einstein, i.e. $\text{Ric} = a I + b\eta\otimes\xi$, for
some $a,b\in\mathbb{R}$ and we prove that, unlike the contact
metric case, if $\tilde\kappa<-1$ there are Einstein paracontact $%
(\tilde\kappa,\tilde\mu)$-metrics in dimension greater than $3$.

In both cases $\tilde\kappa<-1$ and $\tilde\kappa>-1 $ the geometry of the
paracontact metric manifold can be related to the theory of Legendre
foliations. Namely, if $\tilde\kappa>-1$ then the operator $\tilde{h}$ is
diagonalizable and the eigendistributions corresponding to the constant
eigenvalues $\pm\tilde\lambda$, where $\tilde\lambda=\sqrt{1+\tilde\kappa}$,
define two mutually orthogonal and totally geodesic Legendre foliations.
Whereas, if $\tilde\kappa<-1$, the role before played by $\tilde{h}$ is now
played by the operator $\tilde\varphi\tilde{h}$. Such operator is
diagonalizable with the same eigenvalues as $\tilde{h}$. The main difference
with the previous case is that, while the eigendistributions corresponding
to $\pm\tilde{\lambda}$ (where now $\tilde{\lambda}=\sqrt{-1-\tilde\kappa}$)
still define two mutually orthogonal Legendre foliations, they are not
totally geodesic but they are totally umbilical. \ Then, by using the theory
of Legendre foliations, we prove that under some natural assumptions, a
paracontact $(\tilde\kappa,\tilde\mu)$-manifold carries a contact Riemannian
structure compatible with the contact form $\eta$, which in turn satisfies a
$(\kappa,\mu)$-nullity condition, for some constant real numbers $\kappa$
and $\mu$ depending on $\tilde\kappa$ and $\tilde\mu$. Therefore, in view of
such a result and \cite[Theorem 4.7]{MOTE}, it seems that there is a kind of
duality between contact and paracontact structures satisfying nullity
conditions.

Furthermore, we find non-trivial examples of paracontact $%
(\tilde\kappa,\tilde\mu)$-manifolds. We construct examples of left-invariant
paracontact $(\tilde\kappa,\tilde\mu)$-structures on Lie groups and,
moreover, we show that the tangent sphere bundle of a Riemannian manifold of
constant sectional curvature $c$ carries two canonical paracontact $%
(\tilde\kappa_{i},\tilde\mu_{i})$-structures $(\tilde\varphi_{i},\xi,\eta,%
\tilde{g}_{i})$, $i\in\left\{1,2\right\}$, (same $\eta$ and $\xi$, where $%
\xi $ is twice the geodesic flow), with
\begin{gather*}
\tilde\kappa_{1}=(1+c)^2-1, \ \ \ \tilde\mu_{1}=2(1-|c-1|), \\
\tilde\kappa_{2}=4c-1, \ \ \ \tilde\mu_{2}=2.
\end{gather*}
Hence, according to the value of $c$, we obtain examples of paracontact $%
(\tilde\kappa,\tilde\mu)$-manifolds such that $\tilde\kappa<-1$ and $%
\tilde\kappa>-1$. Also we prove that when the base manifold $M$ is flat than
the second structure provides an example of paracontact $(\tilde\kappa,%
\tilde\mu)$-manifold such that $\tilde\kappa=-1$ but which is not
para-Sasakian. To the knowledge of the authors these are the first
paracontact metric structures defined on the tangent sphere bundle.

Many questions about paracontact $(\tilde\kappa,\tilde\mu)$-manifolds remain
open. Apart of the problem of finding other non-trivial examples, the case
of strictly non-para-Sasakian paracontact $(\tilde\kappa,\tilde\mu)$%
-manifolds with $\tilde\kappa=-1$ is worthy to be studied. In particular it
should be important to find sufficient conditions for such manifolds in
order to be para-Sasakian. Other natural questions are to provide a
classification of such manifolds, at least in the $3$-dimensional case, and
to study further the unexpected interplays with contact Riemannian geometry
which we have found in this paper.

\section{Preliminaries}

\label{preliminaries}

A differentiable manifold $M$ of dimension $2n+1$ is said to be a \emph{%
contact manifold} if it carries a global 1-form $\eta $ such that $\eta
\wedge (d\eta )^{n}\neq 0$. It is well known that then there exists a unique
vector field $\xi $ (called the \emph{Reeb vector field}) such that $i_{\xi
}\eta =1$ and $i_{\xi }d\eta =0$. The $2n$-dimensional distribution
transversal to the Reeb vector field defined by ${\mathcal{D}}:=\ker (\eta )$
is called the \emph{contact distribution}. Any contact manifold $(M,\eta )$
admits a Riemannian metric $g$ and a $(1,1)$-tensor field $\varphi $ such
that
\begin{gather}
\varphi ^{2}=-I+\eta \otimes \xi ,\ \ \varphi \xi =0,\ \ \eta (X)=g(X,\xi )
\label{B1} \\
g(\varphi X,\varphi Y)=g(X,Y)-\eta (X)\eta (Y),\ \ g(X,\varphi Y)=d\eta
(X,Y),  \label{B0}
\end{gather}%
for any vector field $X$ and $Y$ on $M$. The contact manifold $(M,\eta )$
together with the geometric structure $(\varphi ,\xi ,\eta ,g)$ is then
called \emph{contact metric manifold} (or \emph{contact Riemannian manifold}%
). Let $h$ be the operator defined by $h=\frac{1}{2}{\mathcal{L}}_{\xi
}\varphi $, where $\mathcal{L}$ denotes Lie differentiation. The tensor
field $h$ vanishes identically if and only if the vector field $\xi $ is
Killing and in this case the contact metric manifold is said to be \textit{%
K-contact}. It is well known that $h$ and $\varphi h$ are symmetric
operators, and
\begin{equation*}
\varphi h+h\varphi =0,\ \ h\xi =0,\ \ \eta \circ h=0,\ \ \text{tr}h=\text{tr}%
\varphi h=0,
\end{equation*}%
where $\text{tr}h$ denotes the trace of $h$. Since $h$ anti-commutes with $%
\varphi $, if $X$ is an eigenvector of $h$ corresponding to the eigenvalue $%
\lambda $ then $\varphi X$ is also an eigenvector of $h$ corresponding to
the eigenvalue $-\lambda $. Moreover, for any contact metric manifold $M$,
the following relation holds
\begin{equation}
\nabla _{X}\xi =-\varphi X-\varphi hX  \label{B2}
\end{equation}%
where $\nabla $ is the Levi-Civita connection of $(M,g)$. If a contact
metric manifold $M$ is \emph{normal}, in the sense that the tensor field $%
N_{\varphi }:=[\varphi ,\varphi ]+2d\eta \otimes \xi $ vanishes identically,
then $M$ is called a \textit{Sasakian manifold}. Equivalently, a contact
metric manifold is Sasakian if and only if $R_{XY}\xi =\eta (Y)X-\eta (X)Y$.
Any Sasakian manifold is $K$-contact and in dimension $3$ the converse
also holds (cf. \cite{B1}).

As a natural generalization of the above Sasakian condition one can consider
contact metric manifolds satisfying
\begin{equation}
R_{XY}\xi =\kappa (\eta (Y)X-\eta (X)Y)+\mu (\eta (Y)hX-\eta (X)hY)
\label{RXYZETA}
\end{equation}
for some real numbers $\kappa$ and $\mu$. \eqref{RXYZETA} is called \emph{$%
(\kappa ,\mu)$-nullity condition}. This type of Riemannian manifolds was
introduced and deeply studied by Blair, Koufogiorgos and Papantoniou in \cite%
{BKP} and a few years earlier by Koufogiorgos for the case $\mu=0$ (\cite%
{koufogiorgos}). Among other things, they proved the following result.

\begin{theorem}[\protect\cite{BKP}]
Let $(M,\varphi,\xi ,\eta ,g)$ be a contact metric $(\kappa,\mu)$-manifold.
Then necessarily $\kappa\leq 1$ and $\kappa= 1$ if and only if $M$ is
Sasakian. Moreover, if $\kappa <1$, the contact metric manifold $M$ admits
three mutually orthogonal and integrable distributions ${\mathcal{D}}_{h}(0)$%
, ${\mathcal{D}}_{h}(\lambda)$ and ${\mathcal{D}}_{h}(-\lambda)$ defined by the
eigenspaces of $h$, where $\lambda=\sqrt{1-\kappa}$.
\end{theorem}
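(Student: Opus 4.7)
The plan is to extract from \eqref{RXYZETA} a purely algebraic identity for $h^{2}$ that forces $\kappa\le 1$, then to handle the Sasakian equivalence, and finally to exploit the spectral decomposition of $h$ together with a covariant-derivative identity to obtain integrability of the eigendistributions.

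First, setting $Y=\xi$ in \eqref{RXYZETA} yields the Jacobi operator $lX:=R_{X\xi}\xi = -\kappa\varphi^{2}X+\mu hX$. On the other hand, on every contact metric manifold the formula \eqref{B2} together with $h\xi=0$ and $\varphi h+h\varphi=0$ gives, by a direct computation, the universal identity $\varphi l\varphi - l = 2(h^{2}-\varphi^{2})$. Substituting the $(\kappa,\mu)$-expression for $l$ and using $\varphi^{2}=-I+\eta\otimes\xi$, $\varphi h\varphi = h$, $\varphi^{4}=-\varphi^{2}$ collapses everything to
\begin{equation*}
h^{2} \;=\; (\kappa-1)\,\varphi^{2}.
\end{equation*}
Since $h$ is $g$-symmetric and $g(\varphi^{2}X,X)=-\|X\|^{2}$ for $X\in\mathcal{D}$, this gives $\|hX\|^{2}=(1-\kappa)\|X\|^{2}\ge 0$, whence $\kappa\le 1$. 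If $\kappa=1$ then $h^{2}=0$ and, by symmetry, $h=0$, so $M$ is $K$-contact; equation \eqref{RXYZETA} then reduces to $R_{XY}\xi=\eta(Y)X-\eta(X)Y$, which is precisely the Sasakian condition, and conversely every Sasakian manifold trivially satisfies \eqref{RXYZETA} with $\kappa=1$.

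Assume now $\kappa<1$. The identity $h^{2}|_{\mathcal{D}}=(1-\kappa)I$ shows that $h$ has exactly three eigenvalues $0,\pm\lambda$ with $\lambda=\sqrt{1-\kappa}$; moreover $\mathcal{D}_{h}(0)=\mathbb{R}\xi$, the anti-commutation $\varphi h=-h\varphi$ interchanges $\mathcal{D}_{h}(\pm\lambda)$, and orthogonality of the three eigenspaces is automatic from the spectral theorem. The hardest part is then integrability of $\mathcal{D}_{h}(\pm\lambda)$, since $\mathcal{D}_{h}(0)$ is trivially integrable as a line field. The strategy is to differentiate \eqref{RXYZETA} covariantly and feed the result into the second Bianchi identity $\sum_{\mathrm{cyc}}(\nabla_{Z}R)(X,Y)\xi=0$; after using \eqref{B2} and $h^{2}=(\kappa-1)\varphi^{2}$, a direct but lengthy computation should produce a commutator formula of the form
\begin{equation*}
(\nabla_{X}h)Y-(\nabla_{Y}h)X \;=\; 2(1-\kappa)\,g(X,\varphi Y)\,\xi \;+\; (\text{terms vanishing on }\mathcal{D}).
\end{equation*}
For $X,Y\in\mathcal{D}_{h}(\lambda)\subset\mathcal{D}$ both the $\eta$-terms and $g(X,\varphi Y)$ vanish (the latter because $\varphi Y\in\mathcal{D}_{h}(-\lambda)$ is orthogonal to $X$), and expanding $h[X,Y]=\nabla_{X}(hY)-(\nabla_{X}h)Y-\nabla_{Y}(hX)+(\nabla_{Y}h)X$ with $hX=\lambda X$, $hY=\lambda Y$ yields $h[X,Y]=\lambda[X,Y]$, so $[X,Y]\in\mathcal{D}_{h}(\lambda)$; the case $\mathcal{D}_{h}(-\lambda)$ is identical. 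The main obstacle throughout is the clean extraction of the $(\nabla h)$-commutator formula from the Bianchi identity, since it requires keeping careful track of many algebraic cancellations among $\varphi$, $h$, and $\eta\otimes\xi$.
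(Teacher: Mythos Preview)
The paper does not prove this theorem; it is quoted from \cite{BKP} as background with no argument supplied. Your outline is essentially the original BKP argument, and the paper's treatment of the paracontact analogue (the Lemma containing \eqref{H2}--\eqref{NMBLA ZETAH}) follows exactly the same template. Two points deserve correction.

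First, the universal contact-metric identity reads $\varphi l\varphi - l = 2(h^{2}+\varphi^{2})$, not $2(h^{2}-\varphi^{2})$. With $l=-\kappa\varphi^{2}+\mu h$ one computes $\varphi l\varphi - l = 2\kappa\varphi^{2}$, and equating gives $h^{2}=(\kappa-1)\varphi^{2}$, so your conclusion is correct despite the slip.

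Second, the second Bianchi identity is an awkward route to the commutator $(\nabla_{X}h)Y-(\nabla_{Y}h)X$: the cyclic sum $\sum(\nabla_{Z}R)(X,Y)\xi$ generates terms of the form $R_{XY}\nabla_{Z}\xi$, which are not controlled by \eqref{RXYZETA} alone. The standard derivation---and the one the paper carries out for the paracontact version \eqref{NAMLA X H}---is to differentiate $\nabla_{Y}\xi=-\varphi Y-\varphi hY$ to express $R_{XY}\xi$ in terms of $(\nabla\varphi)$ and $(\nabla\varphi h)$, insert the explicit formula $(\nabla_{X}\varphi)Y=g(X+hX,Y)\xi-\eta(Y)(X+hX)$ (itself a consequence of the nullity condition), and compare with \eqref{RXYZETA}. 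This yields
\[
(\nabla_{X}h)Y-(\nabla_{Y}h)X=(1-\kappa)\bigl(2g(X,\varphi Y)\xi+\eta(X)\varphi Y-\eta(Y)\varphi X\bigr)+(1-\mu)\bigl(\eta(X)\varphi hY-\eta(Y)\varphi hX\bigr),
\]
after which your final paragraph is correct as written.
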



The standard contact metric structure on the tangent sphere bundle $T_{1}M$
satisfies the $(\kappa,\mu)$-nullity condition if and only if the base
manifold $M$ has constant curvature $c$. In this case $\kappa=c(2-c)$ and $%
\mu=-2c$ (\cite{BKP}). Other examples can be found in \cite{BO}.


\medskip

Now we recall the notion of almost paracontact manifold (cf. \cite{kaneyuki1}%
). An $(2n+1)$-dimensional smooth manifold $M$ is said to have an \emph{%
almost paracontact structure} if it admits a $(1,1)$-tensor field $\tilde{%
\varphi}$, a vector field $\xi $ and a $1$-form $\eta$ satisfying the
following conditions:

\begin{enumerate}
\item[(i)] $\eta(\xi )=1$, \ $\tilde{\varphi}^{2}=I-\eta \otimes \xi$,

\item[(ii)] the tensor field $\tilde{\varphi}$ induces an almost paracomplex
structure on each fibre of ${\mathcal{D}}=\ker(\eta)$, i.e. the $\pm 1$%
-eigendistributions, ${\mathcal{D}}^{\pm}:={\mathcal{D}}_{\tilde\varphi}(\pm
1)$ of $\tilde\varphi$ have equal dimension $n$.
\end{enumerate}

From the definition it follows that $\tilde{\varphi}\xi=0$, $\eta \circ
\tilde{\varphi}=0$ and the endomorphism $\tilde{\varphi}$ has rank $2n$.
When the tensor field $N_{\tilde{\varphi}}:=[\tilde\varphi,\tilde\varphi]-2d%
\eta \otimes \xi$ vanishes identically the almost paracontact manifold is
said to be \emph{normal}. If an almost paracontact manifold admits a
pseudo-Riemannian metric $\tilde{g}$ such that
\begin{equation}
\tilde{g}(\tilde{\varphi}X,\tilde{\varphi}Y)=-\tilde{g}(X,Y)+\eta (X)\eta
(Y),  \label{G METRIC}
\end{equation}%
for all $X,Y\in\Gamma(TM)$, then we say that $(M,\tilde{\varphi},\xi,\eta,%
\tilde{g})$ is an \textit{almost paracontact metric manifold}. Notice that
any such a pseudo-Riemannian metric is necessarily of signature $(n,n+1)$.
For an almost paracontact metric manifold, there always exists an orthogonal
basis $\{X_{1},\ldots,X_{n},Y_{1},\ldots,Y_{n},\xi\}$ such that $\tilde{g}%
(X_{i},X_{j})=\delta_{ij}$, $\tilde{g}(Y_{i},Y_{j})=-\delta_{ij}$ and $%
Y_{i}=\tilde\varphi X_{i}$, for any $i,j\in\left\{1,\ldots,n\right\}$. Such
basis is called a $\tilde\varphi$-basis.

If in addition $d\eta (X,Y)=\tilde{g}(X,\tilde{\varphi}Y)$ for all
vector fields $X,Y$ on $M,$ $(M,\tilde{\varphi},\xi,\eta,\tilde{g})$ is said to be
a \emph{paracontact metric manifold}. \ In a paracontact metric manifold one
defines a symmetric, trace-free operator $\tilde{h}:=\frac{1}{2}{\mathcal{L}}%
_{\xi }\tilde{\varphi}$. It is known (\cite{Za}) that $\tilde{h}$
anti-commutes with $\tilde{\varphi}$ and satisfies $\tilde{h}\xi =0$ and
\begin{equation}  \label{nablaxi}
\tilde{\nabla}\xi =-\tilde{\varphi}+\tilde{\varphi}\tilde{h},
\end{equation}
where $\tilde{\nabla}$ is the Levi-Civita connection of the
pseudo-Riemannian manifold $(M,\tilde{g})$. Moreover $\tilde{h}\equiv 0$ if
and only if $\xi $ is a Killing vector field and in this case $(M,\tilde{%
\varphi},\xi ,\eta ,\tilde{g})$ is said to be a \emph{K-paracontact manifold}%
. A normal paracontact metric manifold is called a \textit{para-Sasakian
manifold}. Also in this context the para-Sasakian condition implies the $K$%
-paracontact condition and the converse holds only in dimension $3$.
Moreover, in any para-Sasakian manifold
\begin{equation}
\tilde{R}_{XY}\xi =-(\eta (Y)X-\eta (X)Y),  \label{Pasa}
\end{equation}
holds, but unlike contact metric geometry the condition (\ref{Pasa}) not
necessarily implies that the manifold is para-Sasakian.

In \cite{Za}, the author proved the following results, which will be used in
next sections:

\begin{theorem}[\protect\cite{Za}]
On a paracontact metric manifold, the following identities hold:
\begin{gather}
(\tilde{\nabla}_{\tilde{\varphi}X}\tilde{\varphi})\tilde{\varphi}Y-(\tilde{%
\nabla}_{X}\tilde{\varphi})Y=2\tilde{g}(X,Y)\xi -\eta (Y)(X-\tilde{h}X+\eta
(X)\xi ),  \label{namlafibar} \\
\tilde{R}_{\xi X}\xi +\tilde{\varphi}\tilde{R}_{\xi \tilde{\varphi}X}\xi =2(%
\tilde{\varphi}^{2}X-\tilde{h}^{2}X),  \label{FiL} \\
\tilde{R}(\xi ,X,Y,Z)=-\tilde{g}(Y,(\tilde{\nabla}_{X}\tilde{\varphi})Z)+%
\tilde{g}(X,(\tilde{\nabla}_{Y}\tilde{\varphi}\tilde{h})Z)-\tilde{g}(X,(%
\tilde{\nabla}_{Z}\tilde{\varphi}\tilde{h})Y),  \label{Curvature2} \\
\tilde{R}(\xi ,X,Y,Z)+\tilde{R}(\xi ,X,\tilde{\varphi}Y,\tilde{\varphi}Z)-%
\tilde{R}(\xi ,\tilde{\varphi}X,\tilde{\varphi}Y,Z)-\tilde{R}(\xi ,\tilde{%
\varphi}X,Y,\tilde{\varphi}Z)  \label{Curvature3} \\
=-2(\tilde{\nabla}_{\tilde{h}X}\tilde{\Phi})(Y,Z)+2\eta(Y)\tilde{g}(X-\tilde{%
h}X,Z)-2\eta(Z)\tilde{g}(X-\tilde{h}X,Y),  \notag
\end{gather}%
where $\tilde{\Phi}:=\tilde{g}(\cdot ,\tilde{\varphi}\cdot )$ is the \emph{%
fundamental $2$-form} of the paracontact metric structure.
\end{theorem}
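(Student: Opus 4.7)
The plan is to derive all four identities from the structural equation~\eqref{nablaxi}, $\tilde{\nabla}\xi=-\tilde{\varphi}+\tilde{\varphi}\tilde{h}$, together with the Koszul formula for $\tilde{\nabla}$ and the standing paracontact identities $\tilde{\varphi}^{2}=I-\eta\otimes\xi$, $\tilde{g}(\tilde{\varphi}X,\tilde{\varphi}Y)=-\tilde{g}(X,Y)+\eta(X)\eta(Y)$, $d\eta(X,Y)=\tilde{g}(X,\tilde{\varphi}Y)$, and their consequences $\tilde{\nabla}_{\xi}\xi=0$, $\tilde{h}\xi=0$, $\tilde{\varphi}\tilde{h}+\tilde{h}\tilde{\varphi}=0$, $\eta\circ\tilde{h}=0$.

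Step 1, the identity~\eqref{namlafibar}, is the computational heart. I would first derive a Koszul-type formula for $(\tilde{\nabla}_{X}\tilde{\varphi})Y$ by expanding $2\tilde{g}((\tilde{\nabla}_{X}\tilde{\varphi})Y,Z)=2\tilde{g}(\tilde{\nabla}_{X}\tilde{\varphi}Y,Z)-2\tilde{g}(\tilde{\varphi}\tilde{\nabla}_{X}Y,Z)$ and applying Koszul to the two Levi-Civita terms. The closedness of $\tilde{\Phi}$ (equivalent to $d\eta(X,Y)=\tilde{g}(X,\tilde{\varphi}Y)$) collapses many of the resulting $2$-form terms, while the contributions along $\xi$ are absorbed using the very definition $2\tilde{h}Z=[\xi,\tilde{\varphi}Z]-\tilde{\varphi}[\xi,Z]$. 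Substituting $(\tilde{\varphi}X,\tilde{\varphi}Y)$ in place of $(X,Y)$, simplifying the left-hand side via $\tilde{\varphi}^{2}=I-\eta\otimes\xi$, and subtracting the two expressions, the bulk of the generic terms cancel and~\eqref{namlafibar} is left.

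For step 2, identity~\eqref{FiL}, I expand $\tilde{R}_{\xi X}\xi=\tilde{\nabla}_{\xi}\tilde{\nabla}_{X}\xi-\tilde{\nabla}_{[\xi,X]}\xi$ (using $\tilde{\nabla}_{\xi}\xi=0$) via~\eqref{nablaxi} and $[\xi,X]=\tilde{\nabla}_{\xi}X-\tilde{\nabla}_{X}\xi$; adding the expression for $\tilde{\varphi}\tilde{R}_{\xi\tilde{\varphi}X}\xi$ obtained by substituting $\tilde{\varphi}X$ and applying $\tilde{\varphi}$, and using $\tilde{\varphi}\tilde{h}+\tilde{h}\tilde{\varphi}=0$, causes the $(\mathcal{L}_{\xi}\tilde{\varphi})$-contributions to cancel and leaves $2(\tilde{\varphi}^{2}X-\tilde{h}^{2}X)$. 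For step 3, identity~\eqref{Curvature2}, the symmetries of $\tilde{R}$ reduce $\tilde{R}(\xi,X,Y,Z)$ to $-\tilde{g}(X,\tilde{R}_{YZ}\xi)$; differentiating~\eqref{nablaxi} yields $\tilde{R}_{YZ}\xi=-(\tilde{\nabla}_{Y}\tilde{\varphi})Z+(\tilde{\nabla}_{Z}\tilde{\varphi})Y+(\tilde{\nabla}_{Y}\tilde{\varphi}\tilde{h})Z-(\tilde{\nabla}_{Z}\tilde{\varphi}\tilde{h})Y$, and pairing against $X$ via the skew-symmetry $\tilde{g}((\tilde{\nabla}_{V}\tilde{\varphi})W,U)=-\tilde{g}(W,(\tilde{\nabla}_{V}\tilde{\varphi})U)$ (a consequence of $\tilde{g}(\tilde{\varphi}\cdot,\cdot)+\tilde{g}(\cdot,\tilde{\varphi}\cdot)=0$ and $\tilde{\nabla}\tilde{g}=0$) produces~\eqref{Curvature2}. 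Finally, step 4, identity~\eqref{Curvature3}, follows by writing out the four summands on the left via~\eqref{Curvature2} and using~\eqref{namlafibar} with the pairs $(X,Y),(X,\tilde{\varphi}Y),(\tilde{\varphi}X,\tilde{\varphi}Y),(\tilde{\varphi}X,Y)$ to recombine the $(\tilde{\nabla}\tilde{\varphi})$- and $(\tilde{\nabla}\tilde{\varphi}\tilde{h})$-terms into $-2(\tilde{\nabla}_{\tilde{h}X}\tilde{\Phi})(Y,Z)$ plus the stated $\eta$-corrections.

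The main obstacle is step 1: the sign difference between the paracontact compatibility $\tilde{g}(\tilde{\varphi}\cdot,\tilde{\varphi}\cdot)=-\tilde{g}(\cdot,\cdot)+\eta\otimes\eta$ and its Riemannian counterpart $g(\varphi\cdot,\varphi\cdot)=g(\cdot,\cdot)-\eta\otimes\eta$ means that the standard contact Riemannian derivation of $(\nabla_{X}\varphi)Y$ from Koszul cannot be transcribed verbatim; every term in a long computation must be recomputed and tracked carefully, and the $\tilde{h}$-terms (which mix with the torsion-like piece arising from $\tilde{\nabla}\xi$) need to be disentangled from the $\tilde{\varphi}$-terms. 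Once~\eqref{namlafibar} is established, identities~\eqref{FiL}--\eqref{Curvature3} follow by systematic application of the curvature definition, of~\eqref{nablaxi}, and of the symmetries of the Riemann tensor.
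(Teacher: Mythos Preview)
The paper does not prove this theorem: it is quoted verbatim from Zamkovoy's article \cite{Za} (note the citation in the theorem header), so there is no ``paper's own proof'' to compare against. Your proposal is therefore not competing with anything in the present paper; it is a reconstruction of Zamkovoy's argument.

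That said, your outline is the standard one and is essentially what Zamkovoy does. One small point: in your Step~3 you write $\tilde{R}(\xi,X,Y,Z)=-\tilde{g}(X,\tilde{R}_{YZ}\xi)$, but with the convention $\tilde{R}(A,B,C,D)=\tilde{g}(\tilde{R}_{AB}C,D)$ and the pair symmetry one has $\tilde{R}(\xi,X,Y,Z)=\tilde{g}(\tilde{R}_{YZ}\xi,X)$ without the minus sign; moreover, after inserting the formula $\tilde{R}_{YZ}\xi=-(\tilde{\nabla}_{Y}\tilde{\varphi})Z+(\tilde{\nabla}_{Z}\tilde{\varphi})Y+(\tilde{\nabla}_{Y}\tilde{\varphi}\tilde{h})Z-(\tilde{\nabla}_{Z}\tilde{\varphi}\tilde{h})Y$ (which is exactly the paper's \eqref{CURVATURE 4}), the two $(\tilde{\nabla}\tilde{\varphi})$-terms do not collapse into the single term $-\tilde{g}(Y,(\tilde{\nabla}_{X}\tilde{\varphi})Z)$ by skew-symmetry alone---one needs an additional application of \eqref{namlafibar} (or of the closedness of $\tilde{\Phi}$) to pass from the pair $-\tilde{g}((\tilde{\nabla}_{Y}\tilde{\varphi})Z,X)+\tilde{g}((\tilde{\nabla}_{Z}\tilde{\varphi})Y,X)$ to $-\tilde{g}(Y,(\tilde{\nabla}_{X}\tilde{\varphi})Z)$. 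This is a cosmetic gap, not a structural one; the derivation goes through once that identity is invoked. The rest of your plan (Steps~1, 2, 4) is sound and matches the approach in \cite{Za}.
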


Moreover, in any paracontact metric manifold Zamkovoy introduced a canonical
connection which plays the same role in paracontact geometry of the
generalized Tanaka-Webster connection (\cite{Tan2}) in a contact metric
manifold.

\begin{theorem}[\protect\cite{Za}]
\label{zamconn} On a paracontact metric manifold there exists a unique
connection $\tilde{\nabla}^{pc}$, called the \emph{canonical paracontact
connection}, satisfying the following properties:

\begin{enumerate}
\item[(i)] $\tilde{\nabla}^{pc}\eta=0$, \ $\tilde{\nabla}^{pc}\xi=0$, \ $%
\tilde{\nabla}^{pc}\tilde g=0$,

\item[(ii)] $(\tilde\nabla^{pc}_{X}\tilde\varphi)Y=(\tilde\nabla_{X}\tilde%
\varphi)Y+\tilde g(X-\tilde h X,Y)\xi-\eta(Y)(X-\tilde h X)$,

\item[(iii)] $\tilde T^{pc}(\xi,\tilde\varphi Y)=-\tilde\varphi \tilde
T^{pc}(\xi,Y)$,

\item[(iv)] $\tilde T^{pc}(X,Y)=2d\eta(X,Y)\xi$ on ${\mathcal{D}}=\ker(\eta)$%
.
\end{enumerate}

Moreover $\tilde\nabla^{pc}$ is given by
\begin{equation}  \label{PcTanaka}
\tilde\nabla^{pc}_{X}Y=\tilde\nabla_{X}Y+\eta(X)\tilde\varphi
Y+\eta(Y)(\tilde\varphi X-\tilde\varphi \tilde h X)+\tilde g(X-\tilde
hX,\tilde\varphi Y)\xi.
\end{equation}
\end{theorem}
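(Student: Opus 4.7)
The plan is in two steps: adopt the formula \eqref{PcTanaka} as the \emph{definition} of $\tilde\nabla^{pc}$ and verify (i)--(iv) by direct computation, and then establish uniqueness through a difference-tensor argument.

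The right-hand side of \eqref{PcTanaka} is $C^{\infty}(M)$-linear in $X$, and the Leibniz rule in $Y$ holds because the correction terms are either $\tilde g$-tensorial in $Y$ or of the form $\eta(Y)$ times a vector field not depending on $Y$. Thus $\tilde\nabla^{pc}$ is a genuine connection. For $\tilde\nabla^{pc}\xi=0$ I would set $Y=\xi$ in \eqref{PcTanaka} and invoke \eqref{nablaxi}: the terms telescope exactly. The vanishing of $\tilde\nabla^{pc}\eta$ is immediate from $\eta\circ\tilde\varphi=0$ and $\tilde\nabla^{pc}\xi=0$. Metric compatibility is the computationally heaviest item: expand $(\tilde\nabla^{pc}_X\tilde g)(Y,Z)$, eliminate the $\tilde\nabla\tilde g=0$ piece, and observe that the six residual terms pair off using the skew-symmetry of $\tilde g(\tilde\varphi\cdot,\cdot)$, the self-adjointness of $\tilde h$, and $\tilde h\tilde\varphi+\tilde\varphi\tilde h=0$. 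For (ii), expand $(\tilde\nabla^{pc}_X\tilde\varphi)Y=\tilde\nabla^{pc}_X(\tilde\varphi Y)-\tilde\varphi(\tilde\nabla^{pc}_X Y)$ using the definition and simplify with $\tilde\varphi^{2}=I-\eta\otimes\xi$, $\eta\circ\tilde\varphi=0$ and $\eta\circ\tilde h=0$; the correction reassembles precisely as $\tilde g(X-\tilde hX,Y)\xi-\eta(Y)(X-\tilde hX)$. For (iv) with $X,Y\in\mathcal{D}$, only the last term in \eqref{PcTanaka} survives upon skew-symmetrization, contributing $2\tilde g(X,\tilde\varphi Y)\xi=2d\eta(X,Y)\xi$, while the $\tilde h$-contributions cancel using self-adjointness and anticommutation with $\tilde\varphi$. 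Condition (iii) reduces to computing $\tilde T^{pc}(\xi,Y)$: since $\tilde\nabla^{pc}_Y\xi=0$ this is $\tilde\nabla^{pc}_\xi Y-[\xi,Y]$, and plugging in \eqref{nablaxi} together with $i_\xi d\eta=0$ gives $\tilde T^{pc}(\xi,Y)=\tilde\varphi\tilde hY$; the identity then follows from $\tilde h\tilde\varphi=-\tilde\varphi\tilde h$.

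For uniqueness, let $\tilde\nabla'$ be another connection satisfying (i)--(iv) and define the $(1,2)$-tensor $D(X,Y):=\tilde\nabla^{pc}_X Y-\tilde\nabla'_X Y$. From (i), $D(\cdot,\xi)=0$, $\eta\circ D\equiv 0$, and $D_X$ is skew with respect to $\tilde g$; from (ii), $D_X$ commutes with $\tilde\varphi$. Applying the torsion-difference identity $\tilde T^{pc}(X,Y)-\tilde T'(X,Y)=D(X,Y)-D(Y,X)$ together with (iii) yields $D(\xi,\tilde\varphi Y)+\tilde\varphi D(\xi,Y)=0$, which combined with the $\tilde\varphi$-commutation gives $\tilde\varphi D(\xi,Y)=0$ and hence $D(\xi,\cdot)\in\mathbb{R}\xi$; but $\eta\circ D=0$ forces $D(\xi,\cdot)=0$. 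On $\mathcal{D}\times\mathcal{D}$, (iv) gives symmetry $D(X,Y)=D(Y,X)$, and combining symmetry with $\tilde g$-skewness through the cyclic argument
\[\tilde g(D(X,Y),Z)=-\tilde g(Y,D(X,Z))=-\tilde g(Y,D(Z,X))=\tilde g(D(Z,Y),X)=\cdots=-\tilde g(D(X,Y),Z)\]
shows $\tilde g(D(X,Y),Z)=0$ for all $Z\in\mathcal{D}$. Non-degeneracy of $\tilde g|_{\mathcal{D}}$ and $D(X,Y)\in\mathcal{D}$ then give $D\equiv 0$.

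The main obstacle I expect is the metric-compatibility calculation in (i): it requires managing six correction terms against each other, and the cancellations rely non-trivially on the simultaneous use of the skew-symmetry of $\tilde g(\tilde\varphi\cdot,\cdot)$, the self-adjointness of $\tilde h$, and the anticommutation $\tilde h\tilde\varphi=-\tilde\varphi\tilde h$. Everything else is routine, but care is needed in tracking $\eta(Y)$ factors and $\xi$-projections so that the formulas reassemble into exactly the tensorial expressions demanded by (ii)--(iv).
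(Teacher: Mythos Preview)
The paper does not supply a proof of this theorem: it is stated as a result of Zamkovoy \cite{Za} and used as a black box throughout the remainder of the paper. So there is no ``paper's own proof'' against which to compare.

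That said, your argument is correct and is the standard one for this kind of statement. A couple of small remarks. In your verification of (iii) you compute $\tilde T^{pc}(\xi,Y)=\tilde\varphi\tilde hY$; it may be worth noting explicitly that then $\tilde T^{pc}(\xi,\tilde\varphi Y)=\tilde\varphi\tilde h\tilde\varphi Y=-\tilde\varphi^{2}\tilde hY=-\tilde hY$ (using $\eta\circ\tilde h=0$), while $-\tilde\varphi\tilde T^{pc}(\xi,Y)=-\tilde\varphi^{2}\tilde hY=-\tilde hY$ as well, so the identity holds not merely by anticommutation but by the combination with $\tilde\varphi^{2}=I-\eta\otimes\xi$. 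In the uniqueness half, your cyclic argument for $D$ on $\mathcal D\times\mathcal D$ is fine, but you should observe that $\tilde g|_{\mathcal D}$ is indeed non-degenerate (this follows from \eqref{G METRIC} since $\xi$ is $\tilde g$-orthogonal to $\mathcal D$), otherwise the last step does not close. With these minor clarifications your proof is complete and self-contained, which is more than the present paper offers.
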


An almost paracontact structure ($\tilde{\varphi},\xi ,\eta )$ is said to be
\emph{integrable} if $N_{\tilde{\varphi}}(X,Y)\in\Gamma(\mathbb{R}\xi)$
whenever $X,Y\in\Gamma({\mathcal{D}})$. For paracontact metric structures,
the integrability condition is equivalent to $\tilde{\nabla}^{pc}\tilde{%
\varphi}=0$ (\cite{Za}).

\medskip

As pointed out in \cite{MON}, paracontact geometry is strictly related to
the theory of Legendre foliations. Recall that a \textit{Legendrian
distribution} on contact manifold $(M,\eta )$ is an $n$-dimensional
subbundle $L$ of the contact distribution such that $d\eta (X,X^{\prime })=0$
for all $X,Y\in \Gamma (L)$. When $L$ is integrable, we speak of \emph{%
Legendrian foliation}. Legendre foliations have been extensively
investigated in recent years from various points of views. In particular
Pang (\cite{PAN}) provided a classification of Legendrian foliations using a
bilinear symmetric form $\Pi _{\mathcal{F}}$ on tangent bundle of the
foliation ${\mathcal{F}}$, defined by
\begin{equation}
\Pi _{\mathcal{F}}(X,X^{\prime })=-({\mathcal{L}}_{X}{\mathcal{L}}%
_{X^{\prime }}\eta )(\xi )=2d\eta ([\xi ,X],X^{\prime }).
\label{panginvariant}
\end{equation}%
Then he called $\mathcal{F}$ \emph{flat}, \emph{degenerate}, \emph{%
non-degenerate}, \emph{positive (negative) definite} according to the
circumstance that $\Pi _{\mathcal{F}}$ vanishes identically, is degenerate,
non-degenerate, positive (negative) definite, respectively. For a
non-degenerate Legendre foliation $\mathcal{F}$, Libermann (\cite{LIB})
defined a linear map $\Lambda _{\mathcal{F}}:TM\longrightarrow T{\mathcal{F}}
$, whose kernel is ${T\mathcal{F}}\oplus \mathbb{R}\xi $, such that
\begin{equation}
\Pi _{\mathcal{F}}(\Lambda _{\mathcal{F}}Z,X)=d\eta (Z,X)  \label{lambda}
\end{equation}%
for any $Z\in \Gamma (TM)$, $X\in \Gamma (T{\mathcal{F}})$. The operator $%
\Lambda _{\mathcal{F}}$ is surjective, satisfies $(\Lambda _{\mathcal{F}%
})^{2}=0$ and $\Lambda _{\mathcal{F}}[\xi ,X]=\frac{1}{2}X$ for all $X\in
\Gamma (T{\mathcal{F}})$. Then we can extend $\Pi _{\mathcal{F}}$ to a
symmetric bilinear form on all $TM$ by putting
\begin{equation*}
\overline{\Pi }_{\mathcal{F}}(Z,Z^{\prime }):=\left\{
\begin{array}{ll}
\Pi _{\mathcal{F}}(Z,Z^{\prime }) & \hbox{if $Z,Z'\in\Gamma(T{\mathcal F})$}
\\
\Pi _{\mathcal{F}}(\Lambda _{\mathcal{F}}Z,\Lambda _{\mathcal{F}}Z^{\prime
}), & \hbox{otherwise.}%
\end{array}%
\right.
\end{equation*}%
An \textit{(almost) bi-Legendrian manifold} (cf. \cite{MON}) is by
definition a contact manifold $(M,\eta )$ endowed with two transversal
Legendrian distributions (foliations) $L_{1}$ and $L_{2}$, so that $TM=$ $%
L_{1}$ $\oplus $ $L_{2}\oplus \mathbb{R}\xi $. $(L_{1}$, $L_{2})$ is then
called an \textit{(almost) bi-Legendrian structure} on the contact manifold $%
(M,\eta )$. Any paracontact metric manifold $(M,\tilde{\varphi},\xi ,\eta ,%
\tilde{g})$ carries a canonical almost bi-Legendrian structure given the
eigendistributions ${\mathcal{D}}^{+}$ and ${\mathcal{D}}^{-}$ of $\tilde{%
\varphi}$ corresponding to the eingenvalues $\pm 1$. Conversely, every
almost bi-Legendrian manifold admits a compatible paracontact metric
structure (\cite{MON}). We notice also that the integrability in the sense
of paracontact geometry, i.e. $\nabla ^{pc}\tilde{\varphi}=0$, is equivalent
to the involutiveness of the Legendre distributions ${\mathcal{D}}^{\pm }$
(cf. \cite[Corollary 3.3]{MON}).

Any almost bi-Legendrian manifold admits a canonical connection, called
\emph{bi-Legendrian connection}, which plays an important role in the study
of almost bi-Legendrian manifolds:

\begin{theorem}[\protect\cite{CAP5}]
\label{biconnection} Let $(M,\eta,L_1,L_2)$ be an almost bi-Legendrian
manifold. There exists a unique connection ${\nabla}^{bl}$ such that
\begin{enumerate}
\item[(i)] ${\nabla }^{bl}L_{1}\subset L_{1}$, \ ${\nabla }^{bl}L_{2}\subset
L_{2}$, \ ${\nabla }^{bl}\left( \mathbb{R}\xi \right) \subset \mathbb{R}\xi $,
\item[(ii)] ${\nabla}^{bl} d\eta=0$,
\item[(iii)] ${T}^{bl}\left(X,Y\right)=2d\eta\left(X,Y\right){\xi}$ \ for
all $X\in\Gamma(L_1)$, $Y\in\Gamma(L_2)$,\newline
${T}^{bl}\left(X,\xi\right)=[\xi,X_{L_1}]_{L_2}+[\xi,X_{L_2}]_{L_1}$ \ for
all $X\in\Gamma\left(TM\right)$,
\end{enumerate}
where $X_{L_1}$ and $X_{L_2}$ denote, respectively, the projections of $X$
onto the subbundles $L_1$ and $L_2$ of $TM$, according to the decomposition $%
TM=L_1\oplus L_2\oplus\mathbb{R}\xi$.
\end{theorem}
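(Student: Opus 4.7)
The plan is to derive explicit formulas for $\nabla^{bl}$ from the three axioms (which yields uniqueness) and then to verify that these formulas define a genuine connection satisfying (i)--(iii) (which yields existence). I would work throughout with the direct-sum decomposition $TM = L_1 \oplus L_2 \oplus \mathbb{R}\xi$; the key technical fact is that $d\eta$, being non-degenerate on the contact distribution $\ker\eta$ and vanishing on each Legendrian subbundle, restricts to a non-degenerate pairing between $L_1$ and $L_2$.

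First I would treat the case $X \in \Gamma(L_1)$, $Y \in \Gamma(L_2)$. Axiom (iii) reads $\nabla^{bl}_X Y - \nabla^{bl}_Y X - [X,Y] = 2 d\eta(X,Y)\,\xi$; using Cartan's formula $\eta([X,Y]) = -2 d\eta(X,Y)$ to handle the $\mathbb{R}\xi$-component, and using (i) to place the first two terms in $L_2$ and $L_1$ respectively, the $L_1$ and $L_2$ projections force
\[
\nabla^{bl}_X Y = [X,Y]_{L_2}, \qquad \nabla^{bl}_Y X = [Y,X]_{L_1}.
\]
The same reasoning applied to $(X,\xi)$ with $X \in \Gamma(L_i)$ would then yield $\nabla^{bl}_X \xi = 0$ and $\nabla^{bl}_\xi X = [\xi,X]_{L_i}$, while $\nabla^{bl}_\xi \xi = 0$ is the natural choice compatible with the preservation of the trivialised line bundle $\mathbb{R}\xi$.

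Next, for $X, X' \in \Gamma(L_1)$, axiom (i) places $\nabla^{bl}_X X' \in \Gamma(L_1)$, and axiom (ii) combined with the previous step and the vanishing of $d\eta$ on $L_1 \times L_1$ gives
\[
d\eta(\nabla^{bl}_X X', Y) = X(d\eta(X', Y)) - d\eta(X', [X, Y]) \qquad \text{for every } Y \in \Gamma(L_2).
\]
The non-degeneracy of $d\eta|_{L_1 \times L_2}$ then determines $\nabla^{bl}_X X'$ uniquely, and a symmetric formula covers $\nabla^{bl}_Y Y'$ for $Y, Y' \in \Gamma(L_2)$. This exhausts all cases and completes the uniqueness part.

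For existence I would define $\nabla^{bl}$ piecewise by the formulas above and extend by $\mathbb{R}$-linearity in the first slot and the Leibniz rule in the second. Tensoriality in the first slot and the Leibniz rule in the second for the $L_1 \times L_1$ (resp.\ $L_2 \times L_2$) case both rely on $d\eta|_{L_1 \times L_1} = 0$ (resp.\ $d\eta|_{L_2 \times L_2} = 0$), so $\nabla^{bl}$ is a well-defined affine connection. Axioms (i) and (iii) are immediate from the construction; the main obstacle is the verification of axiom (ii), $\nabla^{bl} d\eta = 0$, which I would carry out by splitting all three arguments along $L_1 \oplus L_2 \oplus \mathbb{R}\xi$. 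The cases in which some argument is $\xi$ are trivial since $d\eta(\xi,\cdot) = 0$ and $\nabla^{bl}\xi = 0$, while the remaining cases reduce, after organising the decomposition of the relevant Lie brackets, to the closedness identity $d(d\eta) = 0$.
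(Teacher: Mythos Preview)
The paper does not prove this theorem: it is quoted from \cite{CAP5} as a background result in the preliminaries, with no proof given, so there is nothing in the paper to compare your argument against. Your outline is in fact the standard route taken in \cite{CAP5}: first pin down $\nabla^{bl}_X Y$ for $X,Y$ in distinct summands of $L_1\oplus L_2\oplus\mathbb{R}\xi$ via the torsion prescriptions, then use $\nabla^{bl}d\eta=0$ together with the non-degeneracy of $d\eta|_{L_1\times L_2}$ to determine $\nabla^{bl}_X X'$ when $X,X'$ lie in the same Legendrian summand, and finally verify that the resulting formulas patch to a connection satisfying (i)--(iii).

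One point deserves care. Your phrase ``$\nabla^{bl}_\xi\xi=0$ is the natural choice'' is not a derivation: axioms (i)--(iii) as stated here only force $\nabla^{bl}_\xi\xi\in\mathbb{R}\xi$, and neither the torsion condition (trivial for $T^{bl}(\xi,\xi)$) nor $\nabla^{bl}d\eta=0$ (since $i_\xi d\eta=0$) constrains the coefficient. In the original source the requirement $\nabla^{bl}\xi=0$ (equivalently $\nabla^{bl}\eta=0$) is part of the package, and indeed the present paper invokes ``by definition, $\bar\nabla^{bl}\xi=0$'' later in the proof of Theorem~\ref{principal1}. So for a complete uniqueness argument you should either add that clause explicitly or note that the restatement here has suppressed it.
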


By using the properties of the bi-Legendrian connection one can point out
the relationship between contact metric $(\kappa,\mu)$-spaces and the theory
of Legendre foliations. Namely, we have the following characterization.

\begin{theorem}[\protect\cite{CAP4}]
\label{characterization} Let $(M,\varphi,\xi,\eta,g)$ be a contact metric
manifold, which is not $K$-contact. Then $(M,\varphi,\xi,\eta,g)$ is a
contact metric $\left(\kappa,\mu\right)$-manifold if and only if it admits
two mutually orthogonal Legendre distributions $L$ and $Q$ and a unique
linear connection $\bar{\nabla}$ satisfying the following properties:
\begin{enumerate}
\item[\textrm{(i)}] $\bar{\nabla}L\subset L$, \ $\bar{\nabla} Q\subset Q$,
\item[\textrm{(ii)}] $\bar{\nabla}\eta=0$, \ $\bar{\nabla}d\eta=0$, \ $\bar{%
\nabla}g=0$, \ $\bar{\nabla}\varphi=0$, \ $\bar{\nabla}h=0$,
\item[\textrm{(iii)}] $\bar{T}\left(X,Y\right)=2d\eta\left(X,Y\right){\xi}$
\ for all $X,Y\in\Gamma({\mathcal{D}})$,\newline
$\bar{T}(X,\xi)=[\xi,X_{L}]_{Q}+[\xi,X_{Q}]_{L}$ \ for all $X\in\Gamma(TM)$,
\end{enumerate}
Furthermore $\bar{\nabla}$ is uniquely determined and coincide with the
bi-Legendrian connection of $(L,Q)$, $L$ and $Q $ are integrable and
coincides with the eigenspaces ${\mathcal{D}}_{h}(\lambda)$ and ${\mathcal{D}%
}_{h}(-\lambda)$ of the operator $h$.
\end{theorem}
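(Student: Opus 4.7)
The plan is to use Theorem \ref{biconnection} (existence and uniqueness of the bi-Legendrian connection) together with the BKP classification of contact metric $(\kappa,\mu)$-spaces quoted above.

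For the \emph{only if} direction, assume $(M,\varphi,\xi,\eta,g)$ is a contact metric $(\kappa,\mu)$-manifold. Being non $K$-contact forces $h\neq 0$, hence $\kappa<1$, and the BKP theorem supplies integrable Legendre foliations $L:=\mathcal{D}_{h}(\lambda)$ and $Q:=\mathcal{D}_{h}(-\lambda)$ with $\lambda=\sqrt{1-\kappa}$, mutually orthogonal since $h$ is $g$-symmetric with distinct eigenvalues. Take $\bar{\nabla}$ to be the bi-Legendrian connection of $(L,Q)$ from Theorem \ref{biconnection}. Then (i), $\bar{\nabla}\eta=0$, $\bar{\nabla}d\eta=0$ and (iii) are built into its construction. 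The remaining content is to verify $\bar{\nabla}g=0$, $\bar{\nabla}\varphi=0$, $\bar{\nabla}h=0$. I would write $\bar{\nabla}=\nabla+H$ for the contorsion tensor $H$, compute $H$ in closed form on a basis adapted to $L\oplus Q\oplus\mathbb{R}\xi$ using \eqref{B2} and the $(\kappa,\mu)$-nullity condition (which, together with the standard identities in a $(\kappa,\mu)$-space, controls $\nabla_{\xi}h$ as well as the $L/Q$-components of $[\xi,\cdot]$), and then check each parallelism equation termwise.

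For the \emph{if} direction, I would first deduce that $L$ and $Q$ are integrable: for $X,Y\in\Gamma(L)$ axiom (iii) gives $\bar{T}(X,Y)=2d\eta(X,Y)\xi=0$ since $L$ is Legendrian, whence $[X,Y]=\bar{\nabla}_{X}Y-\bar{\nabla}_{Y}X\in\Gamma(L)$ by (i); analogously for $Q$. Thus $(L,Q)$ is an honest bi-Legendrian structure and, by the uniqueness part of Theorem \ref{biconnection}, $\bar{\nabla}$ coincides with its bi-Legendrian connection. From $\bar{\nabla}\varphi=0$ together with $\bar{\nabla}L\subset L$ and the $\varphi$-invariance of $\mathcal{D}$ one gets $\varphi L=Q$; then $\bar{\nabla}h=0$, combined with $h\varphi+\varphi h=0$ and the fact that $h$ preserves $L$ and $Q$, forces $h|_{L}=\lambda\, I$, $h|_{Q}=-\lambda\, I$ for a constant $\lambda$ (parallelism of $h$ along $\bar{\nabla}$ kills the derivative of $\lambda$ in every direction).

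The main obstacle is upgrading these parallelisms to the curvature identity \eqref{RXYZETA}. My strategy is to express the contorsion $H=\bar{\nabla}-\nabla$ in closed form using (ii)--(iii), substitute into the Levi-Civita identity $\nabla_{X}\xi=-\varphi X-\varphi hX$, and compute $R_{XY}\xi=\nabla_{X}\nabla_{Y}\xi-\nabla_{Y}\nabla_{X}\xi-\nabla_{[X,Y]}\xi$. The $\bar{\nabla}$-parallelism of $g$, $\varphi$ and $h$, together with $h^{2}=\lambda^{2}(I-\eta\otimes\xi)$, should collapse all the resulting terms into a combination of $\eta(Y)X-\eta(X)Y$ and $\eta(Y)hX-\eta(X)hY$, exhibiting $\kappa=1-\lambda^{2}$ and an explicit $\mu$ read off from the $\xi$-component of $H$. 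The delicate point is checking that the cross-terms between $L$ and $Q$ cancel, which should follow from the symmetry $\varphi L=Q$ and the anti-commutation $h\varphi+\varphi h=0$.
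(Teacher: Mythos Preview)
This theorem is not proved in the present paper: it is quoted from \cite{CAP4} as a known characterization (note the citation in the theorem header), and the paper offers no argument for it. There is therefore no ``paper's own proof'' to compare your proposal against; the authors simply import the result and use it later (e.g.\ in the proofs of Theorems \ref{principal1} and the analogous theorem in \S\ref{secondcase}).

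Regarding your sketch itself: the overall strategy is the natural one, but there is a genuine gap in the ``if'' direction. From $\bar{\nabla}h=0$, $\bar{\nabla}L\subset L$, and $h\varphi+\varphi h=0$ you conclude that $h|_{L}=\lambda I$ for a constant $\lambda$. Parallelism of $h$ does force each eigenvalue of $h|_{L}$ to be constant along $\bar{\nabla}$-parallel frames, but it does \emph{not} by itself force $h|_{L}$ to be a scalar multiple of the identity; a priori $h|_{L}$ could have several distinct constant eigenvalues on sub-distributions of $L$. You need an additional argument---for instance, using that $\varphi$ interchanges $L$ and $Q$ and that $h$ is $g$-symmetric together with $\bar{\nabla}g=0$ and the specific torsion condition (iii) in the $\xi$-direction---to pin down that there is only one eigenvalue on $L$. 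In \cite{CAP4} this is handled by a careful analysis of $\bar{T}(X,\xi)$ and the relation $\bar{\nabla}_{\xi}h=0$, which couples the $L$- and $Q$-components of $[\xi,\cdot]$ and forces the single-eigenvalue structure. Without that step, the passage to $h^{2}=\lambda^{2}(I-\eta\otimes\xi)$ and hence to the curvature identity \eqref{RXYZETA} is not justified.
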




On the other hand contact $(\kappa,\mu)$-manifolds are also related to
paracontact geometry, as shown by the following result which is one of the
motivations for the present paper.

\begin{theorem}[\protect\cite{MOTE}]
\label{motivation} Let $(M,\varphi ,\xi ,\eta ,g)~$\ be a non-Sasakian
contact metric $(\kappa ,\mu )$-space. Then $M$ admits a canonical
paracontact metric structure $(\tilde{\varphi},\xi ,\eta ,\tilde{g})$ given
by
\begin{equation}
\tilde{\varphi}:=\frac{1}{\sqrt{1-\kappa }}h, \ \ \tilde{g}:=\frac{1}{\sqrt{%
1-\kappa }}d\eta (\cdot ,h\cdot )+\eta \otimes \eta .  \label{CAPAR1}
\end{equation}
Furthermore the curvature tensor field of the Levi-Civita connection of $(M,%
\tilde{g})$ satisfies a $(\tilde\kappa,\tilde\mu)$-nullity condition
\begin{equation}
\tilde{R}_{XY}\xi =\tilde{\kappa}(\eta (Y)X-\eta (X)Y)+\tilde{\mu}(\eta (Y)%
\tilde{h}X-\eta (X)\tilde{h}Y),  \label{CAPAR3}
\end{equation}
where
\begin{equation*}
\tilde{\kappa}=\kappa -2+\left( 1-\frac{\mu }{2}\right) ^{2}, \ \ \tilde{\mu}%
=2.
\end{equation*}
\end{theorem}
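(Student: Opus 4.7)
The plan has three stages: (i) verify that $(\tilde\varphi,\xi,\eta,\tilde g)$ defined by (\ref{CAPAR1}) is an honest paracontact metric structure; (ii) express $\tilde h$ and the Levi-Civita connection $\tilde\nabla$ in terms of the ambient contact metric data; (iii) compute $\tilde R_{XY}\xi$ and identify the constants $\tilde\kappa,\tilde\mu$.

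The first stage rests on the well-known identity $h^2=(\kappa-1)\varphi^2=(1-\kappa)(I-\eta\otimes\xi)$, valid on any contact metric $(\kappa,\mu)$-space. The non-Sasakian hypothesis gives $\kappa<1$, so that $\sqrt{1-\kappa}$ is legitimate. From this identity, $\tilde\varphi^2=\frac{1}{1-\kappa}h^2=I-\eta\otimes\xi$ is immediate. The symmetry of $\tilde g$ follows from the symmetry of $\varphi h$ (noting that $d\eta(\cdot,h\cdot)=g(\cdot,\varphi h\cdot)$). The identity $d\eta(X,Y)=\tilde g(X,\tilde\varphi Y)$ is a one-line computation: $\tilde g(X,\tilde\varphi Y)=\frac{1}{1-\kappa}d\eta(X,h^2 Y)=d\eta(X,Y-\eta(Y)\xi)=d\eta(X,Y)$. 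The pseudo-metric compatibility $\tilde g(\tilde\varphi X,\tilde\varphi Y)=-\tilde g(X,Y)+\eta(X)\eta(Y)$ is obtained analogously, this time using in addition $h\varphi+\varphi h=0$. Choosing a $\varphi$-basis diagonalizing $h$ (eigenvalues $0,\pm\sqrt{1-\kappa}$, with the two nonzero eigenspaces each of dimension $n$ as a consequence of $h\varphi+\varphi h=0$), one simultaneously checks that the $\pm1$-eigenspaces of $\tilde\varphi$ are $n$-dimensional and that $\tilde g$ has signature $(n,n+1)$.

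For the second stage, $\tilde h=\tfrac{1}{2}\mathcal{L}_\xi\tilde\varphi=\frac{1}{2\sqrt{1-\kappa}}\mathcal{L}_\xi h$; combining this with $\nabla_X\xi=-\varphi X-\varphi hX$ and the standard $(\kappa,\mu)$-space identities governing $\nabla_\xi h$ yields an explicit expression for $\tilde h$ as a combination of $h$, $\varphi$ and $h\varphi$. The Levi-Civita connection $\tilde\nabla$ is then determined from the Koszul formula applied to $\tilde g=\frac{1}{\sqrt{1-\kappa}}g(\cdot,\varphi h\cdot)+\eta\otimes\eta$; the difference tensor $\tilde\nabla-\nabla$ can be written as a polynomial expression in $\eta,\xi,\varphi,h$ and $\nabla h$.

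The third and most labor-intensive step is to substitute into
\[
\tilde R_{XY}\xi=\tilde\nabla_X\tilde\nabla_Y\xi-\tilde\nabla_Y\tilde\nabla_X\xi-\tilde\nabla_{[X,Y]}\xi
\]
the paracontact identity $\tilde\nabla\xi=-\tilde\varphi+\tilde\varphi\tilde h$ from (\ref{nablaxi}), expand via the difference tensor obtained in the second stage, and invoke the ambient $(\kappa,\mu)$-nullity (\ref{RXYZETA}) together with the known formulas for $(\nabla_X\varphi)Y$ and $(\nabla_X h)Y$ on contact metric $(\kappa,\mu)$-spaces. The principal obstacle is combinatorial bookkeeping: many terms appear involving cross-products of $h,\varphi,\nabla h$ with $\eta$, and one must invoke the structural identities ($h^2=(\kappa-1)\varphi^2$, $h\varphi+\varphi h=0$, plus the Bianchi-type consequences of (\ref{RXYZETA})) to collapse everything into the announced form, finally yielding $\tilde\kappa=\kappa-2+(1-\mu/2)^2$ and $\tilde\mu=2$. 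A more conceptual alternative that bypasses most of the brute-force calculation goes through Theorem \ref{characterization}: the bi-Legendrian connection of the eigendistributions of $h$ is canonically attached both to the original $(\kappa,\mu)$-structure and to the new paracontact structure, and its curvature encodes (\ref{CAPAR3}) directly without a full comparison of $\tilde\nabla$ and $\nabla$.
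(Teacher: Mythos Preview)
The paper does not prove this theorem: it is quoted verbatim from \cite{MOTE} in the preliminaries section as background and motivation, with no accompanying argument, so there is no ``paper's own proof'' against which to compare your proposal.

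That said, your outline is sound. Stage (i) is essentially complete and correct; the only point to refine is that the eigenvectors of $h$ are \emph{isotropic} for $\tilde g$ (since $\tilde g(X,X)=g(X,\varphi X)=0$ for $X\in\mathcal{D}_h(\lambda)$), so the signature check requires combining them as $X\pm\varphi X$ rather than reading it off directly from the $\varphi$-basis. In stage (ii) your formula for $\tilde h$ indeed yields $\tilde h=\frac{2-\mu}{2\sqrt{1-\kappa}}\,\varphi h+\sqrt{1-\kappa}\,\varphi$, which is exactly what is needed. Stage (iii) is honestly described as bookkeeping rather than carried out; this is the real work, and the direct computation is long but mechanical. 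The alternative you mention via the bi-Legendrian connection (Theorem \ref{characterization}) is in fact closer in spirit to the argument in \cite{MOTE}, where the canonical paracontact structure is built from the eigenfoliations of $h$ and the nullity constants are read off from the invariants of that bi-Legendrian structure rather than from a brute-force curvature comparison.
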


\section{Preliminary results on paracontact $(\tilde{\protect\kappa},\tilde{%
\protect\mu})$-manifolds}

\label{third}

Theorem \ref{motivation} motivates the following definition.

\begin{definition}[\protect\cite{MOTE}]
A paracontact metric $(\tilde{\kappa},\tilde{\mu})$-manifold is a
paracontact metric manifold for which the curvature tensor field satisfies
\begin{equation}  \label{PARAKMU}
\tilde{R}_{XY}\xi =\tilde{\kappa}(\eta (Y)X-\eta (X)Y)+\tilde{\mu}(\eta (Y)%
\tilde{h}X-\eta (X)\tilde{h}Y)
\end{equation}%
for all vector fields $X$, $Y$ on $M$ and for some real constants $%
\tilde\kappa $ and $\tilde\mu$.
\end{definition}

In this section, we discuss some properties of paracontact metric manifolds
satisfying the condition \eqref{PARAKMU}. We start with some preliminary
properties.

\begin{lemma}
Let $(M,\tilde{\varphi},\xi ,\eta ,\tilde{g})$ be a paracontact metric $(%
\tilde{\kappa},\tilde{\mu})$-manifold of dimension $2n+1$. Then the
following identities hold:
\begin{gather}
\tilde{h}^{2}=(1+\tilde{\kappa})\tilde{\varphi}^{2},  \label{H2} \\
\tilde{Q}\xi =2n\tilde{\kappa}\xi,  \label{Riczeta} \\
(\tilde{\nabla}_{X}\tilde{\varphi})Y=-\tilde{g}(X-\tilde{h}X,Y)\xi
+\eta(Y)(X-\tilde{h}X), \ \text{ for }\tilde{\kappa}\neq -1,  \label{NAMLAFI}
\\
(\tilde{\nabla}_{X}\tilde{h})Y-(\tilde{\nabla}_{Y}\tilde{h})X=-(1+\tilde{%
\kappa})(2\tilde{g}(X,\tilde{\varphi}Y)\xi +\eta (X)\tilde{\varphi}Y-\eta (Y)%
\tilde{\varphi}X)  \label{NAMLA X H} \\
\quad\quad\quad\quad\quad+(1-\tilde{\mu})(\eta (X)\tilde{\varphi}\tilde{h}%
Y-\eta (Y)\tilde{\varphi}\tilde{h}X)  \notag \\
\tilde{\nabla}_{\xi }\tilde{h}=\tilde{\mu}\tilde{h}\circ \tilde{\varphi}, \
\ \ \tilde{\nabla}_{\xi }\tilde{\varphi}\tilde{h}=-\tilde{\mu}\tilde{h}.
\label{NMBLA ZETAH}
\end{gather}
for any vector fields $X$, $Y$ on $M$, where $\tilde{Q}$ denotes the Ricci
operator of $(M,\tilde{g})$.
\end{lemma}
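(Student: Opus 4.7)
The plan is to prove the five identities in sequence, since each is obtained from the previous ones combined with the general identities \eqref{namlafibar}--\eqref{Curvature3} recalled in the preliminaries. First, I would insert the nullity condition \eqref{PARAKMU} into \eqref{FiL}. Using $\tilde{h}\xi=0$, $\tilde{h}\tilde{\varphi}=-\tilde{\varphi}\tilde{h}$, $\eta\circ\tilde{h}=0$ and $\tilde{\varphi}^{2}\tilde{h}=\tilde{h}$, a direct computation gives $\tilde{R}_{\xi X}\xi=-\tilde{\kappa}\tilde{\varphi}^{2}X-\tilde{\mu}\tilde{h}X$ and $\tilde{\varphi}\tilde{R}_{\xi\tilde{\varphi}X}\xi=-\tilde{\kappa}\tilde{\varphi}^{2}X+\tilde{\mu}\tilde{h}X$; the $\tilde{\mu}$-terms cancel and \eqref{FiL} collapses to \eqref{H2}. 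For \eqref{Riczeta} I would trace the nullity in the form $\text{Ric}(X,\xi)=\text{tr}(Y\mapsto\tilde{R}_{YX}\xi)$; using $\text{tr}(I)=2n+1$, $\text{tr}(\tilde{h})=0$ and $\text{tr}(Y\mapsto\eta(Y)Z)=\eta(Z)$, one obtains $\text{Ric}(X,\xi)=2n\tilde{\kappa}\eta(X)$, equivalent to \eqref{Riczeta}.

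The main obstacle is \eqref{NAMLAFI}. My plan is to substitute \eqref{PARAKMU} into \eqref{Curvature3}, rewriting the left-hand side via the Riemannian symmetry $\tilde{R}(\xi,X,Y,Z)=\tilde{g}(\tilde{R}_{YZ}\xi,X)$. The $\tilde{\mu}$-contributions cancel thanks to the identity $\tilde{g}(\tilde{\varphi}X,\tilde{h}\tilde{\varphi}Y)=\tilde{g}(X,\tilde{h}Y)$, leaving $2\tilde{\kappa}(\eta(Z)\tilde{g}(X,Y)-\eta(Y)\tilde{g}(X,Z))$ on the left. Equating with the right-hand side of \eqref{Curvature3} and using $(\tilde{\nabla}_{W}\tilde{\Phi})(Y,Z)=\tilde{g}(Y,(\tilde{\nabla}_{W}\tilde{\varphi})Z)$ yields a closed expression for $\tilde{g}(Y,(\tilde{\nabla}_{\tilde{h}X}\tilde{\varphi})Z)$. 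The key move is then to substitute $X\mapsto\tilde{h}X$ and invoke \eqref{H2}: the left-hand side becomes $(1+\tilde{\kappa})\tilde{g}(Y,(\tilde{\nabla}_{\tilde{\varphi}^{2}X}\tilde{\varphi})Z)$, and the right-hand side acquires a common factor $1+\tilde{\kappa}$ which can be cancelled only because $\tilde{\kappa}\neq -1$. Finally, combining $\tilde{\nabla}_{\tilde{\varphi}^{2}X}=\tilde{\nabla}_{X}-\eta(X)\tilde{\nabla}_{\xi}$ with the identity $\tilde{\nabla}_{\xi}\tilde{\varphi}=0$ (valid in any paracontact metric manifold and obtained in a few lines from $2\tilde{h}=\mathcal{L}_{\xi}\tilde{\varphi}$ and \eqref{nablaxi}) delivers \eqref{NAMLAFI}. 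The delicate point, and the main difficulty in the lemma, is precisely the emergence of the factor $1+\tilde{\kappa}$ that is being cancelled.

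For \eqref{NAMLA X H} I would expand $\tilde{R}_{XY}\xi=\tilde{\nabla}_{X}\tilde{\nabla}_{Y}\xi-\tilde{\nabla}_{Y}\tilde{\nabla}_{X}\xi-\tilde{\nabla}_{[X,Y]}\xi$ via \eqref{nablaxi}; after cancelling the torsion-free contributions one reaches
\begin{equation*}
\tilde{\varphi}\bigl((\tilde{\nabla}_{X}\tilde{h})Y-(\tilde{\nabla}_{Y}\tilde{h})X\bigr)=\tilde{R}_{XY}\xi+(\tilde{\nabla}_{X}\tilde{\varphi})Y-(\tilde{\nabla}_{Y}\tilde{\varphi})X-(\tilde{\nabla}_{X}\tilde{\varphi})\tilde{h}Y+(\tilde{\nabla}_{Y}\tilde{\varphi})\tilde{h}X.
\end{equation*}
Substituting \eqref{PARAKMU} and \eqref{NAMLAFI}, the symmetry of $\tilde{h}$ kills the $\xi$-component of $(\tilde{\nabla}_{X}\tilde{\varphi})Y-(\tilde{\nabla}_{Y}\tilde{\varphi})X$ and makes $(\tilde{\nabla}_{X}\tilde{\varphi})\tilde{h}Y=(\tilde{\nabla}_{Y}\tilde{\varphi})\tilde{h}X$; applying $\tilde{\varphi}$ to both sides and recovering the $\xi$-coefficient of $(\tilde{\nabla}_{X}\tilde{h})Y-(\tilde{\nabla}_{Y}\tilde{h})X$ from $\eta(\tilde{\nabla}_{X}(\tilde{h}Y))=-\tilde{g}(\tilde{\nabla}_{X}\xi,\tilde{h}Y)$, combined with the symmetry of $\tilde{\varphi}\tilde{h}$ and \eqref{H2}, produces the scalar $-2(1+\tilde{\kappa})\tilde{g}(X,\tilde{\varphi}Y)$ and hence \eqref{NAMLA X H}. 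Finally, \eqref{NMBLA ZETAH} follows by specialising \eqref{NAMLA X H} to $X=\xi$: computing $(\tilde{\nabla}_{Y}\tilde{h})\xi=-\tilde{h}\tilde{\nabla}_{Y}\xi=\tilde{h}\tilde{\varphi}Y-\tilde{h}\tilde{\varphi}\tilde{h}Y$ and simplifying $\tilde{h}\tilde{\varphi}\tilde{h}=-(1+\tilde{\kappa})\tilde{\varphi}$ via \eqref{H2} reduces the equation to $\tilde{\nabla}_{\xi}\tilde{h}=\tilde{\mu}\tilde{h}\tilde{\varphi}$; the second identity is then immediate from $\tilde{\nabla}_{\xi}(\tilde{\varphi}\tilde{h})=\tilde{\varphi}\tilde{\nabla}_{\xi}\tilde{h}=\tilde{\mu}\tilde{\varphi}\tilde{h}\tilde{\varphi}=-\tilde{\mu}\tilde{\varphi}^{2}\tilde{h}=-\tilde{\mu}\tilde{h}$.
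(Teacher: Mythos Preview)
Your proposal is correct and follows essentially the same route as the paper: derive \eqref{NAMLAFI} from \eqref{Curvature3} after substituting the nullity condition and replacing $X$ by $\tilde h X$ to produce the cancellable factor $1+\tilde\kappa$; obtain \eqref{NAMLA X H} by comparing the expansion of $\tilde R_{XY}\xi$ via \eqref{nablaxi} with \eqref{PARAKMU}, applying $\tilde\varphi$, and recovering the $\xi$-component separately; and specialise to $X=\xi$ for \eqref{NMBLA ZETAH}. The only cosmetic differences are that the paper cites \cite{MOTE} for \eqref{H2} rather than invoking \eqref{FiL}, and quotes the ready-made identity \eqref{Curvature2} instead of re-expanding $\tilde R_{XY}\xi$ from \eqref{nablaxi}.
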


\begin{proof}
\eqref{H2} was proved in \cite{MOTE}. Next, let $\{e_{i},\tilde{\varphi}%
e_{i},\xi \}$, $i\in\left\{1,\ldots,n\right\}$, be a $\tilde{\varphi}$-basis
of $M$. Then the definition of the Ricci operator directly gives %
\eqref{Riczeta}. For \eqref{NAMLAFI}, notice that using \eqref{PARAKMU} one
can easily show that
\begin{equation}
\tilde{R}_{\xi X}Y=\tilde{\kappa}(\tilde{g}(X,Y)\xi -\eta (Y)X)+\tilde{\mu}(%
\tilde{g}(\tilde{h}X,Y)\xi -\eta (Y)\tilde{h}X)  \label{R(X,zeta)Y}
\end{equation}
By virtue of \eqref{R(X,zeta)Y}, the equation \eqref{Curvature3} reduces to
\begin{equation*}
(\tilde{\nabla}_{\tilde{h}X}\tilde{\varphi})Y=\tilde{\kappa}(\tilde{g}%
(X,Y)\xi -\eta (Y)X)-\eta (Y)(X-\tilde{h}X)+\tilde{g}(X-\tilde{h}X,Y)\xi .
\end{equation*}
By replacing $X$ by $\tilde{h}X$ in that equation and using \eqref{H2}, we
get
\begin{equation*}
(1+\tilde{\kappa})((\tilde{\nabla}_{X}\tilde{\varphi})Y+\tilde{g}(X-\tilde{h}%
X,Y)\xi -\eta (Y)(X-\tilde{h}X))=0.
\end{equation*}
Hence \eqref{NAMLAFI} holds. Next, using \eqref{NAMLAFI} and the symmetry of
$\tilde{h}$, we obtain
\begin{equation}  \label{namlaFIH}
(\tilde{\nabla}_{Z}\tilde{\varphi}\tilde{h})Y-(\tilde{\nabla}_{Y}\tilde{%
\varphi}\tilde{h})Z=\tilde{\varphi}((\tilde{\nabla}_{Z}\tilde{h})Y-(\tilde{%
\nabla}_{Y}\tilde{h})Z)
\end{equation}
for all $Y,Z\in\Gamma(TM)$. Substituting \eqref{namlaFIH} in %
\eqref{Curvature2}, we get
\begin{equation*}
\tilde{R}_{YZ}\xi =-\eta (Z)(Y-\tilde{h}Y)+\eta (Y)(Z-\tilde{h}Z)+\tilde{%
\varphi}((\tilde{\nabla}_{Y}\tilde{h})Z-(\tilde{\nabla}_{Z}\tilde{h})Y).
\end{equation*}%
Comparing this equation with \eqref{PARAKMU}, we obtain
\begin{gather}
\tilde{\varphi}((\tilde{\nabla}_{Y}\tilde{h})Z-(\tilde{\nabla}_{Z}\tilde{h}%
)Y)=(\tilde{\kappa}+1)(\eta (Z)Y-\eta (Y)Z)+(\tilde{\mu}-1)(\eta (Z)\tilde{h}%
Y-\eta (Y)\tilde{h}Z).  \label{FIBAR}
\end{gather}
Using \eqref{nablaxi} and the symmetry of $\tilde{h}$ and $\tilde{\nabla}_{Z}%
\tilde{h}$, by a direct computation we have
\begin{equation}
\tilde{g}((\tilde{\nabla}_{Z}\tilde{h})Y-(\tilde{\nabla}_{Y}\tilde{h})Z,\xi
)=2(1+\tilde{\kappa})\tilde{g}(\tilde{\varphi}Z,Y).  \label{HBAR2}
\end{equation}
By applying now $\tilde{\varphi}$ to \eqref{FIBAR} and using \eqref{HBAR2},
we obtain \eqref{NAMLA X H}. Finally, \eqref{NMBLA
ZETAH} follows from \eqref{NAMLA X H} by using the properties of $\tilde{h}$.
\end{proof}

By \eqref{NAMLAFI} we get the following corollary

\begin{corollary}
\label{integrability} Any paracontact $(\tilde\kappa,\tilde\mu)$-manifolds
such that $\tilde\kappa\neq - 1$ is integrable.
\end{corollary}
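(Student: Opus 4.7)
The plan is to exploit the characterization of integrability given earlier in the paper: a paracontact metric structure is integrable if and only if its canonical paracontact connection $\tilde{\nabla}^{pc}$ preserves $\tilde{\varphi}$, i.e. $\tilde{\nabla}^{pc}\tilde{\varphi}=0$. So the goal reduces to verifying that $(\tilde{\nabla}^{pc}_{X}\tilde{\varphi})Y=0$ for all vector fields $X,Y$.

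First, I would recall formula (ii) of Theorem \ref{zamconn}, which expresses the covariant derivative of $\tilde\varphi$ with respect to the canonical paracontact connection in terms of the Levi-Civita covariant derivative:
\begin{equation*}
(\tilde{\nabla}^{pc}_{X}\tilde{\varphi})Y = (\tilde{\nabla}_{X}\tilde{\varphi})Y + \tilde{g}(X-\tilde{h}X,Y)\xi - \eta(Y)(X-\tilde{h}X).
\end{equation*}
Under the assumption $\tilde{\kappa}\neq -1$, the lemma just proved gives the explicit expression \eqref{NAMLAFI}:
\begin{equation*}
(\tilde{\nabla}_{X}\tilde{\varphi})Y = -\tilde{g}(X-\tilde{h}X,Y)\xi + \eta(Y)(X-\tilde{h}X).
\end{equation*}

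Substituting this into the previous identity, the two pairs of correction terms cancel exactly, yielding $(\tilde{\nabla}^{pc}_{X}\tilde{\varphi})Y=0$ for arbitrary $X,Y\in\Gamma(TM)$. Therefore $\tilde{\nabla}^{pc}\tilde{\varphi}=0$, and by the equivalence recalled from \cite{Za} the paracontact metric structure is integrable.

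There is no real obstacle here: the computation is a one-line substitution once \eqref{NAMLAFI} is in hand. The assumption $\tilde{\kappa}\neq -1$ enters only insofar as it is the precise hypothesis under which \eqref{NAMLAFI} is available (recall that in its derivation one divides by the factor $1+\tilde{\kappa}$), so the corollary is essentially a direct consequence of the lemma combined with the defining property (ii) of the canonical paracontact connection.
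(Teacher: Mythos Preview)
Your proof is correct and is exactly the argument the paper has in mind: the paper states only ``By \eqref{NAMLAFI} we get the following corollary'', which unpacks precisely into the substitution of \eqref{NAMLAFI} into formula (ii) of Theorem~\ref{zamconn} to obtain $\tilde{\nabla}^{pc}\tilde{\varphi}=0$.
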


In particular from Corollary \ref{integrability} it follows that in any
paracontact $(\tilde\kappa,\tilde\mu)$-manifold such that $\tilde\kappa\neq
-1$ the canonical Legendre distributions ${\mathcal{D}}^{+}$ and ${\mathcal{D%
}}^{-}$ are integrable and so define two Legendre foliations on $M$.

Remarkable subclasses of paracontact $(\tilde\kappa,\tilde\mu)$-manifolds
are given, in view of \eqref{Pasa}, by para-Sasakian manifolds, and by those
paracontact metric manifolds such that $\tilde{R}_{XY}\xi =0$ for all vector
fields $X,Y$ on $M$. In this last case it was proved (\cite{ZATZA}) that in
dimension greater than $3$ the paracontact metric manifold $(M^{2n+1},%
\tilde{\varphi},\xi,\eta,\tilde{g})$ is locally isometric to a product of a
flat $(n+1)$-dimensional manifold and an $n$-dimensional manifold of
negative constant curvature $-4$. \ Notice that, because of \eqref{H2}, a
paracontact $(\tilde\kappa,\tilde\mu)$-manifold such that $\tilde{\kappa}=-1$
satisfies $\tilde{h}^{2}=0$. Unlike the contact metric case, since the
metric $\tilde{g} $ is pseudo-Riemannian we can not conclude that $\tilde{h}$
vanishes and so the manifold is para-Sasakian. Let us see an explicit
counterexample.

The canonical example of paracontact $(\tilde{\kappa},\tilde{\mu})$-manifold
is given by the tangent sphere bundle $T_{1}M$ of a Riemannian manifold $%
(M,g)$ of constant sectional curvature $c$. The paracontact metric structure
is defined in the following way. Let us consider the standard contact metric
structure $(\varphi ,\xi ,\eta ,g)$ of $T_{1}M$, which is in fact a $%
(c(2-c),-2c)$-structure (cf. \cite{B1}). Let us define
\begin{gather}
\tilde{\varphi}_{1}:=\frac{1}{|1-c|}\varphi h,\ \ \ \tilde{g}_{1}:=\frac{1}{%
|1-c|}d\eta (\cdot ,\varphi h\cdot )+\eta \otimes \eta  \label{sphere1} \\
\tilde{\varphi}_{2}:=\frac{1}{|1-c|}h,\ \ \ \ \ \tilde{g}_{2}:=\frac{1}{|1-c|%
}d\eta (\cdot ,h\cdot )+\eta \otimes \eta .  \label{sphere2}
\end{gather}%
Then one can easily check that $(\tilde{\varphi}_{1},\eta ,\xi ,\tilde{g}%
_{1})$ and $(\tilde{\varphi}_{2},\eta ,\xi ,\tilde{g}_{2})$ define two
paracontact metric structures on $T_{1}M$. Thus by Theorem 5.9 of \cite{CAP2}
we have that $(\tilde{\varphi}_{1},\eta ,\xi ,\tilde{g}_{1})$ is a
paracontact $(\tilde{\kappa}_{1},\tilde{\mu}_{1})$-structure and $(\tilde{%
\varphi}_{2},\eta ,\xi ,\tilde{g}_{2})$ a  paracontact $(\tilde{\kappa}_{2},%
\tilde{\mu}_{2})$-structure, where
\begin{gather*}
\tilde{\kappa}_{1}=(1+c)^{2}-1,\ \ \ \tilde{\mu}_{1}=2(1-|c-1|), \\
\tilde{\kappa}_{2}=4c-1,\ \ \ \tilde{\mu}_{2}=2.
\end{gather*}

Hence we can state the following theorem.

\begin{theorem}
\label{sphere3} The tangent sphere bundle of a Riemannian manifold of
constant curvature $c\neq 1$ is canonically endowed, via \eqref{sphere1}--%
\eqref{sphere2}, with a paracontact $((1+c)^2-1,2(1-|c-1|))$-structure and
with a paracontact $(4c-1,2)$-structure.
\end{theorem}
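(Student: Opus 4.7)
The plan is to deduce both statements from the machinery already developed for contact $(\kappa,\mu)$-spaces, applied to the standard contact metric structure on $T_1 M$. The starting point is the classical fact (\cite{BKP}) that $(\varphi,\xi,\eta,g)$ on $T_1 M$ satisfies the $(\kappa,\mu)$-nullity condition with $\kappa=c(2-c)$ and $\mu=-2c$. In particular $1-\kappa=(1-c)^2$, so $\sqrt{1-\kappa}=|1-c|$ and the normalising factor in \eqref{sphere1}--\eqref{sphere2} coincides with $1/\sqrt{1-\kappa}$; the hypothesis $c\neq 1$ is exactly $\kappa\neq 1$, i.e.\ non-Sasakianness, which makes the formulas well-defined.

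The second structure is an instance of Theorem \ref{motivation} verbatim: $\tilde\varphi_2=\frac{1}{\sqrt{1-\kappa}}h$ and $\tilde g_2=\frac{1}{\sqrt{1-\kappa}}d\eta(\cdot,h\cdot)+\eta\otimes\eta$ match \eqref{CAPAR1} exactly, so that theorem immediately gives a paracontact $(\tilde\kappa_2,\tilde\mu_2)$-structure with $\tilde\mu_2=2$ and
\[
\tilde\kappa_2=\kappa-2+(1-\mu/2)^2=c(2-c)-2+(1+c)^2=4c-1.
\]

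For the first structure the idea is the same but with $\varphi h$ in place of $h$. I would first verify that $(\tilde\varphi_1,\xi,\eta,\tilde g_1)$ is a paracontact metric structure. The algebraic core is the identity $(\varphi h)^2=-\varphi^2 h^2=(1-\kappa)(I-\eta\otimes\xi)$, which follows from $\varphi h+h\varphi=0$ together with the standard identity $h^2=(\kappa-1)\varphi^2$ valid on any contact $(\kappa,\mu)$-space; this forces $\tilde\varphi_1^2=I-\eta\otimes\xi$, and the analogous symmetry properties of $\varphi h$ yield both the metric compatibility \eqref{G METRIC} and $d\eta(X,Y)=\tilde g_1(X,\tilde\varphi_1 Y)$. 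The $\pm 1$-eigendistributions of $\tilde\varphi_1$, obtained by diagonalising $\varphi h$ against the $\pm|1-c|$-eigendecomposition of $h$, form two $n$-dimensional complementary subbundles of $\mathcal{D}$, so the paracomplex condition holds. Then the curvature $\tilde R$ of $\tilde g_1$ must be computed in the Reeb direction: writing the Levi-Civita connection of $\tilde g_1$ as $\nabla$ plus an explicit difference tensor built from $\varphi$ and $h$ via the Koszul formula, and feeding in the $(\kappa,\mu)$-identity \eqref{definizione} for $\nabla$, the nullity condition \eqref{PARAKMU} falls out with $\tilde\kappa_1=(1+c)^2-1$ and $\tilde\mu_1=2(1-|c-1|)$.

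The main obstacle is this last curvature computation. Unlike the second structure, it is not an immediate corollary of Theorem \ref{motivation}, because the twist by $\varphi$ shifts both constants in a way that depends on the sign of $1-c$; this is the origin of the absolute value in $\tilde\mu_1$. A clean alternative is to invoke Theorem~5.9 of \cite{CAP2}, which axiomatises precisely the construction $\tilde\varphi_1=\frac{1}{\sqrt{1-\kappa}}\varphi h$ on an arbitrary non-Sasakian contact $(\kappa,\mu)$-space and produces the paracontact constants in closed form; substituting $\kappa=c(2-c)$ and $\mu=-2c$ then yields the stated values.
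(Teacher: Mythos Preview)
Your proposal is correct and follows essentially the same route as the paper. The paper's argument is exactly: start from the standard contact $(c(2-c),-2c)$-structure on $T_1M$, note that \eqref{sphere1}--\eqref{sphere2} define paracontact metric structures, and then invoke Theorem~5.9 of \cite{CAP2} to read off the constants $\tilde\kappa_i,\tilde\mu_i$ for both structures at once; your treatment of the second structure via Theorem~\ref{motivation} is a harmless variant (that theorem is the special case of the same machinery), and your suggested ``clean alternative'' for the first structure---appealing to Theorem~5.9 of \cite{CAP2} rather than a direct curvature computation---is precisely what the paper does.
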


Consequently, if the base manifold is flat, $(\tilde\varphi_{2},\xi,\eta,%
\tilde{g}_2)$ is a paracontact $(-1,2)$-structure on $T_{1}M$ such that $%
\tilde{h}_{2}^{2}=0$, but which is not para-Sasakian because $\tilde{h}_{2}$
does not vanish. Indeed, according to \eqref{sphere2} and \cite[Lemma 4.5]%
{MOTE}, one has that $h_{2}=\frac{1}{2}{\mathcal{L}}_{\xi}h=\varphi h
+\varphi$.

\medskip

Given a paracontact metric structure $(\tilde{\varphi},\xi ,\eta ,\tilde{g})$
and $\alpha >0$, the change of structure tensors
\begin{equation}
\bar{\eta}=\alpha \eta ,\text{ \ \ }\bar{\xi}=\frac{1}{\alpha }\xi ,\text{ \
\ }\bar{\varphi}=\tilde{\varphi},\text{ \ \ }\bar{g}=\alpha \tilde{g}+\alpha
(\alpha -1)\eta \otimes \eta  \label{DHOMOTHETIC}
\end{equation}%
is called a \emph{${\mathcal{D}}_{\alpha }$-homothetic deformation}. One can
easily check that the new structure $(\bar{\varphi},\bar{\xi},\bar{\eta},%
\bar{g})$ is still a paracontact metric structure (\cite{Za}). We now show
that while ${\mathcal{D}}_{\alpha }$-homothetic deformations destroy
conditions like $\tilde{R}_{XY}\xi =0$, they preserve the class of
paracontact $(\tilde{\kappa},\tilde{\mu})$-spaces.

\begin{proposition}
\label{levicivita} Let $(\bar{\varphi},\bar{\xi},\bar{\eta},\bar{g})$ be a
paracontact metric structure obtained from $(\tilde{\varphi},\tilde{\xi},%
\tilde{\eta},\tilde{g})$ by a ${\mathcal{D}}_{\alpha }$-homothetic
deformation. Then we have the following relationship between the Levi-Civita
connections $\bar{\nabla}$ and $\tilde{\nabla}$ of $\bar{g}$ and $\tilde{g}$%
, respectively,
\begin{equation}  \label{CONNECTION}
\bar{\nabla}_{X}Y=\tilde{\nabla}_{X}Y+\frac{\alpha -1}{\alpha }\tilde{g}(%
\tilde{\varphi}\tilde{h}X,Y)\xi -(\alpha -1)\left( \eta (Y)\tilde{\varphi}%
X+\eta (X)\tilde{\varphi}Y\right).
\end{equation}
Furthermore,
\begin{equation}
\bar{h}=\frac{1}{\alpha }\tilde{h}.  \label{H BAR}
\end{equation}
\end{proposition}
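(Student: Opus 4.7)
The plan is to invoke the uniqueness of the Levi-Civita connection: I would define a connection $\bar\nabla$ by the right-hand side of \eqref{CONNECTION} and verify that it is torsion-free and $\bar g$-compatible, which forces it to coincide with the Levi-Civita connection of $\bar g$. The identity \eqref{H BAR} would then follow from a brief Lie-derivative calculation.

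For torsion-freeness, I would set $D(X,Y) := \bar\nabla_X Y - \tilde\nabla_X Y$; since $\tilde\nabla$ is torsion-free it suffices to show $D(X,Y)=D(Y,X)$. The term $-(\alpha-1)(\eta(Y)\tilde\varphi X + \eta(X)\tilde\varphi Y)$ is manifestly symmetric in $X$ and $Y$. For the $\xi$-component $\tfrac{\alpha-1}{\alpha}\tilde g(\tilde\varphi\tilde h X, Y)\xi$, I would exploit the skew-adjointness of $\tilde\varphi$ with respect to $\tilde g$ (coming from the antisymmetry of $d\eta(X,Y)=\tilde g(X,\tilde\varphi Y)$) together with the self-adjointness of $\tilde h$ and the anticommutation $\tilde h\tilde\varphi=-\tilde\varphi\tilde h$; these combine to show that $\tilde\varphi\tilde h$ is $\tilde g$-self-adjoint, whence $\tilde g(\tilde\varphi\tilde h X, Y) = \tilde g(\tilde\varphi\tilde h Y, X)$.

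For metric-compatibility I must check $Z\bar g(X,Y) = \bar g(\bar\nabla_Z X, Y) + \bar g(X,\bar\nabla_Z Y)$. Using $\bar g = \alpha\tilde g + \alpha(\alpha-1)\eta\otimes\eta$ together with $\tilde\nabla\tilde g=0$, the verification splits into a $\tilde g$-part, where the cancellations involve only $\xi$-valued corrections and thus drop out of $\alpha\tilde g(\bar\nabla_Z X,Y)+\alpha\tilde g(X,\bar\nabla_Z Y)$ against $\alpha Z\tilde g(X,Y)$, and an $\eta\otimes\eta$-part, where I need to match $\alpha(\alpha-1)[(Z\eta(X))\eta(Y)+\eta(X)(Z\eta(Y))]$ against the corresponding pairings produced by $\bar\nabla$. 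The key computation is the identity
\[
(\tilde\nabla_Z\eta)(X) = -\tilde g(\tilde\varphi Z,X) + \tilde g(\tilde\varphi\tilde h Z,X),
\]
which follows from \eqref{nablaxi} by writing $(\tilde\nabla_Z\eta)(X)=\tilde g(\tilde\nabla_Z\xi,X)$, together with $\eta(D(Z,X)) = \tfrac{\alpha-1}{\alpha}\tilde g(\tilde\varphi\tilde h Z,X)$ which comes straight from the definition of $D$. After substitution the various $\tilde\varphi$- and $\tilde\varphi\tilde h$-terms cancel pairwise.

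Finally, since $\bar\xi=\tfrac{1}{\alpha}\xi$ with $\tfrac{1}{\alpha}$ constant and $\bar\varphi=\tilde\varphi$, a direct Lie-derivative computation yields
\[
2\bar h = \mathcal{L}_{\bar\xi}\bar\varphi = \tfrac{1}{\alpha}\mathcal{L}_\xi\tilde\varphi = \tfrac{2}{\alpha}\tilde h,
\]
which gives \eqref{H BAR}. The main obstacle will be the bookkeeping in the metric-compatibility step: the conformal factor $\alpha$ and the extra $\alpha(\alpha-1)\eta\otimes\eta$ contribution must be carried through carefully, and only after combining the skew-adjointness of $\tilde\varphi$ with the formula \eqref{nablaxi} do the cancellations become transparent.
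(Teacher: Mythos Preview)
Your proposal is correct and leads to a valid proof, but it takes a different route from the paper. The paper works forward from the Koszul formula: it expands $\bar g(\bar\nabla_X Y, Z)$ using $\bar g = \alpha\tilde g + \alpha(\alpha-1)\eta\otimes\eta$, obtains an intermediate identity, then specializes to $Z=\xi$ to isolate $\eta(\bar\nabla_X Y)$, and finally solves for $\bar\nabla_X Y$ in terms of $\tilde\nabla_X Y$. You instead verify that the right-hand side of \eqref{CONNECTION} is torsion-free and $\bar g$-compatible and invoke uniqueness of the Levi-Civita connection. Both approaches are standard; yours is slightly more economical here since the target formula is already given in the statement, while the paper's approach is the natural one when the formula has to be discovered. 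One small caution: your description of the metric-compatibility check as splitting cleanly into a ``$\tilde g$-part'' (where ``$\xi$-valued corrections drop out'') and an ``$\eta\otimes\eta$-part'' is misleading. The non-$\xi$-valued pieces of $D(Z,X)$, namely $-(\alpha-1)(\eta(X)\tilde\varphi Z+\eta(Z)\tilde\varphi X)$, do contribute to $\alpha\tilde g(D(Z,X),Y)$, and these must cancel against the $-\tilde g(\tilde\varphi Z,X)$ terms produced by $(\tilde\nabla_Z\eta)(X)$ in the $\eta\otimes\eta$-part; the cancellation is \emph{across} the two parts, not within each one. Your closing remark that ``the various $\tilde\varphi$- and $\tilde\varphi\tilde h$-terms cancel pairwise'' is the accurate picture. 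The Lie-derivative argument for $\bar h=\tfrac1\alpha\tilde h$ is correct and is in fact cleaner than the paper's, which goes through \eqref{CONNECTION}.
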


\begin{proof}
Using (\ref{DHOMOTHETIC}) and the Koszul formula we obtain, for any $%
X,Y,Z\in\Gamma(TM)$,
\begin{align}
\bar{g}(\bar{\nabla}_{X}Y,Z)&=\alpha \tilde{g}(\tilde{\nabla}_{X}Y,Z)+\alpha
(\alpha -1)\eta (\tilde{\nabla}_{X}Y)\eta (Z)  \label{KOSZUL} \\
&\quad+\eta (Z)\tilde{g}(\tilde{\varphi}\tilde{h}X,Y)-\eta (Y)\tilde{g}(%
\tilde{\varphi}X,Z)-\eta (X)\tilde{g}(\tilde{\varphi}Y,Z).  \notag
\end{align}
Moreover we have
\begin{equation}  \label{CON1}
\bar{g}(\bar{\nabla}_{X}Y,Z)=\alpha \tilde{g}(\bar{\nabla}_{X}Y,Z)+\alpha
(\alpha -1)\eta (\bar{\nabla}_{X}Y)\eta (Z)
\end{equation}%
and
\begin{equation}  \label{CON2}
\eta (\bar{\nabla}_{X}Y)=\frac{1}{\alpha ^{2}}\bar{g}(\bar{\nabla}_{X}Y,\xi
).
\end{equation}%
Setting $Z=\xi $ in (\ref{KOSZUL}) we get
\begin{equation}
\bar{g}(\bar{\nabla}_{X}Y,\xi )=\alpha ^{2}\eta (\tilde{\nabla}_{X}Y)+\alpha
(\alpha -1)\tilde{g}(\tilde{\varphi}\tilde{h}X,Y)  \label{CON3}
\end{equation}
Then \eqref{CONNECTION} easily follows from \eqref{KOSZUL}, \eqref{CON1}, %
\eqref{CON2} and \eqref{CON3}. Finally, by using \eqref{CONNECTION} and the
definition of $\tilde{h}$ we get (\ref{H BAR}).
\end{proof}

After a long but straightforward calculation one can prove the following
proposition.

\begin{proposition}
Under the same assumptions of Proposition \ref{levicivita}, the curvature
tensor fields $\bar{R}$ and $\tilde{R}$ are related by
\begin{align}
\alpha \bar{R}_{XY}\bar{\xi}&=\tilde{R}_{XY}\xi -(\alpha -1)((\tilde{\nabla}%
_{X}\tilde{\varphi})Y-(\tilde{\nabla}_{Y}\tilde{\varphi})X+\eta (Y)(X-\tilde{%
h}X) -\eta (X)(Y-\tilde{h}Y))  \notag \\
&\quad-(\alpha -1)^{2}(\eta (Y)X-\eta (X)Y)  \label{CURVATURE}
\end{align}
In particular, if $(M,\tilde{\varphi},\xi ,\eta ,\tilde{g})$ is a
paracontact $(\tilde\kappa,\tilde\mu)$-manifold, then $(\bar{\varphi},\bar{%
\xi},\bar{\eta},\bar{g})$ is a paracontact $(\bar{\kappa},\bar{\mu})$%
-structure, where
\begin{equation}  \label{dhomothetic1}
\bar{\kappa}=\frac{\tilde{\kappa}+1-\alpha ^{2}}{\alpha ^{2}}, \ \ \ \bar{\mu%
}=\frac{\tilde{\mu}+2\alpha -2}{\alpha }.
\end{equation}
\end{proposition}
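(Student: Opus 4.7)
The plan is to derive the curvature identity \eqref{CURVATURE} by direct computation using the connection formula \eqref{CONNECTION} from Proposition \ref{levicivita}, and then specialize to the $(\tilde\kappa,\tilde\mu)$-condition using the identity \eqref{NAMLAFI} for $\tilde\nabla\tilde\varphi$.

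First I would compute $\bar\nabla_{X}\bar\xi$ explicitly. Since $\bar\xi=\frac{1}{\alpha}\xi$, formula \eqref{CONNECTION} together with \eqref{nablaxi} gives
\[
\bar\nabla_{X}\bar\xi=\tfrac{1}{\alpha}\bar\nabla_{X}\xi=\tfrac{1}{\alpha}\bigl(\tilde\nabla_{X}\xi-(\alpha-1)\tilde\varphi X\bigr)=-\tilde\varphi X+\tfrac{1}{\alpha}\tilde\varphi\tilde h X,
\]
which is of course consistent with \eqref{nablaxi} applied to the new structure via \eqref{H BAR}. Next I would compute $\bar\nabla_{Y}\bar\nabla_{X}\bar\xi$ by applying \eqref{CONNECTION} once more to the expression above, carefully expanding the terms $\tilde\nabla_{Y}(\tilde\varphi X)$ and $\tilde\nabla_{Y}(\tilde\varphi\tilde h X)$; then antisymmetrize in $X,Y$ and subtract $\bar\nabla_{[X,Y]}\bar\xi$. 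The resulting expression will contain the Riemannian curvature $\tilde R_{XY}\xi$ (after recognizing the Levi-Civita piece), derivative terms producing $(\tilde\nabla_{X}\tilde\varphi)Y-(\tilde\nabla_{Y}\tilde\varphi)X$ and $(\tilde\nabla_{X}\tilde\varphi\tilde h)Y-(\tilde\nabla_{Y}\tilde\varphi\tilde h)X$, and algebraic terms in $\eta$ and $\tilde h$. After invoking \eqref{nablaxi} to eliminate the $\tilde h$-derivatives and collecting, formula \eqref{CURVATURE} should drop out. This is the main obstacle: keeping track of every term across the bookkeeping, and correctly using the symmetry and anti-commutation properties of $\tilde\varphi$ and $\tilde h$ to produce the clean statement.

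Having \eqref{CURVATURE}, the second claim is a short computation. From \eqref{NAMLAFI} and the symmetry of $\tilde h$,
\[
(\tilde\nabla_{X}\tilde\varphi)Y-(\tilde\nabla_{Y}\tilde\varphi)X=\eta(Y)(X-\tilde h X)-\eta(X)(Y-\tilde h Y),
\]
so the bracket in \eqref{CURVATURE} collapses to $2\eta(Y)(X-\tilde h X)-2\eta(X)(Y-\tilde h Y)$. Substituting \eqref{PARAKMU} on the right of \eqref{CURVATURE} gives
\[
\alpha\bar R_{XY}\bar\xi=\bigl(\tilde\kappa+1-\alpha^{2}\bigr)\bigl(\eta(Y)X-\eta(X)Y\bigr)+\bigl(\tilde\mu+2\alpha-2\bigr)\bigl(\eta(Y)\tilde h X-\eta(X)\tilde h Y\bigr).
\]
Finally, rewriting in terms of the new structure via $\bar\eta=\alpha\eta$ and $\bar h=\tfrac{1}{\alpha}\tilde h$ turns the right-hand side into
\[
\alpha^{2}\bar\kappa\bigl(\bar\eta(Y)X-\bar\eta(X)Y\bigr)\cdot\tfrac{1}{\alpha}+\alpha\bar\mu\bigl(\bar\eta(Y)\bar h X-\bar\eta(X)\bar h Y\bigr)\cdot\tfrac{1}{\alpha},
\]
and matching coefficients yields exactly \eqref{dhomothetic1}. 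The only non-trivial work is the derivation of \eqref{CURVATURE} itself; once that is in hand the passage to the new constants is a direct algebraic comparison.
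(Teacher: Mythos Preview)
Your approach is essentially the same as the paper's, which explicitly says only ``after a long but straightforward calculation'' and gives no details. Computing $\bar\nabla_X\bar\xi$ from \eqref{CONNECTION} and \eqref{nablaxi}, then iterating \eqref{CONNECTION} to obtain $\bar R_{XY}\xi$ and recognize the combination $-(\tilde\nabla_X\tilde\varphi)Y+(\tilde\nabla_Y\tilde\varphi)X+(\tilde\nabla_X\tilde\varphi\tilde h)Y-(\tilde\nabla_Y\tilde\varphi\tilde h)X$ as $\tilde R_{XY}\xi$ via differentiation of \eqref{nablaxi}, is exactly the intended route to \eqref{CURVATURE}. The algebraic reduction you give for the second part is correct: the $\xi$-terms in \eqref{NAMLAFI} cancel by the symmetry of $\tilde h$, and the coefficient matching yields \eqref{dhomothetic1}.

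One small caveat: your derivation of the constants invokes \eqref{NAMLAFI}, which the paper proves only under the hypothesis $\tilde\kappa\neq -1$ (the argument there divides by $1+\tilde\kappa$). The proposition as stated does not exclude $\tilde\kappa=-1$, so strictly speaking you should remark that this case needs a separate check. The formulas \eqref{dhomothetic1} then give $\bar\kappa=-1$ as well, consistently with the later observation that the three classes $\tilde\kappa\lessgtr -1$ are preserved under $\mathcal D$-homothetic deformations. Also, in your final display the bookkeeping of the $\alpha$-factors on the $\bar h$-term is slightly off (you should get $\alpha\bar\mu$ without the extra $\tfrac{1}{\alpha}$), but the conclusion you draw is correct.
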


We pass to discuss some general curvature properties of paracontact $%
(\tilde\kappa,\tilde\mu)$-manifolds. We start with the following preliminary
result.

\begin{theorem}
Let $(M^{2n+1},\tilde{\varphi},\xi ,\eta ,\tilde{g})$ be an integrable
paracontact metric manifold. Then the following identity holds
\begin{equation}
\tilde{Q}\tilde{\varphi}-\tilde{\varphi}\tilde{Q}=\tilde{l}\tilde{\varphi}-%
\tilde{\varphi}\tilde{l}-4(n-1)\tilde{\varphi}\tilde{h}-\eta \otimes \tilde{%
\varphi}\tilde{Q}+(\eta \circ \tilde{Q}\tilde{\varphi})\otimes \xi ,
\label{QFI-FIQ}
\end{equation}
where $\tilde{l}$ denotes the Jacobi operator, defined by $\tilde{l}X=%
\tilde{R}_{X\xi}\xi$.
\end{theorem}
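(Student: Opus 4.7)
The plan is to use the integrability hypothesis to obtain an explicit expression for $\tilde{\nabla}\tilde{\varphi}$, then to apply the Ricci identity and contract over a $\tilde{\varphi}$-basis to recover $\tilde{Q}\tilde{\varphi} - \tilde{\varphi}\tilde{Q}$.

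The first step is to note that by the characterization of integrability recalled after Theorem \ref{zamconn}, $\tilde{\nabla}^{pc}\tilde{\varphi} = 0$. Substituting this into Theorem \ref{zamconn}(ii) immediately yields
\begin{equation*}
(\tilde{\nabla}_X\tilde{\varphi})Y = -\tilde{g}(X-\tilde{h}X, Y)\xi + \eta(Y)(X-\tilde{h}X),
\end{equation*}
a version of \eqref{NAMLAFI} valid on any integrable paracontact metric manifold without the restriction $\tilde{\kappa} \neq -1$. Differentiating this identity covariantly once more, using \eqref{nablaxi} for $\tilde{\nabla}\xi$ and the symmetry of $\tilde{h}$, and then applying the Ricci identity
\begin{equation*}
\tilde{R}_{VX}(\tilde{\varphi}Y) - \tilde{\varphi}(\tilde{R}_{VX}Y) = \bigl(\tilde{\nabla}_V\tilde{\nabla}_X\tilde{\varphi} - \tilde{\nabla}_X\tilde{\nabla}_V\tilde{\varphi} - \tilde{\nabla}_{[V,X]}\tilde{\varphi}\bigr)Y,
\end{equation*}
produces an explicit expression for the commutator $[\tilde{R}_{VX},\tilde{\varphi}]Y$ as a polynomial in $\tilde{\varphi}$, $\tilde{h}$, $\tilde{\nabla}\tilde{h}$, $\eta$ and $\xi$.

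The second step is to contract this commutator against a $\tilde{\varphi}$-basis $\{e_i, \tilde{\varphi}e_i, \xi\}_{i=1,\dots,n}$ with the convention $\tilde{g}(e_i,e_j) = \delta_{ij} = -\tilde{g}(\tilde{\varphi}e_i, \tilde{\varphi}e_j)$. After invoking the first Bianchi identity to reorganize the sums, the contraction of $[\tilde{R}_{V\cdot},\tilde{\varphi}\cdot]$ rewrites on one side as $\tilde{Q}\tilde{\varphi}-\tilde{\varphi}\tilde{Q}$ and on the other as a combination of the Jacobi-operator commutator $\tilde{l}\tilde{\varphi}-\tilde{\varphi}\tilde{l}$, a trace involving $\tilde{h}$, and $\eta$-valued correction terms.

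The principal obstacle is the combinatorial bookkeeping of the many contracted terms. The coefficient $-4(n-1)\tilde{\varphi}\tilde{h}$ should arise from the $2n$ summands indexed by $e_i$ and $\tilde{\varphi}e_i$, after simplification via the anti-commutation $\tilde{\varphi}\tilde{h}+\tilde{h}\tilde{\varphi}=0$ and $\operatorname{tr}(\tilde{h})=0$; the term $\tilde{l}\tilde{\varphi}-\tilde{\varphi}\tilde{l}$ will come from the $\xi$-summand of the Ricci sum; and the asymmetric corrections $-\eta\otimes\tilde{\varphi}\tilde{Q}+(\eta\circ\tilde{Q}\tilde{\varphi})\otimes\xi$ will reflect the fact that $\tilde{\varphi}\xi=0$, so that identities which would otherwise be symmetric under $\tilde{\varphi}$ pick up boundary terms in the $\xi$-direction. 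Checking that all remaining terms cancel, using $\eta\circ\tilde{\varphi}=0$ and the symmetry/anti-commutation properties of $\tilde{h}$ and $\tilde{\varphi}$, is where the bulk of the computation lies.
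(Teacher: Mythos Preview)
Your plan is correct and matches the paper's approach: both start from the integrability condition $\tilde{\nabla}^{pc}\tilde{\varphi}=0$ to obtain the explicit formula for $(\tilde{\nabla}_X\tilde{\varphi})Y$, feed this into the Ricci identity for $\tilde{\varphi}$, and then contract against a $\tilde{\varphi}$-basis. The only organizational difference is that the paper, rather than contracting the commutator $[\tilde{R}_{VX},\tilde{\varphi}]$ directly, first manipulates it (via the curvature symmetries and the substitution $X,Y\mapsto\tilde{\varphi}X,\tilde{\varphi}Y$) into an identity of the form $\tilde{g}(\tilde{R}_{\tilde{\varphi}X\tilde{\varphi}Y}\tilde{\varphi}Z,\tilde{\varphi}W)=\tilde{g}(\tilde{R}_{XY}Z,W)+\cdots$ and then sets $Y=Z=e_i$ and $Y=Z=\tilde{\varphi}e_i$ separately; this repackaging makes the ``combinatorial bookkeeping'' you anticipate considerably cleaner, since the left-hand side of that auxiliary identity already contains the Ricci-type sums $\sum_i\tilde{g}(\tilde{R}_{\tilde{\varphi}X e_i}e_i,\tilde{\varphi}W)$ needed to produce $\tilde{\varphi}\tilde{Q}\tilde{\varphi}$.
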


\begin{proof}
Differentiating $\tilde{\nabla}_{Y}\xi =-\tilde{\varphi}Y+\tilde{\varphi}%
\tilde{h}Y$, we get
\begin{equation}
\tilde{R}_{XY}\xi =-(\tilde{\nabla}_{X}\tilde{\varphi})Y+(\tilde{\nabla}_{Y}%
\tilde{\varphi})X+(\tilde{\nabla}_{X}\tilde{\varphi}\tilde{h})Y-(\tilde{%
\nabla}_{Y}\tilde{\varphi}\tilde{h})X.  \label{CURVATURE 4}
\end{equation}%
Using the integrabilty condition $\tilde{\nabla}^{pc}\tilde{\varphi}=0$, the
properties of $\tilde{h}$ and \eqref{CURVATURE 4} we have
\begin{equation}
\tilde{R}_{XY}\xi =-\eta (Y)(X-\tilde{h}X)+\eta (X)(Y-\tilde{h}Y)+\tilde{%
\varphi}((\tilde{\nabla}_{X}\tilde{h})Y-(\tilde{\nabla}_{Y}\tilde{h})X).
\label{CURVATURE 5}
\end{equation}%
Since $\tilde{h}$ is a symmetric operator we easily get
\begin{equation}
\tilde{g}((\tilde{\nabla}_{X}\tilde{h})Y-(\tilde{\nabla}_{Y}\tilde{h})X,\xi
)=\tilde{g}(((\tilde{\nabla}_{X}\tilde{h})-(\tilde{\nabla}_{Y}\tilde{h}))\xi
,Y-X)  \label{CURVATURE 6}
\end{equation}%
Using the formulas \eqref{nablaxi}, $\tilde{h}\xi =0$ and $\tilde{\varphi}%
\tilde{h}+\tilde{h}\tilde{\varphi}=0$ in \eqref{CURVATURE 6} we find
\begin{equation}
\tilde{g}((\tilde{\nabla}_{X}\tilde{h})Y-(\tilde{\nabla}_{Y}\tilde{h})X,\xi
)=2\tilde{g}(\tilde{\varphi}\tilde{h}^{2}X,Y).  \label{CURVATURE 7}
\end{equation}%
Applying $\tilde{\varphi}$ to \eqref{CURVATURE 5} and using $\tilde{\varphi}%
^{2}=I-\eta \otimes \xi $ and \eqref{CURVATURE 7} we obtain
\begin{equation}
(\tilde{\nabla}_{X}\tilde{h})Y-(\tilde{\nabla}_{Y}\tilde{h})X=\tilde{\varphi}%
\tilde{R}_{XY}\xi +2\tilde{g}(\tilde{\varphi}\tilde{h}^{2}X,Y)-\eta (X)%
\tilde{\varphi}(Y-\tilde{h}Y)+\eta (Y)\tilde{\varphi}(X-\tilde{h}X).
\label{CURVATURE 8}
\end{equation}%
Now we suppose that $P$ is a fixed point of $M$ and $X,Y,Z$ are vector
fields such that ($\tilde{\nabla}X)_{P}=(\tilde{\nabla}Y)_{P}=(\tilde{\nabla}%
Z)_{P}=0$. The Ricci identity for $\tilde{\varphi}$
\begin{equation*}
\tilde{R}_{XY}\tilde{\varphi}Z-\tilde{\varphi}\tilde{R}_{XY}Z=(\tilde{\nabla}%
_{X}\tilde{\nabla}_{Y}\tilde{\varphi})Z-(\tilde{\nabla}_{Y}\tilde{\nabla}_{X}%
\tilde{\varphi})Z-(\tilde{\nabla}_{[X,Y]}\tilde{\varphi})Z,
\end{equation*}%
at the point $P$, reduces to the form
\begin{equation}
\tilde{R}_{XY}\tilde{\varphi}Z-\tilde{\varphi}\tilde{R}_{XY}Z=\tilde{\nabla}%
_{X}(\tilde{\nabla}_{Y}\tilde{\varphi})Z-\tilde{\nabla}_{Y}(\tilde{\nabla}%
_{X}\tilde{\varphi})Z.  \label{CURVATURE 9}
\end{equation}%
By virtue of the integrabilty condition we have, at $P$,
\begin{align}
\tilde{R}_{XY}\tilde{\varphi}Z-\tilde{\varphi}\tilde{R}_{XY}Z& =\tilde{\nabla%
}_{X}(\tilde{\nabla}_{Y}\tilde{\varphi})Z-\tilde{\nabla}_{Y}(\tilde{\nabla}%
_{X}\tilde{\varphi})Z  \notag \\
& =\tilde{g}((\tilde{\nabla}_{X}\tilde{h})Y-(\tilde{\nabla}_{Y}\tilde{h}%
)X,Z)\xi -\eta (Z)((\tilde{\nabla}_{X}\tilde{h})Y-(\tilde{\nabla}_{Y}\tilde{h%
})X)  \notag \\
& \quad +\tilde{g}(Y-\tilde{h}Y,Z)\tilde{\varphi}(X-\tilde{h}X)-\tilde{g}(X-%
\tilde{h}X,Z)\tilde{\varphi}(Y-\tilde{h}Y)  \label{CURVATURE 10} \\
& \quad -\tilde{g}(\tilde{\varphi}(X-\tilde{h}X),Z)(Y-\tilde{h}Y)+\tilde{g}(%
\tilde{\varphi}(Y-\tilde{h}Y),Z)(X-\tilde{h}X).  \notag
\end{align}%
Using \eqref{CURVATURE 8} in \eqref{CURVATURE 10} we find
\begin{align}
\tilde{R}_{XY}\tilde{\varphi}Z-\tilde{\varphi}\tilde{R}_{XY}Z& =\bigl(\tilde{%
g}(\tilde{\varphi}\tilde{R}_{XY}\xi -\eta (X)\tilde{\varphi}(Y-\tilde{h}%
Y)+\eta (Y)\tilde{\varphi}(X-\tilde{h}X),Z)\bigr)\xi  \notag \\
& \quad -\eta (Z)\bigl(\tilde{\varphi}\tilde{R}_{XY}\xi -\eta (X)\tilde{%
\varphi}(Y-\tilde{h}Y)+\eta (Y)\tilde{\varphi}(X-\tilde{h}X)\bigr)  \notag \\
& \quad +\tilde{g}(Y-\tilde{h}Y,Z)\tilde{\varphi}(X-\tilde{h}X)-\tilde{g}(X-%
\tilde{h}X,Z)\tilde{\varphi}(Y-\tilde{h}Y)  \notag \\
& \quad -\tilde{g}(\tilde{\varphi}(X-\tilde{h}X),Z)(Y-\tilde{h}Y)+\tilde{g}(%
\tilde{\varphi}(Y-\tilde{h}Y),Z)(X-\tilde{h}X).  \label{CURVATURE 11}
\end{align}%
Using (\ref{G METRIC}) and the curvature tensor properties we get
\begin{equation*}
\tilde{g}(\tilde{\varphi}\tilde{R}_{\tilde{\varphi}X\tilde{\varphi}Y}Z,%
\tilde{\varphi}W)=-\tilde{g}(\tilde{R}_{ZW}\tilde{\varphi}X,\tilde{\varphi}%
Y)+\eta (\tilde{R}_{\tilde{\varphi}X\tilde{\varphi}Y}Z)\eta (W).
\end{equation*}%
Then by \eqref{G METRIC} and \eqref{CURVATURE 11} we get by a
straightforward calculation%
\begin{align}
\tilde{g}(\tilde{\varphi}\tilde{R}_{\tilde{\varphi}X\tilde{\varphi}Y}Z,%
\tilde{\varphi}W)& =\tilde{g}(\tilde{R}_{XY}Z,W)+\eta (W)\eta (\tilde{R}_{%
\tilde{\varphi}X\tilde{\varphi}Y}Z)  \notag \\
& \quad +\eta (Y)\bigl(-\eta (\tilde{R}_{ZW}X)-\eta (Z)\tilde{g}(W-\tilde{h}%
W,X)+\eta (W)\tilde{g}(Z-\tilde{h}Z,X)\bigr)  \notag \\
& \quad -\eta (X)\bigl(-\eta (\tilde{R}_{ZW}Y)-\eta (Z)\tilde{g}(W-\tilde{h}%
W,Y)+\eta (W)\tilde{g}(Z-\tilde{h}Z,Y)\bigr)  \label{CURVATURE 12} \\
& \quad +\tilde{g}(W-\tilde{h}W,X)\tilde{g}(Z-\tilde{h}Z,Y)-\tilde{g}(Z-%
\tilde{h}Z,X)\tilde{g}(W-\tilde{h}W,Y)  \notag \\
& \quad +\tilde{g}(\tilde{\varphi}(Z-\tilde{h}Z),X)\tilde{g}(W-\tilde{h}W,%
\tilde{\varphi}Y)-\tilde{g}(W-\tilde{h}W,\tilde{\varphi}X)\tilde{g}(\tilde{%
\varphi}(Z-\tilde{h}Z),Y).  \notag
\end{align}%
Replacing in (\ref{CURVATURE 11}) $X,Y$ by $\tilde{\varphi}X,\tilde{\varphi}%
Y $ respectively, and taking the inner product with $\tilde{\varphi}W$, we
get
\begin{align}
\tilde{g}(\tilde{R}_{\tilde{\varphi}X\tilde{\varphi}Y}\tilde{\varphi}Z-%
\tilde{\varphi}\tilde{R}_{\tilde{\varphi}X\tilde{\varphi}Y}Z,\tilde{\varphi}%
W)& =\tilde{g}(\tilde{\varphi}(X+\tilde{h}X),Z)\tilde{g}(\tilde{\varphi}(Y+%
\tilde{h}Y),W)  \notag  \label{CURVATURE 13} \\
& \quad -\tilde{g}(\tilde{\varphi}(Y+\tilde{h}Y),Z)\tilde{g}(\tilde{\varphi}%
(X+\tilde{h}X),W)-\eta (Y)\eta (W)\tilde{g}(X+\tilde{h}X,Z)  \notag \\
& \quad +\tilde{g}(X-\eta (X)\xi +\tilde{h}X,Z)\tilde{g}(Y+\tilde{h}%
Y,W)+\eta (X)\eta (W)\tilde{g}(Y+\tilde{h}Y,Z) \\
& \quad -\tilde{g}(Y-\eta (Y)\xi +\tilde{h}Y,Z)\tilde{g}(X+\tilde{h}%
X,W)+\eta (Z)\tilde{g}(\tilde{R}_{\tilde{\varphi}X\tilde{\varphi}Y}\xi ,W).
\notag
\end{align}%
Comparing (\ref{CURVATURE 12}) to (\ref{CURVATURE 13}) we get by direct
computation%
\begin{align}
\tilde{g}(\tilde{R}_{\tilde{\varphi}X\tilde{\varphi}Y}\tilde{\varphi}Z,%
\tilde{\varphi}W)& =\tilde{g}(\tilde{R}_{XY}Z,W)+\eta (W)\eta (\tilde{R}_{%
\tilde{\varphi}X\tilde{\varphi}Y}Z)-\eta (Z)\eta (\tilde{R}_{\tilde{\varphi}X%
\tilde{\varphi}Y}W)  \notag \\
& \quad +\eta (Y)\bigl(-\eta (\tilde{R}_{ZW}X)+2\eta (Z)\tilde{g}(\tilde{h}%
X,W)-2\eta (W)\tilde{g}(\tilde{h}Z,X)\bigr)  \label{CURVATURE 14} \\
& \quad -\eta (X)\bigl(-\eta (\tilde{R}_{ZW}Y)+2\eta (Z)\tilde{g}(\tilde{h}%
Y,W)-2\eta (W)\tilde{g}(\tilde{h}Z,Y)\bigr)  \notag \\
& \quad -2\tilde{g}(W,X)\tilde{g}(\tilde{h}Z,Y)-2\tilde{g}(Z,Y)\tilde{g}(%
\tilde{h}W,X)  \notag \\
& \quad +2\tilde{g}(W,Y)\tilde{g}(\tilde{h}Z,X)+2\tilde{g}(Z,X)\tilde{g}(%
\tilde{h}W,Y)  \notag
\end{align}%
Let \{$e_{i},\tilde{\varphi}e_{i},\xi \}$, $i\in \left\{ 1,\ldots ,n\right\}
$, be a local $\tilde{\varphi}$-basis. Setting $Y=Z=e_{i}$ in (\ref%
{CURVATURE 14}), we have%
\begin{align}
\sum\limits_{i=1}^{n}\tilde{g}(\tilde{R}_{\tilde{\varphi}X\tilde{\varphi}%
e_{i}}\tilde{\varphi}e_{i},\tilde{\varphi}W)& =\sum\limits_{i=1}^{n}\bigl(%
\tilde{g}(\tilde{R}_{Xe_{i}}e_{i},W)+\eta (W)\eta (\tilde{R}_{\tilde{\varphi}%
X\tilde{\varphi}e_{i}}e_{i})+\eta (X)\eta (\tilde{R}_{e_{i}W}e_{i})  \notag
\\
& \quad +2\eta (X)\eta (W)\tilde{g}(\tilde{h}e_{i},e_{i})-2\tilde{g}(W,X)%
\tilde{g}(\tilde{h}e_{i},e_{i})-2\tilde{g}(e_{i},e_{i})\tilde{g}(\tilde{h}%
W,X)  \label{CURVATURE 15} \\
& \quad +2\tilde{g}(W,e_{i})\tilde{g}(\tilde{h}e_{i},X)+2\tilde{g}(e_{i},X)%
\tilde{g}(\tilde{h}W,e_{i})\bigr).  \notag
\end{align}%
On the other hand, putting $Y=Z=\tilde{\varphi}e_{i}$ in \eqref{CURVATURE 14}%
, we get
\begin{align}
\sum\limits_{i=1}^{n}\tilde{g}(\tilde{R}_{\tilde{\varphi}Xe_{i}}e_{i},\tilde{%
\varphi}W)& =\sum\limits_{i=1}^{n}\bigl(\tilde{g}(\tilde{R}_{X\tilde{\varphi}%
e_{i}}\tilde{\varphi}e_{i},W)+\eta (W)\eta (\tilde{R}_{\tilde{\varphi}Xe_{i}}%
\tilde{\varphi}e_{i})+\eta (X)\eta (\tilde{R}_{\tilde{\varphi}e_{i}W}\tilde{%
\varphi}e_{i})  \notag \\
& \quad +2\eta (X)\eta (W)\tilde{g}(\tilde{h}\tilde{\varphi}e_{i},\tilde{%
\varphi}e_{i})-2\tilde{g}(W,X)\tilde{g}(\tilde{h}\tilde{\varphi}e_{i},\tilde{%
\varphi}e_{i})-2\tilde{g}(\tilde{\varphi}e_{i},\tilde{\varphi}e_{i})\tilde{g}%
(\tilde{h}W,X)  \label{CURVATURE 16} \\
& \quad +2\tilde{g}(W,\tilde{\varphi}e_{i})\tilde{g}(\tilde{h}\tilde{\varphi}%
e_{i},X)+2\tilde{g}(\tilde{\varphi}e_{i},X)\tilde{g}(\tilde{h}W,\tilde{%
\varphi}e_{i})\bigr).  \notag
\end{align}%
Using the definition of the Ricci operator, \eqref{CURVATURE 15} and %
\eqref{CURVATURE 16} it is not hard to prove that%
\begin{equation}
-\tilde{\varphi}\tilde{Q}\tilde{\varphi}X+\tilde{\varphi}\tilde{l}\tilde{%
\varphi}X+\tilde{Q}X=\tilde{l}X+\sum\limits_{i=1}^{n}(\eta (\tilde{R}_{%
\tilde{\varphi}Xe_{i}}\tilde{\varphi}e_{i})-\eta (\tilde{R}_{\tilde{\varphi}X%
\tilde{\varphi}e_{i}}e_{i}))\xi +\eta (X)\tilde{Q}\xi +4(n-1)\tilde{h}X.
\label{CURVATURE 17}
\end{equation}%
Finally, applying $\tilde{\varphi}$ to \eqref{CURVATURE 17} and using $%
\tilde{\varphi}^{2}=I-\eta \otimes \xi $, we obtain the assertion.
\end{proof}

\begin{corollary}
Let $(M^{2n+1},\tilde{\varphi},\xi ,\eta ,\tilde{g})$ be a paracontact $(%
\tilde{\kappa},\tilde{\mu})$-manifold. Then
\begin{equation}
\tilde{Q}\tilde{\varphi}-\tilde{\varphi}\tilde{Q}=2(2(n-1)+\tilde{\mu})%
\tilde{h}\tilde{\varphi}  \label{KMU QFI-FIQ}
\end{equation}
\end{corollary}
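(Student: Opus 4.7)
The plan is to invoke the preceding theorem and reduce each term on its right-hand side using the $(\tilde\kappa,\tilde\mu)$-nullity condition. The theorem applies in our setting because, by Corollary \ref{integrability}, any paracontact $(\tilde\kappa,\tilde\mu)$-manifold with $\tilde\kappa\neq -1$ is integrable; the case $\tilde\kappa=-1$ would need a separate treatment, but the symbolic computation below is unchanged whenever the theorem is available.

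First I would compute the Jacobi operator $\tilde l$ explicitly. Setting $Y=\xi$ in \eqref{PARAKMU}, using $\eta(\xi)=1$ and $\tilde h\xi=0$, yields
\begin{equation*}
\tilde l X=\tilde R_{X\xi}\xi=\tilde\kappa(X-\eta(X)\xi)+\tilde\mu\,\tilde h X.
\end{equation*}
Next I would compute the bracket $\tilde l\tilde\varphi-\tilde\varphi\tilde l$. Since $\eta\circ\tilde\varphi=0$ and $\tilde\varphi\tilde h=-\tilde h\tilde\varphi$, one finds
\begin{equation*}
\tilde l\tilde\varphi X=\tilde\kappa\tilde\varphi X+\tilde\mu\,\tilde h\tilde\varphi X,\qquad \tilde\varphi\tilde l X=\tilde\kappa\tilde\varphi X-\tilde\mu\,\tilde h\tilde\varphi X,
\end{equation*}
so $\tilde l\tilde\varphi-\tilde\varphi\tilde l=2\tilde\mu\,\tilde h\tilde\varphi$. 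Likewise, the middle term in \eqref{QFI-FIQ} becomes $-4(n-1)\tilde\varphi\tilde h=4(n-1)\tilde h\tilde\varphi$ by the same anti-commutation.

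The last ingredient is to show that the correction terms $-\eta\otimes\tilde\varphi\tilde Q+(\eta\circ\tilde Q\tilde\varphi)\otimes\xi$ vanish. This is where \eqref{Riczeta} enters: $\tilde Q\xi=2n\tilde\kappa\xi$, hence $\tilde\varphi\tilde Q\xi=0$, killing the first term. For the second term I use self-adjointness of $\tilde Q$ together with $\tilde g(\tilde\varphi X,\xi)=0$ to compute
\begin{equation*}
\eta(\tilde Q\tilde\varphi X)=\tilde g(\tilde Q\tilde\varphi X,\xi)=\tilde g(\tilde\varphi X,\tilde Q\xi)=2n\tilde\kappa\,\tilde g(\tilde\varphi X,\xi)=0.
\end{equation*}
Collecting the surviving contributions gives $\tilde Q\tilde\varphi-\tilde\varphi\tilde Q=2\tilde\mu\,\tilde h\tilde\varphi+4(n-1)\tilde h\tilde\varphi=2(2(n-1)+\tilde\mu)\tilde h\tilde\varphi$, which is \eqref{KMU QFI-FIQ}.

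The proof is thus a bookkeeping exercise once the theorem and the three structural identities \eqref{PARAKMU}, \eqref{Riczeta}, and the anti-commutation $\tilde\varphi\tilde h+\tilde h\tilde\varphi=0$ are in hand. The only conceptual point that requires care, and the most likely source of error, is the vanishing of the two correction terms: both depend crucially on the eigenvalue of $\tilde Q$ along $\xi$ being $2n\tilde\kappa$, so \eqref{Riczeta} is genuinely used and cannot be dispensed with.
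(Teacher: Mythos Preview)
Your proof is correct and follows essentially the same approach as the paper: compute $\tilde l\tilde\varphi-\tilde\varphi\tilde l=2\tilde\mu\,\tilde h\tilde\varphi$ from the nullity condition, use \eqref{Riczeta} to kill the two correction terms in \eqref{QFI-FIQ}, and collect. Your version is slightly more detailed (you spell out the self-adjointness argument for the second correction term and flag the integrability hypothesis at $\tilde\kappa=-1$), but the logic is identical.
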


\begin{proof}
Using (\ref{CAPAR3}) and $\tilde{\varphi}\tilde{h}+\tilde{h}\tilde{\varphi}%
=0 $ we get $\tilde{l}\tilde{\varphi}-\tilde{\varphi}\tilde{l}=2\tilde{\mu}%
\tilde{h}\tilde{\varphi}$. On the other hand, by virtue of \eqref{Riczeta}
one can easily prove that both $\eta \otimes \tilde{\varphi}\tilde{Q}$ and $%
(\eta \circ \tilde{Q}\tilde{\varphi})\otimes \xi $ vanish. Thus
\eqref{KMU
QFI-FIQ} follows from \eqref{QFI-FIQ}.
\end{proof}

Recall that (\cite{CAP2}) an \emph{almost bi-paracontact structure} on a
contact manifold $(M,\eta )$ is a triplet $(\phi _{1},\phi _{2},\phi _{3})$
where $\phi _{3}$ is an almost contact structure compatible with the contact
form $\eta $, and $\phi _{1}$, $\phi _{2}$ are two anti-commuting tensors on
$M$ such that $\phi _{1}^{2}=\phi _{2}^{2}=I-\eta \otimes \xi $ and $\phi
_{1}\phi _{2}=\phi _{3}$. From the definition it easily follows that $\phi
_{1}\phi _{3}=-\phi _{3}\phi _{1}=\phi _{2}$ and $\phi _{3}\phi _{2}=-\phi
_{2}\phi _{3}=\phi _{1}$. Any almost bi-paracontact manifold is then endowed
with four distributions, ${\mathcal{D}}_{1}^{\pm}$, ${\mathcal{D}}_{2}^{\pm}$%
, given by the eigendistributions corresponding to the eigenvalues $\pm 1$
of $\phi _{1}$ and $\phi _{2}$, respectively. One proves that, for each $%
\alpha \in \left\{ 1,2\right\}$, ${\mathcal{D}}_{\alpha }^{+}$ and ${%
\mathcal{D}}_{\alpha }^{-}$ are transversal $n$-dimensional subbundles of
the contact distribution. In particular it follows that $\phi _{1}$ and $%
\phi _{2}$ are almost paracontact structures.

Now we prove that any paracontact $(\tilde{\kappa},\tilde{\mu})$-manifold
with $\tilde{\kappa}\neq -1$ is canonically endowed with an almost
bi-paracontact structure.

\begin{proposition}
\label{diagon} \label{bipara1} Let $(M,\tilde{\varphi},\xi ,\eta ,\tilde{g})$
be a paracontact $(\tilde{\kappa},\tilde{\mu})$-manifold. If $\tilde{\kappa}%
\neq -1$ then $M$ admits an almost bi-paracontact structure $(\phi _{1},\phi
_{2},\phi _{3})$ given by
\begin{equation}
\phi _{1}:=\tilde{\varphi},\ \ \phi _{2}:=\frac{1}{\sqrt{1+\tilde{\kappa}}}%
\tilde{h},\ \ \phi _{3}:=\frac{1}{\sqrt{1+\tilde{\kappa}}}\tilde{\varphi}%
\tilde{h}  \label{caseI}
\end{equation}%
in the case $\tilde{\kappa}>-1$, and
\begin{equation}
\phi _{1}:=\tilde{\varphi},\ \ \phi _{2}:=\frac{1}{\sqrt{-1-\tilde{\kappa}}}%
\tilde{\varphi}\tilde{h},\ \ \phi _{3}:=\frac{1}{\sqrt{-1-\tilde{\kappa}}}%
\tilde{h}  \label{caseII}
\end{equation}%
in the case $\tilde{\kappa}<-1$.
\end{proposition}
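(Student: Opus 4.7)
The proof is essentially a direct algebraic verification of the defining properties of an almost bi-paracontact structure, so the plan is simply to check, separately in the two cases $\tilde\kappa>-1$ and $\tilde\kappa<-1$, that the three tensors listed in \eqref{caseI} and \eqref{caseII} satisfy
\[
\phi_{1}^{2}=\phi_{2}^{2}=I-\eta\otimes\xi,\quad \phi_{1}\phi_{2}=-\phi_{2}\phi_{1}=\phi_{3},\quad \phi_{3}^{2}=-I+\eta\otimes\xi,
\]
together with $\phi_{i}\xi=0$, $\eta\circ\phi_{i}=0$, and the dimension condition on the $\pm1$-eigendistributions. The essential inputs are \eqref{H2}, the identity $\tilde{\varphi}^{2}=I-\eta\otimes\xi$, the anti-commutation $\tilde{\varphi}\tilde{h}+\tilde{h}\tilde{\varphi}=0$, and the vanishing relations $\tilde{h}\xi=0$, $\eta\circ\tilde{h}=0$, $\tilde{\varphi}\xi=0$, $\eta\circ\tilde{\varphi}=0$. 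Note first that under the respective hypotheses $\tilde\kappa>-1$ or $\tilde\kappa<-1$, the square roots in the definitions are real, so the tensors are well defined.

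\textbf{Case $\tilde\kappa>-1$.} Here $\phi_{1}^{2}=\tilde{\varphi}^{2}=I-\eta\otimes\xi$ is immediate, and $\phi_{2}^{2}=\tfrac{1}{1+\tilde\kappa}\tilde{h}^{2}=\tilde{\varphi}^{2}=I-\eta\otimes\xi$ follows directly from \eqref{H2}. The identity $\phi_{1}\phi_{2}=\phi_{3}$ is the definition, and $\phi_{1}\phi_{2}+\phi_{2}\phi_{1}=\tfrac{1}{\sqrt{1+\tilde\kappa}}(\tilde{\varphi}\tilde{h}+\tilde{h}\tilde{\varphi})=0$. For $\phi_{3}^{2}$, using $(\tilde{\varphi}\tilde{h})^{2}=\tilde{\varphi}(-\tilde{\varphi}\tilde{h})\tilde{h}=-\tilde{\varphi}^{2}\tilde{h}^{2}$ together with $(I-\eta\otimes\xi)^{2}=I-\eta\otimes\xi$ gives $\phi_{3}^{2}=-\tfrac{1}{1+\tilde\kappa}(1+\tilde\kappa)(I-\eta\otimes\xi)=-I+\eta\otimes\xi$, so $\phi_{3}$ is an almost contact structure compatible with $\eta$.

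\textbf{Case $\tilde\kappa<-1$.} The computations are analogous, with the roles of $\tilde{h}$ and $\tilde{\varphi}\tilde{h}$ interchanged. One gets $\phi_{2}^{2}=\tfrac{1}{-1-\tilde\kappa}(\tilde{\varphi}\tilde{h})^{2}=\tfrac{-(1+\tilde\kappa)}{-1-\tilde\kappa}(I-\eta\otimes\xi)=I-\eta\otimes\xi$ and $\phi_{3}^{2}=\tfrac{1}{-1-\tilde\kappa}\tilde{h}^{2}=-I+\eta\otimes\xi$. The product $\phi_{1}\phi_{2}=\tfrac{1}{\sqrt{-1-\tilde\kappa}}\tilde{\varphi}^{2}\tilde{h}=\tfrac{1}{\sqrt{-1-\tilde\kappa}}(I-\eta\otimes\xi)\tilde{h}=\tfrac{1}{\sqrt{-1-\tilde\kappa}}\tilde{h}=\phi_{3}$, using $\eta\circ\tilde{h}=0$, while the same simplification combined with $\tilde{\varphi}\tilde{h}\cdot\tilde{\varphi}=-\tilde{\varphi}^{2}\tilde{h}=-\tilde{h}$ on $\ker\eta$ yields the anti-commutation $\phi_{1}\phi_{2}+\phi_{2}\phi_{1}=0$.

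It remains to verify that in both cases the $\pm1$-eigendistributions of $\phi_{2}$ have equal dimension $n$, since this is built into the definition only for $\phi_{1}=\tilde\varphi$. On $\ker\eta$ one has $\phi_{2}^{2}=I$, so $\phi_{2}$ is diagonalizable with eigenvalues $\pm1$; and since $\phi_{1}=\tilde{\varphi}$ anti-commutes with $\phi_{2}$ (by the identities just checked) while restricting to an isomorphism of $\ker\eta$, it maps the $+1$-eigenspace of $\phi_{2}$ isomorphically onto the $-1$-eigenspace, forcing equal dimension $n$. The work is purely algebraic, and the main subtlety — really the only step requiring attention — is keeping careful track of which factor of $1+\tilde\kappa$ or $-1-\tilde\kappa$ appears and of the sign produced by the anti-commutation of $\tilde{\varphi}$ and $\tilde{h}$ when simplifying $(\tilde{\varphi}\tilde{h})^{2}$.
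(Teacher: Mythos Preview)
Your proof is correct and follows exactly the approach the paper indicates: the paper's own proof is the single sentence ``The proof follows by direct computations, using \eqref{H2} and the property $\tilde{\varphi}\tilde{h}=-\tilde{h}\tilde{\varphi}$,'' and you have simply carried out those computations in detail. Your additional verification that the $\pm1$-eigendistributions of $\phi_{2}$ have equal dimension is not strictly required here, since the paper notes (just before the proposition) that this follows automatically from the defining axioms of an almost bi-paracontact structure, but it does no harm.
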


\begin{proof}
The proof follows by direct computations, using \eqref{H2} and the property $%
\tilde{\varphi}\tilde{h}=-\tilde{h}\tilde{\varphi}$.
\end{proof}

\begin{corollary}
\label{bipara2} Let $(M,\tilde{\varphi},\xi ,\eta ,\tilde{g})$ be a
paracontact $(\tilde{\kappa},\tilde{\mu})$-manifold such that $\tilde{\kappa}%
\neq -1$. Then the operator $\tilde{h}$ in the case $\tilde{\kappa}>-1 $ and
the operator $\tilde{\varphi}\tilde{h}$ in the case $\tilde{\kappa}<-1$ are
diagonalizable and admit three eigenvalues: $0$, associated to the
eigenvector $\xi $, $\tilde{\lambda}$ and -$\tilde{\lambda}$, of
multiplicity $n$, where $\tilde{\lambda}:=\sqrt{|1+\tilde{\kappa}|}$. The
corresponding eigendistributions ${\mathcal{D}}_{\tilde{h}}(0)=\mathbb{R}\xi$%
, ${\mathcal{D}}_{\tilde{h}}(\tilde{\lambda})$, ${\mathcal{D}}_{\tilde{h}}(-%
\tilde{\lambda})$ and ${\mathcal{D}}_{\tilde{\varphi}\tilde{h}}(0)=\mathbb{R}%
\xi$, ${\mathcal{D}}_{\tilde{\varphi}\tilde{h}}(\tilde{\lambda})$, ${%
\mathcal{D}}_{\tilde{\varphi}\tilde{h}}(-\tilde{\lambda})$ are mutually
orthogonal and one has $\tilde{\varphi}{\mathcal{D}}_{\tilde{h}}(\pm\tilde{%
\lambda})={\mathcal{D}}_{\tilde{h}}(\mp\tilde{\lambda})$ and $\tilde{\varphi}%
{\mathcal{D}}_{\tilde{\varphi}\tilde{h}}(\pm\tilde{\lambda})={\mathcal{D}}_{%
\tilde{\varphi}\tilde{h}}(\mp\tilde{\lambda})$. Furthermore,
\begin{equation}  \label{formula1}
{\mathcal{D}}_{\tilde h}(\pm\tilde\lambda)=\left\{X\pm\frac{1}{\sqrt{%
1+\tilde\kappa}}\tilde{h}X | X\in\Gamma({\mathcal{D}}^{\mp})\right\}
\end{equation}
in the case $\tilde\kappa>-1$, and
\begin{equation}  \label{formula2}
{\mathcal{D}}_{\tilde\varphi \tilde h}(\pm\tilde\lambda)=\left\{X\pm\frac{1}{%
\sqrt{-1-\tilde\kappa}}\tilde{\varphi}\tilde{h}X | X\in\Gamma({\mathcal{D}}%
^{\mp})\right\}
\end{equation}
in the case $\tilde\kappa<-1$, where ${\mathcal{D}}^{+}$ and ${\mathcal{D}}%
^{-}$ denote the eigendistributions of $\tilde\varphi$ corresponding to the
eigenvalues $1$ and $-1$, respectively. Finally any two among the four
distributions ${\mathcal{D}}^{+}$, ${\mathcal{D}}^{-}$, ${\mathcal{D}}%
_{\tilde h}(\tilde\lambda)$, ${\mathcal{D}}_{\tilde h}(-\tilde\lambda)$ in
the case $\tilde\kappa>-1$ or ${\mathcal{D}}^{+}$, ${\mathcal{D}}^{-}$, ${%
\mathcal{D}}_{\tilde\varphi\tilde h}(\tilde\lambda)$, ${\mathcal{D}}%
_{\tilde\varphi\tilde h}(-\tilde\lambda)$ in the case $\tilde\kappa<-1$ are
mutually transversal.
\end{corollary}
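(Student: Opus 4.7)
The plan is to derive everything from Proposition \ref{bipara1}, which provides the almost bi-paracontact structure $(\phi_1,\phi_2,\phi_3)$. By definition of an almost paracontact structure, $\phi_2^2 = I - \eta\otimes\xi$, so $\phi_2$ restricted to $\mathcal{D}=\ker\eta$ has minimal polynomial dividing $X^2-1$ and is therefore diagonalizable with $\pm 1$-eigenspaces $\mathcal{D}_{\phi_2}^{\pm}$ each of dimension $n$. Translating back through the scalings \eqref{caseI} and \eqref{caseII} yields at once that $\tilde h$ (in the case $\tilde\kappa>-1$) or $\tilde\varphi\tilde h$ (in the case $\tilde\kappa<-1$) is diagonalizable with eigenvalues $\pm\tilde\lambda$ of multiplicity $n$ on $\mathcal{D}$, plus the eigenvalue $0$ on $\mathbb{R}\xi$ since $\tilde h\xi=0$.

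For orthogonality of the $\pm\tilde\lambda$-eigenspaces, I would invoke the standard argument for symmetric operators with respect to a non-degenerate bilinear form: $\tilde h$ is $\tilde g$-symmetric by hypothesis, and $\tilde\varphi\tilde h$ is likewise symmetric because $\tilde\varphi$ is $\tilde g$-antisymmetric (from the compatibility $d\eta(X,Y)=\tilde g(X,\tilde\varphi Y)$ and skew-symmetry of $d\eta$). The fact that $\tilde\varphi$ swaps the $\pm\tilde\lambda$-eigenspaces of $\tilde h$ is immediate from $\tilde h\tilde\varphi=-\tilde\varphi\tilde h$: if $\tilde h X=\tilde\lambda X$, then $\tilde h(\tilde\varphi X)=-\tilde\varphi\tilde h X=-\tilde\lambda\tilde\varphi X$. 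A similar check using $\tilde\varphi\tilde\varphi\tilde h=\tilde h$ and $\tilde\varphi\tilde h\tilde\varphi=-\tilde h$ on $\mathcal{D}$ handles $\tilde\varphi\tilde h$ in the other case.

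The explicit formulas \eqref{formula1} and \eqref{formula2} will be verified by direct computation. From \eqref{H2} one has $\tilde h^2 X=(1+\tilde\kappa)X$ for every $X\in\mathcal{D}$, so for $X\in\mathcal{D}^{\mp}$ (i.e.\ $\tilde\varphi X=\mp X$) we compute
\begin{equation*}
\tilde h\Bigl(X\pm\tfrac{1}{\sqrt{1+\tilde\kappa}}\tilde h X\Bigr)=\tilde h X\pm\sqrt{1+\tilde\kappa}\,X=\pm\tilde\lambda\Bigl(X\pm\tfrac{1}{\sqrt{1+\tilde\kappa}}\tilde h X\Bigr),
\end{equation*}
so the right-hand side of \eqref{formula1} lies in $\mathcal{D}_{\tilde h}(\pm\tilde\lambda)$. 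The map $X\mapsto X\pm\frac{1}{\sqrt{1+\tilde\kappa}}\tilde h X$ from $\mathcal{D}^{\mp}$ to $\mathcal{D}_{\tilde h}(\pm\tilde\lambda)$ is injective because its $\mathcal{D}^{\mp}$-component (via $\tilde h\mathcal{D}^{\mp}\subset\mathcal{D}^{\pm}$) equals $X$, and a dimension count forces surjectivity. The case $\tilde\kappa<-1$ proceeds identically using $(\tilde\varphi\tilde h)^2=-(1+\tilde\kappa)(I-\eta\otimes\xi)$.

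Finally, mutual transversality of the four $n$-dimensional subbundles of the $2n$-dimensional $\mathcal{D}$ is a short consequence of the explicit formulas: any nonzero element of $\mathcal{D}_{\tilde h}(\pm\tilde\lambda)$ has, by \eqref{formula1}, nontrivial projections onto both $\mathcal{D}^{+}$ and $\mathcal{D}^{-}$, so $\mathcal{D}_{\tilde h}(\pm\tilde\lambda)\cap\mathcal{D}^{\pm}=0$, and the intersection $\mathcal{D}_{\tilde h}(\tilde\lambda)\cap\mathcal{D}_{\tilde h}(-\tilde\lambda)=0$ comes directly from the eigenspace decomposition. No conceptual obstacle is anticipated; the only delicate point is keeping the sign conventions consistent between the two cases $\tilde\kappa>-1$ and $\tilde\kappa<-1$, since the roles of $\tilde h$ and $\tilde\varphi\tilde h$ (and accordingly of $\phi_2$ and $\phi_3$) are interchanged.
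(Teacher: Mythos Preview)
Your proposal is correct and follows essentially the same route as the paper: both derive diagonalizability from Proposition~\ref{bipara1}, orthogonality from the $\tilde g$-symmetry of $\tilde h$ (respectively $\tilde\varphi\tilde h$), and the swap of eigenspaces from the anti-commutation $\tilde h\tilde\varphi=-\tilde\varphi\tilde h$. The only difference is that for the explicit descriptions \eqref{formula1}--\eqref{formula2} and the mutual transversality of the four distributions, the paper simply cites \cite[Propositions~3.2 and~3.3]{CAP2}, whereas you supply short direct arguments (checking $\tilde h\bigl(X\pm\frac{1}{\tilde\lambda}\tilde hX\bigr)=\pm\tilde\lambda\bigl(X\pm\frac{1}{\tilde\lambda}\tilde hX\bigr)$, injectivity via the $\mathcal{D}^{\mp}$-component, and transversality from the nonvanishing of both components). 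Your version is thus more self-contained; otherwise the two proofs coincide.
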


\begin{proof}
The first part of the statement follows from Proposition \ref{bipara1},
since $\tilde h=\tilde\lambda\phi_2$ and ${\mathcal{D}}_{\tilde
h}(\pm\tilde\lambda)={\mathcal{D}}_{2}^{\pm}$. We prove that ${\mathcal{D}}%
_{\tilde h}(\tilde\lambda)$ and ${\mathcal{D}}_{\tilde h}(-\tilde\lambda)$
are mutually orthogonal. Indeed, for any $X\in\Gamma({\mathcal{D}}_{\tilde
h}(\tilde\lambda))$ and $Y\in\Gamma({\mathcal{D}}_{\tilde
h}(-\tilde\lambda)) $ we have $\tilde\lambda\tilde g(X,Y)=\tilde g(\tilde h
X,Y)=\tilde g(X,\tilde h Y)=-\tilde\lambda\tilde g (X,Y)$, from which, since
$\tilde\lambda\neq 0$, we get $\tilde g(X,Y)=0$. Moreover, as $%
\tilde\varphi\tilde h=-\tilde h\tilde\varphi$ one has that if $\tilde h
X=\tilde\lambda X$ then $\tilde h\tilde\varphi X=-\tilde\lambda\tilde\varphi
X$, so that $\tilde\varphi{\mathcal{D}}_{\tilde h}(\pm\tilde\lambda)={%
\mathcal{D}}_{\tilde h}(\mp\tilde\lambda)$. The case $\kappa<-1$ can be
proved in a similar manner. Finally, \eqref{formula1} and \eqref{formula2}
follow from \cite[Proposition 3.3]{CAP2} and the last part is a direct
consequence of \cite[Proposition 3.2]{CAP2}.
\end{proof}

Thus paracontact $(\tilde\kappa,\tilde\mu)$-manifolds can be divided into
three main classes, according to the circumstance that $\tilde\kappa$ is
less, equal or greater than $-1$. According to \eqref{dhomothetic1}, one can
see that these three classes are preserved by ${\mathcal{D}}$-homothetic
deformations. \ Notice that the canonical paracontact metric structure $%
(\tilde\varphi_{1},\xi,\eta,\tilde{g}_1)$ on the tangent sphere bundle $%
T_{1}M$ of a manifold of constant curvature $c$ (cf. Theorem \ref{sphere3})
always satisfies $\tilde\kappa_{1}>-1$. Whereas for the other one, $%
(\tilde\varphi_{2},\xi,\eta,\tilde{g}_2)$, we have that $\tilde\kappa_2$ is
less, equal or greater than $-1$ if and only if, respectively, $c$ is less,
equal or greater than $0$. Thus $T_{1}M$ provides examples for all the above
three classes of paracontact $(\tilde\kappa,\tilde\mu)$-manifolds.

In the sequel, unless otherwise stated, we will always assume the index of ${%
\mathcal{D}}_{\tilde{h}}(\pm\lambda)$ (in the case $\tilde\kappa>-1$) and of
${\mathcal{D}}_{\tilde{\varphi}\tilde{h}}(\pm\lambda)$ (in the case $%
\tilde\kappa<-1$) to be constant.

Being $\tilde{h}$ (in the case $\tilde\kappa>-1$) or $\tilde\varphi\tilde{h}$
(in the case $\tilde\kappa<-1$) diagonalizable, one can easily prove the
following lemma.

\begin{lemma}
\label{basis} Let $(M,\tilde{\varphi},\xi ,\eta ,\tilde{g})$ be a
paracontact metric $(\tilde{\kappa},\tilde{\mu})$-manifold such that $%
\tilde\kappa\neq -1$. If $\tilde{\kappa}>-1$ (respectively, $\tilde{\kappa}%
<-1$), then there exists a local orthogonal $\tilde{\varphi}$-basis $%
\{X_{1},\ldots ,X_{n},Y_{1},\ldots ,Y_{n},\xi \}$ of eigenvectors of $\tilde{%
h}$ (respectively, $\tilde\varphi\tilde{h}$) such that $X_{1},\ldots
,X_{n}\in \Gamma ({\mathcal{D}}_{\tilde{h}}(\tilde{\lambda}))$
(respectively, $\Gamma ({\mathcal{D}}_{\tilde\varphi\tilde{h}}(\tilde{\lambda%
}))$), $Y_{1},\ldots ,Y_{n}\in \Gamma ({\mathcal{D}}_{\tilde{h}}(-\tilde{%
\lambda}))$ (respectively, $\Gamma ({\mathcal{D}}_{\tilde\varphi\tilde{h}}(-%
\tilde{\lambda}))$), and
\begin{equation}  \label{sign}
\tilde{g}(X_{i},X_{i})=-\tilde{g}(Y_{i},Y_{i})=\left\{
\begin{array}{ll}
1, & \hbox{for $1 \leq i \leq r$} \\
-1, & \hbox{for $r+1 \leq i \leq r+s$}%
\end{array}
\right.
\end{equation}
where $r=\emph{index}({\mathcal{D}}_{\tilde h}(-\tilde\lambda))$
(respectively, $r=\emph{index}({\mathcal{D}}_{\tilde\varphi\tilde
h}(-\tilde\lambda))$) and $s=n-r=\emph{index}({\mathcal{D}}_{\tilde
h}(\tilde\lambda))$ (respectively, $s=\emph{index}({\mathcal{D}}%
_{\tilde\varphi\tilde h}(\tilde\lambda))$).
\end{lemma}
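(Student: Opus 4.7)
The plan is to build the $\tilde\varphi$-basis by first constructing the $X_i$'s inside the $(+\tilde\lambda)$-eigendistribution and then defining $Y_i:=\tilde\varphi X_i$, exploiting the fact that $\tilde\varphi$ swaps the two eigendistributions of $\tilde h$ (or of $\tilde\varphi\tilde h$) and flips the sign of the metric. I will treat only the case $\tilde\kappa>-1$; the case $\tilde\kappa<-1$ is formally identical once one replaces $\tilde h$ by $\tilde\varphi\tilde h$ (both operators are diagonalizable with the same eigenvalues and anti-commute with $\tilde\varphi$).

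First I would invoke Corollary \ref{bipara2} to obtain the orthogonal splitting $TM={\mathcal{D}}_{\tilde h}(\tilde\lambda)\oplus{\mathcal{D}}_{\tilde h}(-\tilde\lambda)\oplus\mathbb{R}\xi$, with each of the first two summands $n$-dimensional, and the relation $\tilde\varphi{\mathcal{D}}_{\tilde h}(\pm\tilde\lambda)={\mathcal{D}}_{\tilde h}(\mp\tilde\lambda)$. Because $\tilde h\xi=0$ and any eigenvector of $\tilde h$ with nonzero eigenvalue is orthogonal to $\xi$ (as $\tilde\lambda\,\tilde g(X,\xi)=\tilde g(\tilde h X,\xi)=\tilde g(X,\tilde h\xi)=0$), we have ${\mathcal{D}}_{\tilde h}(\pm\tilde\lambda)\subset\ker\eta$. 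Under the assumption that the index of ${\mathcal{D}}_{\tilde h}(\tilde\lambda)$ is locally constant and equal to $s=n-r$, a standard Gram--Schmidt argument for pseudo-Euclidean inner products produces a local orthonormal frame $X_1,\dots,X_n$ of ${\mathcal{D}}_{\tilde h}(\tilde\lambda)$ arranged so that $\tilde g(X_i,X_i)=1$ for $1\leq i\leq r$ and $\tilde g(X_i,X_i)=-1$ for $r+1\leq i\leq r+s$.

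Next I would set $Y_i:=\tilde\varphi X_i$ for $i=1,\dots,n$. Since $\tilde\varphi\tilde h=-\tilde h\tilde\varphi$, each $Y_i$ lies in ${\mathcal{D}}_{\tilde h}(-\tilde\lambda)$. Using $\eta(X_i)=0$ together with the compatibility condition \eqref{G METRIC} gives
\[
\tilde g(Y_i,Y_j)=\tilde g(\tilde\varphi X_i,\tilde\varphi X_j)=-\tilde g(X_i,X_j)+\eta(X_i)\eta(X_j)=-\tilde g(X_i,X_j),
\]
which shows simultaneously that $Y_1,\dots,Y_n$ are mutually orthogonal and that $\tilde g(Y_i,Y_i)=-\tilde g(X_i,X_i)$, matching the sign pattern prescribed by \eqref{sign}. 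Orthogonality between the $X$'s and the $Y$'s is automatic from the orthogonality ${\mathcal{D}}_{\tilde h}(\tilde\lambda)\perp{\mathcal{D}}_{\tilde h}(-\tilde\lambda)$, while $\tilde g(X_i,\xi)=\eta(X_i)=0$ and analogously for $Y_i$; finally $\tilde g(\xi,\xi)=1$. Thus $\{X_1,\dots,X_n,Y_1,\dots,Y_n,\xi\}$ is the desired orthogonal $\tilde\varphi$-basis.

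There is really no serious obstacle here, only a bookkeeping point: one has to make sure the ordering of signs on the $X_i$ is chosen so that the flipped signs on the $Y_i$'s are consistent with the index of ${\mathcal{D}}_{\tilde h}(-\tilde\lambda)$. This is forced by the relation $\tilde g(Y_i,Y_i)=-\tilde g(X_i,X_i)$: the number of negative signs among $Y_1,\dots,Y_n$ equals the number of positive signs among $X_1,\dots,X_n$, namely $r$, which is exactly $\mathrm{index}({\mathcal{D}}_{\tilde h}(-\tilde\lambda))$ by definition. The case $\tilde\kappa<-1$ proceeds verbatim, simply diagonalizing $\tilde\varphi\tilde h$ in place of $\tilde h$ and using the same sign-flip argument under $\tilde\varphi$.
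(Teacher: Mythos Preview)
Your proposal is correct and is exactly the argument the paper has in mind: the paper does not spell out a proof of this lemma at all, merely prefacing it with ``Being $\tilde{h}$ (in the case $\tilde\kappa>-1$) or $\tilde\varphi\tilde{h}$ (in the case $\tilde\kappa<-1$) diagonalizable, one can easily prove the following lemma.'' Your Gram--Schmidt construction on the $(+\tilde\lambda)$-eigendistribution followed by $Y_i:=\tilde\varphi X_i$ is the natural way to fill in this omission, and your handling of the sign bookkeeping is accurate.
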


As pointed out in \cite{MON}, there is a strict relationship between
paracontact metric geometry and the theory of Legendre foliations. Thus it
is interesting to investigate on the properties of the bi-Legendrian
structure $({\mathcal{D}}^{+},{\mathcal{D}}^{-})$ canonically associated to
a paracontact $(\tilde\kappa,\tilde\mu)$-manifold. In the next proposition
we prove that it is non-degenerate and we find necessary and sufficient
conditions for being positive or negative definite.

\begin{proposition}
\label{posneg1} Let $(M,\tilde{\varphi},\xi ,\eta ,\tilde{g})$ be a
paracontact metric $(\tilde{\kappa},\tilde{\mu})$-manifold such that $\tilde{%
\kappa}\neq -1$. Then the canonical Legendre foliations ${\mathcal{D}}^{+}$
and ${\mathcal{D}}^{-}$ given by the the $\pm 1$-eigendistributions of $%
\tilde{\varphi}$ are both non-degenerate. They are positive definite if and
only if, respectively, $\emph{index}({\mathcal{D}}_{\tilde{h}}(\tilde{\lambda%
}))=0$ in the case $\tilde\kappa>-1$ and $\emph{index}({\mathcal{D}}%
_{\tilde\varphi\tilde{h}}(\tilde{\lambda}))=n$ in the case $\tilde\kappa<-1$%
, and negative definite if and only if $\emph{index}({\mathcal{D}}_{\tilde{h}%
}(\tilde{\lambda}))=n$ in the case $\tilde\kappa>-1$ and $\emph{index}({%
\mathcal{D}}_{\tilde\varphi\tilde{h}}(\tilde{\lambda}))=0$ in the case $%
\tilde\kappa<-1$.
\end{proposition}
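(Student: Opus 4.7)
The plan is to derive a simple closed form for the Pang bilinear form $\Pi_{\mathcal{D}^{\pm}}$, from which both the non-degeneracy and the signature statements will follow at once. The starting observation is that, by the compatibility \eqref{G METRIC} together with $\tilde{\varphi}|_{\mathcal{D}^{\pm}}=\pm I$ and $\eta|_{\mathcal{D}^{\pm}}=0$, both $\mathcal{D}^{+}$ and $\mathcal{D}^{-}$ are totally isotropic for $\tilde{g}$; hence $\tilde{g}$ restricts to a non-degenerate pairing $\mathcal{D}^{+}\times \mathcal{D}^{-}\to \mathbb{R}$. Moreover $\tilde{\varphi}\tilde{h}=-\tilde{h}\tilde{\varphi}$ implies that $\tilde{h}$ exchanges $\mathcal{D}^{+}$ and $\mathcal{D}^{-}$.

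Next I would compute $\Pi_{\mathcal{D}^{\pm}}(X,Y)=2d\eta([\xi,X],Y)$ using $\mathcal{L}_{\xi}\tilde{\varphi}=2\tilde{h}$, which reads $[\xi,\tilde{\varphi}X]-\tilde{\varphi}[\xi,X]=2\tilde{h}X$. Since $i_{\xi}d\eta=0$ forces $\eta([\xi,X])=0$, decomposing $[\xi,X]=A+B$ with $A\in\Gamma(\mathcal{D}^{+})$ and $B\in\Gamma(\mathcal{D}^{-})$ and substituting yields $B=\tilde{h}X$ when $X\in\mathcal{D}^{+}$ and $A=-\tilde{h}X$ when $X\in\mathcal{D}^{-}$. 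Using $d\eta(Z,Y)=\pm\tilde{g}(Z,Y)$ for $Y\in\mathcal{D}^{\pm}$ together with the isotropy of $\mathcal{D}^{\pm}$, both cases collapse to the single uniform identity
\[
\Pi_{\mathcal{D}^{\pm}}(X,Y)=2\tilde{g}(\tilde{h}X,Y), \qquad X,Y\in\Gamma(\mathcal{D}^{\pm}).
\]
Non-degeneracy is then immediate: by \eqref{H2} the hypothesis $\tilde{\kappa}\neq -1$ makes $\tilde{h}^{2}=(1+\tilde{\kappa})I$ invertible on $\mathcal{D}$, so $\tilde{h}\colon\mathcal{D}^{\pm}\to\mathcal{D}^{\mp}$ is an isomorphism, and combined with the non-degenerate pairing between $\mathcal{D}^{+}$ and $\mathcal{D}^{-}$ this forces $\Pi_{\mathcal{D}^{\pm}}$ to be non-degenerate.

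For the signature, I would take the $\tilde{\varphi}$-basis $\{X_{i},Y_{i},\xi\}$ of Lemma \ref{basis} (with $Y_{i}=\tilde{\varphi}X_{i}$) and use $Z_{i}:=X_{i}+Y_{i}$ as a basis of $\mathcal{D}^{+}$ (and $X_{i}-Y_{i}$ for $\mathcal{D}^{-}$). In the case $\tilde{\kappa}>-1$ one has $\tilde{h}X_{i}=\tilde{\lambda}X_{i}$, whence $\tilde{h}Z_{i}=\tilde{\lambda}(X_{i}-Y_{i})$; expanding and using orthogonality of the basis and $\tilde{g}(X_{i},X_{i})=-\tilde{g}(Y_{i},Y_{i})$ gives $\Pi_{\mathcal{D}^{+}}(Z_{i},Z_{j})=4\tilde{\lambda}\,\tilde{g}(X_{i},X_{i})\delta_{ij}$, so positive (resp. negative) definiteness is equivalent to all $\tilde{g}(X_{i},X_{i})=1$ (resp. $-1$), i.e. to $\mathrm{index}(\mathcal{D}_{\tilde{h}}(\tilde{\lambda}))=0$ (resp. $n$). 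In the case $\tilde{\kappa}<-1$, applying $\tilde{\varphi}$ to $\tilde{\varphi}\tilde{h}X_{i}=\tilde{\lambda}X_{i}$ yields $\tilde{h}X_{i}=\tilde{\lambda}Y_{i}$, so $\tilde{h}Z_{i}=-\tilde{\lambda}(X_{i}-Y_{i})$ and the same calculation produces $\Pi_{\mathcal{D}^{+}}(Z_{i},Z_{j})=-4\tilde{\lambda}\,\tilde{g}(X_{i},X_{i})\delta_{ij}$, reversing the correspondence exactly as claimed. The argument for $\mathcal{D}^{-}$ is word-for-word the same using $X_{i}-Y_{i}$. The only delicate point is tracking this sign flip between the two cases, which comes entirely from the switch from $\tilde{h}$-eigenvectors to $\tilde{\varphi}\tilde{h}$-eigenvectors in Lemma \ref{basis}.
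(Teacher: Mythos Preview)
Your proof is correct and follows essentially the same approach as the paper: derive the key identity $\Pi_{\mathcal{D}^{\pm}}(X,Y)=2\tilde g(\tilde hX,Y)$, deduce non-degeneracy from the invertibility of $\tilde h$ on $\mathcal D$ via \eqref{H2}, and then read off the signature by evaluating on the basis $X_i\pm Y_i$ coming from Lemma~\ref{basis}. Your derivation of the Pang invariant is slightly more self-contained (computing $[\xi,X]_{\mathcal D^{\mp}}$ directly from $\mathcal L_\xi\tilde\varphi=2\tilde h$ rather than quoting \cite{MON}), and your diagonal-form argument handles both directions of the ``if and only if'' at once, whereas the paper proves the converse separately by constructing an explicit basis of $\mathcal D_{\tilde h}(\tilde\lambda)$ from one diagonalizing $\Pi_{\mathcal D^-}$; but these are cosmetic differences within the same strategy.
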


\begin{proof}
We consider the case $\tilde{\kappa}>-1$, the proof for the case $\tilde{%
\kappa}<-1$ being analogous. \ First notice that the Pang invariants
associated to the Legendre foliations ${\mathcal{D}}^{+}$ and ${\mathcal{D}}%
^{-}$ are given by
\begin{equation}
\Pi _{{\mathcal{D}}^{+}}(X,X^{\prime })=2\tilde{g}(\tilde{h}X,X^{\prime }),\
\ \ \Pi _{{\mathcal{D}}^{-}}(Y,Y^{\prime })=2\tilde{g}(\tilde{h}Y,Y^{\prime
}).  \label{invariant1}
\end{equation}%
Indeed, for any $X\in \Gamma ({\mathcal{D}}^{+})$ and for any $Y\in \Gamma ({%
\mathcal{D}}^{-})$ one has $\tilde{h}X=[\xi ,X]_{{\mathcal{D}}^{-}}$ and $%
\tilde{h}Y=-[\xi ,Y]_{{\mathcal{D}}^{+}}$ (see \cite[Corollary 3.2]{MON}).
Then we have that, for any $X,X^{\prime }\in \Gamma ({\mathcal{D}}^{+})$, $%
\Pi _{{\mathcal{D}}^{+}}(X,X^{\prime })=2d\eta ([\xi ,X],X^{\prime })=2%
\tilde{g}([\xi ,X],\tilde{\varphi}X^{\prime })=2\tilde{g}([\xi ,X],X^{\prime
})=2\tilde{g}([\xi ,X]_{-},X^{\prime })=2\tilde{g}(\tilde{h}X,X^{\prime })$.
Analogously one proves the other equality. \ Now let $\{X_{i},Y_{i}=\tilde{%
\varphi}X_{i},\xi \}$, ${i\in \{1,\ldots ,n\}}$, be a $\tilde{\varphi}$%
-basis of eigenvectors of $\tilde{h}$ as in Lemma \ref{basis}. Notice that
\begin{equation}
{\mathcal{D}}^{\pm }=\{X\pm \tilde{\varphi}X|X\in \Gamma ({\mathcal{D}}_{%
\tilde{h}}(\pm \tilde{\lambda}))\}.  \label{rel1}
\end{equation}%
This follows from \cite[Proposition 3.3]{CAP2} applied to the canonical
almost bi-paracontact structure attached to $(M,\tilde{\varphi},\xi ,\eta ,%
\tilde{g})$. Then by \eqref{invariant1}, since ${\tilde{h}}{\mathcal{D}}%
^{+}\subset {\mathcal{D}}^{-}$, we have that if $\Pi _{{\mathcal{D}}%
^{+}}(X,X^{\prime })=0$ for all $X,X^{\prime }\in \Gamma ({\mathcal{D}}^{+})$
necessarily $\tilde{h}X=0$. Hence $\tilde{h}^{2}X=0$ and, by \eqref{H2},
this implies that $X=0$. Therefore ${\mathcal{D}}^{+}$ is non-degenerate. A
similar proof works also for ${\mathcal{D}}^{-}$. \ Next, by \eqref{rel1},
in order to check whether $\mathcal{D}^{+}$ is positive or negative definite
it suffices to evaluate $\Pi _{{\mathcal{D}}^{+}}$ on the vector fields of
the form $X_{i}+\tilde{\varphi}X_{i}=X_{i}+Y_{i}$. Using \eqref{invariant1}
we have, for each $i\in \left\{ 1,\ldots ,n\right\} $,
\begin{equation*}
\Pi _{{\mathcal{D}}^{+}}(X_{i}+Y_{i},X_{i}+Y_{i})=2\tilde{g}(\tilde{h}X_{i}+%
\tilde{h}Y_{i},X_{i}+Y_{i})=2\tilde{g}(\tilde{\lambda}X_{i},X_{i})-2\tilde{g}%
(\tilde{\lambda}Y_{i},Y_{i})=\pm 4\tilde{\lambda},
\end{equation*}%
where the sign $\pm $ depends on the fact that $\text{index}({\mathcal{D}}_{%
\tilde{h}}(\tilde{\lambda}))=0$ or $\text{index}({\mathcal{D}}_{\tilde{h}}(%
\tilde{\lambda}))=n$, respectively. On the other hand as before, by %
\eqref{rel1} it is sufficient to evaluate $\Pi _{{\mathcal{D}}^{-}}$ on the
vector fields of the form $Y_{i}-\tilde{\varphi}Y_{i}=Y_{i}-X_{i}$. Then
\begin{equation*}
\Pi _{{\mathcal{D}}^{-}}(Y_{i}-Y_{i},Y_{i}-X_{i})=2\tilde{g}(\tilde{h}Y_{i}-%
\tilde{h}X_{i},Y_{i}-X_{i})=-2\tilde{g}(\tilde{\lambda}Y_{i},Y_{i})+2\tilde{g%
}(\tilde{\lambda}X_{i},X_{i})=\pm 4\tilde{\lambda},
\end{equation*}%
according to the circumstance that $\text{index}({\mathcal{D}}_{\tilde{h}}(%
\tilde{\lambda}))=0$ or $\text{index}({\mathcal{D}}_{\tilde{h}}(\tilde{%
\lambda}))=n$, respectively. Conversely, if $\Pi _{{\mathcal{D}}^{-}}$ is
positive definite then there exists a local basis of ${\mathcal{D}}^{-}$,
say $\{Z_{1},\ldots ,Z_{n}\}$, such that $\Pi _{{\mathcal{D}}%
^{-}}(Z_{i},Z_{j})=\delta _{ij}$. Then, taking \eqref{formula1} into
account, we put \ $X_{i}:=\sqrt{\tilde{\lambda}}\left( Z_{i}+\frac{1}{\tilde{%
\lambda}}\tilde{h}Z_{i}\right) $, for each $i\in \left\{ 1,\ldots ,n\right\}
$. Then $\left\{ X_{1},\ldots ,X_{n}\right\} $ is a local basis of ${%
\mathcal{D}}_{\tilde{h}}(\tilde{\lambda})$ such that
\begin{equation*}
\tilde{g}(X_{i},X_{j})=\tilde{\lambda}\left( \tilde{g}(Z_{i},Z_{j})+\frac{1}{%
\tilde{\lambda}^{2}}\tilde{g}(\tilde{h}Z_{i},\tilde{h}Z_{j})+\frac{2}{\tilde{%
\lambda}}\tilde{g}(\tilde{h}Z_{i},Z_{j})\right) =\Pi _{{\mathcal{D}}%
^{-}}(Z_{i},Z_{j})=\delta _{ij},
\end{equation*}%
since $\tilde{g}(X,X^{\prime })=0$ for all $X,X^{\prime }\in \Gamma ({%
\mathcal{D}}^{+})$. Thus $\text{index}({\mathcal{D}}_{\tilde{h}}(\tilde{%
\lambda}))=0$. The other case is analogous.
\end{proof}

\begin{remark}
Notice that in the course of the proof of Proposition \ref{posneg1} we have
proved in fact more than what we have stated. Namely, we have proved that $%
\Pi_{{\mathcal{D}}^+}$ and $\Pi_{{\mathcal{D}}^-}$ have the same signature,
which is given by $(\text{index}({\mathcal{D}}_{\tilde h}(\tilde\lambda)),%
\text{index(}{\mathcal{D}}_{\tilde h}(-\tilde\lambda)))$ in the case $%
\tilde\kappa>-1$ and by $(\text{index}({\mathcal{D}}_{\tilde\varphi\tilde
h}(\tilde\lambda)),\text{index(}{\mathcal{D}}_{\tilde\varphi\tilde
h}(-\tilde\lambda)))$ if $\tilde\kappa<-1$
\end{remark}

Proposition \ref{posneg1} motivates the following definition.

\begin{definition}
\label{posneg} A paracontact $(\tilde{\kappa},\tilde{\mu})$-manifold $(M,%
\tilde{\varphi},\xi ,\eta ,\tilde{g})$ such that $\tilde\kappa\neq -1$ will
be called \emph{positive definite} or \emph{negative definite} according to
the circumstance that the bi-Legendrian structure $({\mathcal{D}}^{+},{%
\mathcal{D}}^{-})$ canonically associated to $M$ is positive or negative
definite, respectively.
\end{definition}

Positive and negative definite paracontact $(\tilde{\kappa},\tilde{\mu})$%
-structures will play an important role $\S $ \ref{firstcase} and $\S $ \ref%
{secondcase}. \ We conclude the section with an example of negative definite
paracontact $(\tilde{\kappa},\tilde{\mu})$-manifold.

\begin{example}
\label{example1} Let $\mathfrak{g}$ be the Lie algebra with basis $%
\{e_{1},e_{2},e_{3},e_{4},e_{5}\}$ and Lie brackets
\begin{gather}  \label{constant1}
\lbrack e_{1},e_{5}]=\frac{\alpha \beta }{2}e_{2}+\frac{\alpha ^{2}}{2}%
e_{3},\ \ [e_{2},e_{5}]=-\frac{\alpha \beta }{2}e_{1}+\frac{\alpha ^{2}}{2}%
e_{4}, \\
\ \ [e_{3},e_{5}]=-\frac{\beta ^{2}}{2}e_{1}+\frac{\alpha \beta }{2}e_{4},\
\ [e_{4},e_{5}]=-\frac{\beta ^{2}}{2}e_{2}-\frac{\alpha \beta }{2}e_{3}, \\
\lbrack e_{1},e_{2}]=\alpha e_{2},\ \ [e_{1},e_{3}]=-\beta e_{2}+2e_{5},\ \
[e_{1},e_{4}]=0, \\
\lbrack e_{2},e_{3}]=\beta e_{1}-\alpha e_{4},\ \ [e_{2},e_{4}]=\alpha
e_{3}+2e_{5},\ \ [e_{3},e_{4}]=-\beta e_{3}  \label{constant2}
\end{gather}
where $\alpha,\beta$ are real numbers such that $\alpha^2-\beta^2\neq 0$. Let $G$ be a Lie group whose Lie algebra is $\mathfrak{g}$. Define on $G$ a
left invariant paracontact metric structure $(\tilde{\varphi},\xi ,\eta ,%
\tilde{g})$ by imposing that, at the identity,
\begin{equation*}
\tilde{g}(e_{1},e_{1})=-1,\ \ \tilde{g}(e_{2},e_{2})=-1,\ \ \tilde{g}%
(e_{3},e_{3})=1,\ \ \tilde{g}(e_{4},e_{4})=1,\ \ \tilde{g}(e_{5},e_{5})=1,\
\ \tilde{g}(e_{i},e_{j})=0\hbox{ $(i\neq j)$}
\end{equation*}%
and $\tilde{\varphi}e_{1}=e_{3}$, $\tilde{\varphi}e_{2}=e_{4}$, $\tilde{%
\varphi}e_{3}=e_{1}$, $\tilde{\varphi}e_{4}=e_{2}$, $\tilde{\varphi}e_{5}=0$%
, $\xi =e_{5}$ and $\eta =\tilde{g}(\cdot ,e_{5})$. \ Notice that $\tilde{h}%
e_{1}=\tilde{\lambda}e_{1}$, $\tilde{h}e_{2}=\tilde{\lambda}e_{2}$, $\tilde{h%
}\varphi e_{1}=-\tilde{\lambda}\tilde{\varphi}e_{1}$, $\tilde{h}\tilde{%
\varphi}e_{2}=-\tilde{\lambda}\tilde{\varphi}e_{2}$, $\tilde{h}\xi =0$. \
Now let $\tilde{\nabla}$ be the Levi-Civita connection of the
pseudo-Riemannian metric $\tilde{g}$ and $\tilde{R}$ be the curvature tensor
of $\tilde{g}$. Using the Koszul formula, we get
\begin{gather*}
\tilde{\nabla}_{e_{1}}\xi =(\tilde{\lambda}-1)\tilde{\varphi}e_{1},\ \
\tilde{\nabla}_{e_{2}}\xi =(\tilde{\lambda}-1)\tilde{\varphi}e_{2},\ \
\tilde{\nabla}_{\tilde{\varphi}e_{1}}\xi =-(1+\tilde{\lambda})e_{1},\ \
\tilde{\nabla}_{\tilde{\varphi}e_{2}}\xi =-(1+\tilde{\lambda})e_{2}, \\
\tilde{\nabla}_{\xi }e_{1}=-\frac{\alpha \beta }{2}e_{2}-\frac{\tilde{\mu}}{2%
}\tilde{\varphi}e_{1},\ \ \tilde{\nabla}_{\xi }e_{2}=\frac{\alpha \beta }{2}%
e_{1}-\frac{\tilde{\mu}}{2}\tilde{\varphi}e_{2},\ \ \tilde{\nabla}_{\xi }%
\tilde{\varphi}e_{1}=-\frac{\tilde{\mu}}{2}e_{1}-\frac{\alpha \beta }{2}%
\tilde{\varphi}e_{2},\ \ \tilde{\nabla}_{\xi }\tilde{\varphi}e_{2}=-\frac{%
\tilde{\mu}}{2}e_{2}+\frac{\alpha \beta }{2}\tilde{\varphi}e_{1}, \\
\tilde{\nabla}_{e_{1}}e_{1}=0,\ \ \tilde{\nabla}_{e_{1}}e_{2}=0,\ \ \tilde{%
\nabla}_{e_{1}}\tilde{\varphi}e_{1}=-(\tilde{\lambda}-1)\xi ,\ \ \tilde{%
\nabla}_{e_{1}}\tilde{\varphi}e_{2}=0, \\
\tilde{\nabla}_{e_{2}}e_{1}=-\alpha e_{2},\ \ \tilde{\nabla}%
_{e_{2}}e_{2}=\alpha e_{1},\ \ \tilde{\nabla}_{e_{2}}\tilde{\varphi}%
e_{1}=-\alpha \tilde{\varphi}e_{2},\ \ \tilde{\nabla}_{e_{2}}\tilde{\varphi}%
e_{2}=\alpha \tilde{\varphi}e_{1}-(\tilde{\lambda}-1)\xi , \\
\tilde{\nabla}_{\tilde{\varphi}e_{1}}e_{1}=\beta e_{2}-(1+\tilde{\lambda}%
)\xi ,\ \ \tilde{\nabla}_{\tilde{\varphi}e_{1}}e_{2}=-\beta e_{1},\ \ \tilde{%
\nabla}_{\tilde{\varphi}e_{1}}\tilde{\varphi}e_{1}=\beta \tilde{\varphi}%
e_{2},\ \ \tilde{\nabla}_{\tilde{\varphi}e_{1}}\tilde{\varphi}e_{2}=-\beta
\tilde{\varphi}e_{1}, \\
\tilde{\nabla}_{\tilde{\varphi}e_{2}}e_{1}=0,\ \ \tilde{\nabla}_{\tilde{%
\varphi}e_{2}}e_{2}=-(1+\tilde{\lambda})\xi ,\ \ \tilde{\nabla}_{\tilde{%
\varphi}e_{2}}\tilde{\varphi}e_{1}=0,\ \ \tilde{\nabla}_{\tilde{\varphi}%
e_{2}}\tilde{\varphi}e_{2}=0,
\end{gather*}%
where $\tilde{\lambda}=\frac{\alpha ^{2}+\beta ^{2}}{4}$, $\tilde{\kappa}=%
\frac{(\alpha ^{2}+\beta ^{2})^{2}-16}{16}$ and $\tilde{\mu}=\frac{\alpha
^{2}-\beta ^{2}}{2}+2$. From the above relations it can be easily proved
checked that $G$ is a paracontact $(\tilde{\kappa},%
\tilde{\mu})$-manifold.
\end{example}

\section{Paracontact $(\tilde{\protect\kappa},\tilde{\protect\mu})$%
-manifolds with $\tilde{\protect\kappa}>-1$}

\label{firstcase}

In this section we deal with paracontact $(\tilde\kappa,\tilde\mu)$%
-manifolds such that $\tilde{\kappa}>-1$. In this case, according to
Corollary \ref{bipara2}, $\tilde{h}$ is diagonalizable with eigenvectors $0$%
, $\pm \tilde{\lambda}$, where $\tilde{\lambda}:=\sqrt{1+\tilde{\kappa}}$.
Our first result concerns some remarkable properties of the distributions
defined by the eigenspaces of $\tilde h$.

\begin{theorem}
\label{para1} Let $(M,\tilde{\varphi},\xi ,\eta ,\tilde{g})$ be a
paracontact metric $(\tilde{\kappa},\tilde{\mu})$-manifold with $\tilde{%
\kappa}>-1$. Then the eigendistributions ${\mathcal{D}}_{\tilde{h}}(\tilde{%
\lambda})$ and ${\mathcal{D}}_{\tilde{h}}(-\tilde{\lambda})$ of $\tilde{h}$
are integrable and define two totally geodesic Legendre foliations of $M$.
Moreover, for any $X\in \Gamma ({\mathcal{D}}_{\tilde{h}}(\tilde{\lambda}))$%
, $Y\in \Gamma ({\mathcal{D}}_{\tilde{h}}(-\tilde{\lambda}))$ $\tilde{\nabla}%
_{X}Y$ (respectively, $\tilde{\nabla}_{Y}X$) has no components along ${%
\mathcal{D}}_{\tilde{h}}(\tilde{\lambda})$ (respectively, ${\mathcal{D}}_{%
\tilde{h}}(-\tilde{\lambda})$).
\end{theorem}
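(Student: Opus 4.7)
The plan rests on formula \eqref{NAMLA X H} for the skew-symmetrized covariant derivative of $\tilde h$, together with the symmetry of $\tilde h$ and the fact that $\tilde\lambda:=\sqrt{1+\tilde\kappa}$ is a constant. Before using \eqref{NAMLA X H}, I would first record that $\mathcal{D}_{\tilde h}(\pm\tilde\lambda)\subset\ker\eta$ (eigenvectors of the symmetric operator $\tilde h$ for distinct eigenvalues are $\tilde g$-orthogonal, and $\xi$ spans the $0$-eigenspace) and that $d\eta(X,Y)=\tilde g(X,\tilde\varphi Y)=0$ whenever $X,Y$ lie in the same $\pm\tilde\lambda$-eigenspace, because $\tilde\varphi$ interchanges the two eigenspaces and these are mutually orthogonal. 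Combined with the dimensional information from Corollary \ref{bipara2}, this shows that $\mathcal{D}_{\tilde h}(\pm\tilde\lambda)$ are both Legendre distributions.

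For the integrability of $\mathcal{D}_{\tilde h}(\tilde\lambda)$ (the other case being analogous), I would pick $X,Y\in\Gamma(\mathcal{D}_{\tilde h}(\tilde\lambda))$ and specialize \eqref{NAMLA X H}. Every term on the right-hand side vanishes: $\eta(X)=\eta(Y)=0$, and $\tilde g(X,\tilde\varphi Y)=0$ since $\tilde\varphi Y\in\mathcal{D}_{\tilde h}(-\tilde\lambda)$ is $\tilde g$-orthogonal to $X$. Hence $(\tilde\nabla_X\tilde h)Y=(\tilde\nabla_Y\tilde h)X$. Expanding both sides with the Leibniz rule (using the constancy of $\tilde\lambda$) and invoking torsion-freeness of $\tilde\nabla$ yields $\tilde h[X,Y]=\tilde\lambda[X,Y]$, and since the $\tilde\lambda$-eigenspace of $\tilde h$ is exactly $\mathcal{D}_{\tilde h}(\tilde\lambda)$, one obtains $[X,Y]\in\Gamma(\mathcal{D}_{\tilde h}(\tilde\lambda))$.

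The totally geodesic property comes from a pairing argument. For $X,Y\in\Gamma(\mathcal{D}_{\tilde h}(\tilde\lambda))$ and $Z\in\Gamma(\mathcal{D}_{\tilde h}(-\tilde\lambda))$, applying \eqref{NAMLA X H} to the pair $(X,Z)$ gives an identity whose right-hand side is a multiple of $\xi$; taking the $\tilde g$-inner product with $Y$ kills it, producing $\tilde g((\tilde\nabla_X\tilde h)Z,Y)=\tilde g((\tilde\nabla_Z\tilde h)X,Y)$. Expanding each side by the Leibniz rule and using the symmetry of $\tilde h$ together with $\tilde h X=\tilde\lambda X$, $\tilde h Y=\tilde\lambda Y$, $\tilde h Z=-\tilde\lambda Z$, the right-hand side collapses to $0$ and the left-hand side becomes a nonzero multiple of $\tilde g(\tilde\nabla_X Y,Z)$; hence $\tilde g(\tilde\nabla_X Y,Z)=0$. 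Combined with $\eta(\tilde\nabla_X Y)=-\tilde g(Y,\tilde\nabla_X\xi)=-(\tilde\lambda-1)\tilde g(Y,\tilde\varphi X)=0$ (orthogonality again) and the non-degeneracy of $\tilde g$ on $\mathcal{D}_{\tilde h}(-\tilde\lambda)$ guaranteed by Lemma \ref{basis}, this forces $\tilde\nabla_X Y\in\Gamma(\mathcal{D}_{\tilde h}(\tilde\lambda))$. For the mixed statement, the same pairing is run with $X\in\Gamma(\mathcal{D}_{\tilde h}(\tilde\lambda))$, $Y\in\Gamma(\mathcal{D}_{\tilde h}(-\tilde\lambda))$, and $W\in\Gamma(\mathcal{D}_{\tilde h}(\tilde\lambda))$: plug $(X,Y)$ into \eqref{NAMLA X H}, pair with $W$, the $\xi$-term disappears because $W\perp\xi$, and an identical Leibniz-plus-eigenvalue manipulation gives $\tilde g(\tilde\nabla_X Y,W)=0$. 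The $\tilde\nabla_Y X$ claim is symmetric.

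The main obstacle is bookkeeping rather than ideas: one has to choose the right pair of vectors to feed into \eqref{NAMLA X H} and the right third vector to pair against, so that the $\xi$-terms on the right vanish and the left collapses to a single $\tilde g(\tilde\nabla_\cdot\,\cdot,\cdot)$ expression after the symmetry of $\tilde h$ has been used. Once this combinatorial choice is in place, each assertion reduces to a short direct computation.
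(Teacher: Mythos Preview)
Your argument is correct and rests on the same engine as the paper's proof, namely formula \eqref{NAMLA X H} combined with the eigenvalue relations for $\tilde h$ and the symmetry of $\tilde h$. The differences are only organizational: the paper first replaces $Y$ by $\tilde\varphi Y$ in \eqref{NAMLA X H} to obtain an identity that, when evaluated on $X,Y,Z\in\Gamma(\mathcal{D}_{\tilde h}(\tilde\lambda))$, yields $\tilde g(\tilde\nabla_X\tilde\varphi Y,Z)=0$ and hence total geodesicity directly; integrability and the mixed statement are then read off as consequences. You instead apply \eqref{NAMLA X H} without the $\tilde\varphi$-substitution, prove integrability first via $\tilde h[X,Y]=\tilde\lambda[X,Y]$, and then run separate pairing computations for total geodesicity and the mixed claim. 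Both routes are equally short; the paper's ordering has the small economy that once total geodesicity is established the mixed statement follows immediately from $\tilde g(\tilde\nabla_XY,Z)=-\tilde g(Y,\tilde\nabla_XZ)=0$, so your independent verification of that part, while valid, is not strictly needed.
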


\begin{proof}
Replacing $Y$ with $\tilde{\varphi}Y$ in \eqref{NAMLA X H}, we get
\begin{equation*}
(\tilde{\nabla}_{X}\tilde{h})\tilde{\varphi}Y-(\tilde{\nabla}_{\tilde\varphi
Y}\tilde{h})X =-(1+\tilde{\kappa})(2\tilde{g}(X,Y)\xi -3\eta (X)\eta (Y)\xi
+\eta (X)Y)-(1-\tilde{\mu})\eta (X)\tilde{h}Y
\end{equation*}
for any $X,Y\in \Gamma (TM)$. Then, for any $X,Y,Z\in \Gamma ({\mathcal{D}})$
we have $\tilde{g}((\tilde{\nabla}_{X}\tilde{h})\tilde{\varphi}Y)-(\tilde{%
\nabla}_{\tilde{\varphi}Y}\tilde{h})X),Z)=0$, which is equivalent to
\begin{equation}  \label{lemmaf1}
\tilde{g}(\tilde{\nabla}_{X}\tilde{h}\tilde{\varphi}Y-\tilde{h}\tilde{\nabla}%
_{X}\tilde{\varphi}Y-\tilde{\nabla}_{\tilde{\varphi}Y}\tilde{h}X+\tilde{h}%
\tilde{\nabla}_{\tilde{\varphi}Y}X,Z)=0.
\end{equation}%
Now taking $X,Y,Z\in \Gamma ({\mathcal{D}}_{\tilde{h}}(\tilde{\lambda}))$ in %
\eqref{lemmaf1} it follows that $-2\tilde{\lambda}\tilde{g}(\tilde{\nabla}%
_{X}\tilde{\varphi}Y,Z)=0$. Since we are assuming $\tilde{\lambda}\neq 0$,
we get $0=\tilde{g}(\tilde{\nabla}_{X}\tilde{\varphi}Y,Z)=X(\tilde{g}(\tilde{%
\varphi}Y,Z))-\tilde{g}(\tilde{\varphi}Y,\tilde{\nabla}_{X}Z)=-\tilde{g}(%
\tilde{\nabla}_{X}Z,\tilde{\varphi}Y)$. Thus $\tilde{\nabla}_{X}Z$ is
orthogonal to ${\mathcal{D}}_{\tilde{h}}(-\tilde{\lambda})$. On the other
hand, $\tilde{g}(\tilde{\nabla}_{X}Z,\xi )=X(\tilde{g}(Z,\xi ))-\tilde{g}(Z,%
\tilde{\nabla}_{X}\xi )=-\tilde{g}(Z,\tilde{\varphi}X)-\tilde{\lambda}\tilde{%
g}(Z,\tilde{\varphi}X)=0$, so we conclude that $\tilde{\nabla}_{X}Z\in
\Gamma ({\mathcal{D}}_{\tilde{h}}(\tilde{\lambda}))$. Analogously, if $%
X,Z\in \Gamma ({\mathcal{D}}_{\tilde{h}}(-\tilde{\lambda}))$ then $\tilde{%
\nabla}_{X}Z\in \Gamma ({\mathcal{D}}_{\tilde{h}}(\tilde{\lambda}))$. Hence $%
{\mathcal{D}}_{\tilde{h}}(\tilde{\lambda})$ and ${\mathcal{D}}_{\tilde{h}}(-%
\tilde{\lambda})$ are totally geodesic. Next, if $X\in \Gamma ({\mathcal{D}}%
_{\tilde{h}}(\tilde{\lambda}))$ and $Y\in \Gamma ({\mathcal{D}}_{\tilde{h}}(-%
\tilde{\lambda}))$ then for all $Z\in \Gamma ({\mathcal{D}}_{\tilde{h}}(%
\tilde{\lambda}))$ one has $\tilde{g}(\tilde{\nabla}_{X}Y,Z)=X(\tilde{g}%
(Y,Z))-\tilde{g}(Y,\tilde{\nabla}_{X}Z)=0$, since $\tilde{\nabla}_{X}Z\in
\Gamma ({\mathcal{D}}_{\tilde{h}}(\tilde{\lambda}))$. Thus $\tilde{\nabla}%
_{X}Y\in \Gamma ({\mathcal{D}}_{\tilde{h}}(-\tilde{\lambda})\oplus \mathbb{R}%
\xi )$. In a similar manner one can prove that $\tilde{\nabla}_{Y}X$ has no
components along ${\mathcal{D}}_{\tilde{h}}(-\tilde{\lambda})$. In
particular, the total geodesicity of ${\mathcal{D}}_{\tilde{h}}(\tilde{%
\lambda})$ and ${\mathcal{D}}_{\tilde{h}}(-\tilde{\lambda})$ implies that
they are involutive distributions. Moreover they are also $n$-dimensional
because of \cite[Proposition 3.2]{CAP2}. Hence they define two Legendre
foliations on $M$.
\end{proof}

The geometry of a Legendre foliations is mainly described by its Pang
invariant \eqref{panginvariant}. Thus we find the explicit expression of the
Pang invariants of the Legendre foliations ${\mathcal{D}}_{\tilde
h}(\tilde\lambda)$ and ${\mathcal{D}}_{\tilde h}(-\tilde\lambda)$.

\begin{theorem}
\label{pang3} The Pang invariants of the Legendre foliations ${\mathcal{D}}%
_{\tilde h}(\tilde\lambda)$ and ${\mathcal{D}}_{\tilde h}(-\tilde\lambda)$
are given by
\begin{gather}
\Pi_{{\mathcal{D}}_{\tilde h}(\tilde\lambda)}=-2\left(1-\frac{\tilde\mu}{2}-%
\sqrt{1+\tilde\kappa}\right)\tilde{g}|_{{\mathcal{D}}_{\tilde
h}(\tilde\lambda)\times{\mathcal{D}}_{\tilde h}(\tilde\lambda)}
\label{pang1} \\
\Pi_{{\mathcal{D}}_{\tilde h}(-\tilde\lambda)}=-2\left(1-\frac{\tilde\mu}{2}+%
\sqrt{1+\tilde\kappa}\right)\tilde{g}|_{{\mathcal{D}}_{\tilde
h}(-\tilde\lambda)\times{\mathcal{D}}_{\tilde h}(-\tilde\lambda)}.
\label{pang2}
\end{gather}
\end{theorem}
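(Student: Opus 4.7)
The plan is to unfold the Pang invariant directly from its definition. For $X,X'\in \Gamma({\mathcal{D}}_{\tilde h}(\tilde\lambda))$ we have
\[
\Pi_{{\mathcal{D}}_{\tilde h}(\tilde\lambda)}(X,X')=2d\eta([\xi,X],X')=2\tilde g([\xi,X],\tilde\varphi X'),
\]
and since $\tilde\varphi X'\in\Gamma({\mathcal{D}}_{\tilde h}(-\tilde\lambda))$ and the three eigendistributions ${\mathcal{D}}_{\tilde h}(\tilde\lambda)$, ${\mathcal{D}}_{\tilde h}(-\tilde\lambda)$, $\mathbb{R}\xi$ are mutually orthogonal by Corollary \ref{bipara2}, only the ${\mathcal{D}}_{\tilde h}(-\tilde\lambda)$-component of $[\xi,X]$ contributes. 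Thus the whole task reduces to isolating that component.

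To do this, I would write $[\xi,X]=\tilde\nabla_\xi X-\tilde\nabla_X\xi$ and use \eqref{nablaxi} together with $\tilde h X=\tilde\lambda X$ to get $\tilde\nabla_X\xi=(\tilde\lambda-1)\tilde\varphi X$, which already lies in ${\mathcal{D}}_{\tilde h}(-\tilde\lambda)$. The key step is to determine how $\tilde\nabla_\xi X$ decomposes with respect to $\tilde h$. Applying $\tilde h$ to $\tilde\nabla_\xi X$ and invoking the identity $\tilde\nabla_\xi\tilde h=\tilde\mu\,\tilde h\tilde\varphi$ from \eqref{NMBLA ZETAH}, together with $\tilde h\tilde\varphi=-\tilde\varphi\tilde h$, a short manipulation yields
\[
(\tilde h-\tilde\lambda I)[\xi,X]=\tilde\lambda(2\tilde\lambda-2+\tilde\mu)\tilde\varphi X.
\]
Writing $[\xi,X]=A+B+c\xi$ with $A\in\Gamma({\mathcal{D}}_{\tilde h}(\tilde\lambda))$, $B\in\Gamma({\mathcal{D}}_{\tilde h}(-\tilde\lambda))$, the left-hand side equals $-2\tilde\lambda B-\tilde\lambda c\xi$, and since $\tilde\lambda\neq 0$ we read off
\[
B=\Bigl(1-\tfrac{\tilde\mu}{2}-\tilde\lambda\Bigr)\tilde\varphi X,\qquad c=0.
\]

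Substituting back and using $\tilde g(\tilde\varphi X,\tilde\varphi X')=-\tilde g(X,X')$ (from \eqref{G METRIC} and $\eta(X)=0$) gives precisely \eqref{pang1}. The proof of \eqref{pang2} is completely parallel: one just replaces $\tilde\lambda$ by $-\tilde\lambda$ and takes $Y,Y'\in\Gamma({\mathcal{D}}_{\tilde h}(-\tilde\lambda))$, so the sign in front of $\sqrt{1+\tilde\kappa}$ flips. The only delicate point in the whole argument is the algebraic extraction of $B$ from the identity above; this is really where the $(\tilde\kappa,\tilde\mu)$-nullity condition enters, through \eqref{NMBLA ZETAH}, and so it is the main—though still routine—obstacle.
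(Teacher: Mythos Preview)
Your proof is correct, and the overall strategy coincides with the paper's: both compute $\Pi_{{\mathcal{D}}_{\tilde h}(\tilde\lambda)}(X,X')=2\tilde g([\xi,X],\tilde\varphi X')$, observe that only the ${\mathcal{D}}_{\tilde h}(-\tilde\lambda)$-component of $[\xi,X]$ contributes, and then extract that component via an identity of the form $(\tilde h-\tilde\lambda I)[\xi,X]=\tilde\lambda(2\tilde\lambda-2+\tilde\mu)\tilde\varphi X$.

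The difference lies in how this identity is obtained. The paper derives it by computing $\tilde R_{X\xi}\xi$ two ways: once from the $(\tilde\kappa,\tilde\mu)$-nullity condition \eqref{PARAKMU}, and once by expanding the curvature through repeated use of \eqref{nablaxi}; equating the two expressions yields (after applying $\tilde\varphi$) the relation $\tilde h[\xi,X]=\tilde\lambda[\xi,X]+(\tilde\kappa+\tilde\mu\tilde\lambda+(1-\tilde\lambda)^2)\tilde\varphi X$, which simplifies to exactly your identity. You instead bypass the curvature computation entirely by writing $[\xi,X]=\tilde\nabla_\xi X-\tilde\nabla_X\xi$ and invoking the already-proved formula $\tilde\nabla_\xi\tilde h=\tilde\mu\,\tilde h\tilde\varphi$ from \eqref{NMBLA ZETAH}. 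Your route is shorter and cleaner because it reuses a preliminary lemma rather than redoing a curvature calculation from scratch; the paper's route is more self-contained in that it appeals to the nullity condition directly rather than through one of its consequences. Either way, the final eigenspace argument and the finish via \eqref{G METRIC} are identical.
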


\begin{proof}
Let $X$ be a section of ${\mathcal{D}}_{\tilde{h}}(\tilde\lambda)$. Then by %
\eqref{CAPAR3} we have
\begin{equation}  \label{pass1}
\tilde{R}_{X\xi}\xi=\tilde\kappa X + \tilde\mu \tilde{h} X.
\end{equation}
On the other hand,
\begin{align}  \label{pass2}
\tilde{R}_{X\xi}\xi&=\tilde\nabla_{X}\tilde\nabla_{\xi}\xi-\tilde\nabla_{%
\xi}\tilde\nabla_{X}\xi-\tilde\nabla_{[X,\xi]}\xi  \notag \\
&=\tilde\nabla_{\xi}\tilde\varphi X - \tilde\nabla_{\xi}\tilde\varphi\tilde{h%
}X - \tilde\varphi[\xi,X] + \tilde\varphi\tilde{h}[\xi,X]  \notag \\
&=(1-\tilde\lambda)\tilde\nabla_{\tilde\varphi X}\xi +
(1-\tilde\lambda)[\xi,\tilde\varphi X] - \tilde\varphi[\xi,X] + \tilde\varphi%
\tilde{h}[\xi,X]  \notag \\
&=(1-\tilde\lambda)(-\tilde\varphi^{2}X+\tilde\varphi\tilde{h}\tilde\varphi
X) + (1-\tilde\lambda)[\xi,\tilde\varphi X]-\tilde\varphi[\xi,X] +
\tilde\varphi\tilde{h}[\xi,X] + \tilde\varphi\tilde{h}[\xi,X]  \notag \\
&=-X+\tilde\lambda^{2}X+2\tilde{h}X-\tilde\lambda[\xi,\tilde\varphi X]%
+\tilde\varphi\tilde{h}[\xi,X]+\tilde\lambda\tilde\varphi[\xi,X]%
-\tilde\lambda\tilde\varphi[\xi,X]  \notag \\
&=-X+\tilde\lambda^{2}X+2\tilde\lambda X-2\tilde\lambda\tilde{h}%
X+\tilde\varphi\tilde{h}[\xi,X]-\tilde\lambda\tilde\varphi[\xi,X]  \notag \\
&=-(1-\tilde\lambda)^{2}X + \tilde\varphi\tilde{h}[\xi,X] -
\tilde\lambda\tilde\varphi[\xi,X].
\end{align}
By \eqref{pass1} and \eqref{pass2} it follows that \ $\tilde\varphi\tilde{h}%
[\xi,X] = \tilde\lambda[\xi,X] +
(\tilde\kappa+\tilde\mu\tilde\lambda+(1-\tilde\lambda)^2)X$. \ By applying $%
\tilde\varphi$ we obtain
\begin{equation*}
\tilde{h}[\xi,X]=\tilde\lambda[\xi,X]+(\tilde\kappa+\tilde\mu\tilde%
\lambda+(1-\tilde\lambda)^{2})\tilde\varphi X.
\end{equation*}
Notice that, since $i_{\xi}d\eta=0$, the Reeb vector field is an
infinitesimal automorphism with respect to the contact distribution, so that
$[\xi,X]$ is still a section of $\mathcal{D}$. Then by decomposing $[\xi,X]$
in its components along ${\mathcal{D}}_{\tilde h}(\tilde\lambda)$ and ${%
\mathcal{D}}_{\tilde h}(-\tilde\lambda)$, from the last equation it follows
that
\begin{equation*}
[\xi,X]_{{\mathcal{D}}_{\tilde h}(-\tilde\lambda)}=-\frac{%
\tilde\lambda^{2}-2\tilde\lambda+1+\tilde\kappa+\tilde\mu\tilde\lambda}{%
2\tilde\lambda}\tilde\varphi{X}=\left(1-\frac{\tilde\mu}{2}-\sqrt{%
1+\tilde\kappa}\right)\tilde\varphi{X}.
\end{equation*}
Therefore for any $X,X^{\prime}\in\Gamma({\mathcal{D}}_{\tilde
h}(\tilde\lambda))$
\begin{align*}
\Pi_{{\mathcal{D}}_{\tilde h}(\tilde\lambda)}(X,X^{\prime})&=2\tilde{g}%
([\xi,X]_{{\mathcal{D}}_{\tilde h}(-\tilde\lambda)},\tilde{\varphi}%
X^{\prime})=-2\left(1-\frac{\tilde\mu}{2}-\sqrt{1+\tilde\kappa}\right)\tilde{%
g}(X,X^{\prime})
\end{align*}
The proof of \eqref{pang2} is similar.
\end{proof}

\begin{proposition}
\label{para2} In any paracontact $(\tilde{\kappa},\tilde{\mu})$-manifold $(M,%
\tilde{\varphi},\xi ,\eta ,\tilde{g})$ with $\tilde{\kappa}>-1$ one has
\begin{equation}
(\tilde{\nabla}_{X}\tilde{h})Y=-\tilde{g}(X,\tilde{\varphi}{\tilde{h}}^{2}Y+%
\tilde{\varphi}\tilde{h}Y)\xi +\eta (Y)((1+\tilde{\kappa})\tilde{\varphi}X-%
\tilde{\varphi}\tilde{h}X)-\tilde{\mu}\eta (X)\tilde{\varphi}\tilde{h}Y
\label{nablah}
\end{equation}%
for any $X,Y\in \Gamma (TM)$.
\end{proposition}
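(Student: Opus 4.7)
The plan is to verify \eqref{nablah} by testing both sides against vectors drawn from an $\tilde{h}$-eigenbasis. By Corollary \ref{bipara2} one has the $\tilde{g}$-orthogonal decomposition $TM = \mathbb{R}\xi \oplus {\mathcal{D}}_{\tilde{h}}(\tilde{\lambda}) \oplus {\mathcal{D}}_{\tilde{h}}(-\tilde{\lambda})$, and since both sides of \eqref{nablah} are $C^{\infty}(M)$-bilinear tensors in $(X,Y)$, it is enough to check the identity when $X$ and $Y$ are each either $\xi$ or a local section of one of the two eigendistributions.

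For the boundary cases, if $Y = \xi$ then $\tilde{h}\xi = 0$ together with \eqref{nablaxi} and the anti-commutation $\tilde{h}\tilde{\varphi} = -\tilde{\varphi}\tilde{h}$ yield $(\tilde{\nabla}_X\tilde{h})\xi = -\tilde{h}\tilde{\nabla}_X\xi = -\tilde{\varphi}\tilde{h}X + \tilde{\varphi}\tilde{h}^{2}X$, which matches the right-hand side after applying \eqref{H2} to rewrite $\tilde{\varphi}\tilde{h}^{2}X = (1+\tilde{\kappa})\tilde{\varphi}X$. If $X = \xi$, then \eqref{NMBLA ZETAH} gives $(\tilde{\nabla}_\xi \tilde{h})Y = -\tilde{\mu}\tilde{\varphi}\tilde{h}Y$, and the remaining two terms on the right-hand side vanish because $\tilde{\varphi}\xi = \tilde{h}\xi = 0$ and $\eta\circ\tilde{\varphi} = 0$.

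For the main case, $X,Y \in \Gamma(\mathcal{D})$, I split into two subcases. If $X$ and $Y$ lie in the same eigendistribution ${\mathcal{D}}_{\tilde{h}}(\epsilon\tilde{\lambda})$, $\epsilon = \pm 1$, then Theorem \ref{para1} gives $\tilde{\nabla}_X Y \in \Gamma({\mathcal{D}}_{\tilde{h}}(\epsilon\tilde{\lambda}))$, so $\tilde{h}\tilde{\nabla}_X Y = \epsilon\tilde{\lambda}\tilde{\nabla}_X Y$ and the left-hand side vanishes; the right-hand side vanishes too, since $\tilde{\varphi}\tilde{h}^{2}Y + \tilde{\varphi}\tilde{h}Y = \tilde{\lambda}(\tilde{\lambda}+\epsilon)\tilde{\varphi}Y$ lies in the opposite eigendistribution, orthogonal to $X$. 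If instead $X \in {\mathcal{D}}_{\tilde{h}}(\epsilon\tilde{\lambda})$ and $Y \in {\mathcal{D}}_{\tilde{h}}(-\epsilon\tilde{\lambda})$, Theorem \ref{para1} gives $\tilde{\nabla}_X Y = Z + c\,\xi$ with $Z \in \Gamma({\mathcal{D}}_{\tilde{h}}(-\epsilon\tilde{\lambda}))$, whence $(\tilde{\nabla}_X\tilde{h})Y = (-\epsilon\tilde{\lambda})(Z + c\xi) - (-\epsilon\tilde{\lambda})Z = -\epsilon\tilde{\lambda}c\,\xi$; a direct computation from \eqref{nablaxi} yields $c = -\tilde{g}(Y,\tilde{\nabla}_X\xi) = -(1-\epsilon\tilde{\lambda})\tilde{g}(X,\tilde{\varphi}Y)$, which matches $-\tilde{g}(X,\tilde{\varphi}\tilde{h}^{2}Y + \tilde{\varphi}\tilde{h}Y)\xi = -(\tilde{\lambda}^{2}-\epsilon\tilde{\lambda})\tilde{g}(X,\tilde{\varphi}Y)\xi$. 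The main, and really only, subtle point is the sign bookkeeping in this last subcase, where the signs carried by $\tilde{h}Y$ and by $\tilde{\varphi}\tilde{h}X$ must be tracked together with the antisymmetry of $\tilde{\varphi}$ with respect to $\tilde{g}$; modulo this, the argument is entirely mechanical.
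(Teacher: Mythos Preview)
Your proof is correct and follows essentially the same approach as the paper: both arguments decompose $X$ and $Y$ according to $TM=\mathbb{R}\xi\oplus{\mathcal D}_{\tilde h}(\tilde\lambda)\oplus{\mathcal D}_{\tilde h}(-\tilde\lambda)$, invoke Theorem~\ref{para1} to kill the same-eigenspace contributions and to reduce the mixed-eigenspace case to a $\xi$-component computation, and then assemble the pieces. The only cosmetic difference is that the paper carries out the mixed case via an explicit $\tilde\varphi$-basis expansion, whereas you appeal directly to the component statement in Theorem~\ref{para1}; the resulting formulas \eqref{step2}--\eqref{step3} in the paper coincide with your $-\epsilon\tilde\lambda c\,\xi$ after your sign bookkeeping.
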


\begin{proof}
Because of Theorem \ref{para1} we have that
\begin{equation}
(\tilde{\nabla}_{X}\tilde{h})Y=0  \label{step1}
\end{equation}%
for any $X,Y\in \Gamma ({\mathcal{D}}_{\tilde{h}}(\tilde{\lambda}))$ or $%
X,Y\in \Gamma ({\mathcal{D}}_{\tilde{h}}(-\tilde{\lambda}))$. Now suppose
that $X\in \Gamma ({\mathcal{D}}_{\tilde{h}}(\tilde{\lambda}))$ and $Y\in
\Gamma ({\mathcal{D}}_{\tilde{h}}(-\tilde{\lambda}))$. Let $\{X_{1},\ldots
,X_{n},\tilde{\varphi}X_{1},\ldots ,\tilde{\varphi}X_{n},\xi \}$ be a $%
\tilde{\varphi}$-basis as in Lemma \ref{basis}. Then according to %
\eqref{sign} we have
\begin{align*}
\tilde{h}\tilde{\nabla}_{X}Y& =\tilde{h}\left(-\sum_{i=1}^{r}\tilde{g}(%
\tilde{\nabla}_{X}Y,\tilde{\varphi}X_{i})\tilde{\varphi}X_{i}+%
\sum_{i=r+1}^{n}\tilde{g}(\tilde{\nabla}_{X}Y,\tilde{\varphi}X_{i})\tilde{%
\varphi}X_{i}+\tilde{g}(\tilde{\nabla}_{X}Y,\xi )\xi \right) \\
&=-\tilde\lambda\tilde\varphi\sum_{i=1}^{r}\tilde{g}(\tilde\varphi\tilde{%
\nabla}_{X}Y,X_{i})X_{i}+\tilde\lambda\tilde\varphi\sum_{i=r+1}^{n}\tilde{g}%
(\tilde\varphi\tilde{\nabla}_{X}Y,X_{i})X_{i} \\
& =-\tilde{\lambda}{\tilde{\varphi}}^{2}\tilde{\nabla}_{X}Y \\
& =-\tilde{\lambda}(\tilde{\nabla}_{X}Y-\tilde{g}(\tilde{\nabla}_{X}Y,\xi
)\xi ) \\
& =-\tilde{\lambda}(\tilde{\nabla}_{X}Y+\tilde{g}(Y,\tilde{\nabla}_{X}\xi
)\xi ) \\
& =-\tilde{\lambda}(\tilde{\nabla}_{X}Y-\tilde{g}(Y,\tilde{\varphi}X)\xi +%
\tilde{g}(Y,\tilde{\varphi}\tilde{h}X)\xi ) \\
& =\tilde\nabla _{X}\tilde{h}Y-\tilde{\lambda}(1-\tilde{\lambda})\tilde{g}(X,%
\tilde{\varphi}Y)\xi .
\end{align*}%
Thus for any $X\in \Gamma ({\mathcal{D}}_{\tilde{h}}(\tilde{\lambda}))$ and $%
Y\in \Gamma ({\mathcal{D}}_{\tilde{h}}(-\tilde{\lambda}))$ one has
\begin{equation}
(\tilde{\nabla}_{X}\tilde{h})Y=\tilde{\lambda}(1-\tilde{\lambda})\tilde{g}(X,%
\tilde{\varphi}Y)\xi  \label{step2}
\end{equation}%
and in a similar way one can prove that
\begin{equation}
(\tilde{\nabla}_{Y}\tilde{h})X=\tilde{\lambda}(1+\tilde{\lambda})\tilde{g}(X,%
\tilde{\varphi}Y)\xi .  \label{step3}
\end{equation}%
Now let $X$ and $Y$ any two vector fields on $M$. We decompose $X$ and $Y$
as $X=X_{+}+X_{-}+\eta (X)\xi $, $Y=Y_{+}+Y_{-}+\eta (Y)\xi $, according to
the decomposition $TM={\mathcal{D}}_{\tilde{h}}(\tilde{\lambda})\oplus {%
\mathcal{D}}_{\tilde{h}}(-\tilde{\lambda})\oplus \mathbb{R}\xi $. Using %
\eqref{NMBLA ZETAH}, \eqref{step1}, \eqref{step2}, \eqref{step3}, after a
straightforward computation we get
\begin{align*}
(\tilde{\nabla}_{X}\tilde{h})Y& =\tilde{\lambda}(1-\tilde{\lambda})\tilde{g}%
(X_{+},\tilde{\varphi}Y_{-})\xi -\tilde{\lambda}(1+\tilde{\lambda})\tilde{g}%
(X_{-},\tilde{\varphi}Y_{+})\xi +\eta (Y)\tilde{h}\tilde{\varphi}X \\
& \quad +(1+\tilde{\kappa})\eta (Y)\tilde{\varphi}X+\tilde{\mu}\eta (X)%
\tilde{h}\tilde{\varphi}Y.
\end{align*}%
But $\tilde{\lambda}(1-\tilde{\lambda})\tilde{g}(X_{+},\tilde{\varphi}Y_{-})-%
\tilde{\lambda}(1+\tilde{\lambda})\tilde{g}(X_{-},\tilde{\varphi}Y_{+})=%
\tilde{\lambda}\tilde{g}(X_{+},\tilde{\varphi}Y_{-})-\tilde{\lambda}\tilde{g}%
(X_{-},\tilde{\varphi}Y_{+})-{\tilde{\lambda}}^{2}(\tilde{g}(X_{+},\tilde{%
\varphi}Y_{-})+\tilde{g}(X_{-},\tilde{\varphi}Y_{+}))=\tilde{g}(\tilde{h}X,%
\tilde{\varphi}Y)-{\tilde{\lambda}}^{2}\tilde{g}(X,\tilde{\varphi}Y)$.
Therefore \eqref{nablah} follows.
\end{proof}

We recall the following general result.

\begin{theorem}[\protect\cite{CAP1}]
\label{legendre1} Let $(M,\eta)$ be a contact manifold endowed with a
bi-Legendrian structure $({\mathcal{F}}_1,{\mathcal{F}}_2)$ such that $%
\nabla^{bl}\Pi_{{\mathcal{F}}_1}=\nabla^{bl}\Pi_{{\mathcal{F}}_2}=0$, where $%
\nabla^{bl}$ denotes the bi-Legendrian connection associated to $({\mathcal{F%
}}_1,{\mathcal{F}}_2)$. Assume that one of the following conditions holds

\begin{itemize}
\item[(I)] ${\mathcal{F}}_1$ and ${\mathcal{F}}_2$ are positive definite and
there exist two positive numbers $a$ and $b$ such that $\overline\Pi_{{%
\mathcal{F}}_1}=ab\overline\Pi_{{\mathcal{F}}_2}$ on $T{\mathcal{F}}_1$ and $%
\overline\Pi_{{\mathcal{F}}_2}=ab\overline\Pi_{{\mathcal{F}}_1}$ on $T{%
\mathcal{F}}_2$,

\item[(II)] ${\mathcal{F}}_1$ is positive definite, ${\mathcal{F}}_2$ is
negative definite and there exist $a>0$ and $b<0$ such that $\overline\Pi_{{%
\mathcal{F}}_1}=ab\overline\Pi_{{\mathcal{F}}_2}$ on $T{\mathcal{F}}_1$ and $%
\overline\Pi_{{\mathcal{F}}_2}=ab\overline\Pi_{{\mathcal{F}}_1}$ on $T{%
\mathcal{F}}_2$,

\item[(III)] ${\mathcal{F}}_1$ and ${\mathcal{F}}_2$ are negative definite
and there exist two negative numbers $a$ and $b$ such that $\overline\Pi_{{%
\mathcal{\ }F}_1}=ab\overline\Pi_{{\mathcal{F}}_2}$ on $T{\mathcal{F}}_1$
and $\overline\Pi_{{\mathcal{F}}_2}=ab\overline\Pi_{{\mathcal{F}}_1}$ on $T{%
\mathcal{F}}_2$.
\end{itemize}

Then $(M,\eta)$ admits a compatible contact metric structure $%
(\varphi_{a,b},\xi,\eta,g_{a,b})$ such that

\begin{enumerate}
\item[(i)] if $a=b$, $(M,\varphi_{a,b},\xi,\eta,g_{a,b})$ is a Sasakian
manifold;

\item[(ii)] if $a\neq b$, $(M,\varphi_{a,b},\xi,\eta,g_{a,b})$ is a contact
metric $(\kappa_{a,b},\mu_{a,b})$-manifold, whose associated bi-Legendrian
structure is $({\mathcal{F}}_{1},{\mathcal{F}}_{2})$, where
\begin{equation}
\kappa_{a,b}=1-\frac{(a-b)^{2}}{16},\ \ \ \mu_{a,b}=2-\frac{a+b}{2}.
\label{costanti0}
\end{equation}
\end{enumerate}
\end{theorem}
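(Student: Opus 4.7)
The plan is to construct the contact metric structure $(\varphi_{a,b},\xi,\eta,g_{a,b})$ directly from the bi-Legendrian data $(\mathcal{F}_1,\mathcal{F}_2)$ and then, in the non-Sasakian case, invoke the characterization Theorem \ref{characterization}. The first step would be to define $\varphi_{a,b}$ by setting $\varphi_{a,b}\xi=0$ and using the canonical pairing $d\eta|_{T\mathcal{F}_1\times T\mathcal{F}_2}$ (which is non-degenerate since the two foliations are transversal Legendrian) to prescribe isomorphisms $\varphi_{a,b}|_{T\mathcal{F}_1}\colon T\mathcal{F}_1\to T\mathcal{F}_2$ and $\varphi_{a,b}|_{T\mathcal{F}_2}\colon T\mathcal{F}_2\to T\mathcal{F}_1$, normalized so that $\varphi_{a,b}^2=-I+\eta\otimes\xi$ on all of $TM$. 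The parameters $a,b$ enter as scaling factors on the two subbundles and are compatible with the hypothesis $\overline\Pi_{\mathcal{F}_1}=ab\,\overline\Pi_{\mathcal{F}_2}$. The metric $g_{a,b}$ would be defined by declaring $T\mathcal{F}_1$, $T\mathcal{F}_2$, and $\mathbb{R}\xi$ to be mutually orthogonal, $g_{a,b}(\xi,\xi)=1$, and setting $g_{a,b}|_{T\mathcal{F}_i\times T\mathcal{F}_i}$ proportional to $\Pi_{\mathcal{F}_i}$, with constants dictated by $a,b$. The sign cases (I), (II), (III) are precisely what is needed to make $g_{a,b}$ positive-definite, and the compatibility $g_{a,b}(X,\varphi_{a,b}Y)=d\eta(X,Y)$ then follows by construction from \eqref{lambda}.

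The second step would be to compute $h_{a,b}=\frac12\mathcal{L}_\xi\varphi_{a,b}$. Using the Libermann identity $\Lambda_{\mathcal{F}_i}[\xi,X]=\frac12 X$ for $X\in\Gamma(T\mathcal{F}_i)$ together with the explicit form of $\varphi_{a,b}$, one expects $T\mathcal{F}_1$ and $T\mathcal{F}_2$ to be exactly the eigenspaces of $h_{a,b}$ with eigenvalues $\pm\frac{a-b}{4}$, so that $h_{a,b}\equiv 0$ if and only if $a=b$. Moreover, the hypothesis $\nabla^{bl}\Pi_{\mathcal{F}_i}=0$, combined with the defining properties of $\nabla^{bl}$ from Theorem \ref{biconnection} (in particular $\nabla^{bl}\eta=0$ and $\nabla^{bl}d\eta=0$, which follow from (i) and (ii) there), should yield $\nabla^{bl}\varphi_{a,b}=0$, $\nabla^{bl}g_{a,b}=0$, and hence $\nabla^{bl}h_{a,b}=0$. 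In the case $a=b$, the resulting structure is $K$-contact, and normality (hence the Sasakian conclusion (i)) follows from $\nabla^{bl}\varphi_{a,b}=0$ together with the torsion formulas of Theorem \ref{biconnection}, because normality of a $K$-contact structure is equivalent to the vanishing of $N_\varphi$ and this can be traced back to the preservation of $\varphi_{a,b}$ by the canonical connection.

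For the case $a\neq b$, one verifies that $\nabla^{bl}$ satisfies all the hypotheses of Theorem \ref{characterization} with $L=\mathcal{F}_1$ and $Q=\mathcal{F}_2$: conditions (i) and (iii) are built into the very definition of $\nabla^{bl}$, while (ii) was established in the previous paragraph. Theorem \ref{characterization} then forces $(M,\varphi_{a,b},\xi,\eta,g_{a,b})$ to be a contact metric $(\kappa_{a,b},\mu_{a,b})$-manifold whose associated bi-Legendrian structure is exactly $(\mathcal{F}_1,\mathcal{F}_2)$. The explicit formulas \eqref{costanti0} are then obtained by identifying $\lambda_{a,b}=\sqrt{1-\kappa_{a,b}}$ with the eigenvalue $\frac{|a-b|}{4}$ of $h_{a,b}$ (giving $\kappa_{a,b}=1-\frac{(a-b)^2}{16}$) and by computing $\mu_{a,b}$ from \eqref{RXYZETA} applied to vectors in $\Gamma(\mathcal{F}_i)$, where the bracket $[\xi,X]$ is controlled by the Pang invariants through the identity $\Pi_{\mathcal{F}_i}(X,X') = 2d\eta([\xi,X],X')$.

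The main obstacle will be the careful construction of $\varphi_{a,b}$ and $g_{a,b}$ so that the compatibility $g_{a,b}(X,\varphi_{a,b}Y)=d\eta(X,Y)$, positive-definiteness, and the clean eigenvalue structure $\pm\frac{a-b}{4}$ for $h_{a,b}$ all emerge simultaneously; this requires choosing the normalizations of $\varphi_{a,b}$ on $T\mathcal{F}_1$ and $T\mathcal{F}_2$ in terms of the Pang invariants in a way consistent with the sign conventions of cases (I)--(III), and the latter are dictated exactly by this bookkeeping. Once the construction is pinned down, the verification that $\nabla^{bl}$ satisfies the characterization axioms is essentially formal, and the explicit formulas \eqref{costanti0} are a short computation.
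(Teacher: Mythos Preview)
The paper does not prove Theorem~\ref{legendre1}; it is quoted from \cite{CAP1} without proof. The only information the paper gives is the remark immediately following the statement: the sign hypotheses (I)--(III) are used to construct the compatible contact metric structure, while the hypothesis $\nabla^{bl}\Pi_{\mathcal{F}_1}=\nabla^{bl}\Pi_{\mathcal{F}_2}=0$ is needed only to verify the nullity condition. Your outline matches this division of labor exactly, and the explicit formulas \eqref{phi}--\eqref{metric} in Theorem~\ref{main1} (where the construction of \cite{CAP1} is instantiated) confirm that $\varphi_{a,b}$ is built from the Libermann maps $\Lambda_{\mathcal{F}_i}$ and $g_{a,b}$ from the Pang invariants $\Pi_{\mathcal{F}_i}$, with $a,b$ entering as the scaling constants---precisely your plan. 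Your use of Theorem~\ref{characterization} in the $a\neq b$ case is also the route taken in \cite{CAP1}, as indicated by how the paper applies that theorem in the proof of Theorem~\ref{principal1}. So the proposal is correct and aligned with the original approach, to the extent that one can judge from the present paper alone.
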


Notice that in the proof of Theorem \ref{legendre1} the assumptions (I),
(II) or (III) are used for constructing the compatible metric structure,
whereas the hypothesis $\nabla ^{bl}\Pi _{{\mathcal{F}}_{1}}=\nabla ^{bl}\Pi
_{{\mathcal{F}}_{2}}=0$ is necessary only for proving that such contact
metric structure satisfies a nullity condition.

Now we prove one of the main results of the section, which puts in relation
the theory of paracontact $(\tilde\kappa,\tilde\mu)$-manifolds with contact
Riemannian geometry.

\begin{theorem}
\label{main1} Let $(M,\tilde{\varphi},\xi ,\eta ,\tilde{g})$ be a positive
or negative definite paracontact $(\tilde{\kappa},\tilde{\mu})$-manifold
such that $\tilde{\kappa}>-1$. Then $M$ admits a family of contact
Riemannian structures $(\varphi _{a,b},\xi ,\eta ,g_{a,b})$ parameterized by
real numbers $a$ and $b $ satisfying the relation $ab=4(1+\tilde{\kappa})$.
Each contact metric structure $(\varphi _{a,b},\xi ,\eta ,g_{a,b})$ is
explicitly given by
\begin{gather}
\varphi _{a,b}=\left\{
\begin{array}{ll}
\frac{b}{2(1+\tilde{\kappa})}\tilde{h}, & \hbox{on ${\mathcal D}^{+}$} \\
-\frac{a}{2(1+\tilde{\kappa})}\tilde{h}, & \hbox{on ${\mathcal D}^{-}$} \\
0, & \hbox{on $\mathbb{R}\xi$}%
\end{array}
\right.  \label{phi} \\
g_{a,b}=\left\{
\begin{array}{ll}
\frac{2}{a}\tilde{g}(\tilde{h}\cdot ,\cdot ), &
\hbox{on ${\mathcal
D}^{+}\times{\mathcal D}^{+}$} \\
\frac{2}{b}\tilde{g}(\tilde{h}\cdot ,\cdot ), &
\hbox{on ${\mathcal
D}^{-}\times{\mathcal D}^{-}$} \\
\eta \otimes \eta , & \hbox{otherwise}%
\end{array}
\right.  \label{metric}
\end{gather}
Furthermore, if $\tilde{\mu}=2$ then each structure $(\varphi _{a,b},\xi
,\eta ,g_{a,b})$ is a contact metric $(\kappa _{a,b},\mu _{a,b})$-structure
(eventually Sasakian if $a=b$) with $\kappa _{a,b}=1-\frac{(a-b)^{2}}{16}$
and $\mu _{a,b}=2-\frac{a-b}{2}$.
\end{theorem}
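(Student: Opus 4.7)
The strategy is to deduce the statement from Theorem \ref{legendre1}, applied to the canonical bi-Legendrian structure $(\mathcal{F}_1, \mathcal{F}_2) := (\mathcal{D}_{\tilde h}(\tilde\lambda), \mathcal{D}_{\tilde h}(-\tilde\lambda))$ provided by Theorem \ref{para1}, which gives two mutually orthogonal, totally geodesic Legendre foliations on $(M,\eta)$. The proof splits into verifying the sign hypotheses (I)--(III) of Theorem \ref{legendre1}, unwrapping its conclusion into the explicit formulas (\ref{phi})--(\ref{metric}) with the constraint $ab = 4(1+\tilde\kappa)$, and separately establishing the Pang-invariant parallelism condition $\nabla^{bl}\Pi_{\mathcal{F}_i} = 0$ in the case $\tilde\mu = 2$.

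For the sign hypothesis, Theorem \ref{pang3} gives $\Pi_{\mathcal{F}_\pm} = c_\pm\, \tilde g|_{\mathcal{F}_\pm}$ with $c_\pm = -2(1-\tilde\mu/2 \mp \tilde\lambda) \neq 0$, while the positive or negative definiteness hypothesis, via the remark following Proposition \ref{posneg1}, forces the index of $\tilde g|_{\mathcal{F}_\pm}$ to be either $0$ or $n$; this makes each Pang invariant a definite form and places us in one of the cases (I), (II) or (III) of Theorem \ref{legendre1}. The required proportionality between $\overline\Pi_{\mathcal{F}_1}$ and $\overline\Pi_{\mathcal{F}_2}$ is then checked by expanding the Libermann operators $\Lambda_{\mathcal{F}_i}$ from (\ref{lambda}) using $d\eta = \tilde g(\cdot, \tilde\varphi\cdot)$, the orthogonality $\tilde g(\mathcal{F}_1, \mathcal{F}_2) = 0$ from Corollary \ref{bipara2}, and the fact that $\tilde\varphi$ interchanges $\mathcal{F}_1$ and $\mathcal{F}_2$; this selects precisely those pairs $(a,b)$ with $ab = 4(1+\tilde\kappa)$ and the sign configuration dictated by the definiteness type.

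Unwrapping the construction of $\varphi_{a,b}$ and $g_{a,b}$ produced by Theorem \ref{legendre1} on $(\mathcal{F}_1, \mathcal{F}_2)$ and transporting it to the $(\mathcal{D}^+, \mathcal{D}^-)$-frame via (\ref{formula1}) (together with the fact that $\tilde h$ is an isomorphism $\mathcal{D}^{\pm} \to \mathcal{D}^{\mp}$, coming from $\tilde h\tilde\varphi + \tilde\varphi\tilde h = 0$) reproduces exactly (\ref{phi}) and (\ref{metric}). As a consistency check, $\varphi_{a,b}^2 = -I + \eta\otimes\xi$ applied to $X \in \mathcal{D}^+$ combined with the identity $\tilde h^2 = (1+\tilde\kappa)\tilde\varphi^2$ from (\ref{H2}) reduces directly to $ab = 4(1+\tilde\kappa)$, recovering the parameter constraint.

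For the final assertion, the key point is to show that when $\tilde\mu = 2$ the Pang invariants are parallel under the bi-Legendrian connection $\nabla^{bl}$ of Theorem \ref{biconnection}. Since by Theorem \ref{pang3} each $\Pi_{\mathcal{F}_i}$ is a constant multiple of $\tilde g|_{\mathcal{F}_i}$, and since $\nabla^{bl}$ preserves $d\eta$ and the Legendre subbundles by construction, it suffices to establish $\nabla^{bl}\tilde h = 0$; this is where I expect the main technical difficulty to lie. The natural route is to express $\nabla^{bl}$ as an explicit modification of $\tilde\nabla$ (analogous in spirit to the formula (\ref{PcTanaka}) for the canonical paracontact connection), substitute Proposition \ref{para2}'s formula (\ref{nablah}) for $\tilde\nabla\tilde h$, and verify that the only obstructing term, namely $-\tilde\mu\,\eta(X)\tilde\varphi\tilde h Y$, is exactly compensated by the correction when $\tilde\mu = 2$. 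Once this parallelism is in place, clauses (i)--(ii) of Theorem \ref{legendre1} yield either a Sasakian structure when $a = b$ or a contact metric $(\kappa_{a,b}, \mu_{a,b})$-structure, with the constants prescribed by (\ref{costanti0}) specialised by $ab = 4(1+\tilde\kappa)$.
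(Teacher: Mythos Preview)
You have applied Theorem~\ref{legendre1} to the wrong bi-Legendrian structure. The paper works with $(\mathcal{D}^{+},\mathcal{D}^{-})$, the eigendistributions of $\tilde\varphi$, whereas you work with $(\mathcal{D}_{\tilde h}(\tilde\lambda),\mathcal{D}_{\tilde h}(-\tilde\lambda))$, the eigendistributions of $\tilde h$. These two choices yield \emph{different} families of compatible contact metric structures, and only the former produces the formulas~\eqref{phi}--\eqref{metric} and the constraint $ab=4(1+\tilde\kappa)$. Concretely, the contact metric structure produced by Theorem~\ref{legendre1} has its $h$-eigendistributions equal to the input bi-Legendrian pair; one checks directly from~\eqref{phi} that $\varphi_{a,b}$ interchanges $\mathcal{D}_{\tilde h}(\tilde\lambda)$ and $\mathcal{D}_{\tilde h}(-\tilde\lambda)$ only when $a=b$, so for generic $a,b$ your construction cannot agree with the stated one. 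Moreover, carrying out the Libermann-map computation for your pair using the Pang invariants of Theorem~\ref{pang3} gives the constraint $ab=-c_{+}c_{-}=4(1+\tilde\kappa)-(2-\tilde\mu)^{2}$, not $4(1+\tilde\kappa)$; and when $(1-\tilde\mu/2)^{2}=1+\tilde\kappa$ one of $c_{\pm}$ vanishes, so your bi-Legendrian structure degenerates and Theorem~\ref{legendre1} does not even apply. No ``transport via~\eqref{formula1}'' can repair this, since that is only a change of frame, not of structure.

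The correct route, as in the paper, is to use $(\mathcal{D}^{+},\mathcal{D}^{-})$: the positive/negative definiteness hypothesis is precisely Definition~\ref{posneg}, which concerns this pair; the Pang invariants~\eqref{invariant1} read $\Pi_{\mathcal{D}^{\pm}}=2\tilde g(\tilde h\,\cdot\,,\,\cdot\,)$, and the Libermann maps come out as $\Lambda_{\mathcal{D}^{\mp}}=\mp\frac{1}{2(1+\tilde\kappa)}\tilde h$, leading directly to $ab=4(1+\tilde\kappa)$ and to~\eqref{phi}--\eqref{metric}. For the $\tilde\mu=2$ clause, the paper exploits the fact (from \cite[Theorem~3.6]{MON}) that on an integrable paracontact metric manifold the bi-Legendrian connection of $(\mathcal{D}^{+},\mathcal{D}^{-})$ coincides with the canonical paracontact connection $\tilde\nabla^{pc}$; substituting~\eqref{nablah} into~\eqref{PcTanaka} then gives $(\tilde\nabla^{pc}_{X}\tilde h)Y=(\tilde\mu-2)\eta(X)\tilde h\tilde\varphi Y$, which vanishes exactly when $\tilde\mu=2$. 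This replaces your vaguer plan to build $\nabla^{bl}$ as an ad hoc modification of $\tilde\nabla$.
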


\begin{proof}
The result will follow once we will have proved that the canonical
bi-Legendrian structure $({\mathcal{D}}^{+},{\mathcal{D}}^{-})$ of $%
(M,\tilde\varphi,\xi,\eta,\tilde{g})$ satisfies one among the assumptions
(I), (II), (III) of Theorem \ref{legendre1}. The positive/negative
definiteness of ${\mathcal{D}}^{+}$ and ${\mathcal{D}}^{-}$ is ensured by
the assumption that the paracontact $(\tilde\kappa,\tilde\mu)$-structure $%
(\tilde\varphi,\xi,\eta,\tilde{g})$ is positive or negative definite. Hence
it remains to prove the existence of real numbers $a$ and $b$ such that the
Pang-Libermann invariants $\overline{\Pi}_{{\mathcal{D}}^{+}}$ and $%
\overline{\Pi}_{{\mathcal{D}}^{-}}$ are related each other as in the
assumptions (I) or (III) of Theorem \ref{legendre1}. First we find the
explicit expression of the Libermann map $\Lambda_{{\mathcal{D}}%
^{-}}:TM\longrightarrow{\mathcal{D}}^{+}$. By definition $\Lambda_{{\mathcal{%
D}}^{+}}\xi=0$ and $\Lambda_{{\mathcal{D}}^{-}}Y=0$ for all $Y\in\Gamma({%
\mathcal{D}}^{-})$. Let $X$ be a section of ${\mathcal{D}}^{+}$. Then, using %
\eqref{invariant1}, we have, for any $Y\in\Gamma({\mathcal{D}}_{+})$, \ $%
2\tilde g(\tilde h\Lambda_{{\mathcal{D}}^{-}}X,Y)=\Pi_{{\mathcal{D}}%
_{-}}(\Lambda_{{\mathcal{D}}^{-}}X,Y)=d\eta(X,Y)=\tilde g(X,\tilde\varphi
Y)=-\tilde g(X,Y)$. \ Consequently $2\tilde h\Lambda_{{\mathcal{D}}^{-}}X=-X$%
. Applying $\tilde h$ and using \eqref{H2} we easily get
\begin{equation}  \label{lambda1}
\Lambda_{{\mathcal{D}}^{-}}X=-\frac{1}{2(1+\tilde\kappa)}\tilde h X.
\end{equation}
Thus
\begin{align*}
\overline{\Pi}_{{\mathcal{D}}^{-}}(X,X^{\prime})&=\Pi_{{\mathcal{D}}%
^{-}}(\Lambda_{{\mathcal{D}}^{-}}X,\Lambda_{{\mathcal{D}}^{-}}X^{\prime})=%
\frac{1}{4(1+\tilde\kappa)^2}\Pi_{{\mathcal{D}}^{-}}(\tilde h X,\tilde h
X^{\prime})
\end{align*}
\begin{align*}
&=\frac{1}{2(1+\tilde\kappa)^2}\tilde g(\tilde h X,X^{\prime})=\frac{1}{%
4(1+\tilde\kappa)}\Pi_{{\mathcal{D}}^{+}}(X,X^{\prime}),
\end{align*}
so that
\begin{equation}  \label{relation1}
\Pi_{{\mathcal{D}}^{+}}(X,X^{\prime})=4(1+\tilde\kappa)\overline{\Pi}_{{%
\mathcal{D}}^{-}}(X,X^{\prime})
\end{equation}
for any $X,X^{\prime}\in\Gamma({\mathcal{D}}^{+})$. Arguing in a similar
manner one finds that
\begin{equation}  \label{lambda2}
\Lambda_{{\mathcal{D}}^{+}}Y=\frac{1}{2(1+\tilde\kappa)}\tilde h Y
\end{equation}
for all $Y\in\Gamma({\mathcal{D}}^{-})$ and
\begin{equation}  \label{relation2}
\Pi_{{\mathcal{D}}^{-}}(Y,Y^{\prime})=4(1+\tilde\kappa)\overline{\Pi}_{{%
\mathcal{D}}^{+}}(Y,Y^{\prime})
\end{equation}
for any $Y,Y^{\prime}\in\Gamma({\mathcal{D}}^{-})$. Comparing %
\eqref{relation1} with \eqref{relation2} we conclude that the bi-Legendrian
structure $({\mathcal{D}}^{+},{\mathcal{D}}^{-})$ satisfies the assumption
(I) or (III) of Theorem \ref{legendre1}, where $a$ and $b$ are any two real
numbers such that $ab=4(1+\tilde\kappa)$, both positive or negative
according to the fact that ${\mathcal{D}}^{+}$ and ${\mathcal{D}}^{-}$ are
positive or negative definite, respectively. By Theorem \ref{legendre1} this
proves the existence of a family of contact Riemannian structures $%
(\varphi_{a,b},\xi,\eta,g_{a,b})$ on $M$. The expressions \eqref{phi} and %
\eqref{metric} follow from \cite[(3.4)--(3.5)]{CAP1} and from \eqref{lambda1}%
, \eqref{lambda2} and \eqref{invariant1}. Concerning the last part of the
theorem we have to prove that the bi-Legendrian structure $({\mathcal{D}}%
^{+},{\mathcal{D}}^{-})$ satisfies the further assumption of Theorem \ref%
{legendre1}, i.e. that the corresponding bi-Legendrian connection $%
\nabla^{bl} $ preserves the Pang invariant of both the foliations. Notice
that by \cite[Theorem 3.6]{MON}, since ${\mathcal{D}}^{+}$, ${\mathcal{D}}%
^{-}$ are integrable, $\nabla^{bl}$ coincides in fact with the canonical
paracontact connection $\nabla^{pc}$. Now, by \eqref{PcTanaka} and %
\eqref{nablah}, for any $X,Y\in\Gamma(TM)$,
\begin{align*}
(\nabla^{pc}_{X}\tilde h)Y&=\tilde\nabla_{X}\tilde h Y +
\eta(X)\tilde\varphi\tilde h Y + \eta(\tilde h Y)(\tilde\varphi
X-\tilde\varphi\tilde h X) + \tilde g(X,\tilde\varphi\tilde h Y)\xi \\
&\quad - \tilde g(\tilde h X,\tilde\varphi\tilde h Y)\xi - \tilde
h\tilde\nabla_{X}Y - \eta(X)\tilde h\tilde\varphi Y - \eta(Y)(\tilde
h\tilde\varphi X - \tilde h\tilde\varphi\tilde h X) \\
&=(\tilde\nabla_{X}\tilde h)Y + 2\eta(X)\tilde\varphi h Y +
\eta(Y)\tilde\varphi\tilde h X - (1+\tilde\kappa)\eta(Y)\tilde\varphi X +
\tilde g(X,\tilde\varphi\tilde h Y)\xi - \tilde g(\tilde h
X,\tilde\varphi\tilde h Y)\xi \\
&=\tilde g(X,\tilde h \tilde\varphi Y)\xi-(1+\tilde\kappa)\tilde
g(X,\tilde\varphi Y)\xi+\eta(Y)\tilde h\tilde\varphi
X+(1+\tilde\kappa)\eta(Y)\tilde\varphi X + \tilde\mu\eta(X)\tilde
h\tilde\varphi Y \\
&\quad+2\eta(X)\tilde\varphi\tilde h Y + \eta(Y)\tilde\varphi\tilde h X -
(1+\tilde\kappa)\eta(Y)\tilde\varphi X + \tilde g(X,\tilde\varphi\tilde h
Y)\xi + (1+\tilde\kappa)\tilde g(X,\tilde\varphi Y)\xi \\
&=(\tilde\mu - 2)\eta(X)\tilde h\tilde\varphi Y.
\end{align*}
Consequently, if $\tilde\mu=2$ then $\nabla^{bl}\tilde h=\nabla^{pc}\tilde
h=0$. On the other hand, by (i) of Theorem \ref{zamconn} we have $%
\nabla^{bl}\tilde g=\nabla^{pc}\tilde g=0$. Thus, using the expression %
\eqref{invariant1} of $\Pi_{{\mathcal{D}}^{+}}$ in terms of the paracontact
structure, we get, for all $X,X^{\prime}\in\Gamma({\mathcal{D}}^{+})$ and
for all $Z\in\Gamma(TM)$
\begin{align*}
(\nabla^{bl}_{Z}\Pi_{{\mathcal{D}}^{+}})(X,X^{\prime})&=2Z(\tilde g(\tilde h
X,X^{\prime}))-2\tilde g(\tilde h\nabla^{pc}_{Z}X,X^{\prime})-2\tilde
g(\tilde h X,\nabla^{pc}_{Z}X^{\prime}) \\
&=2Z(\tilde g(\tilde h X,X^{\prime}))-2\tilde g(\nabla^{pc}_{Z}\tilde h
X,X^{\prime})-2\tilde g(\tilde h X,\nabla^{pc}_{Z}X^{\prime}) \\
&=2(\nabla^{pc}_{Z}\tilde g)(\tilde h X,X^{\prime})=0.
\end{align*}
\end{proof}

\begin{corollary}
Every positive or negative definite paracontact $(\tilde\kappa,\tilde\mu)$%
-manifold such that $\tilde\kappa>-1$ admits a $K$-contact structure.
\end{corollary}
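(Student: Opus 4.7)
The plan is to apply Theorem \ref{main1} at the diagonal point $a=b$. Since $\tilde\kappa>-1$, the quantity $\tilde\lambda=\sqrt{1+\tilde\kappa}$ is a well-defined positive real, so I would take $a=b=2\tilde\lambda$ in the positive-definite case (respectively $a=b=-2\tilde\lambda$ in the negative-definite case); then $ab=4(1+\tilde\kappa)$ and the common sign of $a$ and $b$ is consistent with the sign conditions (I) or (III) of Theorem \ref{legendre1} that underlie the proof of Theorem \ref{main1}. Theorem \ref{main1} therefore produces a compatible contact Riemannian structure $(\varphi_{a,a},\xi,\eta,g_{a,a})$ on $M$, given explicitly by \eqref{phi} and \eqref{metric}: on the paracontact eigendistributions one has $\varphi_{a,a}|_{\mathcal{D}^\pm}=\pm\tilde\lambda^{-1}\tilde h$ and $g_{a,a}|_{\mathcal{D}^\pm\times\mathcal{D}^\pm}=\tilde\lambda^{-1}\tilde g(\tilde h\cdot,\cdot)$, with $g_{a,a}=\eta\otimes\eta$ in the remaining slots.

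For the $K$-contact conclusion, I would then invoke the final clause of Theorem \ref{main1}: at the diagonal point $a=b$ the constant $\kappa_{a,a}=1-(a-b)^{2}/16$ collapses to $1$, so the resulting contact metric structure is Sasakian, and every Sasakian manifold is in particular $K$-contact.

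The argument therefore rests not on a fresh computation but on the identification of the Sasakian endpoint of the parameter family produced by Theorem \ref{main1}; the diagonal choice $a=b$ is essential, since an off-diagonal choice introduces a non-trivial $h_{a,b}$ proportional to $\tilde h$. Most of the real work has already been carried out in $\S$ \ref{firstcase}—notably the integrability and total geodesicity of the Legendre foliations $\mathcal{D}_{\tilde h}(\pm\tilde\lambda)$, the computation of the Pang invariants in Theorem \ref{pang3}, and the identity $\nabla^{pc}\tilde h=(\tilde\mu-2)\eta\otimes\tilde h\tilde\varphi$ derived inside the proof of Theorem \ref{main1}—so the corollary follows essentially immediately, provided one makes the correct diagonal choice of parameters.
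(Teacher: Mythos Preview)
Your choice of the diagonal parameter $a=b=\pm 2\tilde\lambda$ is correct and matches the paper's approach. However, there is a genuine gap in the second step. The ``final clause'' of Theorem \ref{main1} that you invoke is explicitly conditioned on $\tilde\mu=2$: only under that hypothesis does the theorem assert that $(\varphi_{a,b},\xi,\eta,g_{a,b})$ is a contact metric $(\kappa_{a,b},\mu_{a,b})$-structure (Sasakian when $a=b$). The corollary, by contrast, makes no assumption on $\tilde\mu$, so you are not entitled to conclude that $\kappa_{a,a}=1$ forces the structure to be Sasakian---indeed, without $\tilde\mu=2$ you do not even know that the structure satisfies any nullity condition, so the number $\kappa_{a,a}$ has no a priori meaning. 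The identity $\nabla^{pc}\tilde h=(\tilde\mu-2)\eta\otimes\tilde h\tilde\varphi$ that you mention is precisely what fails to vanish when $\tilde\mu\neq 2$, so the machinery of Theorem \ref{legendre1} that would yield the $(\kappa,\mu)$-condition does not apply.

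The paper closes this gap differently: it appeals to an intermediate fact from the proof of Theorem \ref{legendre1} in \cite{CAP1}, namely that the construction of $(\varphi_{a,b},\xi,\eta,g_{a,b})$ already forces $h_{a,b}=0$ whenever $a=b$, \emph{independently} of the hypothesis $\nabla^{bl}\Pi_{\mathcal{F}_1}=\nabla^{bl}\Pi_{\mathcal{F}_2}=0$ (recall the remark after Theorem \ref{legendre1} that this hypothesis is needed only for the nullity condition, not for the construction itself). Vanishing of $h_{a,b}$ gives $K$-contact directly. Your parenthetical remark that an off-diagonal choice ``introduces a non-trivial $h_{a,b}$'' suggests you have the right intuition; you should make \emph{that} the argument, rather than routing through the Sasakian conclusion.
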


\begin{proof}
It is sufficient to take $a=b=\pm 2(1+\tilde{\kappa})$ in Theorem \ref{main1}%
, since in the proof of Theorem \ref{legendre1} (cf. \cite{CAP1}) it is
shown that if $a=b$ then $h_{a,b}=0$ and so the contact metric structure is $%
K$-contact.
\end{proof}

Actually, we now prove that one can define a distinguished contact metric
structure on any positive or negative definite paracontact $%
(\tilde\kappa,\tilde\mu)$-space such that $\tilde\kappa>-1$.

\begin{theorem}
\label{principal1} Any positive or negative definite paracontact metric $%
(\tilde\kappa,\tilde\mu)$-manifold such that $\tilde\kappa>-1$ carries a
canonical contact Riemannian structure $(\phi,\xi,\eta,g)$ given by
\begin{equation}  \label{def1}
\phi:=\mp\frac{1}{\sqrt{1+\tilde\kappa}}\tilde\varphi\tilde{h}, \ \
g:=-d\eta(\cdot,\phi\cdot)+\eta\otimes\eta,
\end{equation}
where the sign $\mp$ depends on the positive or negative definiteness of the
paracontact $(\tilde\kappa,\tilde\mu)$-manifold. Moreover, if $\tilde\mu=2$
then $(\phi,\xi,\eta,g)$ is Sasakian, whereas if $\tilde\mu\neq 2$ then $%
(\phi,\xi,\eta,g)$ is a non-Sasakian contact metric $(\kappa,\mu)$%
-structure, where
\begin{equation}  \label{valori}
\kappa=1-\left(1-\frac{\tilde\mu}{2}\right)^2, \ \ \ \mu=2\left(1\mp\sqrt{%
1+\tilde\kappa}\right),
\end{equation}
the sign $\mp$ depending, respectively, on the positive or negative
definiteness of the paracontact metric structure $(\tilde\varphi,\xi,\eta,%
\tilde{g})$.
\end{theorem}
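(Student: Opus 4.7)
The plan is to identify the structure defined by \eqref{def1} with a particular member of the family $(\varphi_{a,b},\xi,\eta,g_{a,b})$ constructed in Theorem \ref{main1}, then apply Theorem \ref{main1} directly to obtain the Sasakian case, and finally verify the $(\kappa,\mu)$-nullity by a direct curvature computation in the remaining case.

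For the identification, the key observation is that the anti-commutativity $\tilde\varphi\tilde h=-\tilde h\tilde\varphi$ together with the fact that $\tilde\varphi$ acts as $\pm I$ on ${\mathcal D}^{\pm}$ forces $\tilde h$ to interchange the two eigendistributions, so that $\tilde\varphi\tilde h X=-\tilde h X$ for $X\in{\mathcal D}^{+}$ and $\tilde\varphi\tilde h Y=\tilde h Y$ for $Y\in{\mathcal D}^{-}$. Substituting into \eqref{def1} and comparing with \eqref{phi}--\eqref{metric}, I see that $(\phi,g)=(\varphi_{a,b},g_{a,b})$ for $a=b=\pm 2\sqrt{1+\tilde\kappa}$, with the sign chosen positive or negative according to the positive or negative definiteness of the paracontact $(\tilde\kappa,\tilde\mu)$-structure (so that $g$ is positive definite on the contact distribution). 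Hence $(\phi,\xi,\eta,g)$ is a contact Riemannian structure on $M$ by Theorem \ref{main1}.

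When $\tilde\mu=2$, Theorem \ref{main1} immediately yields that this structure satisfies a $(\kappa_{a,b},\mu_{a,b})$-nullity condition with $\kappa_{a,b}=1-(a-b)^2/16=1$, so by the Blair--Koufogiorgos--Papantoniou theorem it is Sasakian; this matches $\kappa=1-(1-\tilde\mu/2)^2=1$ at $\tilde\mu=2$. When $\tilde\mu\neq 2$, however, Theorem \ref{main1} no longer produces the nullity (the identity $(\nabla^{pc}_X\tilde h)Y=(\tilde\mu-2)\eta(X)\tilde h\tilde\varphi Y$ derived in the proof of Theorem \ref{main1} fails to vanish, so Theorem \ref{legendre1} cannot be invoked), and a direct computation of $R_{XY}\xi$ is required. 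My strategy is: use the Koszul formula together with the explicit relation $g=\pm\tfrac{1}{\sqrt{1+\tilde\kappa}}\tilde g(\tilde h\cdot,\cdot)+\eta\otimes\eta$ to express the Levi-Civita connection $\nabla$ of $g$ as $\tilde\nabla$ plus correction terms involving $\tilde\varphi$, $\tilde h$, and $\tilde\varphi\tilde h$; extract $\nabla_X\xi=-\phi X-\phi h X$, thereby identifying the contact metric operator $h=\tfrac{1}{2}{\mathcal L}_\xi\phi$; then evaluate $R_{XY}\xi$ using the identities \eqref{nablah} and \eqref{NMBLA ZETAH} and the paracontact nullity \eqref{PARAKMU}; finally match against \eqref{RXYZETA} to read off $\kappa=1-(1-\tilde\mu/2)^2$ and $\mu=2(1\mp\sqrt{1+\tilde\kappa})$. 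The main obstacle is precisely this bookkeeping: propagating $\tilde\varphi,\tilde h,\tilde\varphi\tilde h$ through the connection difference and simplifying using \eqref{H2} is lengthy, though entirely mechanical given the identities collected in Section \ref{third}.
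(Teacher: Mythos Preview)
Your identification of $(\phi,\xi,\eta,g)$ with the member $a=b=\pm 2\sqrt{1+\tilde\kappa}$ of the family in Theorem~\ref{main1} is correct and economical: it immediately gives that $(\phi,\xi,\eta,g)$ is a contact Riemannian structure, and for $\tilde\mu=2$ the Sasakian conclusion falls out of Theorem~\ref{main1} with $a=b$. The paper instead verifies the contact metric axioms directly from \eqref{def1} and, for $\tilde\mu=2$, computes $h=0$ (so $K$-contact) and then checks $N_\phi=0$ using the integrability of the canonical almost bi-paracontact structure \eqref{caseI}. Your route is shorter here.

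For $\tilde\mu\neq 2$ the approaches genuinely diverge. You propose to pass from $\tilde\nabla$ to $\nabla$ via Koszul and then compute $R_{XY}\xi$ directly; this is in principle sound but, as you note, heavy. The paper avoids it entirely by a structural argument: it first computes $h=\mp\frac{2-\tilde\mu}{2\sqrt{1+\tilde\kappa}}\tilde h$ (so $h$ is diagonalisable with eigenvalues $\pm(1-\tilde\mu/2)$ and the same eigendistributions ${\mathcal D}_{\tilde h}(\pm\tilde\lambda)$), and then shows that the bi-Legendrian connection $\bar\nabla^{bl}$ of $({\mathcal D}_h(\lambda),{\mathcal D}_h(-\lambda))$ satisfies all the hypotheses of Theorem~\ref{characterization}, using only the totally geodesic property of ${\mathcal D}_{\tilde h}(\pm\tilde\lambda)$ (Theorem~\ref{para1}) and $\bar\nabla^{bl}d\eta=0$. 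This yields the $(\kappa,\mu)$-nullity without ever writing $R_{XY}\xi$. The constant $\kappa=1-(1-\tilde\mu/2)^2$ is read off from the eigenvalues of $h$, while $\mu$ is obtained by matching the Pang invariants of ${\mathcal D}_{\tilde h}(\pm\tilde\lambda)$ computed in Theorem~\ref{pang3} against the known contact metric formula $\Pi_{{\mathcal D}_h(\pm\lambda)}=2(1-\mu/2\pm\sqrt{1-\kappa})g$. Your direct approach would work, but the paper's bi-Legendrian route replaces a long curvature calculation by a handful of short connection identities, and makes the origin of the values \eqref{valori} transparent.
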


\begin{proof}
Let us define a $(1,1)$-tensor field $\phi $ and a tensor $g$ of type $(0,2)$
by \eqref{def1}. First of all, using \eqref{H2}, one can easily prove that $%
\phi ^{2}=-I+\eta \otimes \xi $. Next we prove that $g$ is a Riemannian
metric. By using the symmetry of the operator $\tilde{h}$
with respect to $\tilde{g}$, one has, for any $X,Y\in \Gamma (TM)$,
\begin{align}
g(X,Y)& =\pm \frac{1}{\sqrt{1+\tilde{\kappa}}}d\eta (X,\tilde{\varphi}\tilde{%
h}Y)+\eta (X)\eta (Y)  \notag  \label{metrica1} \\
& =\pm \frac{1}{\sqrt{1+\tilde{\kappa}}}\tilde{g}(X,\tilde{\varphi}^{2}%
\tilde{h}Y)+\eta (X)\eta (Y)  \notag \\
& =\pm \frac{1}{\sqrt{1+\tilde{\kappa}}}\tilde{g}(X,\tilde{h}Y)+\eta (X)\eta
(Y) \\
& =\pm \frac{1}{\sqrt{1+\tilde{\kappa}}}\tilde{g}(Y,\tilde{h}X)+\eta (X)\eta
(Y)  \notag \\
& =g(Y,X),  \notag
\end{align}%
so that $g$ is symmetric. In order to prove that it is also positive
definite, let us consider a $\tilde{\varphi}$-basis \ $\{X_{1},\ldots
,X_{n},Y_{1},\ldots ,Y_{n},\xi \}$ \ as \ in \ Lemma \ref{basis}. \ Then \
we \ have \ that \ $g(\xi ,\xi )=1$, \ $g(X_{i},X_{i})=\pm \frac{1}{\sqrt{1+%
\tilde{\kappa}}}\tilde{g}(X_{i},\tilde{h}X_{i})$ \ $=$ \ $\pm \frac{1}{\sqrt{%
1+\tilde{\kappa}}}\tilde{\lambda}\tilde{g}(X_{i},X_{i})$ \ $=$ \ $\pm \tilde{%
g}(X_{i},X_{i})=(\pm 1)(\pm 1)=1$ and $\tilde{g}(Y_{i},Y_{i})=\pm \frac{1}{%
\sqrt{1+\tilde{\kappa}}}\tilde{g}(Y_{i},\tilde{h}Y_{i})$ $=\mp \tilde{g}%
(Y_{i},Y_{i})=(\mp 1)(\mp 1)=1$. Finally one can straightforwardly check
that $g(\phi X,\phi Y)$ $=g(X,Y)-\eta (X)\eta (Y)$ and $g(X,\phi Y)=d\eta
(X,Y)$. Thus $(\phi ,\xi ,\eta ,g)$ is a contact Riemannian structure. We
prove the second part of the theorem. Let us compute the operator $h$
associated to the contact metric structure $(\phi ,\xi ,\eta ,g)$. We have
\begin{equation}
h=\frac{1}{2}{\mathcal{L}}_{\xi }\phi =\mp \frac{1}{2\sqrt{1+\tilde{\kappa}}}%
{\mathcal{L}}_{\xi }(\tilde{\varphi}\tilde{h})=\mp \frac{1}{2\sqrt{1+\tilde{%
\kappa}}}\left( ({\mathcal{L}}_{\xi }\tilde{\varphi})\tilde{h}+\tilde{\varphi%
}({\mathcal{L}}_{\xi }\tilde{h})\right) .  \label{passo1}
\end{equation}%
On the other hand, by using \eqref{NMBLA ZETAH}, we have, for any $X\in
\Gamma (TM)$,
\begin{align*}
({\mathcal{L}}_{\xi }\tilde{h})X& =[\xi ,\tilde{h}X]-\tilde{h}[\xi ,X] \\
& =\tilde{\nabla}_{\xi }{\tilde{h}}X-\tilde{\nabla}_{\tilde{h}X}\xi -\tilde{h%
}\tilde{\nabla}_{\xi }X+\tilde{h}\tilde{\nabla}_{X}\xi \\
& =(\tilde{\nabla}_{\xi }\tilde{h})X+\tilde{\varphi}\tilde{h}X-\tilde{\varphi%
}\tilde{h}^{2}X-\tilde{h}\tilde{\varphi}X+\tilde{h}\tilde{\varphi}\tilde{h}X
\\
& =(2-\tilde{\mu})\tilde{\varphi}\tilde{h}-2(1+\tilde{\kappa})\tilde{\varphi}%
X.
\end{align*}%
Thus \eqref{passo1} becomes
\begin{equation}
h=\mp \frac{1}{2\sqrt{1+\tilde{\kappa}}}\left( 2-\tilde{\mu}\right) \tilde{h}%
.  \label{passo2}
\end{equation}%
We distinguish the cases $\tilde{\mu}\neq 2$ and $\tilde{\mu}=2$. In the
first case by \eqref{passo2} we see that $h$ is diagonalizable, it admits
the eigenvalues $0$, $\pm \lambda $, where
\begin{equation}
\lambda :=1-\frac{\tilde{\mu}}{2},  \label{passo3}
\end{equation}%
and the same eigendistributions as $\tilde{h}$. We prove that the Legendre
foliations ${\mathcal{D}}_{h}(\lambda )$, ${\mathcal{D}}_{h}(-\lambda )$ and
the corresponding bi-Legendrian connection $\bar{\nabla}^{bl}$ satisfy the
conditions stated in Theorem \ref{characterization}, so concluding that $%
(\phi ,\xi ,\eta ,g)$ is a contact metric $(\kappa ,\mu )$-structure. First
of all, notice that ${\mathcal{D}}_{h}(\lambda )$ and ${\mathcal{D}}%
_{h}(-\lambda )$ are mutually $g$-orthogonal. Indeed by using %
\eqref{metrica1} one has, for any $X\in \Gamma ({\mathcal{D}}_{h}(\lambda ))$
and $Y\in \Gamma ({\mathcal{D}}_{h}(-\lambda ))$, \ $g(X,Y)=\pm \frac{1}{%
\sqrt{1+\tilde{\kappa}}}\tilde{g}(X,\tilde{h}Y)=\mp \tilde{g}(X,Y)=0$, \
since the eigendistributions of $\tilde{h}$ are $\tilde{g}$-orthogonal
(Corollary \ref{bipara2}). Next, by definition of bi-Legendrian connection,
also the conditions (i), (iii) and $\bar{\nabla}^{bl}\eta =\bar{\nabla}%
^{bl}d\eta =0$ of Theorem \ref{characterization} are satisfied. Moreover, $%
\bar{\nabla}^{bl}h=0$ because $\bar{\nabla}^{bl}$ preserves ${\mathcal{D}}%
_{h}(\lambda )$ and ${\mathcal{D}}_{h}(-\lambda )$. Thus it remains to prove
that $\bar{\nabla}^{bl}g=0$ and $\bar{\nabla}^{bl}\phi =0$. Let us recall (%
\cite{CAP5}) that, by definition, $\bar{\nabla}_{X}^{bl}Y=[X,Y]_{{\mathcal{D}%
}_{h}(-\lambda )}$ and $\bar{\nabla}_{Y}^{bl}X=[Y,X]_{{\mathcal{D}}%
_{h}(\lambda )}$ for any $X\in \Gamma ({\mathcal{D}}_{h}(\lambda ))$ and $%
Y\in \Gamma ({\mathcal{D}}_{h}(-\lambda ))$. Then for any $X,X^{\prime }\in
\Gamma ({\mathcal{D}}_{h}(\lambda ))$ and $Y,Y^{\prime }\in \Gamma ({%
\mathcal{D}}_{h}(-\lambda ))$ one has
\begin{align*}
(\bar{\nabla}_{Y}^{bl}\tilde{g})(X,X^{\prime })& =Y(\tilde{g}(X,X^{\prime
}))-\tilde{g}([Y,X]_{{\mathcal{D}}_{h}(\lambda )},X^{\prime })-\tilde{g}%
([Y,X^{\prime }]_{{\mathcal{D}}_{h}(-\lambda )},X) \\
& =Y(\tilde{g}(X,X^{\prime }))-\tilde{g}([Y,X],X^{\prime })-\tilde{g}%
([Y,X^{\prime }],X) \\
& =-2\tilde{g}(\tilde{\nabla}_{X}X^{\prime },Y)=0,
\end{align*}%
and, analogously, using the $\tilde{g}$-orthogonality and totally
geodesicity of ${\mathcal{D}}_{\tilde{h}}(\pm \tilde{\lambda})={\mathcal{D}}%
_{{h}}(\pm \lambda )$, one has that $(\bar{\nabla}_{X}^{bl}\tilde{g}%
)(Y,Y^{\prime })=-2\tilde{g}(\tilde{\nabla}_{Y}Y^{\prime },X)=0$, $(\bar{%
\nabla}_{\xi }^{bl}\tilde{g})(X,X^{\prime })=-2\tilde{g}(\tilde{\nabla}%
_{X}X^{\prime },\xi )=0$ and $(\bar{\nabla}_{\xi }^{bl}\tilde{g}%
)(Y,Y^{\prime })=-2\tilde{g}(\tilde{\nabla}_{Y}Y^{\prime },\xi )=0$.
Moreover, for any $X,X^{\prime },X^{\prime \prime }\in \Gamma ({\mathcal{D}}%
_{h}(\lambda ))$, by using $\bar{\nabla}^{bl}d\eta =0$,
\begin{align*}
(\bar{\nabla}_{X}^{bl}\tilde{g})(X^{\prime },X^{\prime \prime })& =X(\tilde{g%
}(X^{\prime },X^{\prime \prime }))-d\eta (\bar{\nabla}_{X}^{bl}X^{\prime },%
\tilde{\varphi}X^{\prime \prime })-d\eta (X^{\prime },\tilde{\varphi}\bar{%
\nabla}_{X}^{bl}X^{\prime \prime }) \\
& =X(\tilde{g}(X^{\prime },X^{\prime \prime }))-X(d\eta (X^{\prime },\tilde{%
\varphi}X^{\prime \prime }))+d\eta (X^{\prime },\bar{\nabla}_{X}^{bl}\tilde{%
\varphi}X^{\prime \prime }) \\
& \quad -X(d\eta (\tilde{\varphi}X^{\prime },X^{\prime \prime }))+d\eta (%
\bar{\nabla}_{X}^{bl}\tilde{\varphi}X^{\prime },X^{\prime \prime }) \\
& =X(\tilde{g}(X^{\prime },X^{\prime \prime }))-X(\tilde{g}(X^{\prime
},X^{\prime \prime }))+\tilde{g}(X^{\prime },\tilde{g}\bar{\nabla}_{X}^{bl}%
\tilde{\varphi}X^{\prime \prime }) \\
& \quad -X(\tilde{g}(\tilde{\varphi}X^{\prime },\tilde{\varphi}X^{\prime
\prime }))+\tilde{g}(\bar{\nabla}_{X}^{bl}\tilde{\varphi}X^{\prime },\tilde{%
\varphi}X^{\prime \prime }) \\
& =-X(\tilde{g}(\tilde{\varphi}X^{\prime },\tilde{\varphi}X^{\prime \prime
}))-\tilde{g}(\tilde{\varphi}X^{\prime },[X,\tilde{\varphi}X^{\prime \prime
}]_{{\mathcal{D}}_{h}(\lambda )})+\tilde{g}([X,\tilde{\varphi}X^{\prime }]_{{%
\mathcal{D}}_{h}(-\lambda )},\tilde{\varphi}X^{\prime \prime }) \\
& =-X(\tilde{g}(\tilde{\varphi}X^{\prime },\tilde{\varphi}X^{\prime \prime
}))-\tilde{g}(\tilde{\varphi}X^{\prime },[X,\tilde{\varphi}X^{\prime \prime
}])+\tilde{g}([X,\tilde{\varphi}X^{\prime }],\tilde{\varphi}X^{\prime \prime
}) \\
& =2\tilde{g}(\tilde{\nabla}_{\tilde{\varphi}X^{\prime }}\tilde{\varphi}%
X^{\prime \prime },X)=0
\end{align*}%
and, by similar computations, for any $Y,Y^{\prime },Y^{\prime \prime }\in
\Gamma ({\mathcal{D}}_{h}(-\lambda ))$, \ $(\bar{\nabla}_{Y}^{bl}\tilde{g}%
)(Y^{\prime },Y^{\prime \prime })=2\tilde{g}(\tilde{\nabla}_{\tilde{\varphi}%
Y^{\prime }}\tilde{\varphi}Y^{\prime \prime },Y)=0$, \ where we used again
the total geodesicity of ${\mathcal{D}}_{\tilde{h}}(\pm \tilde{\lambda})$.
Since, by definition, $\bar{\nabla}^{bl}\xi =0$, we conclude that $\bar{%
\nabla}^{bl}\tilde{g}=0$. Thus by \eqref{metrica1} and \eqref{passo2} we
have, for all $X,Y,Z\in \Gamma (TM)$,
\begin{equation*}
(\bar{\nabla}_{X}^{bl}g)(Y,Z)=\pm \frac{1}{\sqrt{1+\tilde{\kappa}}}(\bar{%
\nabla}_{X}^{bl}\tilde{g})(Y,\tilde{h}Z)\mp \frac{2}{2-\tilde{\mu}}\tilde{g}%
(Y,(\bar{\nabla}_{X}^{bl}h)Z)\pm \eta (Z)(\bar{\nabla}_{X}^{bl}\eta )(Y)\pm
\eta (Y)(\bar{\nabla}_{X}^{bl}\eta )(Z)=0
\end{equation*}%
since $\bar{\nabla}^{bl}\tilde{g}=0$, $\bar{\nabla}^{bl}h=0$ and $\bar{\nabla%
}^{bl}\eta =0$. On the other hand, from $\bar{\nabla}^{bl}g=0$, $\bar{\nabla}%
^{bl}d\eta =0$ and the relation $d\eta =g(\cdot ,\phi \cdot )$ it easily
follows that the bi-Legendrian connection $\bar{\nabla}^{bl}$ preserves also
the tensor field $\phi $. Therefore, according to Theorem \ref%
{characterization}, $(\phi ,\xi ,\eta ,g)$ is a contact metric $(\kappa ,\mu
)$-structure. In order to find the explicit expression of the constants $%
\kappa $ and $\mu $, notice that, by \eqref{passo3}, $\sqrt{1-\kappa }%
=\left\vert 1-\frac{\tilde{\mu}}{2}\right\vert $, from which it follows that
\begin{equation}
\kappa =1-\left( 1-\frac{\tilde{\mu}}{2}\right) ^{2}.  \label{passo5}
\end{equation}%
In \ order \ to \ find \ $\tilde{\mu}$, \ notice \ that \ since \ the \
bi-Legendrian \ structures \ $({\mathcal{D}}_{\tilde{h}}(-\tilde{\lambda}),{%
\mathcal{D}}_{\tilde{h}}(\tilde{\lambda}))$ and \ $({\mathcal{D}}%
_{h}(-\lambda ),{\mathcal{D}}_{h}(\lambda ))$ coincide, also the
corresponding Pang invariants must be equal. More precisely, by %
\eqref{passo2} one can find that
\begin{equation}
{\mathcal{D}}_{\tilde{h}}(\tilde{\lambda})=\left\{
\begin{array}{ll}
{\mathcal{D}}_{h}(\pm |\lambda |), & \hbox{ if $\tilde\mu>2$} \\
{\mathcal{D}}_{h}(\mp |\lambda |), & \hbox{ if $\tilde\mu<2$}%
\end{array}%
\right.  \label{relazione1}
\end{equation}%
\begin{equation}
{\mathcal{D}}_{\tilde{h}}(-\tilde{\lambda})=\left\{
\begin{array}{ll}
{\mathcal{D}}_{h}(\mp |\lambda |), & \hbox{ if $\tilde\mu>2$} \\
{\mathcal{D}}_{h}(\pm |\lambda |), & \hbox{ if $\tilde\mu<2$}%
\end{array}%
\right.  \label{relazione2}
\end{equation}%
where the sign $\pm $ depends on the positive or negative definiteness of
the paracontact $(\tilde{\kappa},\tilde{\mu})$-manifold $(M,\tilde{\varphi}%
,\xi ,\eta ,\tilde{g})$. Let us assume that $(M,\tilde{\varphi},\xi ,\eta ,%
\tilde{g})$ is positive definite and that $\tilde{\mu}>2$. Then, by using %
\eqref{relazione1}--\eqref{relazione2} and by comparing \cite[(11)]{CAP1}
with \eqref{pang1} we get
\begin{equation}
2\left( 1-\frac{\mu }{2}+|\lambda |\right) g(X,X^{\prime })=-2\left( 1-\frac{%
\tilde{\mu}}{2}-\sqrt{1+\tilde{\kappa}}\right) \tilde{g}(X,X^{\prime })
\label{passo4}
\end{equation}%
for any $X,X^{\prime }\in \Gamma ({\mathcal{D}}_{\tilde{h}}(\tilde{\lambda}%
)) $. By \eqref{metrica1} and \eqref{passo5}, \eqref{passo4} becomes
\begin{equation*}
2\left( 1-\frac{\mu }{2}-\left( 1-\frac{\tilde{\mu}}{2}\right) \right)
\tilde{g}(X,X^{\prime })=-2\left( 1-\frac{\tilde{\mu}}{2}-\sqrt{1+\tilde{%
\kappa}}\right) \tilde{g}(X,X^{\prime }),
\end{equation*}%
It follows that
\begin{equation}
\mu =2\left( 1-\sqrt{1+\tilde{\kappa}}\right) .  \label{value1}
\end{equation}%
If we assume $\tilde{\mu}<2$ we use \cite[(12)]{CAP1} and we get
\begin{equation*}
2\left( 1-\frac{\mu }{2}-|\lambda |\right) g(X,X^{\prime })=-2\left( 1-\frac{%
\tilde{\mu}}{2}-\sqrt{1+\tilde{\kappa}}\right) \tilde{g}(X,X^{\prime })
\end{equation*}%
and, as $|\lambda |=1-\frac{\tilde{\mu}}{2}$, so we obtain again %
\eqref{value1}. The case when $(M,\tilde{\varphi},\xi ,\eta ,\tilde{g})$ is
negative definite is similar and one can prove that
\begin{equation}
\mu =2\left( 1+\sqrt{1+\tilde{\kappa}}\right) .  \label{value2}
\end{equation}%
Now let us assume that $\tilde{\mu}=2$. Then \eqref{passo2} implies that the
operator $h$ vanishes, so that the contact metric structure $(\phi ,\xi
,\eta ,g)$ is $K$-contact. In particular one has $N_{\phi }(\xi ,X)=\phi
^{2}[\xi ,X]-\phi \lbrack \xi ,\phi X]=-2\phi {h}X=0$ for all $X\in \Gamma
(TM)$. Moreover, since ${\mathcal{D}}^{+}$, ${\mathcal{D}}^{-}$, ${\mathcal{D%
}}_{\tilde{h}}(\tilde{\lambda})$, ${\mathcal{D}}_{\tilde{h}}(-\tilde{\lambda}%
)$ are Legendre foliations, the canonical almost bi-paracontact structure %
\eqref{caseI} is integrable. Thus by \cite[Corollary 3.9]{CAP2} we deduce
that $N_{\phi }(X,Y)=0$ for all $X,Y\in \Gamma ({\mathcal{D}})$.
Consequently the tensor field $N_{\phi }$ vanishes identically and $(M,\phi
,\xi ,\eta ,g)$ is a Sasakian manifold.
\end{proof}

\begin{example}
Let us consider the paracontact $(\tilde{\kappa},\tilde{\mu})$-manifold $(G,%
\tilde{\varphi},\xi ,\eta ,\tilde{g})$ described in Example \ref{example1}
and let us apply the procedure of Theorem \ref{principal1}. Then a canonical
contact $(\kappa ,\mu )$-structure $(\phi ,\xi ,\eta ,g)$ is defined on $G$,
where, according to \eqref{valori}, $\kappa =1-\frac{(\alpha ^{2}-\beta
^{2})^{2}}{16}$ and $\mu =2\left( 1+\frac{\alpha ^{2}+\beta ^{2}}{4}\right) $%
. Explicitly, the contact Riemannian structure is defined as follows
\begin{gather*}
\phi e_{1}=e_{3},\ \ \phi e_{2}=e_{4},\ \ \phi e_{3}=-e_{1},\ \ \phi
e_{4}=-e_{2},\ \ \phi e_{5}=0, \\
g(e_{i},e_{j})=\delta _{ij}\ \hbox{ for any }i,j\in \left\{ 1,\ldots
,5\right\} .
\end{gather*}%
In order to understand where such a contact metric $(\kappa ,\mu )$%
-structure on the Lie group $G$ stays in the Boeckx's classification, let us
compute the value of the Boeckx invariant $I_{G}$ (\cite{BO}). An easy
computation shows that $I_{G}=\frac{1-\frac{\mu }{2}}{\sqrt{1-\kappa }}=-%
\frac{\alpha ^{2}+\beta ^{2}}{|\alpha ^{2}-\beta ^{2}|}$. Then one can
straightforwardly check that $I_{G}<-1$ provided that $\alpha ,\beta \neq 0$%
, and $I_{G}=-1$ if $\alpha =0$ ($\beta \neq 0$) or $\alpha =0$ ($\beta \neq
0$). Hence the contact Riemannian manifold $(G,\phi ,\xi ,\eta ,g)$ is
locally isometric to one among the contact Riemannian Lie groups described
in $\S 4$ of \cite{BO}, namely that one  whose Lie algebra has the same constant
structures as \eqref{constant1}--\eqref{constant2}.
\end{example}

\begin{remark}
\label{remarkreferee} Let $(M,\phi ,\xi ,\eta ,g)$ be a (non-Sasakian)
contact metric $(\kappa ^{\prime },\mu ^{\prime })$-space. Then by applying
the procedure described in \cite{MOTE} one obtains a paracontact $(\tilde{%
\kappa},\tilde{\mu})$-structure $(\tilde{\varphi},\xi ,\eta ,\tilde{g})$ on $%
M$, being
\begin{equation*}
\tilde{\kappa}=\kappa -2+\left( 1-\frac{\mu }{2}\right) ^{2},\ \ \ \tilde{\mu%
}=2.
\end{equation*}%
Since the bi-Legendrian structure $(\mathcal{D}^{+},\mathcal{D}^{-})$
coincides with that one $(\mathcal{D}_{h}(\lambda ),\mathcal{D}_{h}(-\lambda
))$ defined by the eigendistribution of $h$, due to \cite[Theorem 4]{CAP1} one
has that the paracontact metric structure $(\tilde{\varphi},\xi ,\eta ,%
\tilde{g})$ is positive or negative definite if and only if $I_{M}^{2}>1$,
where $I_{M}$ denotes the of the contact metric $(\kappa ,\mu )$-structure $%
(\phi ,\xi ,\eta ,g)$. But $I_{M}^{2}>1$ if and only if $\frac{\left( 1-%
\frac{\mu }{2}\right) ^{2}}{1-\kappa }>1$, that is $\kappa -1+\left( 1-\frac{%
\mu }{2}\right) ^{2}>0$, which is equivalent to require that $\tilde{\kappa}%
>-1$. Therefore the only positive or negative definite paracontact $(\tilde{%
\kappa},\tilde{\mu})$-structures determined via the above procedure are
those ones with $\tilde{\kappa}>-1$. Then we are under the assumption of
Theorem \ref{principal1} and so we obtain a new contact Riemannian structure
$(\phi ^{\prime },\eta ,\xi ,g^{\prime })$. Since $\tilde{\mu}=2$, $(\phi
^{\prime },\eta ,\xi ,g^{\prime })$ is in fact a Sasakian structure. Hence
we have proved that any non-Sasakian contact metric $(\kappa ,\mu )$%
-manifold such that $|I_{M}|>1$ admits a Sasakian metric compatible with the
same underlying contact form $\eta $. The same result was proved using
different techniques in \cite{CAP1}.
\end{remark}

Now we pass to study some important curvature properties.

\begin{lemma}
Let $(M,\tilde{\varphi},\xi ,\eta ,\tilde{g})$ be a paracontact $(\tilde{%
\kappa},\tilde{\mu})$-manifold such that $\tilde{\kappa}>-1$. Then for any
vector fields $X$, $Y$, $Z$ on $M$ we have
\begin{align}  \label{RHZ}
\tilde{R}_{XY}\tilde{h}Z-\tilde{h}\tilde{R}_{XY}Z &=\bigl(\tilde{\kappa}%
(\eta (X)\tilde{g}(\tilde{h}Y,Z)-\eta (Y)\tilde{g}(\tilde{h}X,Z)) +\tilde{\mu%
}(1+\tilde{\kappa})(\eta (X)\tilde{g}(Y,Z)-\eta (Y)\tilde{g}(X,Z))\bigr)\xi
\notag \\
&\quad+\tilde{\kappa}\bigl(\tilde{g}(X,\tilde{\varphi}Z)\tilde{\varphi}%
\tilde{h}Y-\tilde{g}(Y,\tilde{\varphi}Z)\tilde{\varphi}\tilde{h}X +\tilde{g}%
(Z,\tilde{\varphi}\tilde{h}X)\tilde{\varphi}Y-\tilde{g}(Z,\tilde{\varphi}%
\tilde{h}Y)\tilde{\varphi}X \\
&\quad+\eta (Z)(\eta (X)\tilde{h}Y-\eta (Y)\tilde{h}X)\bigr) -\tilde{\mu}%
\bigl((1+\tilde{\kappa})\eta (Z)(\eta (Y)X-\eta (X)Y) +2\tilde{g}(X,\tilde{%
\varphi}Y)\tilde{\varphi}\tilde{h}Z\bigr).  \notag
\end{align}
\end{lemma}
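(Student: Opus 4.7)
The plan is to apply the Ricci identity to the $(1,1)$-tensor $\tilde h$, namely
\begin{equation*}
\tilde R_{XY}\tilde h Z - \tilde h\,\tilde R_{XY} Z \;=\; \bigl(\tilde\nabla_X(\tilde\nabla_Y\tilde h) - \tilde\nabla_Y(\tilde\nabla_X\tilde h) - \tilde\nabla_{[X,Y]}\tilde h\bigr)Z,
\end{equation*}
so the whole proof is reduced to computing a second covariant derivative of $\tilde h$ and skew-symmetrizing. Since Proposition \ref{para2} already gives the explicit first covariant derivative \eqref{nablah} of $\tilde h$, this is in principle a direct computation.

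Concretely, I would fix a point $P\in M$ and choose $X,Y,Z\in\Gamma(TM)$ with $(\tilde\nabla X)_P=(\tilde\nabla Y)_P=(\tilde\nabla Z)_P=0$, so that at $P$ the right-hand side of the Ricci identity reduces to $\tilde\nabla_X((\tilde\nabla_Y\tilde h)Z) - \tilde\nabla_Y((\tilde\nabla_X\tilde h)Z)$. I would then plug the formula
\begin{equation*}
(\tilde\nabla_Y\tilde h)Z = -\tilde g\bigl(Y,\tilde\varphi\tilde h^2 Z+\tilde\varphi\tilde h Z\bigr)\xi + \eta(Z)\bigl((1+\tilde\kappa)\tilde\varphi Y-\tilde\varphi\tilde h Y\bigr) - \tilde\mu\,\eta(Y)\tilde\varphi\tilde h Z
\end{equation*}
into the Ricci identity and covariantly differentiate it in $X$. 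This produces terms where $\tilde\nabla_X$ hits $\xi$ (replaced via \eqref{nablaxi}), $\eta$ (via $(\tilde\nabla_X\eta)(\cdot)=\tilde g(\cdot,\tilde\varphi X-\tilde\varphi\tilde h X)$), $\tilde\varphi$ (via \eqref{NAMLAFI}), and $\tilde h$ itself (re-expanded again via \eqref{nablah}). Throughout, the identity \eqref{H2} lets me replace $\tilde h^2$ by $(1+\tilde\kappa)\tilde\varphi^2=(1+\tilde\kappa)(I-\eta\otimes\xi)$, which is the key simplification in the $\tilde\varphi\tilde h^2$-term appearing above.

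After skew-symmetrizing in $X,Y$, many terms either vanish or combine. The $\xi$-component collects the contributions $\tilde\kappa(\eta(X)\tilde g(\tilde h Y,Z)-\eta(Y)\tilde g(\tilde h X,Z))$ and $\tilde\mu(1+\tilde\kappa)(\eta(X)\tilde g(Y,Z)-\eta(Y)\tilde g(X,Z))$; the $\tilde\varphi$- and $\tilde\varphi\tilde h$-components gather the bilinear pairings $\tilde g(X,\tilde\varphi Z)\tilde\varphi\tilde h Y$, $\tilde g(Z,\tilde\varphi\tilde h X)\tilde\varphi Y$, etc.; and the $\eta(Z)$-terms produce the remaining pieces $\eta(Z)(\eta(X)\tilde h Y-\eta(Y)\tilde h X)$ and $(1+\tilde\kappa)\eta(Z)(\eta(Y)X-\eta(X)Y)$, while the term $2\tilde g(X,\tilde\varphi Y)\tilde\varphi\tilde h Z$ emerges from the antisymmetry of the $\eta(Y)$- and $\eta(X)$-factors combined with $\tilde h\tilde\varphi = -\tilde\varphi\tilde h$.

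The main obstacle is not conceptual but purely combinatorial: one has to keep track of a rather large number of terms, repeatedly apply $\tilde h\tilde\varphi+\tilde\varphi\tilde h=0$, $\tilde h\xi=0$, $\eta\circ\tilde h=0$, $\tilde\varphi^2=I-\eta\otimes\xi$ and \eqref{H2} to collapse products, and verify that everything outside the stated right-hand side cancels in the antisymmetrization. As sanity checks I would verify the identity against the specialization $Z=\xi$ (where both sides must reduce consistently to \eqref{PARAKMU} and \eqref{H2}) and against the case $\tilde\mu=2$, $\tilde\kappa>-1$ corresponding to the examples of Theorem \ref{sphere3}.
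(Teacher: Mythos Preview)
Your proposal is correct and follows essentially the same route as the paper: both apply the Ricci identity to $\tilde h$, feed in the explicit formula \eqref{nablah} for $\tilde\nabla\tilde h$, and then simplify the resulting second covariant derivative using \eqref{H2}, \eqref{NAMLAFI}, \eqref{nablaxi} and the standard identities for $\tilde\varphi$ and $\tilde h$. The only cosmetic difference is that the paper writes out the full expansion of $(\tilde\nabla_X\tilde\nabla_Y\tilde h)Z$ and isolates an intermediate expression before reducing to \eqref{RHZ}, whereas you streamline by fixing parallel extensions at a point; the computation and the ingredients are the same.
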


\begin{proof}
The Ricci identity for $\tilde{h}$ is
\begin{equation}
\tilde{R}_{XY}\tilde{h}Z-\tilde{h}\tilde{R}_{XY}Z=(\tilde{\nabla}_{X}\tilde{%
\nabla}_{Y}\tilde{h})Z-(\tilde{\nabla}_{Y}\tilde{\nabla}_{X}\tilde{h})Z-(%
\tilde{\nabla}_{\left[ X,Y\right] }\tilde{h})Z  \label{RHZ2}
\end{equation}%
Using (\ref{H2}), (\ref{nablah}) and the facts that $\tilde{h}$
anti-commutes with $\tilde{\varphi}$ and $\tilde{\nabla}_{X}\tilde{\varphi}$
is antisymmetric, we get by direct calculation
\begin{align*}
(\tilde{\nabla}_{X}\tilde{\nabla}_{Y}\tilde{h})Z& =-\left( (1+\tilde{\kappa})%
\tilde{g}(\tilde{\nabla}_{X}Y,\tilde{\varphi}Z)+(1+\tilde{\kappa})\tilde{g}%
(Y,\tilde{\nabla}_{X}\tilde{\varphi}Z)+\tilde{g}(\tilde{\nabla}_{X}Y,\tilde{%
\varphi}\tilde{h}Z)+\tilde{g}(Y,\tilde{\nabla}_{X}\tilde{\varphi}\tilde{h}%
Z)\right) \xi \\
& \quad -\left( (1+\tilde{\kappa})\tilde{g}(Y,\tilde{\varphi}Z)+\tilde{g}(Y,%
\tilde{\varphi}\tilde{h}Z)\right) \tilde{\nabla}_{X}\xi +(\eta (\tilde{\nabla%
}_{X}Z)+\tilde{g}(Z,\tilde{\nabla}_{X}\xi ))((1+\tilde{\kappa})\tilde{\varphi%
}Y-\tilde{\varphi}\tilde{h}Y) \\
& \quad +\eta (Z)((\tilde{\kappa}+1)\tilde{\nabla}_{X}\tilde{\varphi}Y-%
\tilde{\nabla}_{X}\tilde{\varphi}\tilde{h}Y) \\
& \quad -\tilde{\mu}(\eta (\tilde{\nabla}_{X}Y)+\tilde{g}(Y,\tilde{\nabla}%
_{X}\xi ))\tilde{\varphi}\tilde{h}Z-\tilde{\mu}\eta (Y)\tilde{\nabla}_{X}%
\tilde{\varphi}\tilde{h}Z.
\end{align*}%
So, using also \eqref{nablah} and \eqref{NAMLA X H}, equation \eqref{RHZ2}
yields
\begin{align}
\tilde{R}_{XY}\tilde{h}Z-\tilde{h}\tilde{R}_{XY}Z& =\bigl((1+\tilde{\kappa})%
\tilde{g}((\tilde{\nabla}_{X}\tilde{\varphi})Y-(\tilde{\nabla}_{Y}\tilde{%
\varphi})X,Z)+\tilde{g}((\tilde{\nabla}_{X}\tilde{h}\tilde{\varphi})Y-(%
\tilde{\nabla}_{Y}\tilde{h}\tilde{\varphi})X,Z)\bigr)\xi  \notag
\label{RHZ3} \\
& \quad -\bigl((1+\tilde{\kappa})\tilde{g}(Y,\tilde{\varphi}Z)-\tilde{g}(Y,%
\tilde{h}\tilde{\varphi}Z)\bigr)\tilde{\nabla}_{X}\xi +\bigl((1+\tilde{\kappa%
})\tilde{g}(X,\tilde{\varphi}Z)-\tilde{g}(X,\tilde{h}\tilde{\varphi}Z)\bigr)%
\tilde{\nabla}_{Y}\xi  \notag \\
& \quad +\tilde{g}(Z,\tilde{\nabla}_{X}\xi )(\tilde{h}\tilde{\varphi}Y+(1+%
\tilde{\kappa})\tilde{\varphi}Y)-\tilde{g}(Z,\tilde{\nabla}_{Y}\xi )(\tilde{h%
}\tilde{\varphi}X+(\tilde{\kappa}+1)\tilde{\varphi}X) \\
& \quad +\eta (Z)\bigl((\tilde{\nabla}_{X}\tilde{h}\tilde{\varphi})Y-(\tilde{%
\nabla}_{Y}\tilde{h}\tilde{\varphi})X+(1+\tilde{\kappa})((\tilde{\nabla}_{X}%
\tilde{\varphi})Y-(\tilde{\nabla}_{Y}\tilde{\varphi})X)\bigr)  \notag \\
& \quad -\tilde{\mu}\bigl(\eta (Y)(\tilde{\nabla}_{X}\tilde{\varphi}\tilde{h}%
)Z-\eta (X)(\tilde{\nabla}_{Y}\tilde{\varphi}\tilde{h})Z+2\tilde{g}(X,\tilde{%
\varphi}Y)\tilde{\varphi}\tilde{h}Z\bigr).  \notag
\end{align}%
Using now \eqref{NAMLAFI}, \eqref{nablah} and $\tilde{h}\xi =0$, we get
\begin{equation*}
(\tilde{\nabla}_{X}\tilde{\varphi}\tilde{h})Y=\tilde{g}(\tilde{h}^{2}X-%
\tilde{h}X,Y)\xi +\eta (Y)(\tilde{h}^{2}X-\tilde{h}X)-\tilde{\mu}\eta (X)%
\tilde{h}Y.
\end{equation*}%
Therefore, by using \eqref{NAMLAFI} again, \eqref{RHZ3} reduces to %
\eqref{RHZ}.
\end{proof}

\begin{theorem}
\label{curv} Let $(M,\tilde{\varphi},\xi ,\eta ,\tilde{g})$ be a paracontact
$(\tilde{\kappa},\tilde{\mu})$-manifold such that $\tilde{\kappa}>-1$. Then
we have, for any $X,X^{\prime },X^{\prime \prime }\in {\mathcal{D}}_{\tilde{h%
}}(\tilde{\lambda})$ and $Y,Y^{\prime },Y^{\prime \prime }\in $ ${\mathcal{D}%
}_{\tilde{h}}(-\tilde{\lambda})$,
\begin{align}
\tilde{R}_{XX^{\prime }}X^{\prime \prime }& =(2(\tilde{\lambda}-1)+\tilde{\mu%
})(\tilde{g}(X^{\prime },X^{\prime \prime })X-\tilde{g}(X,X^{\prime \prime
})X^{\prime })  \label{v} \\
\tilde{R}_{XX^{\prime }}Y& =(\tilde{\kappa}+\tilde{\mu})(-\tilde{g}(\tilde{%
\varphi}X^{\prime },Y)\tilde{\varphi}X+\tilde{g}(\tilde{\varphi}X,Y)\tilde{%
\varphi}X^{\prime })  \label{i} \\
\tilde{R}_{XY}X^{\prime }& =\tilde{\kappa}\tilde{g}(\tilde{\varphi}%
Y,X^{\prime })\tilde{\varphi}X-\tilde{\mu}\tilde{g}(\tilde{\varphi}Y,X)%
\tilde{\varphi}X^{\prime } \\
\tilde{R}_{XY}Y^{\prime }& =-\tilde{\kappa}\tilde{g}(\tilde{\varphi}%
X,Y^{\prime })\tilde{\varphi}Y+\tilde{\mu}\tilde{g}(\tilde{\varphi}X,Y)%
\tilde{\varphi}Y^{\prime }  \label{iii} \\
\tilde{R}_{YY^{\prime }}X& =(\tilde{\kappa}+\tilde{\mu})(-\tilde{g}(\tilde{%
\varphi}Y^{\prime },X)\tilde{\varphi}Y+\tilde{g}(\tilde{\varphi}Y,X)\tilde{%
\varphi}Y^{\prime }) \\
\tilde{R}_{YY^{\prime }}Y^{\prime \prime }& =(-2(\tilde{\lambda}+1)+\tilde{%
\mu})(\tilde{g}(Y^{\prime },Y^{\prime \prime })Y-\tilde{g}(Y,Y^{\prime
\prime })Y^{\prime }).  \label{vi}
\end{align}
\end{theorem}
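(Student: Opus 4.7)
The plan is to proceed in three stages, combining the preceding lemma on $\tilde{R}_{XY}\tilde{h}Z - \tilde{h}\tilde{R}_{XY}Z$ with total geodesicity from Theorem \ref{para1}, the first Bianchi identity, and the Ricci identity for $\tilde{\varphi}$. Throughout I exploit that $\tilde{h}X = \tilde{\lambda}X$, $\tilde{h}Y = -\tilde{\lambda}Y$, $\tilde{\varphi}\tilde{h} = -\tilde{h}\tilde{\varphi}$, and that $\mathcal{D}_{\tilde{h}}(\tilde{\lambda})$, $\mathcal{D}_{\tilde{h}}(-\tilde{\lambda})$ are mutually $\tilde{g}$-orthogonal Legendrian distributions each carrying a non-degenerate $\tilde{g}$.

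\textbf{Stage 1 (mixed-type curvatures $\tilde{R}_{XY}X'$ and $\tilde{R}_{XY}Y'$).} I would substitute $Z = X' \in \mathcal{D}_{\tilde{h}}(\tilde{\lambda})$ into \eqref{RHZ}; since $\tilde{h}X' = \tilde{\lambda}X'$ the left-hand side becomes $\tilde{\lambda}\tilde{R}_{XY}X' - \tilde{h}\tilde{R}_{XY}X'$. Decomposing $\tilde{R}_{XY}X' = A_+ + A_- + c\xi$ relative to $TM = \mathcal{D}_{\tilde{h}}(\tilde{\lambda}) \oplus \mathcal{D}_{\tilde{h}}(-\tilde{\lambda}) \oplus \mathbb{R}\xi$, this left-hand side reads $2\tilde{\lambda}A_- + \tilde{\lambda}c\xi$. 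On the right-hand side, pairings like $\tilde{g}(X, \tilde{\varphi}X')$ and $\tilde{g}(X', \tilde{\varphi}\tilde{h}X)$ vanish by orthogonality between the eigenspaces, and the $d\eta$-antisymmetry $\tilde{g}(X, \tilde{\varphi}Y) = -\tilde{g}(Y, \tilde{\varphi}X)$ collapses the surviving $\tilde{\kappa}$-terms, yielding $A_-$ and $c = 0$. To handle $A_+$, pair with an arbitrary $W \in \mathcal{D}_{\tilde{h}}(\tilde{\lambda})$: the symmetry $\tilde{g}(\tilde{R}_{XY}X', W) = \tilde{g}(\tilde{R}_{X'W}X, Y)$ combined with the total geodesicity of $\mathcal{D}_{\tilde{h}}(\tilde{\lambda})$ forces $\tilde{R}_{X'W}X \in \mathcal{D}_{\tilde{h}}(\tilde{\lambda}) \perp Y$, so $A_+ = 0$ by non-degeneracy of $\tilde{g}|_{\mathcal{D}_{\tilde{h}}(\tilde{\lambda})}$. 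The formula for $\tilde{R}_{XY}Y'$ is obtained symmetrically by taking $Z = Y' \in \mathcal{D}_{\tilde{h}}(-\tilde{\lambda})$.

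\textbf{Stage 2 (same-first-pair curvatures $\tilde{R}_{XX'}Y$ and $\tilde{R}_{YY'}X$).} I would then invoke the first Bianchi identity
\[
\tilde{R}_{XX'}Y = \tilde{R}_{XY}X' - \tilde{R}_{X'Y}X
\]
and substitute the two mixed-type expressions from Stage 1. The identity $\tilde{g}(\tilde{\varphi}Y, X') = -\tilde{g}(\tilde{\varphi}X', Y)$ regroups the four resulting $\tilde{\kappa}$- and $\tilde{\mu}$-terms with common factor $\tilde{\kappa} + \tilde{\mu}$, producing exactly the stated formula. The analogous argument with $Y, Y' \in \mathcal{D}_{\tilde{h}}(-\tilde{\lambda})$ yields $\tilde{R}_{YY'}X$.

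\textbf{Stage 3 (same-eigenspace curvatures $\tilde{R}_{XX'}X''$ and $\tilde{R}_{YY'}Y''$).} For $X, X', X'' \in \mathcal{D}_{\tilde{h}}(\tilde{\lambda})$, I would work at a point $P$ where $\tilde{\nabla}X = \tilde{\nabla}X' = \tilde{\nabla}X'' = 0$ and apply the Ricci identity $\tilde{R}_{XX'}\tilde{\varphi}X'' - \tilde{\varphi}\tilde{R}_{XX'}X'' = \tilde{\nabla}_X(\tilde{\nabla}_{X'}\tilde{\varphi})X'' - \tilde{\nabla}_{X'}(\tilde{\nabla}_X\tilde{\varphi})X''$. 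Using \eqref{NAMLAFI} one gets $(\tilde{\nabla}_Z\tilde{\varphi})W = -(1-\tilde{\lambda})\tilde{g}(Z, W)\xi$ on $\mathcal{D}_{\tilde{h}}(\tilde{\lambda}) \times \mathcal{D}_{\tilde{h}}(\tilde{\lambda})$, and differentiating once more through $\tilde{\nabla}_X \xi = (\tilde{\lambda}-1)\tilde{\varphi}X$ yields the correction $(1-\tilde{\lambda})^2(\tilde{g}(X', X'')\tilde{\varphi}X - \tilde{g}(X, X'')\tilde{\varphi}X')$. Since $\tilde{\varphi}X'' \in \mathcal{D}_{\tilde{h}}(-\tilde{\lambda})$, the term $\tilde{R}_{XX'}\tilde{\varphi}X''$ is known from Stage 2. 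Applying $\tilde{\varphi}$ to the resulting identity and using $\tilde{\varphi}^2 = I - \eta\otimes\xi$ together with $\eta(\tilde{R}_{XX'}X'') = 0$ (from total geodesicity) produces the formula, with the coefficient simplification $\tilde{\kappa} + \tilde{\mu} - (1-\tilde{\lambda})^2 = 2(\tilde{\lambda}-1) + \tilde{\mu}$ following from $\tilde{\kappa} = \tilde{\lambda}^2 - 1$.

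The main obstacle is the bookkeeping in Stage 3: one must correctly execute the second covariant derivative of $\tilde{\varphi}$ at a normal-coordinate point and then track signs when passing to $\mathcal{D}_{\tilde{h}}(-\tilde{\lambda})$ for $\tilde{R}_{YY'}Y''$. There the analog of $X - \tilde{h}X = (1-\tilde{\lambda})X$ becomes $Y - \tilde{h}Y = (1+\tilde{\lambda})Y$ and $\tilde{\nabla}_Y\xi = -(1+\tilde{\lambda})\tilde{\varphi}Y$, so the coefficient ends up as $-2(\tilde{\lambda}+1) + \tilde{\mu}$ rather than $2(\tilde{\lambda}-1) + \tilde{\mu}$. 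Correctly tracking this sign flip, together with the $\tilde{\kappa}$-term cancellations in Stage 1, is the primary algebraic burden.
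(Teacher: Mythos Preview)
Your proposal is correct and uses essentially the same ingredients as the paper: the lemma \eqref{RHZ}, total geodesicity of ${\mathcal D}_{\tilde h}(\pm\tilde\lambda)$ from Theorem~\ref{para1}, the first Bianchi identity, and the Ricci identity for $\tilde\varphi$ (which the paper has already packaged as \eqref{CURVATURE 11}). The only difference is the order of deduction: the paper first establishes $\tilde R_{XX'}Y$ by expanding in a $\tilde\varphi$-basis, extracting the component formula $\tilde g(\tilde R_{XY}Z,W)$ from \eqref{RHZ}, and applying Bianchi inside the sum, whereas you obtain $\tilde R_{XY}X'$ directly from \eqref{RHZ} and then recover $\tilde R_{XX'}Y$ via Bianchi; Stage~3 is identical to the paper's use of \eqref{CURVATURE 11}/\eqref{RXYFZ}.
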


\begin{proof}
We start by proving \eqref{i}. We can choose a local orthogonal $\tilde{%
\varphi}$-basis $\{e_{i},\tilde{\varphi}e_{i},\xi \}$, $i\in \{1,\ldots ,n\}$%
, as in Lemma \ref{basis}. Then we have
\begin{align}
\tilde{R}_{XX^{\prime }}Y& =\tilde{g}(\tilde{R}_{XX^{\prime }}Y,\xi )\xi
-\sum_{i=1}^{r}\tilde{g}(\tilde{R}_{XX^{\prime
}}Y,e_{i})e_{i}+\sum_{i=r+1}^{n}\tilde{g}(\tilde{R}_{XX^{\prime
}}Y,e_{i})e_{i}  \notag \\
& \quad +\sum_{i=1}^{r}\tilde{g}(\tilde{R}_{XX^{\prime }}Y,\tilde{\varphi}%
e_{i})\tilde{\varphi}e_{i}-\sum_{i=r+1}^{n}\tilde{g}(\tilde{R}_{XX^{\prime
}}Y,\tilde{\varphi}e_{i})\tilde{\varphi}e_{i}.  \label{RXYZ}
\end{align}%
Notice that, because of \eqref{PARAKMU}, $\tilde{g}(\tilde{R}_{XX^{\prime
}}Y,\xi )=-\tilde{g}(\tilde{R}_{XX^{\prime }}\xi ,Y)=0$. Moreover, due to
Theorem \ref{para1}, also the terms $\tilde{g}(\tilde{R}_{XX^{\prime
}}Y,e_{i})$ in \eqref{RXYZ} vanish. \ On the other hand, if $X\in \Gamma ({%
\mathcal{D}}_{\tilde{h}}(\tilde{\lambda}))$ and $Y,Z\in \Gamma ({\mathcal{D}}%
_{\tilde{h}}(-\tilde{\lambda}))$, then applying \eqref{RHZ} we get
\begin{equation*}
\tilde{R}_{XY}\tilde{h}Z-\tilde{h}\tilde{R}_{XY}Z=-(\tilde{\lambda}\tilde{R}%
_{XY}Z+\tilde{h}\tilde{R}_{XY}Z)=-2\tilde{\lambda}(\tilde{\kappa}\tilde{g}(X,%
\tilde{\varphi}Z)\tilde{\varphi}Y-\tilde{\mu}\tilde{g}(X,\tilde{\varphi}Y)%
\tilde{\varphi}Z)
\end{equation*}%
and, taking the inner product with $W\in \Gamma ({\mathcal{D}}_{\tilde{h}}(%
\tilde{\lambda}))$, we get
\begin{equation}
\tilde{g}(\tilde{R}_{XY}Z,W)=\tilde{\kappa}\tilde{g}(X,\tilde{\varphi}Z)%
\tilde{g}(\tilde{\varphi}Y,W)-\tilde{\mu}\tilde{g}(X,\tilde{\varphi}Y)\tilde{%
g}(\tilde{\varphi}Z,W)  \label{1equ}
\end{equation}%
for any $X,W\in \Gamma ({\mathcal{D}}_{\tilde{h}}(\tilde{\lambda}))$ and $%
Y,Z\in \Gamma (\mathcal{D}_{\tilde{h}}(-\tilde{\lambda}))$. Therefore, using %
\eqref{RXYZ}, \eqref{1equ} and the first Bianchi identity we find
\begin{align*}
\tilde{R}_{XX^{\prime }}Y& =-\sum\limits_{i=1}^{r}\left( \tilde{g}(\tilde{R}%
_{YX}X^{\prime },\tilde{\varphi}e_{i})\tilde{\varphi}e_{i}+\tilde{g}(\tilde{R%
}_{X^{\prime }Y}X,\tilde{\varphi}e_{i})\tilde{\varphi}e_{i}\right) \\
& \quad +\sum\limits_{i=r+1}^{n}\left( \tilde{g}(\tilde{R}_{YX}X^{\prime },%
\tilde{\varphi}e_{i})\tilde{\varphi}e_{i}+\tilde{g}(\tilde{R}_{X^{\prime
}Y}X,\tilde{\varphi}e_{i})\tilde{\varphi}e_{i}\right) \\
& =-\sum\limits_{i=1}^{r}\left( \tilde{g}(\tilde{R}_{XY}\tilde{\varphi}%
e_{i},X^{\prime })\tilde{\varphi}e_{i}-\tilde{g}(\tilde{R}_{X^{\prime }Y}%
\tilde{\varphi}e_{i},X)\tilde{\varphi}e_{i}\right) \\
& \quad +\sum\limits_{i=r+1}^{n}\left( \tilde{g}(\tilde{R}_{XY}\tilde{\varphi%
}e_{i},X^{\prime })\tilde{\varphi}e_{i}-\tilde{g}(\tilde{R}_{X^{\prime }Y}%
\tilde{\varphi}e_{i},X)\tilde{\varphi}e_{i}\right) \\
& =-\sum\limits_{i=1}^{r}\bigl(\tilde{\kappa}\tilde{g}(X,\tilde{\varphi}%
^{2}e_{i})\tilde{g}(\tilde{\varphi}Y,X^{\prime })\tilde{\varphi}e_{i}-\tilde{%
\mu}\tilde{g}(X,\tilde{\varphi}Y)\tilde{g}(\tilde{\varphi}%
^{2}e_{i},X^{\prime })\tilde{\varphi}e_{i} \\
& \quad -\tilde{\kappa}\tilde{g}(X^{\prime },\tilde{\varphi}^{2}e_{i})\tilde{%
g}(\tilde{\varphi}Y,X)\tilde{\varphi}e_{i}+\tilde{\mu}\tilde{g}(X^{\prime },%
\tilde{\varphi}Y)\tilde{g}(\tilde{\varphi}^{2}e_{i},X)\tilde{\varphi}e_{i}%
\bigr) \\
& \quad +\sum\limits_{i=r+1}^{n}\bigl(\tilde{\kappa}\tilde{g}(X,\tilde{%
\varphi}^{2}e_{i})\tilde{g}(\tilde{\varphi}Y,X^{\prime })\tilde{\varphi}%
e_{i}-\tilde{\mu}\tilde{g}(X,\tilde{\varphi}Y)\tilde{g}(\tilde{\varphi}%
^{2}e_{i},X^{\prime })\tilde{\varphi}e_{i} \\
& \quad -\tilde{\kappa}\tilde{g}(X^{\prime },\tilde{\varphi}^{2}e_{i})\tilde{%
g}(\tilde{\varphi}Y,X)\tilde{\varphi}e_{i}+\tilde{\mu}\tilde{g}(X^{\prime },%
\tilde{\varphi}Y)\tilde{g}(\tilde{\varphi}^{2}e_{i},X)\tilde{\varphi}e_{i}%
\bigr)\\
& =\tilde{\kappa}\tilde{g}(\tilde{\varphi}Y,X^{\prime })\tilde{\varphi}X-%
\tilde{\mu}\tilde{g}(X,\tilde{\varphi}Y)\tilde{\varphi}X^{\prime }-\tilde{%
\kappa}\tilde{g}(\tilde{\varphi}Y,X)\tilde{\varphi}X^{\prime }+\tilde{\mu}%
\tilde{g}(\tilde{\varphi}Y,X^{\prime })\tilde{\varphi}X \\
& =(\tilde{\kappa}+\tilde{\mu})(-\tilde{g}(\tilde{\varphi}X^{\prime },Y)%
\tilde{\varphi}X+\tilde{g}(\tilde{\varphi}X,Y)\tilde{\varphi}X^{\prime }).
\end{align*}%
Thus \eqref{i} is proved. Now let us prove \eqref{iii}. We have
\begin{align}
\tilde{R}_{XY}Y^{\prime }& =\tilde{g}(\tilde{R}_{XY}Y^{\prime },\xi )\xi
-\sum_{i=1}^{r}\tilde{g}(\tilde{R}_{XY}Y^{\prime
},e_{i})e_{i}+\sum_{i=r+1}^{n}\tilde{g}(\tilde{R}_{XY}Y^{\prime },e_{i})e_{i}
\notag  \label{RXYZ2} \\
& \quad +\sum_{i=1}^{r}\tilde{g}(\tilde{R}_{XY}Y^{\prime },\tilde{\varphi}%
e_{i})\tilde{\varphi}e_{i}-\sum_{i=r+1}^{n}\tilde{g}(\tilde{R}_{XY}Y^{\prime
},\tilde{\varphi}e_{i})\tilde{\varphi}e_{i}.
\end{align}%
Arguing as before we have that $\tilde{g}(\tilde{R}_{XY}Y^{\prime },\xi )=%
\tilde{g}(\tilde{R}_{XY}Y^{\prime },e_{i})=0$ for each $i\in \{1,\ldots ,n\}$%
. On the other hand, if $X\in \Gamma ({\mathcal{D}}_{\tilde{h}}(\tilde{%
\lambda}))$ and $Y,Z\in \Gamma ({\mathcal{D}}_{\tilde{h}}(-\tilde{\lambda}))$%
, then applying \eqref{RHZ} we get
\begin{equation*}
\tilde{R}_{XY}\tilde{h}Z-\tilde{h}\tilde{R}_{XY}Z=-(\tilde{\lambda}\tilde{R}%
_{XY}Z+\tilde{h}\tilde{R}_{XY}Z)=-2\tilde{\lambda}(\tilde{\kappa}\tilde{g}(X,%
\tilde{\varphi}Z)\tilde{\varphi}Y-\tilde{\mu}\tilde{g}(X,\tilde{\varphi}Y)%
\tilde{\varphi}Z)
\end{equation*}%
and, taking the inner product with $W\in \Gamma ({\mathcal{D}}_{\tilde{h}}(%
\tilde{\lambda}))$, we have%
\begin{equation}
\tilde{g}(\tilde{R}_{XY}Z,W)=\tilde{\kappa}\tilde{g}(X,\tilde{\varphi}Z)%
\tilde{g}(\tilde{\varphi}Y,W)-\tilde{\mu}\tilde{g}(X,\tilde{\varphi}Y)\tilde{%
g}(\tilde{\varphi}Z,W)  \label{2equ}
\end{equation}%
for any $X,W\in \Gamma ({\mathcal{D}}_{\tilde{h}}(\tilde{\lambda}))$ and $%
Y,Z\in \Gamma ({\mathcal{D}}_{\tilde{h}}(-\tilde{\lambda}))$. Using %
\eqref{RXYZ2}, \eqref{2equ} and the first Bianchi identity we get
\begin{align*}
\tilde{R}_{XY}Y^{\prime }& =\sum\limits_{i=1}^{r}\left( \tilde{g}(\tilde{R}%
_{Y^{\prime }X}Y,e_{i})e_{i}+\tilde{g}(\tilde{R}_{YY^{\prime
}}X,e_{i})e_{i}\right) -\sum\limits_{i=r+1}^{n}\left( \tilde{g}(\tilde{R}%
_{Y^{\prime }X}Y,e_{i})e_{i}+\tilde{g}(\tilde{R}_{YY^{\prime
}}X,e_{i})e_{i}\right) \\
& =\sum\limits_{i=1}^{r}\left( \tilde{g}(\tilde{R}_{Xe_{i}}Y,Y^{\prime
})e_{i}-\tilde{g}(\tilde{R}_{XY^{\prime }}Y,e_{i})e_{i}\right)
-\sum\limits_{i=r+1}^{n}\left( \tilde{g}(\tilde{R}_{Xe_{i}}Y,Y^{\prime
})e_{i}-\tilde{g}(\tilde{R}_{XY^{\prime }}Y,e_{i})e_{i}\right) \\
& =\sum\limits_{i=1}^{r}\left( -\tilde{\kappa}\tilde{g}(X,\tilde{\varphi}Y)%
\tilde{g}(\tilde{\varphi}Y^{\prime },e_{i})+\tilde{\mu}\tilde{g}(X,\tilde{%
\varphi}Y^{\prime })\tilde{g}(\tilde{\varphi}Y,e_{i})\right) e_{i} \\
& \quad +\sum\limits_{i=r+1}^{n}\left( \tilde{\kappa}\tilde{g}(X,\tilde{%
\varphi}Y)\tilde{g}(\tilde{\varphi}Y^{\prime },e_{i})-\tilde{\mu}\tilde{g}(X,%
\tilde{\varphi}Y^{\prime })\tilde{g}(\tilde{\varphi}Y^{\prime
},e_{i})\right) e_{i} \\
& \quad +\sum\limits_{i=1}^{r}(\tilde{\kappa}+\tilde{\mu})\left( \tilde{g}(%
\tilde{\varphi}X,Y)\tilde{g}(\tilde{\varphi}e_{i},Y^{\prime })-\tilde{g}(%
\tilde{\varphi}e_{i},Y)\tilde{g}(\tilde{\varphi}X,Y^{\prime })\right) e_{i}
\\
& \quad -\sum\limits_{i=r+1}^{n}(\tilde{\kappa}+\tilde{\mu})\left( \tilde{g}(%
\tilde{\varphi}X,Y)\tilde{g}(\tilde{\varphi}e_{i},Y^{\prime })-\tilde{g}(%
\tilde{\varphi}e_{i},Y)\tilde{g}(\tilde{\varphi}X,Y^{\prime })\right) e_{i}
\\
& =\tilde{\kappa}\tilde{g}(X,\tilde{\varphi}Y)\tilde{\varphi}Y^{\prime }-%
\tilde{\mu}\tilde{g}(X,\tilde{\varphi}Y^{\prime })\tilde{\varphi}Y+(\tilde{%
\kappa}+\tilde{\mu})\left( \tilde{g}(\tilde{\varphi}X,Y)\tilde{\varphi}%
Y^{\prime }-\tilde{g}(\tilde{\varphi}X,Y^{\prime })\tilde{\varphi}Y\right) \\
& =-\tilde{\kappa}\tilde{g}(\tilde{\varphi}X,Y^{\prime })\tilde{\varphi}Y+%
\tilde{\mu}\tilde{g}(\tilde{\varphi}X,Y)\tilde{\varphi}Y^{\prime }.
\end{align*}%
Finally, we show \eqref{v}. By using \eqref{CURVATURE 11} one obtains
\begin{equation}
\tilde{R}_{XX^{\prime }}\tilde{\varphi}X^{\prime \prime }-\tilde{\varphi}%
\tilde{R}_{XX^{\prime }}X^{\prime \prime }=\tilde{g}(X^{\prime }-\tilde{h}%
X^{\prime },X^{\prime \prime })(\tilde{\varphi}X-\tilde{\varphi}\tilde{h}X)-%
\tilde{g}(X-\tilde{h}X,X^{\prime \prime })(\tilde{\varphi}X^{\prime }-\tilde{%
\varphi}\tilde{h}X^{\prime }).  \label{RXYFZ}
\end{equation}%
Then by applying $\tilde{\varphi}$ to \eqref{RXYFZ} we get
\begin{align*}
\tilde{\varphi}\tilde{R}_{XX^{\prime }}\tilde{\varphi}X^{\prime \prime }-%
\tilde{R}_{XX^{\prime }}X^{\prime \prime }& =\tilde{g}(X^{\prime }-\tilde{h}%
X^{\prime },X^{\prime \prime })(X-\tilde{h}X)-\tilde{g}(X-\tilde{h}%
X,X^{\prime \prime })(X^{\prime }-\tilde{h}X^{\prime }) \\
& =(1-\tilde{\lambda})^{2}\tilde{g}(X^{\prime },X^{\prime \prime })X-(1-%
\tilde{\lambda})^{2}\tilde{g}(X,X^{\prime \prime })X^{\prime }.
\end{align*}%
So that, by using \eqref{i}, one has
\begin{align*}
\tilde{R}_{XX^{\prime }}X^{\prime \prime }& =\tilde{\varphi}\tilde{R}%
_{XX^{\prime }}\tilde{\varphi}X^{\prime \prime }-(1-\tilde{\lambda})^{2}%
\tilde{g}(X^{\prime },X^{\prime \prime })X+(1-\tilde{\lambda})^{2}\tilde{g}%
(X,X^{\prime \prime })X^{\prime } \\
& =(\tilde{\kappa}+\tilde{\mu})(\tilde{g}(\tilde{\varphi}X,\tilde{\varphi}%
X^{\prime \prime })X^{\prime }-\tilde{g}(\tilde{\varphi}X^{\prime },\tilde{%
\varphi}X^{\prime \prime })X)+(1-\tilde{\lambda})^{2}(\tilde{g}(X,X^{\prime
\prime })X^{\prime }-\tilde{g}(X^{\prime },X^{\prime \prime })X) \\
& =(2(\tilde{\lambda}-1)+\tilde{\mu})(\tilde{g}(X^{\prime },X^{\prime \prime
})X-\tilde{g}(X,X^{\prime \prime })X^{\prime }).
\end{align*}%
The proofs of remaining cases are similar.
\end{proof}

Using Theorem \ref{curv} one can easily prove the following corollaries.

\begin{corollary}
Let $(M,\tilde{\varphi},\xi ,\eta ,\tilde{g})$ be a paracontact metric $(%
\tilde{\kappa},\tilde{\mu})$-manifold such that $\tilde{\kappa}>-1$. Then
its Riemannian curvature tensor $\tilde{R}$ is given by following formula

\begin{eqnarray}
\tilde{g}(\tilde{R}_{XY}Z,W) &=&\left(-1+\frac{\tilde{\mu}}{2}\right)\left(
\tilde{g}(Y,Z)\tilde{g}(X,W)-\tilde{g}(X,Z)\tilde{g}(Y,W)\right)  \notag \\
&&+\tilde{g}(Y,Z)\tilde{g}(\tilde{h}X,W)-\tilde{g}(X,Z)\tilde{g}(\tilde{h}%
Y,W)  \notag \\
&&-\tilde{g}(Y,W)\tilde{g}(\tilde{h}X,Z)+\tilde{g}(X,W)\tilde{g}(\tilde{h}%
Y,Z)  \notag \\
&&+ \frac{-1+\frac{\tilde{\mu}}{2}}{\tilde{\kappa}+1}\left( \tilde{g}(\tilde{%
h}Y,Z)\tilde{g}(\tilde{h}X,W)-\tilde{g}(\tilde{h}X,Z)\tilde{g}(\tilde{h}%
Y,W)\right)  \notag \\
&&-\frac{\tilde{\mu}}{2}\left( \tilde{g}(\tilde{\varphi}Y,Z)\tilde{g}(\tilde{%
\varphi}X,W)-\tilde{g}(\tilde{\varphi}X,Z)\tilde{g}(\tilde{\varphi}%
Y,W)\right)  \notag \\
&&+ \frac{-\tilde{\kappa}-\frac{\tilde{\mu}}{2}}{\tilde{\kappa}+1} \left(
\tilde{g}(\tilde{\varphi}\tilde{h}Y,Z)\tilde{g}(\tilde{\varphi}\tilde{h}X,W)-%
\tilde{g}(\tilde{\varphi}\tilde{h}Y,W)\tilde{g}(\tilde{\varphi}\tilde{h}%
X,Z)\right)  \label{RXYZW} \\
&&+\tilde{\mu}\tilde{g}(\tilde{\varphi}X,Y)\tilde{g}(\tilde{\varphi}Z,W)
\notag \\
&&+\eta (X)\eta (W)\left( (\tilde{\kappa}+1-\frac{\tilde{\mu}}{2})\tilde{g}%
(Y,Z)+(\tilde{\mu}-1)\tilde{g}(\tilde{h}Y,Z)\right)  \notag \\
&&-\eta (X)\eta (Z)\left( (\tilde{\kappa}+1-\frac{\tilde{\mu}}{2})\tilde{g}%
(Y,W)+(\tilde{\mu}-1)\tilde{g}(\tilde{h}Y,W)\right)  \notag \\
&&+\eta (Y)\eta (Z)\left( (\tilde{\kappa}+1-\frac{\tilde{\mu}}{2})\tilde{g}%
(X,W)+(\tilde{\mu}-1)\tilde{g}(\tilde{h}X,W)\right)  \notag \\
&&-\eta (Y)\eta (W)\left( (\tilde{\kappa}+1-\frac{\tilde{\mu}}{2})\tilde{g}%
(X,Z)+(\tilde{\mu}-1)\tilde{g}(\tilde{h}X,Z)\right)  \notag
\end{eqnarray}
for all vector fields $X$, $Y$, $Z$, $W$ on $M$.
\end{corollary}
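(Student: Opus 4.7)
The plan is to derive the formula for $\tilde{g}(\tilde R_{XY}Z,W)$ from the pieces already computed in Theorem \ref{curv} together with the nullity condition \eqref{PARAKMU} (via its consequence \eqref{R(X,zeta)Y}). Since $\tilde R$ is tensorial and the right-hand side of \eqref{RXYZW} is tensorial in $X,Y,Z,W$, it suffices to verify the identity when each of $X,Y,Z,W$ lies in one of the three distinguished pieces
\[
TM = {\mathcal D}_{\tilde h}(\tilde\lambda)\oplus {\mathcal D}_{\tilde h}(-\tilde\lambda)\oplus \mathbb{R}\xi,
\]
since by Theorem \ref{para1} this splitting is $\tilde\nabla$-adapted on ${\mathcal D}$ and orthogonal by Corollary \ref{bipara2}.

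First I would record the basic substitution rules on each piece: on ${\mathcal D}_{\tilde h}(\pm\tilde\lambda)$ the operator $\tilde h$ acts as $\pm\tilde\lambda I$ (so $\tilde h^2 = (1+\tilde\kappa)I$ and $\tilde\varphi\tilde h = \mp\tilde\lambda\tilde\varphi$), while $\eta$ vanishes; on $\mathbb{R}\xi$ both $\tilde h$ and $\tilde\varphi$ annihilate and $\eta$ is the identity coefficient. Now split the verification into the following cases:
\begin{enumerate}
\item[(a)] All four arguments in ${\mathcal D}_{\tilde h}(\tilde\lambda)$ (resp.\ all four in ${\mathcal D}_{\tilde h}(-\tilde\lambda)$): Theorem \ref{curv} gives $\tilde R_{XX'}X''=(2(\tilde\lambda-1)+\tilde\mu)(\tilde g(X',X'')X-\tilde g(X,X'')X')$ (resp.\ with $-2(\tilde\lambda+1)+\tilde\mu$). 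On the right side of \eqref{RXYZW}, the $\tilde\varphi$ and $\tilde\varphi\tilde h$ terms vanish (vectors in ${\mathcal D}_{\tilde h}(\pm\tilde\lambda)$ are $\tilde g$-orthogonal to $\tilde\varphi$ of vectors in the same eigenspace since $\tilde\varphi$ swaps the two eigenspaces), the $\tilde h$-terms collapse via $\tilde h\mapsto\pm\tilde\lambda$, and the $\tilde h^2$-term becomes $\pm1$ times the $\tilde g\tilde g$-term; a short algebraic simplification reproduces the coefficient $2(\pm\tilde\lambda-1)+\tilde\mu$.
\item[(b)] Two arguments in ${\mathcal D}_{\tilde h}(\tilde\lambda)$ and two in ${\mathcal D}_{\tilde h}(-\tilde\lambda)$: the remaining four identities of Theorem \ref{curv} cover all positions (by first Bianchi), and each is matched against \eqref{RXYZW} using $\tilde\varphi{\mathcal D}_{\tilde h}(\pm\tilde\lambda)={\mathcal D}_{\tilde h}(\mp\tilde\lambda)$, so that exactly the $\tilde\varphi\otimes\tilde\varphi$, $\tilde\varphi\tilde h\otimes\tilde\varphi\tilde h$ and the ``contact'' term $\tilde\mu\tilde g(\tilde\varphi X,Y)\tilde g(\tilde\varphi Z,W)$ can contribute.
\item[(c)] At least one argument equal to $\xi$: by the symmetries of $\tilde R$ this reduces to evaluating $\tilde R_{XY}\xi$ and $\tilde R_{\xi X}Y$, which are given by \eqref{PARAKMU} and \eqref{R(X,zeta)Y}. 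The $\eta$-dependent lines of \eqref{RXYZW} are precisely built so as to reproduce these formulas.
\end{enumerate}

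Once the identity is checked on this finite list of pure cases, tensoriality and multilinearity give \eqref{RXYZW} for arbitrary $X,Y,Z,W$. The main obstacle is purely bookkeeping: in case (b) one must carefully track which of the four ``mixed'' quadratic expressions $\tilde g(\tilde\varphi\cdot,\cdot)\tilde g(\tilde\varphi\cdot,\cdot)$ and $\tilde g(\tilde\varphi\tilde h\cdot,\cdot)\tilde g(\tilde\varphi\tilde h\cdot,\cdot)$ survives, using that $\tilde g(\tilde\varphi\tilde h X,W)=\mp\tilde\lambda \tilde g(\tilde\varphi X,W)$ when $X\in{\mathcal D}_{\tilde h}(\pm\tilde\lambda)$, so that the coefficients $-\tilde\mu/2$ and $(-\tilde\kappa-\tilde\mu/2)/(\tilde\kappa+1)$ combine, after multiplication by $\tilde\lambda^2=1+\tilde\kappa$, to reproduce the constants $\tilde\kappa$, $\tilde\mu$, $\tilde\kappa+\tilde\mu$ appearing in \eqref{i}--\eqref{iii}. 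No new geometric input beyond Theorem \ref{curv}, \eqref{PARAKMU}, \eqref{R(X,zeta)Y} and \eqref{H2} is required.
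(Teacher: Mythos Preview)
Your approach is essentially the same as the paper's: both decompose along $TM={\mathcal D}_{\tilde h}(\tilde\lambda)\oplus{\mathcal D}_{\tilde h}(-\tilde\lambda)\oplus\mathbb{R}\xi$ and reduce to Theorem \ref{curv} together with \eqref{PARAKMU} and \eqref{R(X,zeta)Y}. The paper phrases it as a forward substitution of the explicit projection formulas $X_{\pm\tilde\lambda}=\frac{1}{2}\bigl(X-\eta(X)\xi\pm\frac{1}{\tilde\lambda}\tilde hX\bigr)$ into $\tilde R_{XY}Z$, whereas you propose a case-by-case verification of the already-written formula \eqref{RXYZW}; these are two presentations of the same computation.

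One small gap in your enumeration: your cases (a), (b), (c) cover the $4{+}0$, $2{+}2$, and ``at least one $\xi$'' configurations, but you have omitted the $3{+}1$ split (three arguments in one eigenspace of $\tilde h$ and the fourth in the other, none equal to $\xi$). This is easily filled: Theorem \ref{curv} shows that in every such configuration the left-hand side vanishes (e.g.\ $\tilde R_{XX'}X''\in\Gamma({\mathcal D}_{\tilde h}(\tilde\lambda))$ by \eqref{v}, hence is orthogonal to any $W\in\Gamma({\mathcal D}_{\tilde h}(-\tilde\lambda))$; similarly for the other placements via \eqref{i}--\eqref{vi}), and on the right-hand side of \eqref{RXYZW} every term contains at least one factor $\tilde g(U,V)$, $\tilde g(\tilde h U,V)$, $\tilde g(\tilde\varphi U,V)$ or $\tilde g(\tilde\varphi\tilde h U,V)$ pairing vectors that end up in opposite (orthogonal) eigenspaces, so the right-hand side also vanishes.
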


\begin{proof}
We can decompose an arbitrary vector field $X$ on $M$ uniquely as \ $X=X_{%
\tilde{\lambda}}+X_{-\tilde{\lambda}}+\eta (X)\xi$, \ where $X_{\tilde{%
\lambda}}\in\Gamma({\mathcal{D}}_{\tilde{h}}(\tilde{\lambda}))$ and $X_{-%
\tilde{\lambda}}\in\Gamma({\mathcal{D}}_{\tilde{h}}(-\tilde{\lambda}))$. We
then write $\tilde{R}_{XY}Z$ as a sum of terms of the form $\tilde{R}_{X_{\pm
\tilde{\lambda}} Y_{\pm _{\tilde{\lambda}}}}Z_{_{\pm \tilde{\lambda}}}$, $%
\tilde{R}_{XY}\xi$, $\tilde{R}_{X\xi}Z$. Then by Theorem \ref{curv} and %
\eqref{R(X,zeta)Y}, and taking into account that, in fact
\begin{equation*}
X_{\tilde{\lambda}}=\frac{1}{2}\bigl(X-\eta (X)\xi +\frac{1}{{\tilde{\lambda}%
}}\tilde{h}X\bigr), \text{ \ \ \ }X_{-\tilde{\lambda}}=\frac{1}{2}\bigl(%
X-\eta (X)\xi -\frac{1}{{\tilde{\lambda}}}\tilde{h}X\bigr),
\end{equation*}
we obtain \eqref{RXYZW}.
\end{proof}

\begin{corollary}
Let $(M,\tilde{\varphi},\xi ,\eta ,\tilde{g})$ be a paracontact $(\tilde{%
\kappa},\tilde{\mu})$-manifold such that $\tilde{\kappa}>-1$. Then for any $%
Z\in \Gamma ({\mathcal{D}})$ the $\xi $-sectional curvature $\tilde{K}%
(Z,\xi) $ is given by
\begin{equation*}
\tilde{K}(Z,\xi)=\tilde{\kappa}+\tilde{\mu}\frac{\tilde{g}(\tilde{h}Z,Z)}{%
\tilde{g}(Z,Z)}=\left\{
\begin{array}{ll}
\tilde\kappa+\tilde\lambda\tilde\mu, &
\hbox{if $Z\in\Gamma({\mathcal
D}_{\tilde h}(\tilde\lambda))$;} \\
\tilde\kappa-\tilde\lambda\tilde\mu, &
\hbox{if $Z\in\Gamma({\mathcal
D}_{\tilde h}(-\tilde\lambda))$.}%
\end{array}
\right.
\end{equation*}
Moreover, the sectional curvature of plane sections normal to $\xi$ is given
by
\begin{gather*}
K(X,X^{\prime})=2(\tilde{\lambda}-1)+\tilde{\mu}, \ \ K(Y,Y^{\prime})=-2(%
\tilde{\lambda}+1)+\tilde{\mu}, \\
K(X,Y)=(\tilde{\kappa}-\tilde{\mu})\frac{\tilde{g}(X,\tilde{\varphi}Y)^{2}}{%
\tilde{g}(X,X)\tilde{g}(Y,Y)},
\end{gather*}
for any $X,X^{\prime}\in\Gamma({\mathcal{D}}_{\tilde h}(\tilde\lambda))$, $%
Y,Y^{\prime}\in\Gamma({\mathcal{D}}_{\tilde h}(-\tilde\lambda))$.
\end{corollary}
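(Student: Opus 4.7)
The plan is to compute each sectional curvature by direct substitution into the explicit curvature formulas supplied by Theorem \ref{curv}, together with the auxiliary identity \eqref{R(X,zeta)Y} for the mixed curvature $\tilde R_{\xi X}Y$. No new identity is required; the work amounts to assembling the pieces and invoking the $\tilde g$-orthogonality of the eigendistributions of $\tilde h$ established in Corollary \ref{bipara2}.

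First I would handle the $\xi$-sectional curvature. For $Z \in \Gamma(\mathcal{D})$ one has $\eta(Z) = 0$, and \eqref{PARAKMU} immediately gives $\tilde R_{Z\xi}\xi = \tilde\kappa Z + \tilde\mu \tilde h Z$. Pairing this with $Z$ and dividing by $\tilde g(Z,Z)\tilde g(\xi,\xi) - \eta(Z)^2 = \tilde g(Z,Z)$ produces the stated general formula for $\tilde K(Z,\xi)$. The two special values follow from the eigenvalue equation $\tilde h Z = \pm\tilde\lambda Z$ when $Z \in \Gamma(\mathcal{D}_{\tilde h}(\pm\tilde\lambda))$.

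Next I would treat the three cases of planes tangent to $\mathcal{D}$ separately. For $X, X' \in \Gamma(\mathcal{D}_{\tilde h}(\tilde\lambda))$, identity \eqref{v} shows that $\tilde R_{XX'}X'$ is a linear combination of $X$ and $X'$ with coefficient $2(\tilde\lambda - 1)+\tilde\mu$; hence $\tilde g(\tilde R_{XX'}X', X)$ factors as $(2(\tilde\lambda-1)+\tilde\mu)\bigl(\tilde g(X,X)\tilde g(X',X')-\tilde g(X,X')^2\bigr)$, which is precisely the denominator of $K(X,X')$. The computation of $K(Y,Y')$ is identical, using \eqref{vi} in place of \eqref{v}. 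For the mixed case I would feed $Y' = Y$ into the formula $\tilde R_{XY}Y' = -\tilde\kappa\, \tilde g(\tilde\varphi X, Y')\tilde\varphi Y + \tilde\mu\, \tilde g(\tilde\varphi X, Y)\tilde\varphi Y'$; the two terms collapse to $(\tilde\mu - \tilde\kappa)\, \tilde g(\tilde\varphi X, Y)\tilde\varphi Y$, and pairing with $X$ together with the antisymmetry $\tilde g(\tilde\varphi X, Y) = -\tilde g(X, \tilde\varphi Y)$ (a consequence of $d\eta(X,Y)=\tilde g(X,\tilde\varphi Y)$) gives numerator $(\tilde\kappa-\tilde\mu)\tilde g(X,\tilde\varphi Y)^2$. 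The orthogonality $\tilde g(X,Y)=0$ from Corollary \ref{bipara2} simplifies the denominator to $\tilde g(X,X)\tilde g(Y,Y)$.

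There is no real obstacle here: every ingredient is already in place and the argument is purely algebraic bookkeeping. The only point requiring a little care is sign-tracking in the mixed case, where one must correctly combine the two summands of $\tilde R_{XY}Y$ and keep track of the signs introduced by the antisymmetry of $\tilde g(\cdot,\tilde\varphi\cdot)$ on the contact distribution.
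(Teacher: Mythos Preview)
Your proposal is correct and is exactly the approach the paper intends: the corollary is stated immediately after Theorem \ref{curv} with the remark that it follows easily from that theorem, and your computations using \eqref{PARAKMU} for $\tilde K(Z,\xi)$ and \eqref{v}, \eqref{vi}, \eqref{iii} for the planes in $\mathcal D$ carry this out precisely. Your sign bookkeeping in the mixed case is correct; the identity $\tilde g(\tilde\varphi X,Y)=-\tilde g(X,\tilde\varphi Y)$ together with $\tilde g(\tilde\varphi Y,X)=\tilde g(X,\tilde\varphi Y)$ turns $(\tilde\mu-\tilde\kappa)\tilde g(\tilde\varphi X,Y)\tilde g(\tilde\varphi Y,X)$ into $(\tilde\kappa-\tilde\mu)\tilde g(X,\tilde\varphi Y)^2$ as required.
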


\begin{corollary}
\label{ricci0} In any $(2n+1)$-dimensional paracontact $(\tilde{\kappa},%
\tilde{\mu})$-manifold $(M,\tilde{\varphi},\xi ,\eta ,\tilde{g})$ such that $%
\tilde{\kappa}>-1$, the Ricci operator $\tilde{Q}$ is given by
\begin{equation}
\tilde{Q}=(2(1-n)+n\tilde{\mu})I+(2(n-1)+\tilde{\mu})\tilde{h}+(2(n-1)+n(2%
\tilde{\kappa}-\tilde{\mu}))\eta \otimes \xi  \label{RICCI
OPERATOR}
\end{equation}%
In particular, $(M,\tilde{g})$ is $\eta $-Einstein if and only if $\tilde{\mu%
}=2(1-n)$, Einstein if and only if $\tilde{\kappa}=\tilde{\mu}=0$ and $n=1$
(in this case the manifold is Ricci-flat).
\end{corollary}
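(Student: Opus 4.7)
The plan is to compute $\tilde{Q}$ by direct contraction of the explicit curvature formulas collected in Theorem \ref{curv}. First I would fix a local $\tilde{\varphi}$-basis of eigenvectors of $\tilde{h}$ as provided by Lemma \ref{basis}: $\{X_1,\ldots,X_n,Y_1=\tilde{\varphi}X_1,\ldots,Y_n=\tilde{\varphi}X_n,\xi\}$ with $\epsilon_i:=\tilde{g}(X_i,X_i)=-\tilde{g}(Y_i,Y_i)\in\{\pm 1\}$. Accounting for the metric signature in the standard definition $\tilde{Q}Z=\sum_j \epsilon_j^{\mathrm{bas}}\tilde{R}_{Ze_j}e_j$, this reads
\begin{equation*}
\tilde{Q}Z=\sum_{i=1}^n\epsilon_i\,\tilde{R}_{ZX_i}X_i-\sum_{i=1}^n\epsilon_i\,\tilde{R}_{ZY_i}Y_i+\tilde{R}_{Z\xi}\xi
\end{equation*}
for every $Z\in\Gamma(TM)$. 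Since the target formula \eqref{RICCI OPERATOR} is linear in $Z$ and the subspaces $\mathbb{R}\xi$, ${\mathcal{D}}_{\tilde{h}}(\tilde{\lambda})$, ${\mathcal{D}}_{\tilde{h}}(-\tilde{\lambda})$ together span $TM$, it is enough to verify it on one representative from each.

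For $Z=\xi$, equation \eqref{Riczeta} already gives $\tilde{Q}\xi=2n\tilde{\kappa}\,\xi$, which agrees with \eqref{RICCI OPERATOR} since the coefficients of $I$ and of $\eta\otimes\xi$ there sum precisely to $2n\tilde{\kappa}$. For $Z=X_k\in\Gamma({\mathcal{D}}_{\tilde{h}}(\tilde{\lambda}))$ I would compute the three contributions separately: formula \eqref{v} gives $\sum_i\epsilon_i\tilde{R}_{X_kX_i}X_i=(n-1)(2(\tilde{\lambda}-1)+\tilde{\mu})X_k$, using the orthogonality of the $X_i$ and $\epsilon_i^2=1$; formula \eqref{iii} applied with $X=X_k$ and $Y=Y'=Y_i$, together with $\tilde{\varphi}X_k=Y_k$, $\tilde{\varphi}Y_i=X_i$ and $\tilde{g}(Y_k,Y_i)=-\epsilon_i\delta_{ki}$, collapses $-\sum_i\epsilon_i\tilde{R}_{X_kY_i}Y_i$ to $(\tilde{\mu}-\tilde{\kappa})X_k$; and \eqref{R(X,zeta)Y} yields $\tilde{R}_{X_k\xi}\xi=(\tilde{\kappa}+\tilde{\mu}\tilde{\lambda})X_k$. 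Summing the three contributions produces
\begin{equation*}
\tilde{Q}X_k=\bigl[(2(1-n)+n\tilde{\mu})+(2(n-1)+\tilde{\mu})\tilde{\lambda}\bigr]X_k,
\end{equation*}
which is exactly \eqref{RICCI OPERATOR} evaluated at $X_k$ in view of $\tilde{h}X_k=\tilde{\lambda}X_k$. The case $Z=Y_k\in\Gamma({\mathcal{D}}_{\tilde{h}}(-\tilde{\lambda}))$ is handled symmetrically, using \eqref{vi}, the formula for $\tilde{R}_{XY}X'$ listed in Theorem \ref{curv}, and \eqref{R(X,zeta)Y}; the result is the same expression with $\tilde{\lambda}$ replaced by $-\tilde{\lambda}$, consistent with $\tilde{h}Y_k=-\tilde{\lambda}Y_k$.

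The two final assertions I would read off directly from \eqref{RICCI OPERATOR}. The metric is $\eta$-Einstein exactly when the coefficient of $\tilde{h}$ vanishes, i.e. $\tilde{\mu}=2(1-n)$. The Einstein case requires in addition the coefficient of $\eta\otimes\xi$ to vanish; substituting $\tilde{\mu}=2(1-n)$ into $2(n-1)+n(2\tilde{\kappa}-\tilde{\mu})=0$ gives $\tilde{\kappa}=-(n^2-1)/n$, and the standing hypothesis $\tilde{\kappa}>-1$ then forces $n^2-n-1<0$, which leaves only $n=1$; in that dimension one has $\tilde{\kappa}=\tilde{\mu}=0$, the coefficient of $I$ also vanishes, and $M$ is Ricci-flat. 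The only delicate point in the whole argument is the signs-and-indices bookkeeping in the trace, but this is entirely mechanical once Theorem \ref{curv} and equation \eqref{R(X,zeta)Y} are in hand.
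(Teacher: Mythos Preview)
Your proposal is correct and follows essentially the same route the paper indicates: the paper merely states that the corollary follows from Theorem \ref{curv}, and your contraction over a $\tilde{\varphi}$-basis of eigenvectors of $\tilde{h}$, together with \eqref{Riczeta} and \eqref{R(X,zeta)Y}, is exactly the intended computation. The Einstein/$\eta$-Einstein analysis at the end is also the straightforward reading-off the paper has in mind.
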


In particular it follows that in dimension $3$ any paracontact $(\tilde{%
\kappa},0)$-manifold with $\tilde{\kappa}>-1$ is $\eta $-Einstein.
Notice that, in dimension greater than $3$ no paracontact $(\tilde{\kappa},%
\tilde{\mu})$-manifold such that $\tilde{\kappa}>-1$ can be Einstein, since
one would find $\tilde{\kappa}=\frac{1-n^{2}}{n}$ and only for $n=1$ one has
that $\tilde{\kappa}>-1$.

\section{Paracontact $(\tilde{\protect\kappa},\tilde{\protect\mu})$%
-manifolds with $\tilde{\protect\kappa}<-1$}

\label{secondcase}

In this section we deal with paracontact $(\tilde\kappa,\tilde\mu)$%
-manifolds such that $\tilde{\kappa}<-1$. In this case, as stated in
Corollary \ref{bipara2}, $\tilde\varphi\tilde{h}$ is diagonalizable with
eigenvectors $0$, $\pm \tilde{\lambda}$, where $\tilde{\lambda}:=\sqrt{-1-%
\tilde{\kappa}}$. As for the case $\tilde\kappa>-1$ we start by proving that
the distributions defined by the eigenspaces of $\tilde\varphi\tilde{h}$
define two mutually orthogonal Legendre foliations. The main difference with
the case $\tilde\kappa>-1$ and, more in general, with the theory of contact
metric $(\kappa,\mu)$-spaces, is that they are not totally geodesic,
but they are totally umbilical.

\begin{theorem}
\label{foliations} Let $(M,\tilde{\varphi},\xi,\eta,\tilde{g})$ be a
paracontact $(\tilde\kappa,\tilde\mu)$-manifold such that $\tilde\kappa<-1$.
Then the eigendistributions ${\mathcal{D}}_{\tilde\varphi\tilde
h}(\tilde\lambda)$ and ${\mathcal{D}}_{\tilde\varphi\tilde
h}(-\tilde\lambda) $ of the operator $\tilde\varphi\tilde h$ are integrable
and define two mutually orthogonal Legendre foliations of $M$ with totally
umbilical leaves. Moreover, for any $X,Y\in{\mathcal{D}}_{\tilde\varphi%
\tilde h}(\pm\tilde\lambda)$, $\tilde\nabla_{X}Y\in{\mathcal{D}}%
_{\tilde\varphi\tilde{h}}(\pm\tilde\lambda)\oplus\mathbb{R}\xi$.
\end{theorem}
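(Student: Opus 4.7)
The plan is to mimic the strategy used in Theorem~\ref{para1} but with the operator $\tilde{\varphi}\tilde{h}$ playing the role of $\tilde{h}$. Three of the claims of the statement follow at once from the moreover part, so I would aim first at the covariant derivative statement. Indeed, Corollary~\ref{bipara2} already gives that ${\mathcal{D}}_{\tilde{\varphi}\tilde{h}}(\pm\tilde{\lambda})$ are mutually $\tilde{g}$-orthogonal (using that $\tilde{\varphi}\tilde{h}$ is $\tilde{g}$-self-adjoint), and these distributions are Legendre because $d\eta(X,Y)=\tilde{g}(X,\tilde{\varphi}Y)=0$: if $X,Y\in{\mathcal{D}}_{\tilde{\varphi}\tilde{h}}(\tilde{\lambda})$ then $\tilde{\varphi}Y\in{\mathcal{D}}_{\tilde{\varphi}\tilde{h}}(-\tilde{\lambda})$, which is $\tilde{g}$-orthogonal to $X$. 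Once the moreover part is established, involutivity is immediate: $[X,Y]\in{\mathcal{D}}_{\tilde{\varphi}\tilde{h}}(\pm\tilde{\lambda})\oplus\mathbb{R}\xi$ and $\eta([X,Y])=-2d\eta(X,Y)=0$.

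To establish the covariant derivative statement I would start from \eqref{NAMLA X H} and replace $Y$ with $\tilde{\varphi}Y$. Using $\eta\circ\tilde{\varphi}=0$, $\tilde{\varphi}^{2}Y=Y-\eta(Y)\xi$ and $\tilde{\varphi}\tilde{h}\tilde{\varphi}=-\tilde{h}$ on $\mathcal{D}$, one gets, for $X,Y\in\Gamma(\mathcal{D})$,
\[
(\tilde{\nabla}_{X}\tilde{h})\tilde{\varphi}Y-(\tilde{\nabla}_{\tilde{\varphi}Y}\tilde{h})X=-2(1+\tilde{\kappa})\tilde{g}(X,Y)\xi.
\]
Taking the $\tilde{g}$-inner product with $Z\in\Gamma(\mathcal{D})$ kills the $\xi$-term. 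Now restrict $X,Y,Z\in\Gamma({\mathcal{D}}_{\tilde{\varphi}\tilde{h}}(\tilde{\lambda}))$: since $\tilde{h}X=\tilde{\lambda}\tilde{\varphi}X$, $\tilde{h}\tilde{\varphi}Y=-\tilde{\lambda}Y$ and $\tilde{h}Z=\tilde{\lambda}\tilde{\varphi}Z$, expanding yields (after dividing by $-\tilde{\lambda}$)
\[
\tilde{g}(\tilde{\nabla}_{X}Y,Z)+\tilde{g}(\tilde{\nabla}_{X}\tilde{\varphi}Y,\tilde{\varphi}Z)+\tilde{g}(\tilde{\nabla}_{\tilde{\varphi}Y}\tilde{\varphi}X,Z)-\tilde{g}(\tilde{\nabla}_{\tilde{\varphi}Y}X,\tilde{\varphi}Z)=0.
\]

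The crucial reduction now uses \eqref{NAMLAFI}, which is available because $\tilde{\kappa}<-1\neq-1$: for $U,V\in\Gamma(\mathcal{D})$, $(\tilde{\nabla}_{U}\tilde{\varphi})V=-\tilde{g}(U-\tilde{h}U,V)\xi$, so applying $\tilde{\varphi}$ annihilates it and therefore $\tilde{\varphi}\tilde{\nabla}_{U}\tilde{\varphi}V\equiv\tilde{\nabla}_{U}V\pmod{\mathbb{R}\xi}$. Using this and $\tilde{g}(W,\tilde{\varphi}Z)=-\tilde{g}(\tilde{\varphi}W,Z)$ one finds $\tilde{g}(\tilde{\nabla}_{X}\tilde{\varphi}Y,\tilde{\varphi}Z)=-\tilde{g}(\tilde{\nabla}_{X}Y,Z)$ and $\tilde{g}(\tilde{\nabla}_{\tilde{\varphi}Y}X,\tilde{\varphi}Z)=-\tilde{g}(\tilde{\nabla}_{\tilde{\varphi}Y}\tilde{\varphi}X,Z)$. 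Substituting, four terms cancel pairwise and only $2\tilde{g}(\tilde{\nabla}_{\tilde{\varphi}Y}\tilde{\varphi}X,Z)=0$ survives. Setting $U=\tilde{\varphi}Y$, $V=\tilde{\varphi}X$, which range over ${\mathcal{D}}_{\tilde{\varphi}\tilde{h}}(-\tilde{\lambda})$, this says $\tilde{\nabla}_{U}V\perp{\mathcal{D}}_{\tilde{\varphi}\tilde{h}}(\tilde{\lambda})$, hence (by mutual orthogonality and non-degeneracy of $\tilde{g}$ on each eigenspace given by Lemma~\ref{basis}) $\tilde{\nabla}_{U}V\in\Gamma({\mathcal{D}}_{\tilde{\varphi}\tilde{h}}(-\tilde{\lambda})\oplus\mathbb{R}\xi)$. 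The symmetric computation with $X,Y,Z\in\Gamma({\mathcal{D}}_{\tilde{\varphi}\tilde{h}}(-\tilde{\lambda}))$ yields the statement for the other eigenspace.

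Finally, the umbilical property follows immediately: for $X,Y\in\Gamma({\mathcal{D}}_{\tilde{\varphi}\tilde{h}}(\pm\tilde{\lambda}))$, the second fundamental form $\mathrm{II}(X,Y)$ is the $T\mathcal{F}^{\perp}$-projection of $\tilde{\nabla}_{X}Y$; since that projection lies entirely in $\mathbb{R}\xi$ (the $\mp\tilde{\lambda}$-eigenspace part being zero), we only need to compute $\eta(\tilde{\nabla}_{X}Y)=-\tilde{g}(Y,\tilde{\nabla}_{X}\xi)=\tilde{g}(Y,\tilde{\varphi}X)-\tilde{g}(Y,\tilde{\varphi}\tilde{h}X)=0\mp\tilde{\lambda}\tilde{g}(X,Y)$, so $\mathrm{II}(X,Y)=\mp\tilde{\lambda}\tilde{g}(X,Y)\xi$, i.e. totally umbilical with mean curvature vector $\mp\tilde{\lambda}\xi$. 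The main obstacle is organising the six terms produced by the expansion so that the cancellations become visible; this is where \eqref{NAMLAFI} is indispensable, marking the key departure from the proof of Theorem~\ref{para1} (where the analogous simplification ended up producing total geodesicity rather than only umbilicity).
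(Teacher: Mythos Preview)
Your proof is correct and follows essentially the same strategy as the paper's: derive a skew identity for a covariant derivative, substitute $\tilde{\varphi}Y$, restrict to an eigenspace, and read off the orthogonality. The only cosmetic difference is that the paper first packages \eqref{NAMLA X H} and \eqref{NAMLAFI} into the single identity $(\tilde{\nabla}_{X}\tilde{\varphi}\tilde{h})Y-(\tilde{\nabla}_{Y}\tilde{\varphi}\tilde{h})X=-(1+\tilde{\kappa})(\eta(X)Y-\eta(Y)X)+(1-\tilde{\mu})(\eta(X)\tilde{h}Y-\eta(Y)\tilde{h}X)$ and works with $\tilde{\varphi}\tilde{h}$ throughout; since $\tilde{\varphi}\tilde{h}$ is the diagonalizable operator, the eigenvalue substitution is then a one-liner and the extra cancellation step via \eqref{NAMLAFI} that you carry out is absorbed from the start.
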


\begin{proof}
From \eqref{NAMLAFI}, \eqref{NAMLA X H}, \eqref{namlaFIH} we get the formula
\begin{equation}  \label{formula3}
(\tilde\nabla_{X}\tilde\varphi\tilde{h})Y-(\tilde\nabla_{Y}\tilde\varphi%
\tilde{h})X=-(1+\tilde\kappa)(\eta(X)Y-\eta(Y)X)+(1-\tilde\mu)(\eta(X)\tilde{%
h}Y-\eta(Y)\tilde{h}X)
\end{equation}
which holds for any paracontact $(\tilde\kappa,\tilde\mu)$-manifold. From %
\eqref{formula4} it follows that, for any $X,Y,Z\in\Gamma({\mathcal{D}})$, $%
\tilde{g}((\tilde\nabla_{X}\tilde\varphi\tilde{h})\tilde\varphi
Y-(\tilde\nabla_{\tilde\varphi Y}\tilde\varphi\tilde{h})X,Z)=0$, that is
\begin{equation}  \label{formula4}
\tilde{g}(\tilde\nabla_{X}\tilde\varphi\tilde{h}\tilde\varphi Y -
\tilde\varphi\tilde{h}\tilde\nabla_{X}\tilde\varphi Y -
\tilde\nabla_{\tilde\varphi Y}\tilde\varphi\tilde{h}X + \tilde\varphi\tilde{h%
}\tilde\nabla_{\tilde\varphi Y}X,Z)=0.
\end{equation}
Now, let us assume that $X,Y,Z\in\Gamma({\mathcal{D}}_{\tilde\varphi\tilde{h}%
}(\tilde\lambda))$. Then \eqref{formula4} reduces to $0=-\tilde\lambda\tilde{%
g}(\tilde\nabla_{X}\tilde\varphi Y,Z)-\tilde\lambda\tilde{g}%
(\tilde\nabla_{X}\tilde\varphi Y,Z)-\tilde\lambda\tilde{g}%
(\tilde\nabla_{\tilde\varphi Y}X,Z)+\tilde\lambda\tilde{g}%
(\tilde\nabla_{\tilde\varphi Y}X,Z)=-2\tilde\lambda\tilde{g}%
(\tilde\nabla_{X}\tilde\varphi Y,Z)$. Thus $\tilde\nabla_{X}Z\in{\mathcal{D}}%
_{\tilde\varphi\tilde{h}}(\tilde\lambda)\oplus\mathbb{R}\xi$. In particular,
since $\tilde{g}([X,Y],\xi)=\tilde{g}(X,\tilde\nabla_{Y}\xi)-\tilde{g}%
(Y,\tilde\nabla_{X}\xi)=\tilde\lambda\tilde{g}(Y,X)-\tilde\lambda\tilde{g}%
(X,Y)=0$ for all $X,Y\in\Gamma({\mathcal{D}}_{\tilde\varphi\tilde{h}%
}(\tilde\lambda))$, we have that ${\mathcal{D}}_{\tilde\varphi\tilde{h}%
}(\tilde\lambda)$ defines a foliation on $M$. Moreover, $\dim({\mathcal{D}}%
_{\tilde\varphi\tilde{h}}(\tilde\lambda))=n$ due to \cite[Proposition 3.2]%
{CAP2}. Hence, being an $n$-dimensional integrable subbundle of the contact
distribution, ${\mathcal{D}}_{\tilde\varphi\tilde{h}}(\tilde\lambda)$ is a
Legendre foliation of $M$. Similar arguments work also for ${\mathcal{D}}%
_{\tilde\varphi\tilde{h}}(-\tilde\lambda)$. In order to complete the proof,
let us consider $X\in\Gamma({\mathcal{D}}_{\tilde\varphi\tilde{h}%
}(\tilde\lambda))$ and $Y\in\Gamma({\mathcal{D}}_{\tilde\varphi\tilde{h}%
}(-\tilde\lambda))$. Then, for any $Z\in\Gamma({\mathcal{D}}_{\tilde\varphi%
\tilde{h}}(\tilde\lambda))$, $\tilde{g}(\tilde\nabla_{X}Y,Z)=-\tilde{g}%
(Y,\tilde\nabla_{X}Z)=0$, since $\tilde\nabla_{X}Z\in\Gamma({\mathcal{D}}%
_{\tilde\varphi\tilde{h}}(\tilde\lambda)\oplus\mathbb{R}\xi)$. Hence $%
\tilde\nabla_{X}Y\in\Gamma({\mathcal{D}}_{\tilde\varphi\tilde{h}%
}(-\tilde\lambda)\oplus\mathbb{R}\xi)$. In the same way one proves that $%
\tilde\nabla_{Y}X\in\Gamma({\mathcal{D}}_{\tilde\varphi\tilde{h}%
}(\tilde\lambda)\oplus\mathbb{R}\xi)$. \ Finally we prove that the leaves of
${\mathcal{D}}_{\varphi h}(\tilde\lambda)$ and ${\mathcal{D}}_{\varphi
h}(-\tilde\lambda)$ are totally umbilical. Since for any $%
X,X^{\prime}\in\Gamma({\mathcal{D}}_{\varphi h}(\tilde\lambda))$ $%
\tilde\nabla_{X}X^{\prime}\in\Gamma({\mathcal{D}}_{\varphi
h}(\tilde\lambda)\oplus\mathbb{R}\xi)$, $B(X,X^{\prime})$ is a section of $%
\mathbb{R}\xi$, where $B$ denotes the second fundamental form. Actually $%
B(X,X^{\prime})=-\tilde\lambda \tilde{g}(X,X^{\prime})\xi$. Indeed by using %
\eqref{nablaxi} one has
\begin{equation*}
\tilde{g}(B(X,X^{\prime}),\xi)=\tilde{g}(\tilde\nabla_{X}X^{\prime},\xi)=-%
\tilde{g}(X^{\prime},\tilde\nabla_{X}\xi)=\tilde{g}(X^{\prime},\tilde\varphi
X)-\tilde\lambda\tilde{g}(X,X^{\prime})=-\tilde\lambda\tilde{g}%
(X,X^{\prime}).
\end{equation*}
Then the mean curvature vector field is given by $H=-\tilde\lambda \xi$.
Hence $B(X,X^{\prime})= H \tilde{g}(X,X^{\prime})$. The proof for the other
foliation  is similar.
\end{proof}

\begin{remark}
Notice that the foliations ${\mathcal{D}}_{\tilde{\varphi}\tilde{h}}(\tilde{%
\lambda})$ and ${\mathcal{D}}_{\tilde{\varphi}\tilde{h}}(-\tilde{\lambda})$
are not totally geodesic. In fact a straightforward computation shows that,
for all $X,Y\in \Gamma ({\mathcal{D}}_{\tilde{\varphi}\tilde{h}}(\pm \tilde{%
\lambda}))$, $\tilde{g}(\tilde{\nabla}_{X}Y,\xi )=-\tilde{\lambda}\tilde{g}%
(X,\tilde{\varphi}Y)$, in general different from zero.
\end{remark}

Now we find the explicit expressions of the Pang invariants of the Legendre
foliations ${\mathcal{D}}_{\tilde{\varphi}\tilde{h}}(\tilde{\lambda})$ and ${%
\mathcal{D}}_{\tilde{\varphi}\tilde{h}}(-\tilde{\lambda})$. The proof is
similar to that of Theorem \ref{pang3} hence we omit it.

\begin{theorem}
\label{pang4} The Pang invariants of the Legendre foliations ${\mathcal{D}}%
_{\tilde\varphi\tilde{h}}(\tilde\lambda)$ and ${\mathcal{D}}_{\tilde\varphi%
\tilde{h}}(-\tilde\lambda)$ are given by
\begin{gather}
\Pi_{{\mathcal{D}}_{\tilde\varphi\tilde{h}}(\tilde\lambda)}=(\tilde\mu-2)%
\tilde{g}|_{{\mathcal{D}}_{\tilde\varphi\tilde{h}}(\tilde\lambda)\times{%
\mathcal{D}}_{\tilde\varphi\tilde{h}}(\tilde\lambda)}  \label{pang5} \\
\Pi_{{\mathcal{D}}_{\tilde\varphi\tilde{h}}(-\tilde\lambda)}=(\tilde\mu-2)%
\tilde{g}|_{{\mathcal{D}}_{\tilde\varphi\tilde{h}}(-\tilde\lambda)\times{%
\mathcal{D}}_{\tilde\varphi\tilde{h}}(-\tilde\lambda)}.  \label{pang6}
\end{gather}
\end{theorem}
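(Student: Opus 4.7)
The plan is to mimic the argument of Theorem \ref{pang3}, with $\tilde{\varphi}\tilde h$ now playing the role that $\tilde h$ played in the case $\tilde\kappa>-1$. Fix $X\in\Gamma(\mathcal{D}_{\tilde\varphi\tilde h}(\tilde\lambda))$, so $\tilde\varphi\tilde h X=\tilde\lambda X$. Applying $\tilde\varphi$ to this identity and using $\eta\circ\tilde h=0$ together with $\tilde\varphi^{2}=I-\eta\otimes\xi$ gives the key auxiliary relation $\tilde h X=\tilde\lambda\tilde\varphi X$, and from $\tilde\varphi\tilde h=-\tilde h\tilde\varphi$ one sees that $\tilde\varphi X\in\Gamma(\mathcal{D}_{\tilde\varphi\tilde h}(-\tilde\lambda))$.

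Next, compute $\tilde R_{X\xi}\xi$ in two ways. By \eqref{PARAKMU} we obtain directly
\begin{equation*}
\tilde R_{X\xi}\xi=\tilde\kappa X+\tilde\mu\tilde h X=\tilde\kappa X+\tilde\mu\tilde\lambda\,\tilde\varphi X.
\end{equation*}
On the other hand, unwinding the definition using $\tilde\nabla_{X}\xi=-\tilde\varphi X+\tilde\varphi\tilde h X=\tilde\lambda X-\tilde\varphi X$ from \eqref{nablaxi}, and using that $\tilde\nabla_{\xi}\tilde\varphi=0$ — which follows instantly from \eqref{NAMLAFI} since $\xi-\tilde h\xi=\xi$ — a computation formally identical to \eqref{pass2} yields
\begin{equation*}
\tilde R_{X\xi}\xi=\tilde\kappa X+2\tilde\lambda\tilde\varphi X+\tilde\lambda[X,\xi]-\tilde\varphi\tilde h[X,\xi],
\end{equation*}
after replacing $-\tilde\lambda^{2}=1+\tilde\kappa$. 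Equating the two expressions gives $\tilde\lambda[X,\xi]-\tilde\varphi\tilde h[X,\xi]=(\tilde\mu-2)\tilde\lambda\,\tilde\varphi X$.

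Since $i_{\xi}d\eta=0$, the bracket $[\xi,X]$ lies in $\mathcal{D}$, so decompose $[X,\xi]=Z_{+}+Z_{-}$ along $\mathcal{D}_{\tilde\varphi\tilde h}(\pm\tilde\lambda)$. Then $\tilde\lambda[X,\xi]-\tilde\varphi\tilde h[X,\xi]=2\tilde\lambda Z_{-}$, so $Z_{-}=\tfrac{\tilde\mu-2}{2}\tilde\varphi X$; equivalently,
\begin{equation*}
[\xi,X]_{\mathcal{D}_{\tilde\varphi\tilde h}(-\tilde\lambda)}=-\tfrac{\tilde\mu-2}{2}\,\tilde\varphi X.
\end{equation*}
Then for any $X'\in\Gamma(\mathcal{D}_{\tilde\varphi\tilde h}(\tilde\lambda))$, using \eqref{panginvariant} and $d\eta(\cdot,\cdot)=\tilde g(\cdot,\tilde\varphi\,\cdot)$,
\begin{equation*}
\Pi_{\mathcal{D}_{\tilde\varphi\tilde h}(\tilde\lambda)}(X,X')=2\,\tilde g([\xi,X],\tilde\varphi X').
\end{equation*}
Now $\tilde\varphi X'\in\Gamma(\mathcal{D}_{\tilde\varphi\tilde h}(-\tilde\lambda))$, and by Corollary \ref{bipara2} the two eigendistributions are $\tilde g$-orthogonal, so only $Z_{-}$ contributes; combined with $\tilde g(\tilde\varphi X,\tilde\varphi X')=-\tilde g(X,X')$ on $\mathcal{D}$, this gives exactly \eqref{pang5}. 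The proof of \eqref{pang6} is obtained by swapping the roles of the $\pm\tilde\lambda$-eigenspaces and is formally identical.

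I do not anticipate any real obstacle: the computation is mechanical once one observes (i) the identification $\tilde h X=\tilde\lambda\tilde\varphi X$ on $\mathcal{D}_{\tilde\varphi\tilde h}(\tilde\lambda)$, which is what makes the curvature expression $\tilde\kappa X+\tilde\mu\tilde h X$ linear in the two basis elements $X,\tilde\varphi X$, and (ii) the fact that $\tilde\varphi$ interchanges the two eigendistributions of $\tilde\varphi\tilde h$, which isolates the right component of $[\xi,X]$ in the Pang pairing.
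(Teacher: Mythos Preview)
Your proof is correct and follows precisely the approach the paper intends: the paper omits the proof, stating only that it ``is similar to that of Theorem \ref{pang3},'' and your argument is the natural adaptation of that proof, replacing $\tilde h$ by $\tilde\varphi\tilde h$ and using the identity $\tilde h X=\tilde\lambda\tilde\varphi X$ on $\mathcal{D}_{\tilde\varphi\tilde h}(\tilde\lambda)$ together with $-\tilde\lambda^{2}=1+\tilde\kappa$.
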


\begin{proposition}
In any paracontact $(\tilde{\kappa},\tilde{\mu})$-manifold such that $\tilde{%
\kappa}<-1$ one has
\begin{equation}
(\tilde{\nabla}_{X}\tilde{\varphi}\tilde{h})Y=((1+\tilde{\kappa})\tilde{g}%
(X,Y)-\tilde{g}(\tilde{h}X,Y))\xi +\eta (Y)\tilde{h}(\tilde{h}X-X)-\tilde{\mu%
}\eta (X)\tilde{h}Y.  \label{formula8}
\end{equation}
\end{proposition}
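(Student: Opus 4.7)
My plan is to derive the formula case-by-case on the eigendistributions of $\tilde\varphi\tilde h$ and then assemble via bilinearity, mirroring the strategy of Proposition~\ref{para2}. The role played there by the total geodesicity of $\mathcal{D}_{\tilde h}(\pm\tilde\lambda)$ is here played by Theorem~\ref{foliations}: for $X,Y\in\Gamma(\mathcal{D}_{\tilde\varphi\tilde h}(\epsilon\tilde\lambda))$, the covariant derivative $\tilde\nabla_X Y$ lies in $\Gamma(\mathcal{D}_{\tilde\varphi\tilde h}(\epsilon\tilde\lambda)\oplus\mathbb{R}\xi)$.

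First I would record the algebraic identities $\tilde h X = \epsilon\tilde\lambda\,\tilde\varphi X$ for $X\in\Gamma(\mathcal{D}_{\tilde\varphi\tilde h}(\epsilon\tilde\lambda))$, obtained by applying $\tilde\varphi$ to the eigenvalue equation $\tilde\varphi\tilde h X = \epsilon\tilde\lambda X$ and using $\tilde\varphi^2 = I - \eta\otimes\xi$ together with $\eta\circ\tilde h = 0$. A consequence, to be used repeatedly, is the vanishing $\tilde g(\tilde\varphi X,Y)=0$ for $X,Y$ in the \emph{same} eigenspace: this follows from the symmetry of $\tilde h$ combined with the antisymmetry of $d\eta(X,Y) = \tilde g(X,\tilde\varphi Y)$. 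These identities are precisely what will collapse the case computations into the compact right-hand side.

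Next I would split into cases according to where $X$ and $Y$ lie. For $X,Y\in\Gamma(\mathcal{D}_{\tilde\varphi\tilde h}(\epsilon\tilde\lambda))$, by Theorem~\ref{foliations} I can write $\tilde\nabla_X Y = Z + \eta(\tilde\nabla_X Y)\xi$ with $Z \in \Gamma(\mathcal{D}_{\tilde\varphi\tilde h}(\epsilon\tilde\lambda))$; then
\[
(\tilde\nabla_X\tilde\varphi\tilde h)Y = \epsilon\tilde\lambda\,\tilde\nabla_X Y - \tilde\varphi\tilde h\,\tilde\nabla_X Y = \epsilon\tilde\lambda\,\eta(\tilde\nabla_X Y)\,\xi,
\]
and $\eta(\tilde\nabla_X Y) = -\tilde g(Y,\tilde\nabla_X\xi)$ is computed directly from \eqref{nablaxi}. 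The analogous treatment applies to $X\in\Gamma(\mathcal{D}_{\tilde\varphi\tilde h}(\tilde\lambda))$ and $Y\in\Gamma(\mathcal{D}_{\tilde\varphi\tilde h}(-\tilde\lambda))$, and its mirror. The cases where $X$ or $Y$ equals $\xi$ are handled separately: for $X=\xi$ via $\tilde\nabla_\xi\tilde\varphi\tilde h = -\tilde\mu\tilde h$ from \eqref{NMBLA ZETAH}, and for $Y=\xi$ via $\tilde\varphi\tilde h\xi=0$ together with \eqref{nablaxi} and the algebraic relations $\tilde\varphi\tilde h\tilde\varphi = -\tilde h$ and $\tilde h^2 = (1+\tilde\kappa)\tilde\varphi^2$.

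The final step is algebraic bookkeeping: decompose $X = X_+ + X_- + \eta(X)\xi$ and $Y = Y_+ + Y_- + \eta(Y)\xi$ with $X_\pm, Y_\pm\in\Gamma(\mathcal{D}_{\tilde\varphi\tilde h}(\pm\tilde\lambda))$ via the projection formulas implicit in \eqref{formula2}, substitute the case-by-case values of $(\tilde\nabla_{X_i}\tilde\varphi\tilde h)Y_j$, and regroup the resulting $\xi$-valued and $\tilde h$-valued terms using $\tilde h^2 = (1+\tilde\kappa)\tilde\varphi^2$ and $\eta\circ\tilde h = 0$. The main obstacle, as in Proposition~\ref{para2}, is precisely this reassembly step: one has to keep careful track of the signs coming from $\epsilon=\pm 1$ and from the anticommutation $\tilde\varphi\tilde h = -\tilde h\tilde\varphi$ in order to see the various pieces collapse into the stated compact expression.
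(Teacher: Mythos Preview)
Your proposal is correct and follows essentially the same route as the paper: both proofs use Theorem~\ref{foliations} to control $\tilde\nabla_X Y$ along the eigendistributions of $\tilde\varphi\tilde h$, compute $(\tilde\nabla_X\tilde\varphi\tilde h)Y$ case-by-case (obtaining $-\tilde\lambda^2\tilde g(X,Y)\xi$ on like eigenspaces and $-\tilde\lambda\tilde g(X,\tilde\varphi Y)\xi$ on mixed ones), invoke \eqref{NMBLA ZETAH} for the $\xi$-direction, and then reassemble via the decomposition $X=X_{+}+X_{-}+\eta(X)\xi$. The only cosmetic difference is that the paper writes the same-eigenspace computation through an explicit $\tilde\varphi$-basis from Lemma~\ref{basis}, whereas you argue directly from the projection; the content is identical.
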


\begin{proof}
Let $\{X_{1},\ldots ,X_{n},Y_{1},\ldots ,Y_{n},\xi \}$ be a $\tilde{\varphi}$%
-basis as in Lemma \ref{basis}. By using Theorem \ref{foliations} we have,
for any $X,Y\in \Gamma ({\mathcal{D}}_{\tilde{\varphi}\tilde{h}}(\tilde{%
\lambda}))$,
\begin{align*}
\tilde{\varphi}\tilde{h}\tilde{\nabla}_{X}Y& =\tilde{\varphi}\tilde{h}\left(
-\sum_{i=1}^{r}\tilde{g}(\tilde{\nabla}_{X}Y,X_{i})X_{i}+\sum_{i=r+1}^{n}%
\tilde{g}(\tilde{\nabla}_{X}Y,X_{i})X_{i}+\tilde{g}(\tilde{\nabla}_{X}Y,\xi
)\xi \right) \\
& =-\tilde{\lambda}\sum_{i=1}^{n}\tilde{g}(\tilde{\nabla}_{X}Y,X_{i})X_{i}+%
\tilde{\lambda}\sum_{i=r+1}^{n}\tilde{g}(\tilde{\nabla}_{X}Y,X_{i})X_{i} \\
& =\tilde{\lambda}\tilde{\nabla}_{X}Y-\tilde{\lambda}\tilde{g}(\tilde{\nabla}%
_{X}Y,\xi )\xi \\
& =\tilde{\nabla}_{X}\tilde{\varphi}\tilde{h}Y-\tilde{\lambda}\tilde{g}(%
\tilde{\nabla}_{X}Y,\xi )\xi .
\end{align*}%
It follows that
\begin{equation}
(\tilde{\nabla}_{X}\tilde{\varphi}\tilde{h})Y=-\tilde{\lambda}\tilde{g}(Y,%
\tilde{\nabla}_{X}\xi )\xi =-\tilde{\lambda}\tilde{g}(Y,-\tilde{\varphi}X+%
\tilde{\varphi}\tilde{h}X)\xi =-\tilde{\lambda}^{2}\tilde{g}(X,Y)\xi .
\label{formula5}
\end{equation}%
Now, let us consider $X\in \Gamma ({\mathcal{D}}_{\tilde{\varphi}\tilde{h}}(%
\tilde{\lambda}))$ and $Y\in \Gamma ({\mathcal{D}}_{\tilde{\varphi}\tilde{h}%
}(-\tilde{\lambda}))$. Arguing as in the previous case, one finds
\begin{equation}
(\tilde{\nabla}_{X}\tilde{\varphi}\tilde{h})Y=(\tilde{\nabla}_{Y}\tilde{%
\varphi}\tilde{h})X=-\tilde{\lambda}\tilde{g}(X,\tilde{\varphi}Y)\xi .
\label{formula6}
\end{equation}%
Finally, for any $X,Y\in \Gamma ({\mathcal{D}}_{\tilde{\varphi}\tilde{h}}(-%
\tilde{\lambda}))$ one has
\begin{equation}
(\tilde{\nabla}_{X}\tilde{\varphi}\tilde{h})Y=-{\tilde{\lambda}}^{2}\tilde{g}%
(X,Y)\xi .  \label{formula7}
\end{equation}%
Then \eqref{formula8} follows from \eqref{NMBLA ZETAH}, \eqref{formula5}, %
\eqref{formula6} and \eqref{formula7}.
\end{proof}

\begin{corollary}
In any paracontact $(\tilde\kappa,\tilde\mu)$-manifold such that $%
\tilde\kappa\neq -1$ one has
\begin{equation}  \label{nablah1}
(\tilde\nabla_{X}\tilde{h})Y=-((1+\tilde\kappa)\tilde{g}(X,\tilde\varphi Y)+%
\tilde{g}(X,\tilde\varphi\tilde{h}Y))\xi + \eta(Y)\tilde\varphi\tilde{h}(%
\tilde{h}X-X)-\tilde\mu\eta(X)\tilde\varphi\tilde{h}Y.
\end{equation}
\end{corollary}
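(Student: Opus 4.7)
The plan is to verify the formula separately in the two regimes $\tilde\kappa>-1$ and $\tilde\kappa<-1$; in both the crucial algebraic identity will be
\begin{equation*}
\tilde\varphi\tilde h^{2}=(1+\tilde\kappa)\tilde\varphi,
\end{equation*}
which follows from \eqref{H2} together with $\tilde\varphi^{3}=\tilde\varphi$ (the latter being a direct consequence of $\tilde\varphi^{2}=I-\eta\otimes\xi$ and $\tilde\varphi\xi=0$).

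In the case $\tilde\kappa>-1$, I would start directly from Proposition \ref{para2}. The $\xi$-coefficient of \eqref{nablah} rewrites as $-(1+\tilde\kappa)\tilde g(X,\tilde\varphi Y)-\tilde g(X,\tilde\varphi\tilde h Y)$ once we substitute $\tilde\varphi\tilde h^{2}Y=(1+\tilde\kappa)\tilde\varphi Y$; and the $\eta(Y)$-coefficient becomes
\begin{equation*}
(1+\tilde\kappa)\tilde\varphi X-\tilde\varphi\tilde h X=\tilde\varphi\tilde h^{2}X-\tilde\varphi\tilde h X=\tilde\varphi\tilde h(\tilde h X-X),
\end{equation*}
so \eqref{nablah1} drops out on inspection.

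In the case $\tilde\kappa<-1$, I would build $(\tilde\nabla_{X}\tilde h)Y$ out of the known expression \eqref{formula8} for $(\tilde\nabla_{X}\tilde\varphi\tilde h)Y$. The product rule gives
\begin{equation*}
(\tilde\nabla_{X}\tilde\varphi\tilde h)Y=(\tilde\nabla_{X}\tilde\varphi)(\tilde h Y)+\tilde\varphi\bigl((\tilde\nabla_{X}\tilde h)Y\bigr),
\end{equation*}
and by \eqref{NAMLAFI} combined with $\eta\circ\tilde h=0$ the term $(\tilde\nabla_{X}\tilde\varphi)(\tilde h Y)$ is a pure $\xi$-multiple, hence is killed by a further application of $\tilde\varphi$. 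Applying $\tilde\varphi$ to the isolated identity $\tilde\varphi((\tilde\nabla_{X}\tilde h)Y)=(\tilde\nabla_{X}\tilde\varphi\tilde h)Y-(\tilde\nabla_{X}\tilde\varphi)(\tilde h Y)$ and using $\tilde\varphi^{2}=I-\eta\otimes\xi$ on the left yields
\begin{equation*}
(\tilde\nabla_{X}\tilde h)Y-\eta\bigl((\tilde\nabla_{X}\tilde h)Y\bigr)\xi=\tilde\varphi\bigl((\tilde\nabla_{X}\tilde\varphi\tilde h)Y\bigr)=\eta(Y)\tilde\varphi\tilde h(\tilde h X-X)-\tilde\mu\,\eta(X)\tilde\varphi\tilde h Y,
\end{equation*}
where the right-hand side comes from plugging \eqref{formula8} in and noting $\tilde\varphi\xi=0$.

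It then remains only to pin down the $\xi$-component $\eta((\tilde\nabla_{X}\tilde h)Y)=\tilde g((\tilde\nabla_{X}\tilde h)Y,\xi)$. Expanding and using $\tilde h\xi=0$, this reduces to $-\tilde g(\tilde h Y,\tilde\nabla_{X}\xi)$, and \eqref{nablaxi} turns it into $\tilde g(\tilde h Y,\tilde\varphi X)-\tilde g(\tilde h Y,\tilde\varphi\tilde h X)$. The main obstacle of the whole argument is precisely the bookkeeping in simplifying this expression: using the symmetry of $\tilde h$, the anticommutativity $\tilde h\tilde\varphi=-\tilde\varphi\tilde h$, the antisymmetry $\tilde g(A,\tilde\varphi B)=-\tilde g(\tilde\varphi A,B)$, and collapsing the arising $\tilde\varphi\tilde h^{2}X$ by the opening identity to $(1+\tilde\kappa)\tilde\varphi X$, one is led to $\eta((\tilde\nabla_{X}\tilde h)Y)=-(1+\tilde\kappa)\tilde g(X,\tilde\varphi Y)-\tilde g(X,\tilde\varphi\tilde h Y)$. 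Inserting this into the previous display produces \eqref{nablah1}. No new geometric input is required beyond Proposition \ref{para2}, formula \eqref{formula8}, and the basic structural identities already recorded.
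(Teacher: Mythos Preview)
Your proof is correct and follows essentially the same approach as the paper. For $\tilde\kappa>-1$ both you and the paper simply recognize \eqref{nablah1} as a rewriting of \eqref{nablah}; for $\tilde\kappa<-1$ the paper uses the slightly cleaner factorization $\tilde h=\tilde\varphi\circ(\tilde\varphi\tilde h)$, which gives $(\tilde\nabla_X\tilde h)Y=(\tilde\nabla_X\tilde\varphi)(\tilde\varphi\tilde h Y)+\tilde\varphi\bigl((\tilde\nabla_X\tilde\varphi\tilde h)Y\bigr)$ directly and avoids your separate computation of the $\xi$-component, but both arguments rest on the same ingredients \eqref{NAMLAFI} and \eqref{formula8}.
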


\begin{proof}
If $\tilde\kappa>-1$, \eqref{nablah1} is just \eqref{nablah} and there is
nothing to prove. Next, if $\tilde\kappa<-1$, as $(\tilde\nabla_{X}\tilde{h}%
)Y = (\tilde\nabla_{X}\tilde\varphi)\tilde\varphi\tilde{h}Y +
\tilde\varphi(\tilde\nabla_{X}\tilde\varphi\tilde{h})Y$, the assertion
follows directly from \eqref{NAMLAFI} and \eqref{formula8}.
\end{proof}

Even if the Legendre foliations ${\mathcal{D}}_{\tilde\varphi\tilde{h}%
}(\pm\tilde\lambda)$ are not totally geodesic and thus many properties are
missing compared to the case $\tilde\kappa>-1$, also in this case we can
find an interesting relationship with contact Riemannian structures. We have
in fact the following result.

\begin{theorem}
Any positive or negative definite paracontact $(\tilde\kappa,\tilde\mu)$%
-manifold such that $\tilde\kappa<-1$ carries a canonical contact Riemannian
structure $(\phi,\xi,\eta,g)$ given by
\begin{equation}  \label{def2}
\phi:=\pm\frac{1}{\sqrt{-1-\tilde\kappa}}\tilde{h}, \ \
g:=-d\eta(\cdot,\phi\cdot)+\eta\otimes\eta,
\end{equation}
where the sign $\pm$ depends on the positive or negative definiteness of the
paracontact $(\tilde\kappa,\tilde\mu)$-manifold. Moreover, $%
(\phi,\xi,\eta,g) $ is a contact metric $(\kappa,\mu)$-structure, where
\begin{equation*}
\kappa=\tilde\kappa + 2-\left(1-\frac{\tilde\mu}{2}\right)^2, \ \ \ \mu=2.
\end{equation*}
\end{theorem}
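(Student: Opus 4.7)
The plan is to mirror closely the proof of Theorem~\ref{principal1}, adapting each step to the case $\tilde\kappa<-1$, where $\tilde\varphi\tilde h$ rather than $\tilde h$ is the diagonalizable operator. First I would verify that $(\phi,\xi,\eta,g)$ is a contact Riemannian structure. The identity $\phi^2=-I+\eta\otimes\xi$ is immediate from \eqref{H2}: $\phi^2 = \frac{1}{-1-\tilde\kappa}\tilde h^2 = \frac{1+\tilde\kappa}{-1-\tilde\kappa}\tilde\varphi^2 = -\tilde\varphi^2$. Symmetry of $g$ is inherited from that of $\tilde h$; positive definiteness is checked in a $\tilde\varphi$-basis of eigenvectors of $\tilde\varphi\tilde h$ provided by Lemma~\ref{basis}, using the identity $\tilde h X = \tilde\lambda\tilde\varphi X$ on $\mathcal{D}_{\tilde\varphi\tilde h}(\tilde\lambda)$. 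The choice of sign $\pm$ in \eqref{def2} is forced by whether the paracontact structure is positive or negative definite, according to Proposition~\ref{posneg1}. The compatibility $g(X,\phi Y)=d\eta(X,Y)$ is tautological, and $g(\phi X,\phi Y)=g(X,Y)-\eta(X)\eta(Y)$ is a routine calculation.

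Next I would compute the operator $h := \frac{1}{2}\mathcal{L}_\xi\phi$, reproducing the Lie-derivative calculation from the proof of Theorem~\ref{principal1}. Using \eqref{NMBLA ZETAH}, the anti-commutation $\tilde h\tilde\varphi = -\tilde\varphi\tilde h$, and \eqref{H2}, one finds $(\mathcal{L}_\xi\tilde h)X = (2-\tilde\mu)\tilde\varphi\tilde h X - 2(1+\tilde\kappa)\tilde\varphi X$, hence
\begin{equation*}
h = \pm\frac{1}{2\sqrt{-1-\tilde\kappa}}\bigl[(2-\tilde\mu)\tilde\varphi\tilde h - 2(1+\tilde\kappa)\tilde\varphi\bigr].
\end{equation*}
Squaring this (the cross term $\tilde\varphi\tilde h\tilde\varphi + \tilde\varphi^2\tilde h$ vanishes because $\tilde\varphi\tilde h\tilde\varphi = -\tilde h = -\tilde\varphi^2\tilde h$) gives $h^2 = \bigl[(1-\tilde\mu/2)^2 - (1+\tilde\kappa)\bigr]\tilde\varphi^2 = (\kappa-1)\phi^2$, yielding $\kappa = \tilde\kappa + 2 - (1-\tilde\mu/2)^2$.

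Finally the $(\kappa,\mu)$-nullity condition with $\mu=2$ must be established. Following the template of Theorem~\ref{principal1}, I would apply Theorem~\ref{characterization} to the bi-Legendrian structure $(L,Q) := (\mathcal{D}_h(\lambda),\mathcal{D}_h(-\lambda))$, where $\lambda = \sqrt{1-\kappa}$. The principal obstacle, and what genuinely distinguishes this case from $\tilde\kappa>-1$, is that these Legendre distributions do \emph{not} coincide with $\mathcal{D}^\pm$ or with $\mathcal{D}_{\tilde\varphi\tilde h}(\pm\tilde\lambda)$: the explicit form of $h$ shows that vectors in $L$ and $Q$ are specific linear combinations of sections of the latter, with coefficients depending on $\tilde\mu$ and $\tilde\lambda$. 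Moreover, a calculation using \eqref{PcTanaka} and \eqref{nablah1} yields $(\tilde\nabla^{pc}_X\tilde h)Y = (2-\tilde\mu)\eta(X)\tilde\varphi\tilde h Y$, so when $\tilde\mu\neq 2$ the canonical paracontact connection fails to preserve $\tilde h$ and the bi-Legendrian connection $\bar\nabla^{bl}$ of $(L,Q)$ cannot simply be identified with $\tilde\nabla^{pc}$. I therefore plan to construct $\bar\nabla^{bl}$ directly from Theorem~\ref{biconnection}, establish integrability of $L$ and $Q$ by means of Theorem~\ref{foliations} together with the explicit form of $h$, and verify $\bar\nabla^{bl}g = \bar\nabla^{bl}\phi = \bar\nabla^{bl}h = 0$ through direct computation exploiting \eqref{formula8} and the totally umbilical geometry of the paracontact Legendre foliations. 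The value $\mu=2$ would then emerge by matching the Pang invariants of $L,Q$ computed intrinsically via Theorem~\ref{pang4} with the expressions provided by formulas (11)--(12) of \cite{CAP1} for the new contact metric $(\kappa,\mu)$-structure.
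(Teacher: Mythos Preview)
Your overall architecture matches the paper's: verify $(\phi,\xi,\eta,g)$ is a contact Riemannian structure, compute $h$, and then invoke Theorem~\ref{characterization} on the bi-Legendrian structure $(\mathcal{D}_h(\lambda),\mathcal{D}_h(-\lambda))$. Your computation of $h$ and of $h^2$ (hence of $\kappa$) is correct and essentially what the paper does, just organized differently (the paper writes out the matrix of $h$ and its characteristic polynomial). You also correctly identify the key difficulty distinguishing this case from $\tilde\kappa>-1$: the eigendistributions $\mathcal{D}_h(\pm\lambda)$ are genuinely new foliations, not equal to $\mathcal{D}^\pm$ or $\mathcal{D}_{\tilde\varphi\tilde h}(\pm\tilde\lambda)$.

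There is, however, a real gap in your plan for extracting $\mu=2$. You propose to ``match the Pang invariants of $L,Q$ computed intrinsically via Theorem~\ref{pang4}'' with the formulas of \cite{CAP1}. But Theorem~\ref{pang4} computes the Pang invariants of $\mathcal{D}_{\tilde\varphi\tilde h}(\pm\tilde\lambda)$, \emph{not} of $L=\mathcal{D}_h(\lambda)$ and $Q=\mathcal{D}_h(-\lambda)$; as you yourself observe, these are different foliations. So Theorem~\ref{pang4} gives you nothing to compare against \cite{CAP1}. The analogous step in Theorem~\ref{principal1} worked only because there $\mathcal{D}_{\tilde h}(\pm\tilde\lambda)=\mathcal{D}_h(\pm\lambda)$ (cf.\ \eqref{passo2}, \eqref{relazione1}--\eqref{relazione2}), so Theorem~\ref{pang3} applied directly. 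The paper avoids this obstruction altogether: it computes $(\nabla_\xi h)X$ in two ways --- once using the Blair--Koufogiorgos--Papantoniou identity $(\nabla_\xi h)=\mu h\phi$ valid in any contact metric $(\kappa,\mu)$-space, and once directly from $\mathcal{L}_\xi h$ (recycling the formula \eqref{formula11} you already have) together with \eqref{B2} --- and equates the two expressions to read off $\mu=2$.

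A secondary point: for $\bar\nabla^{bl}g=\bar\nabla^{bl}\phi=0$ the paper does not work with \eqref{formula8} and the umbilical geometry of the \emph{paracontact} foliations, but instead proves that $\mathcal{D}_h(\pm\lambda)$ are \emph{totally geodesic with respect to the new metric $g$} (via a Koszul computation, formula \eqref{formula9}), and then appeals to \cite[Proposition~2.9]{CAP3}. Your plan may well get there, but be aware that the relevant total geodesicity is $g$-total geodesicity of $L,Q$, not the $\tilde g$-umbilicity of $\mathcal{D}_{\tilde\varphi\tilde h}(\pm\tilde\lambda)$.
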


\begin{proof}
Let us define a $(1,1)$-tensor field $\phi$ and a tensor $g$ of type $(0,2)$
by \eqref{def2}. First of all, using \eqref{H2}, one can easily prove that $%
\phi^{2}=-I+\eta\otimes\xi$. Next we prove that $g$ is a Riemannian metric.
By using the symmetry of the operator $\tilde\varphi\tilde{h}$ with respect
to $\tilde{g}$, one has, for any $X,Y\in\Gamma(TM)$,
\begin{align*}
g(X,Y)&=\mp\frac{1}{\sqrt{-1-\tilde\kappa}}d\eta(X,\tilde{h}Y)+\eta(X)\eta(Y)
\\
&=\mp\frac{1}{\sqrt{-1-\tilde{\kappa}}}\tilde{g}(X,\tilde\varphi\tilde{h}%
Y)+\eta(X)\eta(Y) \\
&=\mp\frac{1}{\sqrt{-1-\tilde\kappa}}\tilde{g}(\tilde\varphi\tilde{h}%
X,Y)+\eta(X)\eta(Y) \\
&=g(Y,X),
\end{align*}
so that $g$ is symmetric. In order to prove that it is also positive
definite, let us consider a $\tilde\varphi$-basis \ $\{X_{1},%
\ldots,X_{n},Y_{1},\ldots,Y_{n},\xi\}$ \ as \ in \ Lemma \ref{basis}. \ Then
\ we \ have \ that \ $g(\xi,\xi)=1$, \ $\tilde{g}(X_{i},X_{i})=\mp\frac{1}{%
\sqrt{-1-\tilde\kappa}}\tilde{g}(X_{i},\tilde\varphi\tilde{h}X_{i})$ \ $=$ \
$\mp\frac{1}{\sqrt{-1-\tilde\kappa}}\tilde\lambda\tilde{g}(X_{i},X_{i})$ \ $%
= $ \ $\mp\tilde{g}(X_{i},X_{i})$ \ $=$ \ $(\mp 1)(\mp 1)=1$ \ and \ $\tilde{%
g}(Y_{i},Y_{i})$ \ $=$ \ $\mp\frac{1}{\sqrt{-1-\tilde\kappa}}\tilde{g}%
(Y_{i},\tilde\varphi\tilde{h}Y_{i})=\pm\tilde{g}(Y_{i},Y_{i})=(\pm 1)(\pm
1)=1$. Finally one can straightforwardly check that $g(\phi X,\phi Y)$ $%
=g(X,Y)-\eta(X)\eta(Y)$ and $g(X,\phi Y)=d\eta(X,Y)$. Thus $%
(\phi,\xi,\eta,g) $ is a contact Riemannian structure. In order to prove
that it satisfies a $(\kappa,\mu)$-nullity condition, let us compute the
operator $h:=\frac{1}{2}{\mathcal{L}}_{\xi}\phi=\pm\frac{1}{2\sqrt{%
-1-\tilde\kappa}}{\mathcal{L}}_{\xi}\tilde{h}$. Notice that, for all $%
X\in\Gamma(TM)$,
\begin{align}  \label{formula11}
({\mathcal{L}}_{\xi}\tilde{h})X&=[\xi,\tilde{h}X]-\tilde{h}[\xi,X]  \notag \\
&=\tilde\nabla_{\xi}\tilde{h}X-\tilde\nabla_{\tilde{h}X}\xi-\tilde{h}%
\tilde\nabla_{\xi}X+\tilde{h}\tilde\nabla_{X}\xi \\
&=(\tilde\nabla_{\xi}\tilde{h})X-(-\tilde\varphi\tilde{h}X+\tilde\varphi%
\tilde{h}^{2}X)+\tilde{h}(-\tilde\varphi X + \tilde\varphi\tilde{h}X)  \notag
\\
&=(2-\tilde\mu)\tilde\varphi\tilde{h}X-2(1+\tilde\kappa)\tilde\varphi X.
\notag
\end{align}
Consequently
\begin{equation*}
h=\pm\left(\frac{1-\frac{\tilde\mu}{2}}{\sqrt{-1-\tilde\kappa}}\tilde\varphi%
\tilde{h} + \sqrt{-1-\tilde\kappa}\tilde\varphi\right).
\end{equation*}
We prove that $h$ is diagonalizable. Note that the matrix of $h$ with
respect to the $\tilde\varphi$-basis $\{X_i,Y_i,\xi\}$ is given by
\begin{equation*}
\pm\left(
\begin{array}{ccccccc}
1-\frac{\tilde\mu}{2} & \cdots & 0 & \sqrt{-1-\tilde\kappa} & \cdots & 0 & 0
\\
\vdots & \ddots & \vdots & \vdots & \ddots & \vdots & \vdots \\
0 & \cdots & 1-\frac{\tilde\mu}{2} & 0 & \cdots & \sqrt{-1-\tilde\kappa} & 0
\\
\sqrt{-1-\tilde\kappa} & \cdots & 0 & -1+\frac{\tilde\mu}{2} & \cdots & 0 & 0
\\
\vdots & \ddots & \vdots & \vdots & \ddots & \vdots & \vdots \\
0 & \cdots & \sqrt{-1-\tilde\kappa} & 0 & \cdots & -1+\frac{\tilde\mu}{2} & 0
\\
0 & \cdots & 0 & 0 & \cdots & 0 & 0 \\
&  &  &  &  &  &
\end{array}
\right).
\end{equation*}
Thus the characteristic polynomial is given by $P(\lambda)=\mp\lambda\left(%
\lambda^2-\left(1-\frac{\tilde\mu}{2}\right)^2+\tilde\kappa+1\right)^n$ and $%
h$ admits the eigenvalues $0$ and $\pm\sqrt{\left(1-\frac{\tilde\mu}{2}%
\right)^{2}-1-\tilde\kappa}$. After a long computation, one can prove that
the corresponding eigendistributions are ${\mathcal{D}}_{h}(0)=\mathbb{R}\xi$%
, and
\begin{equation*}
{\mathcal{D}}_{h}(\lambda)=\text{span}\{X_{1}+\alpha
Y_{1},\ldots,X_{n}+\alpha Y_{n}\}, \ \ \ {\mathcal{D}}_{h}(-\lambda)=\text{%
span}\{X_{1}-\beta Y_{1},\ldots,X_{n}-\beta Y_{n}\},
\end{equation*}
where $\alpha:=\frac{1}{\tilde\lambda}\sqrt{\left(1-\frac{\tilde\mu}{2}%
\right)^{2}-1-\tilde\kappa}-\frac{1}{\tilde\lambda}\left(1-\frac{\tilde\mu}{2%
}\right)$ and $\beta:=\frac{1}{\tilde\lambda}\sqrt{\left(1-\frac{\tilde\mu}{2%
}\right)^{2}-1-\tilde\kappa}+\frac{1}{\tilde\lambda}\left(1-\frac{\tilde\mu}{%
2}\right)$. Hence one can easily conclude that $h$ is diagonalizable. Next,
we prove that ${\mathcal{D}}_{h}(\lambda)$ and ${\mathcal{D}}_{h}(-\lambda)$
are Legendre foliations. Taking Theorem \ref{foliations} into account, we
can write $\tilde\nabla_{X_i}X_{j}=\sum_{k=1}^{n}f^{k}_{ij}X_{k}+\tilde{g}%
(\tilde\nabla_{X_i}X_{j},\xi)\xi$ and $\tilde\nabla_{Y_i}X_{j}=%
\sum_{k=1}^{n}g^{k}_{ij}X_{k}+\tilde{g}(\tilde\nabla_{Y_i}X_{j},\xi)\xi$,
for some functions $f_{ij}^k$, $g_{ij}^k$. Then one finds
\begin{align*}
\tilde\nabla_{X_i}Y_j=\tilde\nabla_{X_i}\tilde\varphi{X_j}%
=\tilde\varphi\tilde\nabla_{X_i}X_j - \tilde{g}(X_i-\tilde{h}%
X_i,X_j)\xi=\sum_{i=1}^{n}f_{ij}^{k}Y_{k} - \delta_{ij}\xi
\end{align*}
and, analogously, \ $\tilde\nabla_{Y_i}Y_{j}=\sum_{i=1}^{n}g^{k}_{ij}Y_{k} -
\tilde\lambda\delta_{ij}\xi$. \ Notice that $\tilde{g}(\tilde%
\nabla_{X_i}X_j,\xi)=-\tilde{g}(X_j,\tilde\nabla_{X_i}\xi)=-\tilde{g}%
(X_j,-\tilde\varphi X_i+\tilde\varphi\tilde{h}X_i)=-\tilde\lambda\delta_{ij}$
and $\tilde{g}(\tilde\nabla_{X_i}X_j,\xi)=\delta_{ij}$. After a
straightforward computation one can get
\begin{equation*}
\tilde\nabla_{X_i+\alpha Y_i}(X_j+\alpha
Y_j)=\sum_{k=1}^{n}((f_{ij}^{k}+\alpha g^{k}_{ij})(X_k + \alpha Y_k)) -
\tilde\lambda(1-\alpha^2)\delta_{ij}\xi.
\end{equation*}
Then
\begin{align*}
[X_{i}+\alpha Y_{i}, X_{j}+\alpha Y_{j}] &= \tilde\nabla_{X_i+\alpha
Y_i}(X_j+\alpha Y_j) - \tilde\nabla_{X_j+\alpha Y_j}(X_i+\alpha Y_i) \\
&=\sum_{k=1}^{n}(f^{k}_{ij}-f^{k}_{ji} +
\alpha(g^{k}_{ik}-g^{k}_{ji}))(X_{k}+\alpha Y_{k}),
\end{align*}
consequently ${\mathcal{D}}_{h}(\lambda)$ is involutive. In a similar way
one proves the integrability of ${\mathcal{D}}_{h}(-\lambda)$. Moreover, for
any $X\in\Gamma({\mathcal{D}}_{h}(\pm\lambda))$, $\eta(X)=\pm\frac{1}{\lambda%
}\eta(hX)=0$. Being $n$-dimensional integrable subbundles of the contact
distribution, ${\mathcal{D}}_{h}(\lambda)$ and ${\mathcal{D}}_{h}(-\lambda)$
are Legendre foliations. In order to prove that $(M,\varphi,\xi,\eta,g)$ is
a contact metric $(\kappa,\mu)$-space, we show that the bi-Legendrian
structure $({\mathcal{D}}_{h}(\lambda),{\mathcal{D}}_{h}(-\lambda))$
satisfies the hypotheses of Theorem \ref{characterization}. First, due to
the property $\phi h =-h\phi$, we have that ${\mathcal{D}}_{h}(\lambda)$ and
${\mathcal{D}}_{h}(-\lambda)$ are \emph{conjugate} Legendre foliations, that
is $\phi{\mathcal{D}}_{h}(\pm\lambda)={\mathcal{D}}_{h}(\mp\lambda)$. Let us
consider the bi-Legendrian connection $\nabla^{bl}$ associated to $({%
\mathcal{D}}_{h}(\lambda),{\mathcal{D}}_{h}(-\lambda))$. By definition of
bi-Legendrian connection, $\nabla^{bl}$ satisfies (i) and (iii) of Theorem %
\ref{characterization}. Moreover, $\nabla^{bl}\eta=\nabla^{bl}d\eta=0$ and,
since $\nabla^{bl}$ preserves ${\mathcal{D}}_{h}(\pm\lambda)$, also $%
\nabla^{bl}h=0$. It remains to prove that $\nabla^{bl}$ preserves the tensor
field $\phi$ and is a metric connection. Notice that, by virtue of \cite[%
Proposition 2.9]{CAP3}, in order to prove that $\nabla^{bl}\phi=0$ and $%
\nabla^{bl}g=0$, it is enough to show that ${\mathcal{D}}_{h}(\pm\lambda)$
are totally geodesic foliations (with respect to $g$). Let $\nabla$ be the
Levi-Civita connection of $g$ and $X,X^{\prime}\in\Gamma({\mathcal{D}}%
_{h}(\lambda))$, $Y\in\Gamma({\mathcal{D}}_{h}(-\lambda))$. As $g=\mp\frac{1%
}{\tilde\lambda}\tilde{g}(\cdot,\tilde\varphi\tilde{h}\cdot)+\eta\otimes\eta$%
, after a very long computation one can get
\begin{align}  \label{formula9}
g(\nabla_{X}X^{\prime},Y)&=-\frac{2\lambda}{2-\tilde\mu}\tilde{g}%
(\tilde\nabla_{X}X^{\prime},Y) - \frac{3\tilde\lambda}{2-\tilde\mu}%
d^2\eta(X,X^{\prime},Y)  \notag \\
&\quad - \frac{1}{2\tilde\lambda}(X(d\eta(X^{\prime},\tilde{h}%
Y))+X^{\prime}(d\eta(X,\tilde{h}Y))+d\eta([X,X^{\prime}],\tilde{h}Y))
\end{align}
Since $\tilde{h}$ maps ${\mathcal{D}}_{h}(\pm\lambda)$ onto ${\mathcal{D}}%
_{h}(\mp\lambda)$ and due to Theorem \ref{foliations}, \eqref{formula9}
implies that $\tilde{g}(\nabla_{X}X^{\prime},Y)=0$. Moreover, $%
g(\nabla_{X}X^{\prime},\xi)=-g(X^{\prime},\nabla_{X}\xi)=-g(X^{\prime},-\phi
X-\phi hX)=(1+\lambda)g(X^{\prime},\phi X)=0$. Thus ${\mathcal{D}}%
_{h}(\lambda)$ is totally geodesic and in a similar way one can prove that
also ${\mathcal{D}}_{h}(-\lambda)$ is totally geodesic. It follows that the
bi-Legendrian connection $\nabla^{bl}$ preserves $g$ and $\phi$. Therefore
all the assumptions of Theorem \ref{characterization} are satisfied and we
conclude that $(\phi,\xi,\eta,g)$ is a contact metric $(\kappa,\mu)$%
-structure. It remains to find explicitly $\kappa$ and $\mu$. We have
immediately that $\kappa=1-\lambda^2=\tilde\kappa+2-\left(1-\frac{\tilde\mu}{%
2}\right)^2$. In order to find $\mu$, notice that by \cite[(3.19)]{BKP}, one
has
\begin{equation}  \label{formula10}
(\nabla_{\xi}h)X=\mu h\phi X=\mu\left(\left(\frac{\tilde\mu}{2}%
-1\right)\tilde\varphi X + \tilde\varphi\tilde{h}X\right),
\end{equation}
for all $X\in\Gamma(TM)$. On the other hand, by using \eqref{B2} and the
relation $h^{2}=(\kappa-1)\phi$ (cf. \cite{BKP}), we have
\begin{align}  \label{formula13}
(\nabla_{\xi}h)X&=\nabla_{hX}\xi+[\xi,hX]-h\nabla_{X}\xi-h[\xi,X]  \notag \\
&=-2\phi hX-2\phi h^{2}X+({\mathcal{L}}_{\xi}h)X \\
&=-2\phi hX - 2(1-\kappa)\phi X + ({\mathcal{L}}_{\xi}h)X.  \notag
\end{align}
But, due to \eqref{formula11},
\begin{align}  \label{formula12}
({\mathcal{L}}_\xi h)X&=\pm\left(\frac{2-\tilde\mu}{2\sqrt{-1-\tilde\kappa}}(%
{\mathcal{L}}_{\xi}\tilde\varphi)\tilde{h}X+\frac{2-\tilde\mu}{2\sqrt{%
-1-\tilde\kappa}}\tilde\varphi({\mathcal{L}}_{\xi}\tilde{h})X+\tilde\lambda({%
\mathcal{L}}_{\xi}\tilde\varphi)X\right)  \notag \\
&=\pm\left(\frac{2-\tilde\mu}{\sqrt{-1-\tilde\kappa}}\tilde{h}^{2} + \frac{%
(2-\tilde\mu)^{2}}{2\sqrt{-1-\tilde\kappa}}\tilde\varphi^{2}\tilde{h}X +%
\sqrt{-1-\tilde\kappa})(2-\tilde\mu)\tilde\varphi^{2}X + 2\tilde\lambda
\tilde{h}X\right) \\
&=\pm\left(\frac{(2-\tilde\mu)^2}{2\sqrt{-1-\tilde\kappa}}+2\sqrt{%
-1-\tilde\kappa}\right)\tilde{h}X.  \notag
\end{align}
Thus by \eqref{formula12} and \eqref{formula13} we get
\begin{equation}  \label{formula14}
(\nabla_{\xi}h)X=-2\phi hX - 2(1-\kappa)\phi X \pm\left(\frac{(2-\tilde\mu)^2%
}{\sqrt{-1-\tilde\kappa}}+2\sqrt{-1-\tilde\kappa}\right)\tilde{h}X,
\end{equation}
where the sign $\pm$ depends on the positive or negative definiteness of the
paracontact $(\tilde\kappa,\tilde\mu)$-manifold $(M,\tilde\varphi,\xi,\eta,%
\tilde{g})$. Thus comparing \eqref{formula10} with \eqref{formula14} we
obtain $\mu=2$ both in the positive and in the negative definite case.
\end{proof}

\begin{remark}
Notice that, since $\tilde\kappa<-1$, in this case we can not perform a
construction analogous to that one described in Remark \ref{remarkreferee}.
\end{remark}

We now pass to study the curvature properties of a paracontact $%
(\tilde\kappa,\tilde\mu)$-manifold with $\tilde\kappa<-1$. We start
observing that \eqref{RHZ} holds also in the case $\tilde\kappa<-1$ since
its proof only needs the expression of the covariant derivative of $\tilde h$%
, which is the same in the cases $\tilde\kappa<-1$ and $\tilde\kappa>-1$
(cf. \eqref{nablah} and \eqref{nablah1}). Moreover, by combining %
\eqref{CURVATURE 10} with \eqref{nablah1} we get

\begin{align}  \label{curv1}
\tilde{R}_{XY}\tilde{\varphi}Z - \tilde{\varphi}\tilde{R}_{XY}Z &= \bigl(%
\eta(Y)\tilde{g}((1+\tilde{\kappa})\tilde{\varphi}X+(\tilde\mu-1)\tilde%
\varphi\tilde{h}X,Z)-\eta(X)\tilde{g}((1+\tilde\kappa)\tilde{\varphi}Y
\notag \\
&\quad+(\tilde\mu-1)\tilde\varphi\tilde{h}Y,Z)\bigr)\xi -
\eta(Y)\eta(Z)((1+\tilde\kappa)\tilde\varphi X+(\tilde\mu-1)\tilde{\varphi}%
\tilde{h}X)  \notag \\
&\quad+\eta(X)\eta(Z)((1+\tilde\kappa)\tilde{\varphi}Y+(\tilde\mu-1)\tilde%
\varphi\tilde{h}Y) \\
&\quad+\tilde{g}(Y-\tilde{h}Y,Z)\tilde\varphi(X-\tilde{h}X)-\tilde{g}(X-%
\tilde{h}X,Z)\tilde\varphi(Y-\tilde{h}Y)  \notag \\
&\quad-\tilde{g}(\tilde\varphi(X-\tilde{h}X),Z)(Y-\tilde{h}Y)+\tilde{g}(%
\tilde{\varphi}(Y-\tilde{h}Y),Z)(X-\tilde{X}) .  \notag
\end{align}
From \eqref{RHZ} and \eqref{curv1}, using \eqref{H2} and the properties of
the operator $\tilde{h}$, it follows that
\begin{align}  \label{curv2}
\tilde{R}_{XY}\tilde\varphi\tilde{h}Z-\tilde\varphi\tilde{h}\tilde{R}%
_{XY}Z&= \tilde{R}_{XY}\tilde\varphi\tilde{h}Z-\tilde\varphi\tilde{R}_{XY}%
\tilde{h}Z + \tilde\varphi(\tilde{R}_{XY}\tilde{h}Z-\tilde{h}\tilde{R}_{XY}Z)
\notag \\
&=\bigl(\eta(Y)\tilde{g}((1+2\tilde\kappa)\tilde\varphi X
+(\tilde\mu-1)\tilde\varphi\tilde{h}X,\tilde{h}Z)  \notag \\
&\quad-\eta(X)\tilde{g}((1+2\tilde\kappa)\tilde\varphi
Y+(\tilde\mu-1)\tilde\varphi\tilde{h}Y,\tilde{h}Z)\bigr)\xi  \notag \\
&\quad+\bigl(\tilde{g}(Y-\tilde{h}Y,\tilde{h}Z)-\tilde\mu(1+\tilde\kappa)%
\eta(Y)\eta(Z)\bigr)\tilde\varphi X  \notag \\
&\quad-\bigl(\tilde{g}(X-\tilde{h}X,\tilde{h}Z)-\tilde\mu(1+\tilde\kappa)%
\eta(X)\eta(Z)\bigr)\tilde\varphi Y \\
&\quad-\bigl(\tilde{g}(Y-\tilde{h}Y,\tilde{h}Z)+\tilde\kappa\eta(Y)\eta(Z)%
\bigr)\tilde\varphi\tilde{h}X +\bigl(\tilde{g}(X-\tilde{h}X,\tilde{h}%
Z)+\tilde\kappa\eta(X)\eta(Z)\bigr)\tilde\varphi\tilde{h}Y  \notag \\
&\quad-(1+\tilde\kappa)\tilde{g}(Y,\tilde\varphi Z+\tilde\varphi\tilde{h}Z)X
+ (1+\tilde\kappa)\tilde{g}(X,\tilde\varphi Z+\tilde\varphi\tilde{h}Z)Y
\notag \\
&\quad+\tilde{g}(Y,\tilde\varphi Z+\tilde\varphi\tilde{h}Z)\tilde{h}X-\tilde{%
g}(X,\tilde\varphi Z+\tilde\varphi\tilde{h}Z)\tilde{h}Y-2\tilde\mu\tilde{g}%
(X,\tilde\varphi Y)\tilde{h}Z.  \notag
\end{align}

We can prove now that also in the case $\tilde\kappa<-1$ the $%
(\tilde\kappa,\tilde\mu)$-nullity condition determines the curvature tensor
field completely.

\begin{theorem}
\label{curvatura} Let $(M,\tilde{\varphi},\xi ,\eta ,\tilde{g})$ be a
paracontact metric $(\tilde{\kappa},\tilde{\mu})$-manifold such that $\tilde{%
\kappa}<-1$. Then the curvature tensor field of $M$ satisfies the following
relations:
\begin{gather}
\tilde{R}_{XX^{\prime}}X^{\prime \prime}=(\tilde{\kappa}-1+\tilde{\mu}%
)\left( \tilde{g}(X^{\prime},X^{\prime\prime })X-\tilde{g}(X,X^{\prime
\prime})X^{\prime }\right) + \tilde\lambda(\tilde{g}(X^{\prime },X^{\prime
\prime })\tilde{\varphi}X -\tilde{g}(X,X^{\prime \prime })\tilde{\varphi}%
X^{\prime })  \label{first} \\
\tilde{R}_{XX^{\prime }}Y=-\tilde{\lambda}\left(\tilde{g}(X^{\prime },\tilde{
\varphi}Y)X - \tilde{g}(X,\tilde{\varphi}Y)X^{\prime }\right)-(1- \tilde{\mu}%
)\left(\tilde{g}(X^{\prime },\tilde{\varphi}Y)\tilde{\varphi}X -\tilde{g}(X,%
\tilde{\varphi}Y)\tilde{\varphi} X^{\prime }\right)  \label{second} \\
\tilde{R}_{XY}X^{\prime } =-\tilde{\lambda}\tilde{g}(X^{\prime },\tilde{
\varphi}Y)X-\tilde{g}(X^{\prime },\tilde{\varphi}Y)\tilde{\varphi}X-\tilde{
\lambda}^{2}\tilde{g}(X,X^{\prime })Y +\tilde{\lambda}\tilde{g}(X,X^{\prime
})\tilde{\varphi}Y-\tilde{\mu} \tilde{g}(X,\tilde{\varphi}Y)\tilde{\varphi}%
X^{\prime } \\
\tilde{R}_{XY}Y^{\prime } =\tilde{\lambda}^{2}\tilde{g}(Y,Y^{\prime })X+
\tilde{\lambda}\tilde{g}(Y,Y^{\prime })\tilde{\varphi}X+\tilde{\lambda}
\tilde{g}(X,\tilde{\varphi}Y^{\prime })Y-\tilde{g}(X,\tilde{\varphi}
Y^{\prime })\tilde{\varphi}Y -\tilde{\mu}\tilde{g}(X,\tilde{\varphi}Y)\tilde{%
\varphi}Y^{\prime } \\
\tilde{R}_{YY^{\prime}}X=-\tilde{\lambda}\left(\tilde{g}(X,\tilde{\varphi}
Y^{\prime })Y -\tilde{g}(X,\tilde{\varphi}Y)Y^{\prime }\right) +(1-\tilde{\mu%
})\left(\tilde{g}(X,\tilde{\varphi} Y^{\prime })\tilde{\varphi}Y-\tilde{g}(X,%
\tilde{\varphi}Y) \tilde{\varphi}Y^{\prime }\right) \\
\tilde{R}_{YY^{\prime }}Y^{\prime \prime }=(\tilde{\kappa}-1+\tilde{\mu}%
)\left( \tilde{g}(Y^{\prime },Y^{\prime \prime })Y - \tilde{g}(Y,Y^{\prime
\prime })Y^{\prime }\right) -\tilde{\lambda}(\tilde{g} (Y^{\prime
},Y^{\prime \prime })\tilde{\varphi}Y -\tilde{g}(Y,Y^{\prime \prime })\tilde{%
\varphi} Y^{\prime })
\end{gather}
for any $X,X^{\prime },X^{\prime \prime }\in \Gamma ({\mathcal{D}}_{\tilde{
\varphi}\tilde{h}}(\tilde{\lambda}))$ and $Y,Y^{\prime },Y^{\prime \prime
}\in \Gamma ({\mathcal{D}}_{\tilde{\varphi}\tilde{h}}(-\tilde{\lambda}))$.
\end{theorem}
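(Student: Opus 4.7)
The strategy parallels that of Theorem \ref{curv}, replacing the role of the diagonalizable operator $\tilde h$ (which was central when $\tilde\kappa>-1$) by $\tilde\varphi\tilde h$, whose eigenspaces ${\mathcal D}_{\tilde\varphi\tilde h}(\pm\tilde\lambda)$ are now the totally umbilical Legendre foliations of Theorem \ref{foliations}. The essential algebraic ingredients are three commutator identities already at hand: equation \eqref{RHZ}, which (as noted just before \eqref{curv1}) holds for $\tilde\kappa<-1$ because its derivation uses only the covariant derivative of $\tilde h$; equation \eqref{curv1}; and the derived equation \eqref{curv2}.

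First I would record some simplifying preliminaries. On $\Gamma({\mathcal D}_{\tilde\varphi\tilde h}(\tilde\lambda))$ one has $\tilde h X=\tilde\lambda\tilde\varphi X$ (since $\tilde\varphi^{2}X=X$ on ${\mathcal D}$ and $\eta\circ\tilde h=0$), and analogously $\tilde h Y=-\tilde\lambda\tilde\varphi Y$ on $\Gamma({\mathcal D}_{\tilde\varphi\tilde h}(-\tilde\lambda))$, so $\tilde\varphi$ exchanges the two eigenspaces. Combined with the mutual orthogonality of ${\mathcal D}_{\tilde\varphi\tilde h}(\pm\tilde\lambda)$ from Corollary \ref{bipara2}, this yields $\tilde g(\tilde h X,X')=\tilde g(\tilde\varphi X,X')=0$ for $X,X'\in\Gamma({\mathcal D}_{\tilde\varphi\tilde h}(\tilde\lambda))$, together with the analogous identities on the opposite eigenspace. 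Fix throughout a local $\tilde\varphi$-basis of eigenvectors of $\tilde\varphi\tilde h$ as in Lemma \ref{basis}.

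The proof then proceeds in three steps. In the \emph{mixed} case $X\in\Gamma({\mathcal D}_{\tilde\varphi\tilde h}(\tilde\lambda))$, $Y,Z\in\Gamma({\mathcal D}_{\tilde\varphi\tilde h}(-\tilde\lambda))$, substitute into \eqref{curv2}: since $\tilde\varphi\tilde h Z=-\tilde\lambda Z$, the left-hand side reduces to $-\tilde\lambda\tilde R_{XY}Z-\tilde\varphi\tilde h\tilde R_{XY}Z$, while the right-hand side collapses to a handful of explicit terms after using the preliminaries above. Pairing the resulting identity with basis vectors in each of ${\mathcal D}_{\tilde\varphi\tilde h}(\pm\tilde\lambda)$ extracts all components of $\tilde R_{XY}Z$ transverse to $\xi$, and the $\xi$-component vanishes by \eqref{PARAKMU} (since $\eta(X)=\eta(Y)=0$). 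This yields the formulas for $\tilde R_{XY}X'$ and $\tilde R_{XY}Y'$, from which those for $\tilde R_{XX'}Y$ and $\tilde R_{YY'}X$ follow by the first Bianchi identity. Finally, for the \emph{pure} case $\tilde R_{XX'}X''$ with all three vectors in ${\mathcal D}_{\tilde\varphi\tilde h}(\tilde\lambda)$, evaluate \eqref{curv1} at $(X,X',X'')$; since $\tilde\varphi X''\in\Gamma({\mathcal D}_{\tilde\varphi\tilde h}(-\tilde\lambda))$, the term $\tilde R_{XX'}\tilde\varphi X''$ is known from Step 1. Applying $\tilde\varphi$ to the resulting equation and using $\tilde\varphi^{2}=I-\eta\otimes\xi$ together with the vanishing of $\eta(\tilde R_{XX'}X'')$ (another consequence of \eqref{PARAKMU}) solves explicitly for $\tilde R_{XX'}X''$, producing \eqref{first}. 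The formula \eqref{vi} is obtained by the symmetric argument on the other eigenspace.

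The main obstacle will be the bookkeeping in the first and third steps. When $\tilde\kappa>-1$, the total geodesicity of ${\mathcal D}_{\tilde h}(\pm\tilde\lambda)$ suppressed most right-hand-side terms in the commutator identities; here, because ${\mathcal D}_{\tilde\varphi\tilde h}(\pm\tilde\lambda)$ is only totally umbilical with mean curvature along $\xi$, extra $\tilde\varphi$-dependent terms survive. These are precisely what produce the additional $\tilde\lambda\tilde\varphi X$ and $\tilde\lambda\tilde\varphi X'$ contributions that distinguish \eqref{first}--\eqref{second} from the purely $\tilde g$-valued analogues \eqref{v} and \eqref{i} in the $\tilde\kappa>-1$ case, and they must be tracked carefully through each simplification. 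A secondary concern is maintaining the correct signs when invoking the identities $\tilde h=\pm\tilde\lambda\tilde\varphi$ on the two eigenspaces, since a sign error at that level propagates into every curvature formula.
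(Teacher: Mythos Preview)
Your strategy is essentially the paper's: both arguments rest on the commutator identity \eqref{curv2} for $\tilde\varphi\tilde h$ to extract the mixed curvature components, and on \eqref{curv1} (applied to $\tilde\varphi X''$) to recover the pure case $\tilde R_{XX'}X''$. The only organizational difference is that the paper first computes $\tilde R_{XX'}Y$ directly and derives the $\tilde R_{XY}X'$ formula along the way, whereas you propose the reverse order.

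There is one point where your sketch is optimistic. You assert that a single instance of \eqref{curv2} with $X\in\Gamma({\mathcal D}_{\tilde\varphi\tilde h}(\tilde\lambda))$ and $Y,Z\in\Gamma({\mathcal D}_{\tilde\varphi\tilde h}(-\tilde\lambda))$, once paired with basis vectors of both eigenspaces, ``extracts all components of $\tilde R_{XY}Z$ transverse to $\xi$''. It does not: when you pair the left-hand side $-\tilde\lambda\tilde R_{XY}Z-\tilde\varphi\tilde h\tilde R_{XY}Z$ with any $V\in\Gamma({\mathcal D}_{\tilde\varphi\tilde h}(-\tilde\lambda))$, the two terms cancel and you obtain $0$, so only the component of $\tilde R_{XY}Z$ lying in the eigenspace \emph{opposite} to $Z$ is recovered. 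The paper handles this by applying \eqref{curv2} twice---once with third slot in ${\mathcal D}_{\tilde\varphi\tilde h}(-\tilde\lambda)$ to produce \eqref{curv3}, and once with third slot in ${\mathcal D}_{\tilde\varphi\tilde h}(\tilde\lambda)$ to produce \eqref{curv4}---and then combining them via the first Bianchi identity (this is the computation of \eqref{curv5}). You will have to do the same, so in practice your ``Step 1'' already requires the Bianchi maneuver you defer to Step 2. This is a bookkeeping issue rather than a gap in the method; once corrected, your argument coincides with the paper's.
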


\begin{proof}
We prove \eqref{first} and \eqref{second}, the remaining relations being
analogous. First notice that, since ${\mathcal{D}}_{\tilde{\varphi}\tilde{h}%
}(\tilde{\lambda})$ and ${\mathcal{D}}_{\tilde{\varphi}\tilde{h}}(-\tilde{%
\lambda})$ are not totally geodesic, $\tilde{R}_{XX^{\prime }}Y$ has
components along both ${\mathcal{D}}_{\tilde{\varphi}\tilde{h}}(\tilde{%
\lambda})$ and ${\mathcal{D}}_{\tilde{\varphi}\tilde{h}}(-\tilde{\lambda})$.
Moreover it has no components along $\mathbb{R}\xi $ because
\begin{align*}
\tilde{g}(\tilde{R}_{XX^{\prime }}Y,\xi )& =-\tilde{R}(\xi ,Y,X,X^{\prime
})=-\tilde{R}(Y,\xi ,X,X^{\prime })=-\tilde{g}(\tilde{R}_{XX^{\prime }}\xi
,Y) \\
& =-\tilde{g}(\tilde{\kappa}(\eta (X^{\prime })X-\eta (X)X^{\prime })+\tilde{%
\mu}(\eta (X^{\prime })\tilde{h}X-\eta (X)\tilde{h}X^{\prime },Y)=0.
\end{align*}%
Due to the fact that $\tilde{h}{\mathcal{D}}_{\tilde{\varphi}\tilde{h}}(\pm
\tilde{\lambda})={\mathcal{D}}_{\tilde{\varphi}\tilde{h}}(\mp \tilde{\lambda}%
)$, \eqref{curv2} implies
\begin{align*}
\tilde{\lambda}\tilde{R}_{XX^{\prime }}Y+\tilde{\varphi}\tilde{h}\tilde{R}%
_{XX^{\prime }}Y& =-\tilde{g}(X^{\prime },\tilde{h}Y)\tilde{\varphi}X+\tilde{%
g}(X,\tilde{h}Y)\tilde{\varphi}X^{\prime }+\tilde{g}(X^{\prime },\tilde{%
\lambda}\tilde{h}Y+(1+\tilde{\kappa})\tilde{\varphi}Y)X \\
& \quad -\tilde{g}(X,\tilde{\lambda}\tilde{h}Y+(1+\tilde{\kappa})\tilde{%
\varphi}Y)X^{\prime }-\tilde{g}(X^{\prime },\tilde{\varphi}Y)\tilde{h}X+%
\tilde{g}(X,\tilde{\varphi}Y)\tilde{h}X^{\prime }.
\end{align*}%
Taking the inner product with any $U\in \Gamma ({\mathcal{D}}_{\tilde{\varphi%
}\tilde{h}}(\tilde{\lambda}))$ and using the symmetry of the operator $%
\tilde{\varphi}\tilde{h}$, we have
\begin{equation*}
2\tilde{\lambda}\tilde{g}(\tilde{R}_{XX^{\prime }}Y,U)=\tilde{g}(X^{\prime },%
\tilde{\lambda}\tilde{h}Y+(1+\tilde{\kappa})\tilde{\varphi}Y)\tilde{g}(X,U)-%
\tilde{g}(X,\tilde{\lambda}\tilde{h}Y+(1+\tilde{\kappa})\tilde{\varphi}Y)%
\tilde{g}(X^{\prime },U).
\end{equation*}%
Now notice that $\tilde{h}Y=-\frac{1}{\tilde{\lambda}}\tilde{h}\tilde{\varphi%
}\tilde{h}Y=\frac{1}{\tilde{\lambda}}\tilde{\varphi}\tilde{h}^{2}Y=-\tilde{%
\lambda}\tilde{\varphi}Y$. Hence the previous equations yields
\begin{equation}
\tilde{g}(\tilde{R}_{XX^{\prime }}Y,U)=\tilde{\lambda}\tilde{g}(X,\tilde{%
\varphi}Y)\tilde{g}(X^{\prime },U)-\tilde{\lambda}\tilde{g}(X^{\prime },%
\tilde{\varphi}Y)\tilde{g}(X,U).  \label{curv3}
\end{equation}%
Arguing in a similar way one can prove that
\begin{equation}
\tilde{g}(\tilde{R}_{XY}X^{\prime },V)=\tilde{g}(X^{\prime },\tilde{\varphi}%
Y)\tilde{g}(X,\tilde{\varphi}V)+(1+\tilde{\kappa})\tilde{g}(X,X^{\prime })%
\tilde{g}(Y,V)+\tilde{\mu}\tilde{g}(X,\tilde{\varphi}Y)\tilde{g}(X^{\prime },%
\tilde{\varphi}V)  \label{curv4}
\end{equation}%
for all $V\in \Gamma ({\mathcal{D}}_{\tilde{\varphi}\tilde{h}}(-\tilde{%
\lambda}))$. Now let us consider a $\tilde{\varphi}$-basis $\left\{ e_{i},%
\tilde{\varphi}e_{i},\xi \right\} $, $i\in \left\{ 1,\ldots ,n\right\} $, as
in Lemma \ref{basis}. Note that by \eqref{curv4} we have
\begin{align}
\tilde{g}(\tilde{R}_{XX^{\prime }}Y,\tilde{\varphi}e_{i})& =-\tilde{g}(%
\tilde{R}_{X^{\prime }Y}X,\tilde{\varphi}e_{i})+\tilde{g}(\tilde{R}%
_{XY}X^{\prime },\tilde{\varphi}e_{i})  \notag  \label{curv5} \\
& =-\tilde{g}(Y,\tilde{\varphi}X)\tilde{g}(\tilde{\varphi}X^{\prime },\tilde{%
\varphi}e_{i})-(1+\tilde{\kappa})\tilde{g}(X,X^{\prime })\tilde{g}(Y,\tilde{%
\varphi}e_{i})+\tilde{\mu}\tilde{g}(X^{\prime },\tilde{\varphi}Y)\tilde{g}(%
\tilde{\varphi}X,\tilde{\varphi}e_{i})  \notag \\
& \quad +\tilde{g}(Y,\tilde{\varphi}X^{\prime })\tilde{g}(\tilde{\varphi}X,%
\tilde{\varphi}e_{i})+(1+\tilde{\kappa})\tilde{g}(X,X^{\prime })\tilde{g}(Y,%
\tilde{\varphi}e_{i})-\tilde{\mu}\tilde{g}(X,\tilde{\varphi}Y)\tilde{g}(%
\tilde{\varphi}X^{\prime },\tilde{\varphi}e_{i})  \notag \\
& =(\tilde{\mu}-1)\tilde{g}(X,\tilde{\varphi}Y)\tilde{g}(X^{\prime },e_{i})-(%
\tilde{\mu}-1)\tilde{g}(X^{\prime },\tilde{\varphi}Y)\tilde{g}(X,e_{i}).
\end{align}%
Then, by using \eqref{curv3} and \eqref{curv5} we get
\begin{align*}
\tilde{R}_{XX^{\prime }}Y& =\sum_{i=1}^{r}\tilde{g}(\tilde{R}_{XX^{\prime
}}Y,e_{i})e_{i}-\sum_{i=r+1}^{n}\tilde{g}(\tilde{R}_{XX^{\prime
}}Y,e_{i})e_{i}-\sum_{i=1}^{r}\tilde{g}(\tilde{R}_{XX^{\prime }}Y,\tilde{%
\varphi}e_{i})\tilde{\varphi}e_{i}+\sum_{i=r+1}^{n}\tilde{g}(\tilde{R}%
_{XX^{\prime }}Y,\tilde{\varphi}e_{i})\tilde{\varphi}e_{i} \\
& =-\tilde{\lambda}\tilde{g}(X^{\prime },\tilde{\varphi}Y)X+\tilde{\lambda}%
\tilde{g}(X,\tilde{\varphi}Y)X^{\prime }-(1-\tilde{\mu})\tilde{g}(X^{\prime
},\tilde{\varphi}Y)\tilde{\varphi}X+(1-\tilde{\mu})\tilde{g}(X,\tilde{\varphi%
}Y)\tilde{\varphi}X^{\prime }.
\end{align*}%
In order to prove \eqref{first}, we use \eqref{curv1} and we get, after a
long computation,
\begin{align*}
\tilde{R}_{XX^{\prime }}X^{\prime \prime }& =\tilde{g}(\tilde{R}_{XX^{\prime
}}X^{\prime \prime },\xi )\xi +\tilde{\varphi}\tilde{R}_{XX^{\prime }}\tilde{%
\varphi}X^{\prime \prime }-\tilde{\varphi}\bigl(\tilde{g}(X^{\prime }-\tilde{%
h}X^{\prime },X^{\prime \prime })\tilde{\varphi}(X-\tilde{h}X) \\
& \quad -\tilde{g}(X-\tilde{h}X,X^{\prime \prime })\tilde{\varphi}(X^{\prime
}-\tilde{h}X^{\prime })-\tilde{g}(\tilde{\varphi}(X-\tilde{h}X),X^{\prime
\prime })(X^{\prime }-\tilde{h}X^{\prime }) \\
& \quad +\tilde{g}(\tilde{\varphi}(X^{\prime }-\tilde{h}X^{\prime
}),X^{\prime \prime })(X-\tilde{h}X)\bigr) \\
& =(\tilde{\kappa}-1+\tilde{\mu})\tilde{g}(X^{\prime },X^{\prime \prime })X+%
\tilde{g}(X^{\prime },X^{\prime \prime })\tilde{h}X-(\tilde{\kappa}-1+\tilde{%
\mu})\tilde{g}(X,X^{\prime \prime })X^{\prime }-\tilde{g}(X,X^{\prime \prime
})\tilde{h}X^{\prime }.
\end{align*}
\end{proof}

\begin{corollary}
Let $(M,\tilde{\varphi},\xi ,\eta ,\tilde{g})$ be a paracontact metric $(%
\tilde{\kappa},\tilde{\mu})$-manifold such that $\tilde{\kappa}<-1$. Then
its Riemannian curvature tensor $\tilde{R}$ is given by following formula

\begin{eqnarray}
\tilde{g}(\tilde{R}_{XY}Z,W) &=&\left( -1+\frac{\tilde{\mu}}{2}\right)
\left( \tilde{g}(Y,Z)\tilde{g}(X,W)-\tilde{g}(X,Z)\tilde{g}(Y,W)\right)
\notag \\
&&+\tilde{g}(Y,Z)\tilde{g}(\tilde{h}X,W)-\tilde{g}(X,Z)\tilde{g}(\tilde{h}%
Y,W)  \notag \\
&&-\tilde{g}(Y,W)\tilde{g}(\tilde{h}X,Z)+\tilde{g}(X,W)\tilde{g}(\tilde{h}%
Y,Z)  \notag \\
&&+\frac{-1+\frac{\tilde{\mu}}{2}}{\tilde{\kappa}+1}\left( \tilde{g}(\tilde{h%
}Y,Z)\tilde{g}(\tilde{h}X,W)-\tilde{g}(\tilde{h}X,Z)\tilde{g}(\tilde{h}%
Y,W)\right)  \notag \\
&&-\frac{\tilde{\mu}}{2}\left( \tilde{g}(\tilde{\varphi}Y,Z)\tilde{g}(\tilde{%
\varphi}X,W)-\tilde{g}(\tilde{\varphi}X,Z)\tilde{g}(\tilde{\varphi}%
Y,W)\right)  \notag \\
&&+\frac{-\tilde{\kappa}-\frac{\tilde{\mu}}{2}}{\tilde{\kappa}+1}\left(
\tilde{g}(\tilde{\varphi}\tilde{h}Y,Z)\tilde{g}(\tilde{\varphi}\tilde{h}X,W)-%
\tilde{g}(\tilde{\varphi}\tilde{h}Y,W)\tilde{g}(\tilde{\varphi}\tilde{h}%
X,Z)\right)  \label{RXYZW2} \\
&&+\tilde{\mu}\tilde{g}(\tilde{\varphi}X,Y)\tilde{g}(\tilde{\varphi}Z,W)
\notag \\
&&+\eta (X)\eta (W)\left( (\tilde{\kappa}+1-\frac{\tilde{\mu}}{2})\tilde{g}%
(Y,Z)+(\tilde{\mu}-1)\tilde{g}(\tilde{h}Y,Z)\right)  \notag \\
&&-\eta (X)\eta (Z)\left( (\tilde{\kappa}+1-\frac{\tilde{\mu}}{2})\tilde{g}%
(Y,W)+(\tilde{\mu}-1)\tilde{g}(\tilde{h}Y,W)\right)  \notag \\
&&+\eta (Y)\eta (Z)\left( (\tilde{\kappa}+1-\frac{\tilde{\mu}}{2})\tilde{g}%
(X,W)+(\tilde{\mu}-1)\tilde{g}(\tilde{h}X,W)\right)  \notag \\
&&-\eta (Y)\eta (W)\left( (\tilde{\kappa}+1-\frac{\tilde{\mu}}{2})\tilde{g}%
(X,Z)+(\tilde{\mu}-1)\tilde{g}(\tilde{h}X,Z)\right)  \notag
\end{eqnarray}%
for all vector fields $X$, $Y$, $Z$, $W$ on $M$.
\end{corollary}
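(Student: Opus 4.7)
The plan is to mirror the proof of the analogous formula \eqref{RXYZW} in the case $\tilde\kappa>-1$, replacing the role of $\tilde h$ in the spectral decomposition by $\tilde\varphi\tilde h$. Since by Corollary \ref{bipara2} the operator $\tilde\varphi\tilde h$ is diagonalizable with eigenvalues $0,\pm\tilde\lambda$, where $\tilde\lambda=\sqrt{-1-\tilde\kappa}$, and since a direct computation using $\tilde h^{2}=(1+\tilde\kappa)\tilde\varphi^{2}$ and $\tilde\varphi\tilde h=-\tilde h\tilde\varphi$ gives $(\tilde\varphi\tilde h)^{2}=\tilde\lambda^{2}(I-\eta\otimes\xi)$, any vector field $X$ decomposes uniquely as $X=X_{\tilde\lambda}+X_{-\tilde\lambda}+\eta(X)\xi$ with
\[
X_{\pm\tilde\lambda}=\frac{1}{2}\Bigl(X-\eta(X)\xi\pm\frac{1}{\tilde\lambda}\tilde\varphi\tilde h X\Bigr)\in\Gamma({\mathcal D}_{\tilde\varphi\tilde h}(\pm\tilde\lambda)).
\]

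Next, by multilinearity I would expand $\tilde R_{XY}Z$ into three groups of summands: (i) the ``horizontal'' pieces $\tilde R_{X_{\varepsilon_1}Y_{\varepsilon_2}}Z_{\varepsilon_3}$ with $\varepsilon_i\in\{+,-\}$, all of which are furnished by the six cases of Theorem \ref{curvatura}; (ii) the pieces $\tilde R_{XY}\xi$ arising whenever $\xi$ sits in one of the first two slots, which come directly from the nullity condition \eqref{PARAKMU}; and (iii) the mixed pieces $\tilde R_{X\xi}Z$ (and its symmetric counterparts) coming from $Z$ having a $\xi$-component, which are handled by \eqref{R(X,zeta)Y}. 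Substituting the explicit projection formulas above and collecting, one expresses $\tilde g(\tilde R_{XY}Z,W)$ as a combination of inner products of $X,Y,Z,W$ mediated by the tensors $I,\tilde h,\tilde\varphi,\tilde\varphi\tilde h$ (together with the $\eta$-boundary terms).

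The main obstacle is purely computational: the bookkeeping is heavy, as there are many combinations to track, and the correct simplification hinges on repeated use of $\tilde h\xi=\tilde\varphi\xi=0$, the symmetry of $\tilde h,\tilde\varphi\tilde h$ and antisymmetry of $\tilde\varphi$ with respect to $\tilde g$, and the crucial relation $\tilde h^{2}=(1+\tilde\kappa)\tilde\varphi^{2}$. The key sign issue is that $\tilde\lambda^{2}=-(1+\tilde\kappa)$ here, whereas in the case $\tilde\kappa>-1$ one has $\tilde\lambda^{2}=1+\tilde\kappa$; this sign is however exactly compensated by the corresponding sign change in Theorem \ref{curvatura} relative to Theorem \ref{curv} (e.g.\ the factor $\tilde\kappa-1+\tilde\mu$ replacing $2(\tilde\lambda-1)+\tilde\mu$ when $\tilde\lambda^{2}=1+\tilde\kappa$ becomes $-\tilde\lambda^{2}-1+\tilde\mu=\tilde\kappa-1+\tilde\mu$). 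Once these substitutions are carried out and the coefficients of the terms $\tilde g(X,Z)\tilde g(Y,W)$, $\tilde g(\tilde h X,Z)\tilde g(\tilde h Y,W)$, $\tilde g(\tilde\varphi X,Z)\tilde g(\tilde\varphi Y,W)$, $\tilde g(\tilde\varphi\tilde h X,Z)\tilde g(\tilde\varphi\tilde h Y,W)$, $\tilde g(\tilde\varphi X,Y)\tilde g(\tilde\varphi Z,W)$ and the $\eta$-boundary terms are compared, one verifies that the very same formula \eqref{RXYZW} is recovered, now valid under the assumption $\tilde\kappa<-1$, yielding \eqref{RXYZW2}.
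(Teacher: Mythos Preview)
Your proposal is correct and follows essentially the same approach as the paper: decompose each vector field along ${\mathcal D}_{\tilde\varphi\tilde h}(\pm\tilde\lambda)\oplus\mathbb{R}\xi$ via the explicit projections $X_{\pm\tilde\lambda}=\frac{1}{2}\bigl(X-\eta(X)\xi\pm\frac{1}{\tilde\lambda}\tilde\varphi\tilde h X\bigr)$, then assemble $\tilde R_{XY}Z$ from the horizontal pieces of Theorem~\ref{curvatura} together with \eqref{PARAKMU} and \eqref{R(X,zeta)Y}. The paper's proof is no more detailed than your outline; your additional remarks about the sign compensation between $\tilde\lambda^2=-(1+\tilde\kappa)$ and the coefficients in Theorem~\ref{curvatura} are a helpful sanity check explaining why the final formula coincides with \eqref{RXYZW}.
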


\begin{proof}
We can decompose an arbitrary vector field $X$ on $M$ uniquely as \ $X=X_{%
\tilde{\lambda}}+X_{-\tilde{\lambda}}+\eta (X)\xi $, where $X_{\tilde{\lambda%
}}\in \Gamma ({\mathcal{D}}_{\tilde{\varphi}\tilde{h}}(\tilde{\lambda}))$
and $X_{-\tilde{\lambda}}\in \Gamma ({\mathcal{D}}_{\tilde{\varphi}\tilde{h}%
}(-\tilde{\lambda}))$. We then write $\tilde{R}_{XY}Z$ as a sum of terms of
the form $\tilde{R}_{X_{\pm \tilde{\lambda}} Y_{\pm _{\tilde{\lambda}%
}}}Z_{_{\pm \tilde{\lambda}}}$, $\tilde{R}_{XY}\xi $, $\tilde{R}_{X\xi}Z$.
Then by Theorem 5.8, and taking into account that, in fact
\begin{equation*}
X_{\tilde{\lambda}}=\frac{1}{2}\bigl(X-\eta (X)\xi +\frac{1}{\sqrt{-1-\tilde{%
\kappa}}}\tilde{\varphi}\tilde{h}X\bigr),\text{ \ \ \ }X_{-\tilde{\lambda}}=%
\frac{1}{2}\bigl(X-\eta (X)\xi -\frac{1}{\sqrt{-1-\tilde{\kappa}}}\tilde{%
\varphi}\tilde{h}X\bigr),
\end{equation*}%
we obtain \eqref{RXYZW2}.
\end{proof}

\begin{remark}
We point out that the surprising fact that formula \eqref{RXYZW2} is the same as \eqref{RXYZW}, tough the cases $\tilde\kappa<-1$ and $\tilde\kappa >-1$ are geometrically very different from each other.
\end{remark}

\begin{corollary}
Let $(M,\tilde{\varphi},\xi ,\eta ,\tilde{g})$ be a paracontact $(\tilde{%
\kappa},\tilde{\mu})$-manifold such that $\tilde{\kappa}<-1$. Then for any $%
X\in \Gamma ({\mathcal{D}})$ the $\xi $-sectional curvature $\tilde{K}(X,\xi
)$ is constant and is given by $\tilde{K}(X,\xi )=\tilde{\kappa}$. Moreover,
the sectional curvature of plane sections normal to $\xi $ is given by
\begin{gather*}
\tilde{K}(X,X^{\prime })=\tilde{K}(Y,Y^{\prime })=\tilde{\kappa}-1+\tilde{\mu%
} \\
\tilde{K}(X,Y)=\tilde{\lambda}^{2}-(\tilde{\mu}+1)\frac{\tilde{g}(X,\tilde{%
\varphi}Y)^{2}}{\tilde{g}(X,X)\tilde{g}(Y,Y)}
\end{gather*}%
for any $X,X^{\prime }\in \Gamma ({\mathcal{D}}_{\tilde{\varphi}\tilde{h}}(%
\tilde{\lambda}))$ and $Y,Y^{\prime }\in \Gamma ({\mathcal{D}}_{\tilde{%
\varphi}\tilde{h}}(-\tilde{\lambda}))$.
\end{corollary}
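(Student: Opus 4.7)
The plan is to derive each of the three sectional curvatures as a direct application of curvature formulas already established: the $\xi$-sectional curvature from the nullity condition \eqref{PARAKMU}, and the two families of sectional curvatures inside $\mathcal{D}$ from Theorem \ref{curvatura}. For $\tilde{K}(X,\xi)$, setting $Y = \xi$ in \eqref{PARAKMU} gives $\tilde{R}_{X\xi}\xi = \tilde\kappa X + \tilde\mu\, \tilde{h} X$ for $X \in \Gamma(\mathcal{D})$, so
\[
\tilde{K}(X,\xi) = \tilde\kappa + \tilde\mu\, \frac{\tilde{g}(\tilde{h} X, X)}{\tilde{g}(X, X)}.
\]
The key point is that, for $X$ lying in either eigendistribution $\mathcal{D}_{\tilde\varphi\tilde h}(\pm\tilde\lambda)$, the relation $\tilde\varphi\tilde{h} X = \pm\tilde\lambda X$ together with $\tilde\varphi^2 = I$ on $\mathcal{D}$ forces $\tilde{h} X = \pm\tilde\lambda\, \tilde\varphi X$. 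The $\tilde{g}$-skew-symmetry $\tilde{g}(\tilde\varphi U, V) = -\tilde{g}(U, \tilde\varphi V)$, an immediate consequence of \eqref{G METRIC}, then yields $\tilde{g}(\tilde{h} X, X) = \pm\tilde\lambda\, \tilde{g}(\tilde\varphi X, X) = 0$ and hence $\tilde{K}(X,\xi) = \tilde\kappa$.

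For the sectional curvatures of planes inside $\mathcal{D}$ I would substitute directly into the formulas of Theorem \ref{curvatura}. Taking $X, X' \in \Gamma(\mathcal{D}_{\tilde\varphi\tilde h}(\tilde\lambda))$ and using \eqref{first}, the inner product $\tilde{g}(\tilde{R}_{XX'}X', X)$ splits as
\[
(\tilde\kappa - 1 + \tilde\mu)\bigl(\tilde{g}(X', X')\tilde{g}(X, X) - \tilde{g}(X, X')^2\bigr) + \tilde\lambda\bigl(\tilde{g}(X', X')\tilde{g}(\tilde\varphi X, X) - \tilde{g}(X, X')\tilde{g}(\tilde\varphi X', X)\bigr);
\]
the first summand supplies precisely $(\tilde\kappa - 1 + \tilde\mu)$ times the sectional denominator, while the second vanishes since $\tilde{g}(\tilde\varphi X, X) = 0$ by skew-symmetry and $\tilde{g}(\tilde\varphi X', X) = 0$ because $\tilde\varphi X' \in \Gamma(\mathcal{D}_{\tilde\varphi\tilde h}(-\tilde\lambda))$ is $\tilde{g}$-orthogonal to $X$ (Corollary \ref{bipara2}). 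Dividing gives $\tilde{K}(X, X') = \tilde\kappa - 1 + \tilde\mu$, and the case $Y, Y' \in \Gamma(\mathcal{D}_{\tilde\varphi\tilde h}(-\tilde\lambda))$ is handled identically from the corresponding formula of Theorem \ref{curvatura}.

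For the mixed case $X \in \Gamma(\mathcal{D}_{\tilde\varphi\tilde h}(\tilde\lambda))$, $Y \in \Gamma(\mathcal{D}_{\tilde\varphi\tilde h}(-\tilde\lambda))$, I would specialize the formula for $\tilde{R}_{XY}Y'$ of Theorem \ref{curvatura} to $Y' = Y$ and take the inner product with $X$. Using $\tilde{g}(X, Y) = 0$ (mutual orthogonality of the eigendistributions), $\tilde{g}(\tilde\varphi X, X) = 0$, and $\tilde{g}(\tilde\varphi Y, X) = -\tilde{g}(\tilde\varphi X, Y)$, one obtains $\tilde{g}(\tilde{R}_{XY}Y, X) = \tilde\lambda^2\, \tilde{g}(X, X)\tilde{g}(Y, Y) - (1 + \tilde\mu)\tilde{g}(X, \tilde\varphi Y)^2$, giving the stated formula for $\tilde{K}(X, Y)$. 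The entire argument is a direct bookkeeping computation on top of Theorem \ref{curvatura}, Corollary \ref{bipara2}, and the skew-symmetry of $\tilde\varphi$; no genuine obstacle is expected.
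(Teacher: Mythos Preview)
Your approach is exactly what the paper intends: the corollary is stated without proof there, as an immediate consequence of Theorem \ref{curvatura} (and of the nullity condition \eqref{PARAKMU} for the $\xi$-sectional curvature), and your substitutions and simplifications for $\tilde K(X,X')$, $\tilde K(Y,Y')$, $\tilde K(X,Y)$ are correct.

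One point deserves comment. The statement asserts $\tilde K(X,\xi)=\tilde\kappa$ for \emph{every} $X\in\Gamma(\mathcal{D})$, but your argument only verifies $\tilde g(\tilde hX,X)=0$ when $X$ lies in one of the eigendistributions $\mathcal{D}_{\tilde\varphi\tilde h}(\pm\tilde\lambda)$. For a general $X=X_{+}+X_{-}$ (with $X_{\pm}\in\Gamma(\mathcal{D}_{\tilde\varphi\tilde h}(\pm\tilde\lambda))$) one has $\tilde hX_{\pm}=\pm\tilde\lambda\,\tilde\varphi X_{\pm}$, and since $\tilde\varphi X_{+}$ and $X_{-}$ both lie in $\mathcal{D}_{\tilde\varphi\tilde h}(-\tilde\lambda)$ one finds
\[
\tilde g(\tilde hX,X)=2\tilde\lambda\,\tilde g(\tilde\varphi X_{+},X_{-}),
\]
which need not vanish. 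Compare with the analogous corollary in the case $\tilde\kappa>-1$, where the authors leave the $\xi$-sectional curvature in the form $\tilde\kappa+\tilde\mu\,\tilde g(\tilde hZ,Z)/\tilde g(Z,Z)$ for general $Z\in\Gamma(\mathcal{D})$. So the ``for any $X\in\Gamma(\mathcal{D})$'' in the present statement appears to be an imprecision of the paper rather than a gap in your argument; what you prove (the eigendistribution case) is what can actually be obtained.
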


Using Theorem \ref{curvatura}, \eqref{Riczeta} and \eqref{KMU
QFI-FIQ}, one can easily prove the following result.

\begin{corollary}
\label{ricci} In any $(2n+1)$-dimensional paracontact $(\tilde{\kappa},%
\tilde{\mu})$-manifold $(M,\tilde{\varphi},\xi ,\eta ,\tilde{g})$ such that $%
\tilde{\kappa}<-1$, the Ricci operator $\tilde Q$ is given by
\begin{equation*}
\tilde{Q} = (2(1-n)+n\tilde\mu)I + (2(n+1)+ \tilde\mu) \tilde{h} +
(2(n-1)+n(2\tilde\kappa-\tilde\mu))\eta\otimes\xi.
\end{equation*}
In particular $(M,\tilde{g})$ is $\eta$-Einstein if and only if $%
\tilde\mu=2(1-n)$, Einstein if and only if $\tilde\kappa=\frac{1-n^2}{n}$
and $\tilde\mu=2(1-n)$.
\end{corollary}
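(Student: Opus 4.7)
The strategy is to compute $\tilde Q$ directly from the explicit curvature expressions in Theorem~\ref{curvatura}, and then use the two structural identities \eqref{Riczeta} and \eqref{KMU QFI-FIQ} to pin down the coefficients and cross-check consistency. First I would fix a local $\tilde\varphi$-basis $\{X_1,\ldots,X_n,Y_1,\ldots,Y_n,\xi\}$ adapted to the eigendistributions $\mathcal{D}_{\tilde\varphi\tilde h}(\pm\tilde\lambda)$, as provided by Lemma~\ref{basis}, with $Y_i=\tilde\varphi X_i$ and signs $\tilde g(X_i,X_i)=-\tilde g(Y_i,Y_i)=\pm 1$ as in \eqref{sign}. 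On $\mathcal{D}_{\tilde\varphi\tilde h}(\tilde\lambda)$ one has $\tilde\varphi\tilde h=\tilde\lambda I$, so $\tilde h=-\tilde\lambda\tilde\varphi$ there, and symmetrically on $\mathcal{D}_{\tilde\varphi\tilde h}(-\tilde\lambda)$.

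Next, for $X\in\Gamma(\mathcal{D}_{\tilde\varphi\tilde h}(\tilde\lambda))$ I would compute
\[
\tilde Q X=\sum_{i}\epsilon_i\tilde R_{X_i,X}X_i-\sum_{i}\epsilon_i\tilde R_{Y_i,X}Y_i+\tilde R_{\xi,X}\xi,
\]
where $\epsilon_i=\tilde g(X_i,X_i)$. The last term is given by \eqref{R(X,zeta)Y}, while the first two sums are evaluated term by term using the formulas \eqref{first}--\eqref{vi} of Theorem~\ref{curvatura}, together with the identity $\tilde h X=-\tilde\lambda\tilde\varphi X$ on $\mathcal{D}_{\tilde\varphi\tilde h}(\tilde\lambda)$. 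A routine telescoping, in which the contributions involving $\tilde g(X,\tilde\varphi Y)$ terms cancel across the two sums via $\tilde\varphi Y_i=X_i-\eta(X_i)\xi$, collapses the expression to $aX+b\,\tilde h X$ for constants $a,b$ depending only on $n,\tilde\kappa,\tilde\mu$. The analogous computation on $\mathcal{D}_{\tilde\varphi\tilde h}(-\tilde\lambda)$ yields the same $a$ and $b$, so on the contact distribution $\tilde Q=aI+b\tilde h$.

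The $\xi$-component of $\tilde Q$ is handled separately: writing $\tilde Q=aI+b\tilde h+c\,\eta\otimes\xi$ and evaluating at $\xi$ using $\tilde h\xi=0$ gives $(a+c)\xi$, which by \eqref{Riczeta} must equal $2n\tilde\kappa\xi$; this determines $c=2n\tilde\kappa-a$. As a sanity check I would substitute the ansatz into \eqref{KMU QFI-FIQ}: since $\tilde\varphi\xi=0$ and $\eta\circ\tilde\varphi=0$ the $\eta\otimes\xi$ term drops out, and $\tilde h\tilde\varphi=-\tilde\varphi\tilde h$ forces $\tilde Q\tilde\varphi-\tilde\varphi\tilde Q=-2b\,\tilde\varphi\tilde h$, which must match the right-hand side of \eqref{KMU QFI-FIQ}, providing an independent determination of $b$.

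Finally, the two characterizations are immediate from the resulting formula: the manifold is $\eta$-Einstein iff the coefficient of $\tilde h$ vanishes, yielding $\tilde\mu=2(1-n)$; it is Einstein iff additionally the coefficient of $\eta\otimes\xi$ vanishes, which combined with the preceding equation gives $\tilde\kappa=\frac{1-n^2}{n}$. In the Einstein case one should verify that the constraint $\tilde\kappa<-1$ is compatible, i.e. $\frac{1-n^2}{n}<-1$, equivalently $n^2-n-1>0$, which holds for all $n\geq 2$, thus producing genuine higher-dimensional Einstein examples in contrast with the $\tilde\kappa>-1$ setting of Corollary~\ref{ricci0}. The main technical obstacle is the sign bookkeeping in the contraction step: because $\tilde g$ is of signature $(n,n+1)$ and the Legendre foliations $\mathcal{D}_{\tilde\varphi\tilde h}(\pm\tilde\lambda)$ are only totally umbilical (not totally geodesic, as in the $\tilde\kappa>-1$ case), several mixed curvature terms appear that must be organized carefully before the telescoping into a clean $aI+b\tilde h$ form can be carried out.
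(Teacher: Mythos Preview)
Your approach is essentially the same as the paper's, which simply says the result follows from Theorem~\ref{curvatura} together with \eqref{Riczeta} and \eqref{KMU QFI-FIQ}; you have correctly fleshed out how those three ingredients are combined. One small slip to watch: on $\mathcal{D}_{\tilde\varphi\tilde h}(\tilde\lambda)$ the relation is $\tilde h X=\tilde\lambda\tilde\varphi X$ (apply $\tilde\varphi$ to $\tilde\varphi\tilde h X=\tilde\lambda X$), not $-\tilde\lambda\tilde\varphi X$; this is exactly the sign bookkeeping you flag at the end, and your cross-check via \eqref{KMU QFI-FIQ} will catch it, yielding $b=2(n-1)+\tilde\mu$.
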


\begin{remark}
We point out that, according to Corollary \ref{ricci}, if $\tilde\kappa<-1$,
there exist Einstein paracontact $(\tilde\kappa,\tilde\mu)$-manifolds also
in dimension greater than $3$. This is a relevant difference with respect
to the case $\tilde\kappa>-1$ (cf. Corollary \ref{ricci0}) and, moreover,
with respect to the contact metric case.
\end{remark}

We conclude with an example of paracontact $(\tilde{\kappa},\tilde{\mu})$%
-manifold such that $\tilde{\kappa}<-1$.

\begin{example}
Let $\mathfrak{g}$ be the Lie algebra with basis $%
\{e_{1},e_{2},e_{3},e_{4},e_{5}\}$ and non-zero Lie brackets
\begin{gather*}
\lbrack e_{1},e_{5}]=\alpha \beta e_{1}+\alpha \beta e_{2},\ \
[e_{2},e_{5}]=\alpha \beta e_{1}+\alpha \beta e_{2},\ \  \\
\lbrack e_{3},e_{5}]=-\alpha \beta e_{3}+\alpha \beta e_{4},\ \
[e_{4},e_{5}]=\alpha \beta e_{3}-\alpha \beta e_{4}, \\
\lbrack e_{1},e_{2}]=\alpha e_{1}+\alpha e_{2},\ \ [e_{1},e_{3}]=\beta
e_{2}+\alpha e_{4}-2e_{5},\ \ [e_{1},e_{4}]=\beta e_{2}+\alpha e_{3}, \\
\lbrack e_{2},e_{3}]=\beta e_{1}-\alpha e_{4},\ \ [e_{2},e_{4}]=\beta
e_{1}-\alpha e_{3}+2e_{5},\ \ [e_{3},e_{4}]=-\beta e_{3}+\beta e_{4}
\end{gather*}%
where $\alpha,\beta$ are non-zero real numbers such that $\alpha\beta>0$. Let $G$ be a Lie group whose Lie algebra is $\mathfrak{g}$. Define on $G$ a
left invariant paracontact metric structure $(\tilde{\varphi},\xi ,\eta ,%
\tilde{g})$ by imposing that, at the identity, $\tilde{g}(e_{1},e_{1})=%
\tilde{g}(e_{4},e_{4})=-\tilde{g}(e_{2},e_{2})=-\tilde{g}(e_{3},e_{3})=%
\tilde{g}(e_{5},e_{5})=1$, $\tilde{g}(e_{i},e_{j})=0$ for any $i\neq j$, and
$\tilde{\varphi}e_{1}=e_{3}$, $\tilde{\varphi}e_{2}=e_{4}$, $\tilde{\varphi}%
e_{3}=e_{1}$, $\tilde{\varphi}e_{4}=e_{2}$, $\tilde{\varphi}e_{5}=0$, $\xi
=e_{5}$ and $\eta =g(\cdot ,e_{5})$. A very long but straightforward
computation shows that
\begin{gather*}
\tilde{\nabla}_{e_{1}}\xi =\alpha \beta e_{1}-\tilde{\varphi}e_{1},\ \
\tilde{\nabla}_{e_{2}}\xi =\alpha \beta e_{2}-\tilde{\varphi}e_{2},\ \
\tilde{\nabla}_{\tilde{\varphi}e_{1}}\xi =-e_{1}-\alpha \beta \tilde{\varphi}%
e_{1},\ \ \tilde{\nabla}_{\tilde{\varphi}e_{2}}\xi =-e_{2}-\alpha \beta
\tilde{\varphi}e_{2}, \\
\tilde{\nabla}_{\xi }e_{1}=-\alpha \beta e_{2}-\tilde{\varphi}e_{1},\ \
\tilde{\nabla}_{\xi }e_{2}=-\alpha \beta e_{1}-\tilde{\varphi}e_{2},\ \
\tilde{\nabla}_{\xi }\tilde{\varphi}e_{1}=-e_{1}-\alpha \beta \tilde{\varphi}%
e_{2},\ \ \tilde{\nabla}_{\xi }\tilde{\varphi}e_{2}=-e_{2}-\alpha \beta
\tilde{\varphi}e_{1}, \\
\tilde{\nabla}_{e_{1}}e_{1}=\alpha e_{2}-\alpha \beta e_{5},\ \ \tilde{\nabla%
}_{e_{1}}e_{2}=\alpha e_{1},\ \ \tilde{\nabla}_{e_{1}}\tilde{\varphi}%
e_{1}=\alpha \tilde{\varphi}e_{2}-e_{5},\ \ \tilde{\nabla}_{e_{1}}\tilde{%
\varphi}e_{2}=\alpha \tilde{\varphi}e_{1}, \\
\tilde{\nabla}_{e_{2}}e_{1}=-\alpha e_{2},\ \ \tilde{\nabla}%
_{e_{2}}e_{2}=-\alpha e_{1}+\alpha \beta e_{5},\ \ \tilde{\nabla}_{e_{2}}%
\tilde{\varphi}e_{1}=-\alpha \tilde{\varphi}e_{2},\ \ \tilde{\nabla}_{e_{2}}%
\tilde{\varphi}e_{2}=-\alpha \tilde{\varphi}e_{1}+e_{5}, \\
\tilde{\nabla}_{\tilde{\varphi}e_{1}}e_{1}=-\beta e_{2}+e_{5},\ \ \tilde{%
\nabla}_{\tilde{\varphi}e_{1}}e_{2}=-\beta e_{1},\ \ \tilde{\nabla}_{\tilde{%
\varphi}e_{1}}\tilde{\varphi}e_{1}=-\beta \tilde{\varphi}e_{2}-\alpha \beta
e_{5},\ \ \tilde{\nabla}_{\tilde{\varphi}e_{1}}\tilde{\varphi}e_{2}=-\beta
\tilde{\varphi}e_{1}, \\
\tilde{\nabla}_{\tilde{\varphi}e_{2}}e_{1}=-\beta e_{2},\ \ \tilde{\nabla}_{%
\tilde{\varphi}e_{2}}e_{2}=-\beta e_{1}-e_{5},\ \ \tilde{\nabla}_{\tilde{%
\varphi}e_{2}}\tilde{\varphi}e_{1}=-\beta \tilde{\varphi}e_{2},\ \ \tilde{%
\nabla}_{\tilde{\varphi}e_{2}}\tilde{\varphi}e_{2}=-\beta \tilde{\varphi}%
e_{1}+\alpha \beta e_{5}.
\end{gather*}%
where \ $\tilde{\lambda}=\alpha \beta$ and $\tilde{\mu}=2$. \ Then one
can prove that the curvature tensor field of the Levi-Civita connection of $%
(G,\tilde{g})$ satisfies the $(\tilde{\kappa},\tilde{\mu})$-nullity
condition \eqref{PARAKMU}, with $\tilde{\kappa}=-1-(\alpha \beta )^{2}$ and $%
\tilde{\mu}=2$.
\end{example}

\textbf{Acknowledgements. }The authors are grateful to the referee for their
valuable comments and suggestions.

\end{document}